\let\c@lofdepth\relax
\let\c@lotdepth\relax
\newtheorem{theorem}{Theorem}[section]
\newtheorem{lemma}{Lemma}[section]
\newtheorem{corollary}{Corollary}[section]
\newtheorem{definition}{Definition}[section]
\newtheorem{remark}{Remark}[section]
\numberwithin{equation}{section}
\numberwithin{figure}{section}
\def\Xint#1{\mathchoice
{\XXint\displaystyle\textstyle{#1}}%
{\XXint\textstyle\scriptstyle{#1}}%
{\XXint\scriptstyle\scriptscriptstyle{#1}}%
{\XXint\scriptscriptstyle\scriptscriptstyle{#1}}%
\!\int}
\def\XXint#1#2#3{{\setbox0=\hbox{$#1{#2#3}{\int}$}
\vcenter{\hbox{$#2#3$}}\kern-.51\wd0}}
\def\dashint{\Xint-}
\newcommand{\reff}[1]{{\rm (\ref{#1})}}
\newcommand{\R}{\mathbb{R}}            % real numbers
\newcommand{\Z}{\mathbb{Z}}            % integers
\newcommand{\C}{\mathbb{C}}
\newcommand{\N}{\mathbb{N}}
\newcommand{\M}{\mathbb{M}}
\newcommand{\sA}{\mathscr{A}}
\newcommand{\sP}{\mathscr{P}}
\newcommand{\sQ}{\mathscr{Q}}
\newcommand{\bA}{\mathbf A}  % boldface n
\newcommand{\bD}{\mathbf D}            % boldface D
\newcommand{\bF}{\mathbf F}
\newcommand{\bv}{\mathbf v}
\newcommand{\br}{\mathbf r}            % boldface r
\newcommand{\ve}{\varepsilon}          % varepsilon
\newcommand{\p}{\mathcal{P}}
\newcommand{\zhou}[1] {{\color{red} #1}}
\newcommand{\BL}[1]{{\color{blue}[#1 {-BL}]}}
\begin{document}

%\title{Convergence and Error Estimates of 
%Finite-Difference Approximations and Local Algorithms for Electrostatics}
%\title{Convergence of Local Algorithms for Electrostatics} 
%%\title{Error Estimates of Finite-Difference Approximations and Convergence of Local Algorithms for Poisson--Boltzmann Electrostatics}

\title{Finite-Difference Approximations and Local Algorithms for the 
Poisson and Poisson--Boltzmann Electrostatics}

%%\title{Finite-Difference Error Estimates and Convergence of Local Algorithms
%%for the Poisson and Poisson--Boltzmann Electrostatics}

\author[1]{Bo Li}
%%\author[2]{Zhenli Xu}
\author[2]{Qian Yin}
\author[3]{Shenggao Zhou}

\affil[1]{\small{Department of Mathematics, 
%%and Ph.D.\ Program in Quantitative Biology, 
University of California, San Diego, La Jolla, California, USA.
%9500 Gilman Drive, Mail code: 0112,
%La Jolla, CA 92093-0112, USA.

Email: bli@math.ucsd.edu
}}

%%%%%%%%%%%%%%%
\begin{comment}
\affil[2]{
%\small
School of Mathematical Sciences, 
%%MOE-LSC, and CMA-Shanghai, 
Shanghai Jiao Tong University, 
Shanghai 200240, P.R.\ China.
China. 

Email: xuzl@sjtu.edu.cn
}
\end{comment}
%%%%%%%%%%%%%%%%%%%%

\affil[2]{\small{
%%200240, 
Department of Applied Mathematics, The Hong Kong Polytechnic University, 
Hong Kong, China,

and School of Mathematical Sciences, 
Shanghai Jiao Tong University, Shanghai, 
China.

%%Email: sjtu\_yinq@sjtu.edu.cn
Email: qqian.yin@polyu.edu.hk
}}

\affil[3]{\small{School of Mathematical Sciences, MOE-LSC, CMA-Shanghai, and Shanghai Center for Applied Mathematics,  Shanghai Jiao Tong University, 
%%Shanghai, 200240, 
Shanghai, China.

Email: sgzhou@sjtu.edu.cn
}
}

%Department of Mathematics,
%University of California, San Diego.
%9500 Gilman Drive, Mail code: 0112,
%La Jolla, CA 92093-0112, USA.
%Email: bli@math.ucsd.edu.
%\affil[3]{
%\small
%}

\vspace{-6 mm}

\date{\today}

\maketitle

\begin{abstract}

% \BL{this will be a bit misleading: We don't have error estimates for the local algorithms. Also, 
% Electrostatics Interactions is a common phrase in science. But Math Comp people may not such phrase - they may like finite difference better. The convergence of the local algorithm is perhpas only 1/3 of the 
% the entire analysis.}

We study finite-difference approximations of both Poisson and 
Poisson--Boltzmann (PB) electrostatic energy functionals for periodic structures 
constrained by Gauss' law  and a class of local algorithms
for minimizing the finite-difference discretization of such functionals. 
The variable of Poisson energy is the vector field of electric displacement
%%same as the flux in an elliptic partial differential equation (PDE) of divergence form. 
and that for the PB energy consists of 
an electric displacement and ionic concentrations.
%% of ions of multiple species. some chemicals.
The displacement is discretized at midpoints of edges of grid boxes
%%by central differencing 
while the concentrations are discretize at grid points. 
The local algorithm is an iteration over all the grid boxes that locally 
minimizes the energy on each grid box, keeping Gauss' law satisfied.  
We prove that the energy functionals admit unique minimizers that
are solutions to the corresponding Poisson's and charge-conserved PB equation, respectively.
%%the equivalence between minimizing these functionals with constraints and
%%solving the corresponding Poisson's and PB equation are shown. respectively. 
%%Minimizing these functionals are equivalent to solving the corresponding
%%Poisson and PB equations but using the flux instead of a direct variable. 
Local equilibrium conditions are identified to characterize
the finite-difference minimizers of the discretized energy functionals. 
These conditions are the curl free for the Poisson case and the discrete
Boltzmann distributions for the PB case, respectively. 
%are identified to be local equilibrium conditions that characterize
%the finite-difference minimizers of the discretized energy functionals. 
%We charaterize The finite-difference minimizers of the discretized energy functionals 
%with a local equilibrium conditiion 
Next, we obtain the uniform bound with respect to the grid size $h$
and $O(h^2)$-error estimates in maximum norm for the finite-difference minimizers. 
%These, together with the known $L^\infty$ and $W^{1,\infty}$ stability of the underlying 
%%finite-difference operator for Poisson's equation, serve as  
%% the key technical tools for us to obtain the  
%%obtain their $O(h^2)$ error estimates in the maximum norm. 
%both $L^2$ and $L^\infty$-norm for the finite-difference minimizers. 
%% where $h$ is the grid size. 
%%The key technical tools in obtaining these estimates include 
%%The technical tools used here include
%%the characterization of the finite-difference minimizers, uniform
%%bounds for the discrete minimizers independent of grid size $h,$ and the known 
%%$L^\infty$ and $W^{1,\infty}$ stabilities of the underlying difference operators 
%%for Poisson's equation. 
The local algorithms are detailed, and a new local algorithm with 
shift is proposed to treat the general case of a variable coefficient
for the Poisson energy. 
We prove the convergence of all these local algorithms,  
using the characterization of the finite-difference minimizers. 
Finally, we present numerical tests to demonstrate the results of our analysis. 

\medskip

\noindent
{\bf Key words and phrases:}  
Gauss' law, 
Poisson's equation, 
the Poisson--Boltzmann equation, 
finite difference, 
error estimate, 
local algorithm, 
convergence, 
superconvergence.

\medskip

\noindent
{\bf AMS Subject Class:} 49M20, 65N06, 65Z05.

\end{abstract}

%\begin{spacing}{0.6}
\tableofcontents
%\end{spacing}

%\renewcommand{\baselinestretch}{0.5}\normalsize
%\tableofcontents
%\renewcommand{\baselinestretch}{1.0}\normalsize

%\noindent
%{\bf Appendix}

%\noindent
%{\bf Acknowledgement}

%\noindent
%{\bf References}

%%%%%%%%%%%%%%%%%%%%%%%%%%%%%%%%%%%%%%%%%%%%%%%%%%%%%%%%%%%%%%%%%%%%%%%%%%%%%%%%%
\section{Introduction}
\label{s:Introduction}
%
%%\BL{I suggest to start with the equations and results and then talk about the background / applications, and then literature comparison, etc.}

We consider the following variational problems of minimizing 
the non-dimensionalized Poisson \cite{Landau_Book93,Jackson99} and Poisson--Boltzmann (PB) 
\cite{Chapman1913,DebyeHuckel1923,Gouy1910,AndelmanHandbook95,
Fixman_JCP1979,Zhou_JCP94,CDLM_JPCB08,Li_SIMA09}
electrostatic energy functionals constrained by Gauss' law for periodic structures: 
\begin{align*}
& 
%%\mbox{Poisson}
\left\{ 
\begin{aligned}
& \mbox{Miminize} & &  F[D]:= \int_{\Omega} \frac{1}{2 \ve} |D|^2 dx
& & \mbox{(Poisson energy)}, 
\\
& \mbox{subject to} & & \nabla \cdot D = \rho \qquad \mbox{in } \Omega
& & \mbox{(Gauss' law)}; 
\end{aligned}
\right.
\\
&\left\{
\begin{aligned}
& \mbox{Minimize} & & \hat{F}[c, D] :=  \int_{\Omega} \left( \frac{1}{2 \ve} |D|^2 
%%\, dx + \int_{\Omega} 
+ \sum_{s=1}^M c_s \log c_s \right) dx
& & \mbox{(PB energy)}, 
\\
& \mbox{subject to} & & 
\nabla \cdot D = \rho + \sum_{s=1}^M q_s c_s \qquad \mbox{in } \Omega 
& & \mbox{(Gauss' law)}, 
\\
& \ & &  \int_\Omega c_s \, dx = N_s,  \quad s = 1, \dots, M
   &  & \mbox{(Conservation of mass)}. 
\end{aligned}
\right.
\end{align*}
Here, $\Omega \subset \R^d$ $(d = 2 \mbox{ or } 3)$ is a cube, 
$\ve > 0$ and $\rho$ are given $\overline{\Omega}$-periodic functions
representing the dielectric coefficient and a fixed charge density, respectively, 
and $D$ is an $\overline{\Omega}$-periodic vector filed of electric displacement. 
%%the vector field of electric displacement, assumed to be 
%%$\overline{\Omega}$-periodic.  
For the PB case, $c = (c_1, \dots, c_M)$ and each $c_s \ge 0$ is 
the local concentration of ions of $s$th species, a total of 
$M$ species is assumed.  For each $s$, $q_s$ 
is the charge for an ion in species $s$ and $N_s$ is the total amount
of concentration of such ions. All $M$, $q_s$, and $N_s$ are given constants. 
Here and below $\log$ denotes the natural logarithm and 
$u \log u = 0$ if $u = 0.$ 

%Both of the Poisson and PB energy functionals are convex and their 
%admissible sets defined by the corresponding constraints are affine. 
%It is therefore expected that each functional
%permits a unique minimizer in a properly defined admissible set. 

%%%%%%%%%%%%%%%%%%%%
\begin{comment}
Heuristic calculations using the method of Lagrange's multipliers lead to 
the expression $D_{\rm min} = - \ve \nabla \phi_{\rm min}$ of the minimizer of the 
Poisson energy functional, 
%% constrained by Gauss' law, where 
where $\phi_{\rm min}$, the Lagrange multiplier also known as 
the electrostatic potential \cite{Landau_Book93,Jackson99}, 
is the unique $\overline{\Omega}$-periodic solution to Poisson's equation 
$ \nabla \cdot \ve \nabla \phi = - \rho.  $
Similarly, the minimizer $(\hat{c}_{\rm min}, \hat{D}_{\rm min})$
of the PB energy functional is given by 
%%constrained by Gauss' law can be expressed by 
%%an electrostatic potential $\hat{\phi}_{\rm min}$ through 
$\hat{D}_{\rm min} = -\ve \nabla \hat{\phi}_{\rm min}$
and the Boltzmann distributions
$\hat{c}_{{\rm min}, s} \propto 
%% that the $s$th component of $\hat{c}_{\rm min}$ is proportional to 
e^{-q_s \hat{\phi}_{\rm min}}$ for all $s$,  
where the electrostatic potential 
%%Such distributions are determined by the Euler--Lagrange equation
%%of the PB energy functional with  $D$ replaced by $-\ve \nabla \phi.$ 
%%Plugging these distributions in Gauss' law, we find that 
$\hat{\phi}_{\rm min}$ is the unique periodic solution to the 
%%so-called 
charge-conseved PB equation (CCPBE) (cf.\ section~\ref{ss:CCPBE})
\begin{equation}
\label{ccPBIntro}
\nabla \cdot \ve \nabla \phi +\sum_{s=1}^M 
N_s q_s \left( \int_\Omega e^{-q_s \phi} 
dx \right)^{-1} e^{-q_s \phi}  = -\rho.
\end{equation}
\end{comment}
%%%%%%%%%%%%%%%%%%%%

To discretize the energy functionals and Gauss' law, let us consider
the three-dimensional case to be specific and cover 
$\overline{\Omega}$ with a finite-difference grid of size $h$ with 
the grid point $(i, j, k)$ corresponding to the spatial point 
$(x_i, y_j, z_k)$.  We approximate the displacement at half-grid points
by $D_{i+1/2,j+1/2,k+1/2} = (u_{i+1/2, j, k},  v_{i, j+1/2, k}, w_{i, j,k+1/2})$ 
and concentrations $c = (c_s, \dots, c_s)$ 
at grid points by $c_{s, i,j,k} \ge 0$ for all $s, i, j, k.$
%%$(s = 1, \dots, M)$ for all $i, j, k.$ 
The PB energy and the corresponding Gauss' law at all the grid points are 
then discretized as
\begin{align*}
& \hat{F}_h [c,D] :=  \frac{h^3}{2} 
\sum_{i, j, k} \left( \frac{u_{i+1/2,j,k}^2 }{ \ve_{i+1/2, j, k}}
+  \frac{v_{i,j+1/2,k}^2 }{ \ve_{i, j+1/2, k}} 
+  \frac{w_{i,j,k+1/2}^2 }{ \ve_{i, j, k+1/2}} \right)
+ h^3 \sum_{s}\sum_{i,j, k} c_{s, i, j, k} \log c_{s, i, j, k},
\\
&
u_{i+1/2, j, k}-u_{i-1/2, j, k} + v_{i, j+1/2, k} - v_{i, j-1/2, k } 
+ w_{i, j,k+1/2}-w_{i, j, k-1/2}
= h \left( \rho_{i,j,k} + \sum_{s} q_s c_{s,i,j, k}\right), 
%& ~~\qquad \qquad c_{s,i,j} \geq 0,  \quad s=1,\dots, M;  \ i,j=1,\dots, N;& \\
%& ~~\qquad \qquad h^2 \sum_{i,j=0}^{N-1} c_{s,i,j}=N_s,  \quad  s = 1, \dots, M.
\end{align*}
respectively, where $\ve_{i+1/2, j, k} = ( \ve(x_i, y_j, z_k) + \ve(x_{i+1}, y_j, z_k)/2$
and $\ve_{i,j+1/2, k}$ and $\ve_{i,j,k+1/2}$ are similarly defined, 
and $\rho_{i,j,k}$ is an approximation of $\rho(x_i,y_j,z_k)$. 
The  mass conservation can be discretized similarly. 
The finite-difference discretization $F_h[D]$ of the  Poisson energy and that of
the corresponding Gauss' law are similar. 
%given by the first term in the above discretized PB function $\hat{F}_h[c, D]$
%and the finite-difference Gauss' law is the same as that 
%for the PB case except without all the terms  $ q_s c_{s, i, j, k}.$ 
Note that the discretization of displacement is 
a classical scheme for Maxwell's equation for isotropic media \cite{Yee_1966}
(cf.\ also \cite{MonkSuli94,LiShields16}).
If the displacement is given by $-\ve \nabla \phi$ 
with an electrostatic potential $\phi$, 
then the resulting scheme for $\phi$ is a commonly used, 
second-order central differencing scheme; cf.\ e.g., 
\cite{Pruitt_NumerMath2014,Pruitt_M2AN2015}. 

We are interested in a class of local algorithms for electrostatics 
\cite{MaggsRossetto_PRL02,M:JCP:2002,DuenwegPRE09,RM:PRL:2004,eint}
that are based on the above formulation of the constrained
energy minimization and the corresponding finite-difference discretization. 
The key idea of such algorithms
%%more precisely local update or local relaxation algorithm, 
is to keep Gauss' law satisfied at each grid point while 
locally updating the discretized displacement or ionic concentrations 
one grid at a time, cycling through all the grid points iteratively. 
For instance, given a finite-difference 
displacement $D = (u, v, w)$ and a grid box
$(i, j, k)+[0, 1]^3$, one updates locally the components of 
$D$ on the edges of the three faces of the grid box  
sharing the vertex $(i, j,k)$ to decrease the Poisson energy $F_h[D]$. 
Let us fix such a face to be the square
%%the face of the grid box to be on the plane $z = kh$; it is 
with vertices $(i, j, k)$, $(i+1, j, k)$, $(i, j+1, k)$, and $(i+1, j+1, k).$  
To satisfy Gauss' law at these vertices, we update
\begin{align*}
&{u}_{i+1/2,j, k} \leftarrow u_{i+1/2,j, k}+\eta, \quad \mbox{and} \quad 
{u}_{i+1/2,j+1, k} \leftarrow u_{i+1/2,j+1,k}-\eta,
\\
&{v}_{i,j+1/2, k} \leftarrow v_{i,j+1/2, k}-\eta, \quad \mbox{and} \quad 
{v}_{i+1,j+1/2, k} \leftarrow v_{i+1,j+1/2, k}+\eta,
\end{align*}
with a single parameter $\eta$ that can be readily computed to minimize 
the perturbed Poisson energy; cf.\ section~\ref{ss:LcoalAlg4Poisson} 
for more details. For the PB energy, the concentration $c_s$ and the displacement $D$
are locally updated at neighboring grids, e.g., $(i, j, k)$ and $(i+1, j, k)$, 
and at the edge connecting them, respectively, by 
\begin{align*}
c_{s, i, j, k} \leftarrow c_{s, i, j, k} - \zeta, 
\quad 
c_{s, i+1, j, k} \leftarrow c_{s, i+1, j, k} + \zeta, 
\quad \mbox{and} \quad 
u_{i+1/2, j, k} \leftarrow u_{i+1/2, j, k} - h q_s \zeta, 
\end{align*}
with a single parameter $\zeta$ that can be computed to minimize the perturbed PB energy. 
The special forms of these perturbations are determined by the mass conservation and 
Gauss' law; cf.\ section~\ref{ss:LocalAlg4PB} for more details. 

Let us now briefly describe and discuss our main results. 

(1) {\it Existence, uniqueness, characterization, and bounds of minimizers.}
The constrained Poisson energy $F$ is uniquely minimized by 
%functional has a unique minimizer given by 
$D_{\rm min} = -\ve \nabla \phi_{\rm min}$, where
$\phi_{\rm min}$ is the unique solution to Poisson's equation 
$\nabla \cdot \ve \nabla \phi = -\rho$; 
%% and $\phi_{\rm min}$ is a bounded function; 
cf.\ Theorem~\ref{t:PoissonEnergy}. 

Similarly, the unique minimizer $(\hat{c}_{\rm min}, \hat{D}_{\rm min})$
of the constrained PB energy $\hat{F}$ is given by 
$\hat{D}_{\rm min} = -\ve \nabla \hat{\phi}_{\rm min}$
and the Boltzmann distributions $\hat{c}_{{\rm min}, s} \propto 
e^{-q_s \hat{\phi}_{\rm min}}$ for all $s$,  
where the electrostatic potential $\hat{\phi}_{\rm min}$ 
is the unique solution to the charge-conserved PB equation (CCPBE)
%% (cf.\ section~\ref{ss:CCPBE})
\begin{equation*}
\nabla \cdot \ve \nabla \phi +\sum_{s=1}^M N_s q_s 
\left( \int_\Omega e^{-q_s \phi} dx \right)^{-1} e^{-q_s \phi}  = -\rho.  
\end{equation*}
Moreover, a variational analysis of the CCPBE using a comparison argument 
\cite{LiChengZhang_SIAP11} shows that 
$\hat{\phi}_{\rm min}$ is bounded function. This leads to the uniform positive bounds 
\[
 0 < \theta_1 \le \hat{c}_{{\rm min}, s}(x) \le \theta_2  
\qquad \mbox{for all } x, s, 
\]
%%for all $x$ and $s$, 
where $\theta_1$ and $\theta_2$ are constants; 
cf.\ Theorem~\ref{t:CCPBenergy} and Theorem~\ref{t:PBEnergy}. 

(2) {\it Characterization and uniform bounds of finite-difference minimizers.}
The unique minimizer $D_{\rm min}^h$ of the discretized
constrained Poisson energy $F_h$
is given by $D_{\rm min}^h = - \ve \nabla \phi_{\rm min}^h$, where 
$\phi_{\rm min}^h$ is the unique solution to the discretized
Poisson's equation. Moreover, $D_{\rm min}^h$ is characterized by 
the local equilibrium condition and the global constraint 
\[
\nabla_h \times \left( \frac{D_{\rm min}^h}{\ve} 
\right)_{i+1/2,j+1/2,k+1/2}  = 0
\quad \forall i, j, k \quad \mbox{and} \quad 
\sum_{i, j, k} \left( \frac{D_{\rm min}^h}{\ve}\right)_{i+1/2,j+1/2, k+1/2} = 0,  
\]
respectively, where $\nabla_h \times $ is the discrete curl operator; 
cf.\ Theorem~\ref{t:DiscreteEnergy}. These are analogous to the vanishing
of curl and integral of gradient of a smooth and periodic function.

%%Using the same comparison argument, we prove that the 
The unique finite-difference solution $\hat{\phi}^h_{\rm min}$ to 
the discretized CCPBE is uniformly bounded in the maximum norm 
with respect to the grid size $h.$ This is proved using a similar comparison argument. 
%%similar for the continuous case. 
The unique minimizer $(\hat{c}_{\rm min}^h, \hat{D}_{\rm min}^h)$
of the discretized constrained PB energy 
$\hat{F}_h$ is then given by the discrete Boltzmann distributions and
$\hat{D}_{\rm min}^h = - \ve \nabla_h \hat{\phi}_{\rm min}^h$, where 
$\nabla_h$ is the discrete gradient. 
These, together with the uniform positive bounds 
\[
 0 < C_1 \le \hat{c}^h_{{\rm min}, s} \le C_2 \quad 
\mbox{on all the grid points,}
\]
with $C_1 $ and $C_2 $ constants independent of $h$,  
characterize the discrete minimizer for the PB energy;  
cf.\ Theorem~\ref{t:hCCPB} and Theorem~\ref{t:DiscretePBEnergy}.

%%The characterizations of discrete minimizers are 
%%%%, and the uniform positive bounds for the PB case, are 
%%critial to proving the convergence of local algorithms. 
%%getting the bounds for the continuous case. 

(3) {\it Error estimates.} 
We obtain the $L^\infty$-error estimate 
for the finite-difference approximation $D^h_{\rm min}$ of the Poisson energy minimizer  
$D_{\rm min}$
\[
\| \sP_h D_{\rm min} - D^h_{\rm min} \|_\infty \le Ch^2, 
\]
where $(\sP_h D)_{i, j, k} = (u(x_{i+1/2, j, k}), v(y_{i,j+1/2,k}), w(z_{i,j,k+1/2})) $
for any continuous displacement $D=(u, v, w)$ and all $i, j, k$, and 
$C$ denotes a generic constant independent of $h$.
This follows from the $L^\infty$ and $W^{1,\infty}$
stability of the inverse of the finite-difference operator 
for the Poisson equation \cite{Pruitt_M2AN2015,Pruitt_NumerMath2014,Beale_SINUM2009}.
By a simple averaging from $D^h_{\rm min}$, we obtain an
approximation $ m_h [D^h_{\rm min}]$, a vector-valued grid function,
and the superconvergence estimate
\[
\left\| \frac{m_h [-D^h_{\rm min}]}{\ve} -  \nabla \phi_{\rm min}\right\|_\infty \le Ch^2,
\] 
improving the existing $L^2$-superconvergence estimate \cite{LiShields16}; 
cf.\ Theorem~\ref{t:L2Poisson} and Corollary~\ref{c:D4GradphiP}.

For the PB case, we first prove the $O(h^2)$ $L^2$-error estimates
for both the displacement and concentrations, relying on 
the uniform bounds on the discrete concentrations. 
Such estimates are then used to prove the 
%%$O(h^2)$ 
$L^\infty$-error estimate
\[
\| \hat{c}_{\rm min} - \hat{c}^h_{\rm min} \|_\infty + 
\| \sP_h \hat{D}_{\rm min} - \hat{D}^h_{\rm min} \|_\infty \le Ch^2; 
\]
cf.\ Theorem~\ref{t:pbL2error}.

(4) {\it A new local algorithm with shift for variable dielectric coefficient.}
Note that each local update in the local algorithm for relaxing the 
discrete Poisson energy does not change 
$\sum_{i, j,k} D_{i+1/2,j+1/2,k+1/2}$ 
but will change $\sum_{i, j,k} (D/\ve)_{i+1/2,j+1/2,k+1/2}$ if $\ve$ is not 
a constant. Therefore, the local algorithm for Poisson may not 
converge to the correct limit in this case, as the minimizer 
$D^h_{\rm min}$ should satisfy the global constraint 
$\sum_{i, j,k} (D^h_{\rm min}/\ve)_{i+1/2,j+1/2,k+1/2} = 0.$
To resolve this issue, we propose a new local algorithm with shift:
%%to solve this problem:
after a few cycles of local update of the displacement $D$, we shift
it by adding a constant vector $(\hat{a}, \hat{b}, \hat{c})$ to 
$D$ so that the shifted new displacement will satisfy the required global constraint; 
cf.\ section~\ref{ss:LcoalAlg4Poisson}. 

(5) {\it Convergence of all the local algorithms.}
The proof relies crucially on the characterization of the finite-difference 
minimizers $D_{\rm min}^h$ and $(\hat{c}_{\rm min}^h, \hat{D}_{\rm min}^h)$
of the discrete Poisson and PB energy functionals, respectively.
If $\delta^{(k)}$ is the energy difference after the $k$th local update, 
then $0 \le \delta^{(k)} \to 0$ as $k \to \infty$. Moreover, the amount of 
local change of the displacement or concentration in a local update is controlled
by the energy difference. Therefore, the sequence of such local changes 
converge to a local equilibrium that satisfies the conditions characterizing
the finite-difference minimizer; 
cf.\ Theorem~\ref{t:ConvP}, Theorem~\ref{t:ConvLocalGlobal}, and 
Theorem~\ref{t:ConvPB}. 

%cf.\ section~\ref{s:LocalAlg}. 

(6) {\it Numerical tests.}
We present numerical tests to demonstrate the results of our analysis on 
the error estimates and the convergence of local algorithms; 
cf.\ section~\ref{s:NumericalTests}.

%Poisson's equation serves as a general model for electrostatics, heat 
%conduction, chemical diffusion, and many other physical and biological processes. 

We remark that the PB equation 
\cite{Chapman1913,DebyeHuckel1923,Gouy1910, AndelmanHandbook95, Fixman_JCP1979,Zhou_JCP94,
CDLM_JPCB08,Li_SIMA09}, 
%%,ReinerRadke90, FogolariBriggs97,
%in its various forms and for different types of boundary conditions, however, 
%%are a class of more specific yet widely used 
with different kinds of boundary conditions, 
is a widely used continuum model of electrostatics
for ionic solutions with many applications, particularly in molecular biology
%%widely used in molecular biology, colloidal science, and chemical engieneering
\cite{HonigScience95,SharpHonig90,DavisMcCammon_ChemRev90,GrochowskiTrylska08, 
Hille_Book2001,Baker_QRBiophys2012,FogolariBrigoMolinari02,BSJHM_PNAS01,VISMPB_JCTC14}. 
%%The application of the PB equation to the solvation of charged molecules 
%%is complicated by the presence of a dielectric boundary that separates the solvent  
%%(i.e., water with mobile ions) and solute molecules (e.g., proteins)
%%%% with the ratio of their respective dielectric coefficients around $80$:$1$ 
%%Many finite-difference methods, the class of methods we employ here, 
%%have been proposed to solve the resulting nonlinear elliptic interface problems
%%\cite{ZhangCheng_JSC24,ShuChernChang_JCP2014,
%%RayLuo_JCTC2012,Fenley_JCTC2011,Wei_JCC2007,Gibou_JCP2011},  
%%while the rigorous convergence analysis of such methods remains challenging. 
The periodic boundary conditions for Poisson's and PB equations are commonly used 
for simulations of electrostatics not only for periodic charged structures
such as ionic crystals but also in molecular dynamics simulations of charged molecules
\cite{RauchScott_SIAP2021,RauchScott_SIMA2021,deLeeuW_ElecPBCI80,deLeeuW_ElecPBCII80,
FrenkelSmit_1996,DardenPME_JCP93,Ewald_1921}.

The local algorithms were initially proposed for Monte Carlo and molecular dynamics
simulations of 
%%many-body, long-ranged 
electrostatics and electromagnetics 
%%with a constant dielectric coefficient
\cite{MaggsRossetto_PRL02,M:JCP:2002, RM:PRL:2004,DuenwegPRE09, eint}.  
Such algorithms scale linearly with system sizes and are simple to implement. 
The Gauss' law constrained energy minimization model for electrostatics
that is the basis for the local algorithms  
has been extended to model ionic size effects with nonuniform ionic sizes
\cite{ZhouWangLi_PRE11,LiLiuXuZhou_Nonlinearity2013,Li_Nonlinearity09}.
%%,BAO_PRL97,IglicIglic96,
Recently, the local algorithms have been incorporated into numerical methods for 
Poisson--Nernst--Planck equations~\cite{Qiao2022NumANP,Qiao2022ANP,Qiao2024SISC}.  
The linear complexity and locality of the local algorithms make it appealing
to combine them with the recently developed binary level-set method
for large-scale molecular simulations using the variational implicit solvent model
\cite{ZhangRicci_JCTC202021,LiuZhang_2023,ZhangCheng_SISC23,VISMPB_JCTC14}.

%%%%%%%%%%%%%%%%%%%%%%%%%%%%%%%%%%%
\begin{comment}
The formulation of the constrained energy minimization for electrostatics, 
their finite-difference approximations and the local relaxation algorithms, 
and our error estimates and convergence analysis can all be possibly extended 
to generalized PB models to include more physical effects, 
such as the ionic volume exclusion and 
\cite{ZhouWangLi_PRE11,LiLiuXuZhou_Nonlinearity2013,BAO_PRL97,IglicIglic96,
Li_Nonlinearity09} and the charge decrement
%the ionic concentration dependent dielectric coefficient
\cite{LiWenZhou_CMS2016,Collie1948,BenAndelmanPodgornik_JCP11}. 
\end{comment}
%%%%%%%%%%%%%%%%%%%%%%%%%%%%%%%%%%%
 
The rest of this paper is organized as follows: 
In section~\ref{s:EnergyMin}, we first set up the variational problems of minimizing
the Poisson and PB electrostatic energy functionals constrained by 
Gauss' law. We then obtain the existence, 
uniqueness, and bounds in maximum norm of the energy minimizers
through the corresponding electrostatic potentials that are the 
periodic solutions to Poisson's equation and the CCPBE,   respectively. 
In section~\ref{s:Discretization}, we define finite-difference 
approximations of the Poisson and PB energy functionals, 
%%and prove the existence and uniqueness 
identify sufficient and necessary conditions for 
the finite-difference energy minimizers, and 
obtain their uniform bounds in maximum norm independent of the grid size $h$. 
In section~\ref{s:ErrorEstimates}, we prove the error estimates for the
finite-difference energy minimizers. 
In section~\ref{s:LocalAlg}, we describe the local algorithms for minimizing
the finite-difference functionals, and a new local algorithm with shift
for minimizing the Poisson energy with a variable dielectric coefficient. 
We also prove the convergence of all these algorithms.  
In section~\ref{s:NumericalTests}, we report numerical tests to 
demonstrate the results of our analysis. 
Finally, in Appendix, we prove some properties of the finite-difference operators.

\section{Energy Minimization}
\label{s:EnergyMin}

%%{\color{blue}{[We may need to place these notations in the introduction. -Bo]}}
Let $L > 0$ and $\Omega = (0, L)^d$ with $d = 2$ or $3$. We denote by 
$C_{\rm per}(\overline{\Omega})$
and $C^k_{\rm per}(\overline{\Omega})$ ($k \in \N $) the spaces of $\overline{\Omega}$-periodic continuous functions and $\overline{\Omega}$-periodic $C^k$-functions on $\R^d$, respectively. 
Let $1 \le p \le \infty$ and $k \in \N.$  We denote by 
$L^p_{\rm per}(\Omega)$ and $W^{k, p}_{\rm per}(\Omega)$ 
the spaces of all $\overline{\Omega}$-periodic functions 
on $\R^d$ such that their restrictions onto $\Omega$ are
in the Lebesgue space $L^p(\Omega)$ and the Sobolev
space $W^{k, p}(\Omega)$, respectively \cite{GilbargTrudinger98,Adams75,EvansBook2010}.
Note that any $\phi \in L^p(\Omega)$ can be extended 
$\overline{\Omega}$-periodically to $\R^d$ after the values of 
$\phi$ on a set of zero Lebesgue measure are modified if necessary. 
%%If $k \in \N$ and $p \in [1, \infty]$, we denote by 
%%$W^{k, p}_{\rm per}(\Omega)$ the space of all $\overline{\Omega}$-periodic functions 
%%defined on $\R^d$ such that their restrictions onto $\Omega$ are in the Sobolev
%space $W^{k, p}(\Omega)$ \cite{GilbargTrudinger98,Adams75,EvansBook2010}.
As usual, two functions in $L^p_{\rm per}(\Omega)$ or 
$W^{k, p}_{\rm per}(\Omega) $
are the same if and only if they equal to each other almost everywhere with respect
to the Lebesgue measure. 
%%(i.e., functions whose partial derivatives up to order $k$ all exist 
%%and are continuous) on $\R^d$, respectively. 
%%Note that $C^k_{\rm per}(\overline{\Omega})$ is dense in $W^{k, p}_{\rm per}(\Omega)$ 
%%if $1 \le p < \infty.$
We define
\begin{align*}
& \mathring{L}^{p}_{\rm per}(\Omega) = 
\left\{ \phi \in L^p_{\rm per}(\Omega): \sA_\Omega (\phi) = 0 \right\}, 
\\
& \mathring{W}^{k, p}_{\rm per}(\Omega) = 
\left\{ \phi \in W_{\rm per}^{k, p}(\Omega): \sA_\Omega (\phi) = 0 \right\}, 
\end{align*}
where for a Lebesgue measurable function $u$ defined on
a Lebesgue measurable set $A \subset \R^d$ of finite measure $|A| > 0$, 
%%with the Lebesgue measure of $A$ positive and finite, 
%%$|A|$ and function $u$ on $A$ 
%%$0 < |Afunction defined on a Lebesgue measurable
%%set $A \subset \R^d$ with and function $u$ on $A$ of finite Lebesgue measure
%%$|A| > 0$ and Lebesgue integrable function $u$ on $A$,
%%Here and below we denote 
%%{\color{blue}{[Changed to $\sA_A$ from ${\rm Ave}_A$. -Bo]}}
\begin{equation}
\label{AveAu}
\sA_A(u):=
%%\mbox{Ave}_A (u) := 
\dashint_A u \, dx := \frac{1}{| A | } \int_A u \, dx.
\end{equation}
%%where $A \subset \R^d$ is a Lebesgue measurable set of finite Lebesgue measure 
%%$| A | > 0$ and $u$ is Lebesgue integrable on $A.$  
%%For $p = 2$, we denote 
We denote $H^k_{\rm per}(\Omega) = W^{k, 2}_{\rm per}(\Omega)$ and 
$ \mathring{H}^{k}_{\rm per}(\Omega) = \mathring{W}^{k, 2}_{\rm per}(\Omega)$. 
By Poincar{\' e}'s inequality, $\phi \mapsto \|\nabla \phi\|_{L^2(\Omega)}$ is
a norm of $\mathring{H}^1_{\rm per}(\Omega)$, equivalent to the $H^1$-norm. 
We further define
\begin{align*}
   % \label{H1per}
%%& H^1_{\rm per}(\Omega) = \mbox{the $H^1(\Omega)$--closure of } 
%\overline{\Omega}{\mbox{--periodic }} 
%%C_{\rm per}^1(\R^d)\mbox{-functions restricted to $\Omega$}, 
%%\\
%\label{H1per0}
%%&\mathring{H}_{\rm per}^1(\Omega) = \left\{ \phi \in H_{\rm per}^1(\Omega): 
%%\sA_\Omega (\phi) = 0 \right\}, 
%\int_\Omega \phi \, dx = 0 \right\}, 
%%\\
&H(\mbox{div}, \Omega) = \{ D \in L^2(\Omega, \R^d): 
\nabla \cdot D \in L^2(\Omega)\},
\\
%\label{Hperdiv}
& H_{\rm per}(\mbox{div}, \Omega) = 
\mbox{the $H(\mbox{div}, \Omega)$--closure of }
% \overline{\Omega}\mbox{-periodic } 
 C_{\rm per}^1(\overline{\Omega}, \R^d)\mbox{--functions restricted 
to } \Omega. 
\end{align*}
%\[
%\int_\Omega  (\nabla \cdot D ) \xi \, dx = - \int_{\Omega} D \cdot \nabla \xi \, dx
%\qquad \forall \xi \in H^1_0(\Omega).
%\]  
The divergence $\nabla \cdot D$ 
%%in the definition of $H({\rm div},\Omega)$ 
is understood in the weak sense.
The space $H(\mbox{div}, \Omega)$ is a Hilbert space 
with the corresponding norm  $\|D\|_{H({\rm div}, \Omega)}
= \|D \|_{L^2(\Omega)}+ \| \nabla \cdot D \|_{L^2(\Omega)}$ 
%%equipped with the inner product
\cite{Temam84}.

\subsection{The Poisson energy}
\label{ss:minPoissonEnergyCont}

We consider the Poisson electrostatic energy with a given charge density
$\rho \in L_{\rm per}^2(\Omega)$. Denote
%%the description of ionic concentrations. 
%%Let $\rho \in L^2(\Omega)$. Define
\begin{align}
\label{Srho}
&S_\rho = \{ D \in H_{\rm per}(\mbox{div}, \Omega): 
\nabla \cdot D = \rho \mbox{ in } \Omega \},        
\\
\label{S0}
&S_0 = \{ D \in H_{\rm per}(\mbox{div}, \Omega): 
\nabla \cdot D = 0 \mbox{ in } \Omega \}. 
\end{align}
%%It follows from the 
%%compatibility of solution for Poisson's equation with 
By the periodic boundary condition and the divergence theorem, 
$S_{\rho} \ne \emptyset$ if and only if 
$\sA_\Omega(\rho) = 0.$ Clearly $S_0 \ne \emptyset.$
Let $\ve\in L_{\rm per}^\infty(\Omega)$. 
%% and extend it $\overline{\Omega}$-periodically. 
Assume there exist
$\ve_{\rm min}, \, \ve_{\rm max} \in \R$ such that
\begin{equation}
\label{epsilon}
%\mbox{ there exist }  \ve_{\rm min}, \ve_{\rm max} \in \R 
%\mbox{ such that } \quad 
0 < \ve_{\rm min} \le \ve(x) \le \ve_{\rm max}  \qquad 
%%\mbox{a.e. } 
\forall x \in \R^d.  
\end{equation}
We define 
%%$I: {H}^1_{\rm per}(\Omega)\to \R$ and $F: S_\rho \to \R$ by 
\begin{align}
\label{defineI}
&I[\phi] = \int_{\Omega} \left( \frac{\ve}{2} |\nabla \phi|^2 - \rho \phi\right) dx
\qquad \forall \phi \in H^1_{\rm per}(\Omega), 
\\
    \label{PoissonEnergy}
&F[D] = \int_\Omega \frac{1}{2 \ve} |D|^2 dx \qquad \forall D \in S_\rho.
\end{align}

%%%%%%%%%%%%%%%%%%%%%%%%%%%%%%%
\begin{comment}
{\color{blue}{[I cleaned up notations. The Poisson electrostatic energy functional
for electrostatic potential is denoted by $I$ while while that PB functional is 
denoted by $\hat{I}$. The corresponding minimimizers are 
$\phi_{\rm min}$ and $\hat{\phi}_{\rm min}.$ Also, $F$ and $\hat{F}$ are for 
for the Poisson and PB functions in the $D$ or $(c, D)$ formulations. 
Maybe we should remove the proof of the next theorem. -Bo]}}
\end{comment}
%%%%%%%%%%%%%%%%%%%%%%%

\begin{theorem}
\label{t:PoissonEnergy}
Let $\ve\in L^\infty_{\rm per}(\Omega)$ satisfy \reff{epsilon} 
and $\rho \in \mathring{L}^2_{\rm per}(\Omega)$.
%% satisfy $\sA_{\Omega}(\rho) = 0.$
%%\reff{Arho}.
\begin{compactenum}
\item[{\rm (1)}] 
There exists a unique $\phi_{\rm min}\in \mathring{H}^1_{\rm per}(\Omega)$ such that 
${I}[{\phi}_{\rm min}] = 
\min_{\phi \in \mathring{H}^1_{\rm per}(\Omega)} I[\phi].$
Moreover, $\phi_{\rm min}$ is the
unique weak solution  in $\mathring{H}_{\rm per}(\Omega)$ to Poisson's equation 
$
\nabla \cdot \ve \nabla \phi_{\rm min} = - \rho, 
$
%%with the periodic boundary condition, 
defined by 
%%$\phi_{\rm min} \in \mathring{H}^1_{\rm per}(\Omega)$ and 
\begin{equation}
\label{weakphimin}
\int_\Omega \ve \nabla \phi_{\rm min} \cdot  \nabla \xi \, dx 
= \int_\Omega \rho \, \xi \, dx \qquad \forall \xi \in \mathring{H}_{\rm per}^1(\Omega).
%%\in H_{\rm per}^1(\Omega). 
\end{equation}

\item[{\rm (2)}] 
There exists a unique $D_{\rm min} \in S_\rho$ such that 
$ F [D_{\rm min}] = \min_{D \in S_\rho} F[D].$ Moreover, the minimizer
$D_{\rm min}$ is characterized by $D_{\rm min} \in S_\rho$ and 
\begin{equation}
\label{weakDmin}
\int_\Omega \frac{1}{\ve} D_{\rm min} \cdot 
\tilde{D}\, dx = 0 \qquad \forall \tilde D \in S_0.
\end{equation}

\item[{\rm (3)}] 
We have $D_{\rm min} = - \ve \nabla \phi_{\rm min}.$ 
\end{compactenum}
\end{theorem}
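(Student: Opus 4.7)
My plan is to handle the three parts in turn, leveraging the convex variational structure and the weak formulation.

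For part (1), I would apply the Lax--Milgram lemma on $\mathring{H}^1_{\rm per}(\Omega)$ to the bilinear form $a(\phi,\xi) = \int_\Omega \varepsilon \nabla\phi\cdot\nabla\xi\,dx$ and linear form $\ell(\xi)=\int_\Omega \rho\,\xi\,dx$. Boundedness is immediate from $\varepsilon\in L^\infty$ and Cauchy--Schwarz; coercivity follows from the lower bound $\varepsilon\ge\varepsilon_{\rm min}>0$ combined with Poincaré's inequality on $\mathring{H}^1_{\rm per}(\Omega)$, which makes $\xi\mapsto\|\nabla\xi\|_{L^2}$ an equivalent norm. This yields a unique $\phi_{\rm min}$ satisfying \eqref{weakphimin}. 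Because $I$ is strictly convex on $\mathring{H}^1_{\rm per}(\Omega)$ and its Gateaux derivative at $\phi_{\rm min}$ vanishes by \eqref{weakphimin}, $\phi_{\rm min}$ is the unique minimizer of $I$.

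For part (2), I would first recall that $S_\rho\neq\emptyset$ because $\sA_\Omega(\rho)=0$ (this follows either from the divergence theorem applied to smooth periodic approximants, or from the candidate $-\varepsilon\nabla\phi_{\rm min}$ constructed in part (1)). Then $S_\rho$ is a nonempty closed affine subspace of $H_{\rm per}({\rm div},\Omega)$ with tangent space $S_0$, and $F$ is strictly convex and coercive on $S_\rho$ thanks to the uniform bounds \eqref{epsilon}. A standard direct-method argument (or projection onto a closed convex set in the Hilbert-space inner product $(D,\tilde D)\mapsto\int \varepsilon^{-1}D\cdot\tilde D\,dx$) gives a unique minimizer $D_{\rm min}$. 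The first-order condition, obtained by perturbing $D_{\rm min}+t\tilde D$ with $\tilde D\in S_0$ and differentiating at $t=0$, is exactly \eqref{weakDmin}; conversely, strict convexity together with \eqref{weakDmin} forces $D_{\rm min}$ to minimize $F$.

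For part (3), I would verify that $D_*:=-\varepsilon\nabla\phi_{\rm min}$ satisfies the characterization in part (2). First, $D_*\in L^2(\Omega,\mathbb{R}^d)$ and I claim $\nabla\cdot D_*=\rho$ in the weak sense on $\Omega$. Given $\eta\in C_c^\infty(\Omega)$, extend it $\overline\Omega$-periodically and set $\tilde\eta=\eta-\sA_\Omega(\eta)\in\mathring{H}^1_{\rm per}(\Omega)$; since $\nabla\tilde\eta=\nabla\eta$ and $\sA_\Omega(\rho)=0$, plugging $\tilde\eta$ into \eqref{weakphimin} gives $\int_\Omega\varepsilon\nabla\phi_{\rm min}\cdot\nabla\eta\,dx=\int_\Omega\rho\,\eta\,dx$, so $\nabla\cdot D_*=\rho$ in $\mathcal{D}'(\Omega)$, hence in $L^2$; periodicity of $\varepsilon\nabla\phi_{\rm min}$ places $D_*\in H_{\rm per}({\rm div},\Omega)$, giving $D_*\in S_\rho$. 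Second, for any $\tilde D\in S_0$,
\begin{equation*}
\int_\Omega \frac{1}{\varepsilon} D_*\cdot\tilde D\,dx = -\int_\Omega \nabla\phi_{\rm min}\cdot\tilde D\,dx.
\end{equation*}
I would approximate $\tilde D$ in $H({\rm div},\Omega)$ by $\tilde D_k\in C^1_{\rm per}(\overline\Omega,\mathbb{R}^d)$, integrate by parts (the boundary contributions cancel on opposite faces because of periodicity), pass to the limit, and use $\nabla\cdot\tilde D=0$ to conclude the right-hand side vanishes. Hence $D_*$ satisfies \eqref{weakDmin}, and uniqueness from part (2) gives $D_{\rm min}=D_*$.

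The main technical nuisance I expect is the integration-by-parts step in part (3): the approximating sequence $\tilde D_k$ is only guaranteed to satisfy $\nabla\cdot\tilde D_k\to 0$, not $\nabla\cdot\tilde D_k=0$, so I would need to write the periodic Green's identity $\int_\Omega\nabla\phi_{\rm min}\cdot\tilde D_k\,dx=-\int_\Omega\phi_{\rm min}\,\nabla\cdot\tilde D_k\,dx$ for smooth periodic fields and then pass to the limit using $\phi_{\rm min}\in L^2(\Omega)$. Equivalently, one can argue by density that the identity $\int_\Omega\nabla\psi\cdot\tilde D\,dx=-\int_\Omega\psi\,\nabla\cdot\tilde D\,dx$ holds for all $\psi\in H^1_{\rm per}(\Omega)$ and $\tilde D\in H_{\rm per}({\rm div},\Omega)$; this is where the precise definition of $H_{\rm per}({\rm div},\Omega)$ as the closure of periodic $C^1$ fields is essential and should probably be packaged as a short lemma (or deferred to the appendix alluded to in the introduction).
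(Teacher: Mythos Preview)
Your proposal is correct and follows essentially the same route as the paper: the paper dismisses part~(1) as standard (citing Lax--Milgram-type arguments), treats existence/uniqueness and the necessity of \eqref{weakDmin} in part~(2) as standard while proving sufficiency via a short Cauchy--Schwarz computation, and handles part~(3) exactly as you do by checking that $-\varepsilon\nabla\phi_{\rm min}\in S_\rho$ satisfies \eqref{weakDmin} via integration by parts. Your write-up simply unpacks the details (the mean-subtraction trick for test functions, the density argument in $H_{\rm per}(\mathrm{div},\Omega)$) that the paper leaves implicit.
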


\begin{proof}
(1) These are standard; cf.\ e.g., \cite{EvansBook2010,GilbargTrudinger98}. 
%%This follows from the Lax--Milgram Lemma \cite{EvansBook2010}.

(2) 
%%The set $S_\rho$ is convex and closed in $H_{\rm per}({\rm div}, \Omega)$ and
%%the functional $F: S_\rho \to \R$ is strictly convex. 
The existence and uniqueness of a minimizer $D_{\rm min}$ of 
$F: S_\rho \to \R$ and \reff{weakDmin} are standard. 
%%then follows from the standard argument using the direct method of
%%calculus; cf.\ e.g., 
%%\cite{EvansBook2010, GilbargTrudinger98}.
%%,Lee_JMathPhys2014, Li_Nonlinearity09, Li_SIMA09, LiChengZhang_SIAP11}. 
%%%%%%%%%%%%%
\begin{comment}
Denote now $\alpha = \inf_{D \in S_\rho} F[D] \ge 0$. Then, there exist
$D_k \in S_\rho$ $(k = 1, 2, \dots)$ such that $F[D_k] \to \alpha.$ 
Note that $\| \cdot \| := \sqrt{ F[\cdot ] } $ is a norm on $L^2(\Omega, \R^d)$
and is equivalent to the $L^2(\Omega, \R^d)$-norm. 
Thus, $\{ \| D_k \| \}$ is bounded and hence
there exist a subsequence of $\{D_k\}$, not relabelled, and some 
$D_{\rm min} \in L^2(\Omega,\R^d)$  such that 
$D_k \rightharpoonup D_{\rm min}$ in $L^2(\Omega,\R^d)$  
with respect to the norm $\| \cdot \|$.
(Here and below, $\rightharpoonup$ denotes the weak convergence.) 
Since all $D_k \in S_\rho$, we have $D_{\rm min}\in S_\rho.$ Moreover, 
%%the weak convergence implies that 
\[
\alpha = \lim_{k \to \infty} F[D_k] = 
 \lim_{k \to \infty} \| D_k \|  \ge \| D_{\rm min} \| = F[D_{\rm min}] \ge \alpha.  
\]
This leads to \reff{FDmin}. 
%%and hence that $ F [D_{\rm min}] = \alpha = \min_{D \in S_\rho} F[D]. $  
%One verifies that 
%\[
%F[D_k - D_{\rm min}] = F[D_k] - F[D_{\rm min}] - \int_{\Omega} \frac{1}{\ve}
%D_{\rm min} \cdot ( D_k - D_{\rm min} ) \, dx \qquad \forall k \ge 1. 
%\]
%It then follows from the weak convergence and energy convergence that 
%$\limsup_{k\to \infty } F[D_k - D_{\rm min}] = 0.$
\end{comment}
%%%%%%%%%%%%%%%%%%%%%%%%%%%%
%%Routine calculations lead to \reff{weakDmin}.
%%if $D_{\rm min}$ is a minmizer of $F: S_\rho \to \R.$
%%If $\tilde{D}\in S_0$ then $D_{\rm min} + t \tilde{D} \in S_\rho$
%%for any $t \in \R$ and the function $g(t):= F[D_{\rm min} + t \tilde{D}]$ $(t \in \R)$
%%attains its minimum at $0$, hence $g'(0) = 0$. This implies \reff{weakDmin}. 
Suppose $D \in S_\rho$ satisfies \reff{weakDmin} with $D$ replacing $D_{\rm min}.$ 
 Since $ D - D_{\rm min} \in S_0$, 
\[
\int_\Omega \frac{1}{\ve} D \cdot (D - D_{\rm min}) \, dx = 0.
\]
Thus, by the Cauchy--Schwarz inequality, 
\[
\int_\Omega \frac{1}{2 \ve} |D|^2 dx 
= \int_\Omega \frac{1}{2 \ve} D \cdot D_{\rm min} \, dx 
\le \left( \int_\Omega \frac{1}{2 \ve} |D|^2 dx \right)^{1/2}
\left( \int_\Omega \frac{1}{2 \ve} |D_{\rm min}|^2 dx \right)^{1/2}. 
\]
This leads to $F[D] \le F[D_{\rm min}]$ and hence $D$ is the minimizer. 

(3) By Part (1), $D:  = - \ve \nabla \phi_{\rm min} \in S_\rho.$ Thus, 
\reff{weakDmin} follows from integration by parts. 
Hence $D = D_{\rm min} = - \ve \nabla \phi_{\rm min}.$ 
%%Moreover,  
%\[
%\int_\Omega \frac{1}{\ve} D \cdot \tilde{D}\, dx 
%= - \int_\Omega \nabla \phi_{\rm min} \cdot \tilde{D}\, dx 
%=  \int_\Omega \phi_{\rm min} \nabla \cdot \tilde{D}\, dx = 0 
%\qquad \forall \tilde D \in S_0.
%\]
%Therefore, 
%%by \reff{weakDmin}, the characterization of the minimizer, we have 
%it follows from Part (2) that 
%%If $\tilde{\phi} \in H_{\rm per}^1(\Omega)$ satisfies
%%\reff{weakphimin} with $\tilde{\phi}$ replacing $\phi_{\rm min}$, then, 
%%the difference $\xi = \tilde{\phi} - \phi_{\rm min} \in H^1_{\rm per}(\Omega)$ satisfies
%%\[
%% \int_\Omega \ve |\nabla \xi|^2 \, dx = 0, 
%%\]
%%and hence it is a constant, i.e., 
%%$\phi_{\rm min}$ is uniquely determined up to an additive constant. 
\end{proof}

%%%%%%%%%%%%%%%%%%%%%%%%%%%%%%%%%%%%%%%%%%%%%%%%%%%%%
\subsection{The charge-conserved Poisson--Boltzmann equation}
\label{ss:CCPBE}

%%We now consider the electrostatics with ions whose effects are 
%%modeled through their concentrations. 

Let $M \ge 1$ be an integer,  $q_1, \dots, q_M$ nonzero real numbers, 
$N_1, \dots, N_M$ positive numbers, $\ve\in L^\infty_{\rm per}(\Omega)$ satisfy
\reff{epsilon}, and $\rho\in L_{\rm per}^2(\Omega).$ 
We shall assume the following: 
\begin{equation}
    \label{neutrality}
    \mbox{Charge neutrality:} \qquad \qquad  \qquad 
    \sum_{s=1}^M q_s N_s + \int_\Omega \rho \, dx = 0.
    \qquad \qquad \qquad \qquad \qquad 
\end{equation}
%We first study the charge-conserved Poisson--Boltzmann equation 
%(CCPBE) with periodic boundary conditions. 
Let us define
$\hat{I}: H_{\rm per}^1(\Omega)\to \R \cup \{ + \infty \}$ by  \cite{Lee_JMathPhys2014}
\begin{equation}
    \label{Iphi}
\hat{I}[\phi] = \int_\Omega \left( \frac{\ve}{2} | \nabla \phi |^2 - \rho \phi \right) dx
+ \sum_{s=1}^M N_s \log \left( \sA_{\Omega} (e^{-q_s \phi})
\right)
%%\int_\Omega e^{-q_s \phi} \, dx \right) 
\qquad \forall \phi \in H_{\rm per}^1(\Omega). 
\end{equation} 
%%Here and below, $\log$ denotes the natural logarithm. 

%\noindent
%{\color{blue}{I added Part (2) in this lemma and slightly revised its proof.}}
 \begin{lemma}
     \label{l:Iphi}
        Let $\ve \in L_{\rm per}^\infty(\Omega) $ 
satisfy \reff{epsilon} and $\rho \in L_{\rm per}^2(\Omega)$ satisfy \reff{neutrality}. Then the following hold true: 
        
     {\rm (1)} 
     $\hat{I}[\phi] = \hat{I}[\phi + a]$ for any $\phi \in H_{\rm per}^1(\Omega)$ and 
any constant  $a \in \R;$

     {\rm (2)} The functional $\hat{I}: \mathring{H}^1_{\rm per}(\Omega) \to \R \cup \{ + \infty \}$
     is strictly convex;
     
     {\rm (3)} There exist $K_1 > 0 $ and $K_2 \in \R$ such that 
     $ \hat{I}[\phi] \ge K_1 \| \phi \|_{H^1(\Omega)}^2 + K_2$ for all 
     $\phi \in \mathring{H}^1_{\rm per}(\Omega).$
  %%   \begin{equation}
   %%      \label{boundIphi}
%%         \hat{I}[\phi] \ge K_1 \| \phi \|_{H^1(\Omega)}^2 + K_2
 %%   \qquad \forall      \phi \in \mathring{H}^1_{\rm per}(\Omega). 
 %%    \end{equation}
 \end{lemma}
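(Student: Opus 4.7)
\textbf{Plan for the proof of Lemma \ref{l:Iphi}.}

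\emph{Part (1): Shift invariance.} I would simply substitute $\phi + a$ into the definition \reff{Iphi} and track each term. The Dirichlet part $\int (\ve/2)|\nabla\phi|^2$ is untouched. The linear part contributes an extra $-a\int_\Omega \rho\, dx$. For the log terms, pulling out the constant gives $\log \sA_\Omega(e^{-q_s(\phi+a)}) = -q_s a + \log \sA_\Omega(e^{-q_s\phi})$, so the sum contributes an extra $-a \sum_{s} q_s N_s$. These two extra contributions cancel precisely by the charge neutrality hypothesis \reff{neutrality}.

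\emph{Part (2): Strict convexity.} The first term $\phi \mapsto \int (\ve/2)|\nabla \phi|^2\,dx$ is strictly convex on $\mathring H^1_{\rm per}(\Omega)$ since on this space $\|\nabla\phi\|_{L^2}$ is a norm (Poincar\'e), while $\phi\mapsto -\int \rho\phi$ is affine. The main nontrivial step is the convexity of each map $\phi\mapsto \log \sA_\Omega(e^{-q_s\phi})$. For this, fix $\theta\in(0,1)$ and $\phi,\psi\in H^1_{\rm per}(\Omega)$; I would write
\begin{equation*}
\sA_\Omega\bigl(e^{-q_s(\theta\phi+(1-\theta)\psi)}\bigr)
=\sA_\Omega\bigl((e^{-q_s\phi})^{\theta}\,(e^{-q_s\psi})^{1-\theta}\bigr)
\le \bigl(\sA_\Omega(e^{-q_s\phi})\bigr)^{\theta}\bigl(\sA_\Omega(e^{-q_s\psi})\bigr)^{1-\theta}
\end{equation*}
by H\"older's inequality with exponents $1/\theta$ and $1/(1-\theta)$. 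Taking logarithms yields convexity of each log term, and summing with positive weights $N_s$ preserves convexity. Combined with the strict convexity of the quadratic part, this yields the claim.

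\emph{Part (3): Coercivity.} I would estimate the three pieces separately on $\mathring{H}^1_{\rm per}(\Omega)$. First, \reff{epsilon} gives $\int(\ve/2)|\nabla\phi|^2\,dx \ge (\ve_{\min}/2)\|\nabla\phi\|_{L^2}^2$. Second, Cauchy--Schwarz together with Poincar\'e's inequality $\|\phi\|_{L^2}\le C_P\|\nabla\phi\|_{L^2}$ (valid on $\mathring H^1_{\rm per}$) and Young's inequality give
\begin{equation*}
\left|\int_\Omega \rho\phi\,dx\right|
\le \|\rho\|_{L^2}\|\phi\|_{L^2}
\le C_P\|\rho\|_{L^2}\|\nabla\phi\|_{L^2}
\le \tfrac{\ve_{\min}}{4}\|\nabla\phi\|_{L^2}^2+\tfrac{C_P^2}{\ve_{\min}}\|\rho\|_{L^2}^2.
\end{equation*}
Third, since $\sA_\Omega(\phi)=0$, Jensen's inequality applied to the convex function $t\mapsto e^{-q_s t}$ yields $\sA_\Omega(e^{-q_s\phi})\ge e^{-q_s \sA_\Omega(\phi)}=1$, so each log term is nonnegative. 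Adding the three estimates gives $\hat I[\phi]\ge (\ve_{\min}/4)\|\nabla\phi\|_{L^2}^2 - (C_P^2/\ve_{\min})\|\rho\|_{L^2}^2$, and the equivalence of $\|\nabla\phi\|_{L^2}$ and $\|\phi\|_{H^1}$ on $\mathring H^1_{\rm per}(\Omega)$ delivers the asserted bound with $K_1=\ve_{\min}/(4(1+C_P^2))$ and $K_2=-(C_P^2/\ve_{\min})\|\rho\|_{L^2}^2$.

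The main obstacle is really only the convexity of the log-mean-exponential term in (2); once H\"older is in place everything else is routine. The use of Jensen in (3) is what makes the log terms cooperate with the coercive gradient term on the mean-zero subspace, which is the reason the restriction to $\mathring H^1_{\rm per}(\Omega)$ (justified by Part (1)) is essential.
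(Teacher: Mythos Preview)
Your proposal is correct and follows essentially the same approach as the paper: Part~(1) via charge neutrality, Part~(2) via strict convexity of the gradient term on $\mathring H^1_{\rm per}$ plus H\"older's inequality for the log-mean-exponential terms, and Part~(3) via Jensen's inequality (the paper phrases it as Jensen for $u\mapsto -\log u$, you use Jensen for $t\mapsto e^{-q_s t}$, which is the same inequality) together with Poincar\'e. Your write-up simply fills in the details the paper leaves to the reader.
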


 \begin{proof} 
 (1) This follows from the charge neutrality \reff{neutrality}. 

(2) The integral part of the functional $\hat{I}$ is strictly convex as 
%%it is quadratic and 
$\phi \mapsto \| \nabla \phi \|_{L^2(\Omega)}$ is a norm on $\mathring{H}^1_{\rm per}(\Omega).$
The convexity of 
the non-integral part of the functional $\hat{I}$ follows from an application of 
Holder's inequality and the fact that $u \mapsto \log u$ is an increasing function on 
$(0, \infty).$

 (3) This follows from Jensen's inequality applied to $ u \mapsto - \log u$ and 
 Poincar{\' e}'s inequality applied to $\phi \in \mathring{H}^1_{\rm per}(\Omega).$
 \end{proof} 

%%%%%%%%%%%%%%%%
 \begin{comment}
 Let $\phi \in \mathring{H}^1_{\rm per}(\Omega)$ 
 and $s \in \{ 1, \dots, M\}.$ We have by Jensen's inequality that
 \begin{align*}
     \log \left( \int_\Omega e^{-q_s \phi } dx \right)
     = \log | \Omega | + \log \left( \dashint_\Omega e^{-q_s \phi} dx \right)
     \ge \log |\Omega| - q_s \dashint_\Omega \phi \, dx 
     = \log |\Omega|.
 \end{align*}
 This and an application of Poincar{\'e}'s inequality imply 
 the desired inequality.
 %%\reff{boundIphi}. 
\end{comment}
%%%%%%%%%%%%%%%%%%%%%%%
 
By formal calculations, the Euler--Lagrange equation for the functional $\hat{I}$
defined in 
%%{\color{blue}{[Check the sign of the sum term. -Bo]}}
 \reff{Iphi} is the charge-conserved Poisson--Boltzmann equation (CCPBE)
 \begin{equation}
 \label{CCPBE}
       \nabla \cdot \ve \nabla \phi +\sum_{s=1}^M 
     N_s q_s \left( \int_\Omega e^{-q_s \phi} dx \right)^{-1} e^{-q_s \phi}  = -\rho.
 \end{equation}

 \begin{definition}
     A function $ \phi \in \mathring{H}^1_{\rm per}(\Omega)$ is a weak solution to 
     the CCPBE \reff{CCPBE} if 
     $e^{-q_s \phi} \in L^2(\Omega)$ for each $s \in \{ 1, \dots, M\}$ and 
     \begin{equation}
         \label{weakCCPBE}
         \int_\Omega \ve \nabla \phi \cdot \nabla \xi \, dx 
         - \sum_{s=1}^M N_s q_s \left( \int_\Omega e^{-q_s \phi }dx \right)^{-1}
         \int_\Omega e^{-q_s \phi} \xi \, dx = \int_\Omega \rho \, \xi \, dx
         \qquad \forall \xi \in \mathring{H}^1_{\rm per}(\Omega). 
     \end{equation}
 \end{definition}

 \begin{theorem}
     \label{t:CCPBenergy}
     Let $\ve\in L_{\rm per}^\infty(\Omega)$ satisfy \reff{epsilon} and 
 $\rho \in L_{\rm per}^2(\Omega)$ satisfy \reff{neutrality}. 
There exists a unique $\hat{\phi}_{\rm min} \in \mathring{H}^1_{\rm per}(\Omega)$ such   that 
     $
     \hat{I}[\hat{\phi}_{\rm min}] = \min_{\phi \in \mathring{H}^1_{\rm per}(\Omega)} \hat{I}[\phi],
     $
     which is finite.
      If in addition $\ve \in C_{\rm per}^1(\overline{\Omega})$,  
      then $\hat{\phi}_{\rm min} \in L_{\rm per}^\infty(\Omega)
      \cap H_{\rm per}^2(\Omega)$, it is the unique weak solution
     to the CCPBE with the periodic boundary condition, and it satisfies
     \reff{CCPBE} a.e.\ in $\Omega$. 
 \end{theorem}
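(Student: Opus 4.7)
The plan is to apply the direct method of the calculus of variations using the three properties established in Lemma \ref{l:Iphi}, and then bootstrap regularity via the Euler--Lagrange equation, with the $L^\infty$-bound serving as the bridge between the two.

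For existence and uniqueness, I take a minimizing sequence $\{\phi_n\} \subset \mathring{H}^1_{\rm per}(\Omega)$ with $\hat{I}[\phi_n] \to \inf \hat{I} \le \hat{I}[0] < \infty$. The coercivity estimate in Lemma \ref{l:Iphi}(3) bounds $\|\phi_n\|_{H^1(\Omega)}$, so up to a subsequence $\phi_n \rightharpoonup \hat{\phi}_{\rm min}$ weakly in $H^1_{\rm per}(\Omega)$, and by Rellich compactness $\phi_n \to \hat{\phi}_{\rm min}$ strongly in $L^2(\Omega)$ and a.e.\ in $\Omega$. Weak lower semicontinuity of $\int_\Omega \frac{\ve}{2}|\nabla \phi|^2 dx$ and weak continuity of $-\int_\Omega \rho \phi\, dx$ are standard; for each species, Fatou's lemma applied to the a.e.\ convergent nonnegative sequence $e^{-q_s \phi_n}$ together with the monotonicity of $\log$ yields $\log \sA_\Omega(e^{-q_s \hat{\phi}_{\rm min}}) \le \liminf_{n\to\infty} \log \sA_\Omega(e^{-q_s \phi_n})$. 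Summing these contributions gives $\hat{I}[\hat{\phi}_{\rm min}] \le \liminf \hat{I}[\phi_n] = \inf \hat{I}$, so $\hat{\phi}_{\rm min}$ is a minimizer with finite energy. Uniqueness is immediate from the strict convexity in Lemma \ref{l:Iphi}(2).

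Next, for the $L^\infty$-bound, I would adopt the variational comparison/truncation argument of \cite{LiChengZhang_SIAP11}. Fix a threshold $T > 0$ to be chosen and form the truncated competitor $\phi^T := \max(-T, \min(\hat{\phi}_{\rm min}, T))$; by Lemma \ref{l:Iphi}(1) the functional is translation invariant, so after subtracting $\sA_\Omega(\phi^T)$ the competitor lies in $\mathring{H}^1_{\rm per}(\Omega)$ and minimality applies. On $\{|\hat{\phi}_{\rm min}| > T\}$, truncation strictly reduces the Dirichlet energy by at least $\frac{\ve_{\rm min}}{2}\int_{\{|\hat{\phi}_{\rm min}|>T\}}|\nabla \hat{\phi}_{\rm min}|^2 dx$, while the change in the linear and entropy terms is controlled by $\|\rho\|_{L^2}$ and by the structural constants $\{N_s, q_s\}$ with an explicit growth rate in $T$. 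For $T$ larger than a data-dependent threshold, the Dirichlet gain dominates, so $\hat{I}[\phi^T] < \hat{I}[\hat{\phi}_{\rm min}]$ unless $|\{|\hat{\phi}_{\rm min}| > T\}| = 0$, which forces $\|\hat{\phi}_{\rm min}\|_{L^\infty} \le T$. I expect this step to be the main obstacle, since the sign bookkeeping in $\sum_s N_s \log \sA_\Omega(e^{-q_s \phi})$ for species of both signs requires a careful two-sided argument and a quantitative use of charge neutrality \reff{neutrality}.

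Once $\hat{\phi}_{\rm min} \in L^\infty_{\rm per}(\Omega)$ is known, $e^{-q_s \hat{\phi}_{\rm min}}$ is bounded, and dominated convergence justifies computing $\left.\frac{d}{dt}\hat{I}[\hat{\phi}_{\rm min} + t\xi]\right|_{t=0}$ for every $\xi \in \mathring{H}^1_{\rm per}(\Omega)$; setting this derivative to zero produces exactly \reff{weakCCPBE}, and by charge neutrality the identity extends to all $\xi \in H^1_{\rm per}(\Omega)$ since testing with $\xi \equiv 1$ gives $0 = 0$. Under the added assumption $\ve \in C^1_{\rm per}(\overline{\Omega})$, the source $g := \rho + \sum_{s=1}^M N_s q_s \bigl(\int_\Omega e^{-q_s \hat{\phi}_{\rm min}} dx\bigr)^{-1} e^{-q_s \hat{\phi}_{\rm min}}$ lies in $L^\infty_{\rm per}(\Omega) \subset L^2_{\rm per}(\Omega)$, and standard $H^2$-regularity for periodic weak solutions of $-\nabla\cdot \ve\nabla \phi = g$ upgrades $\hat{\phi}_{\rm min}$ to $H^2_{\rm per}(\Omega)$, so the CCPBE \reff{CCPBE} holds a.e.\ in $\Omega$. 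Uniqueness of the weak solution in $\mathring{H}^1_{\rm per}(\Omega)$ finally follows because any such solution is a critical point of the strictly convex $\hat{I}$ and must therefore coincide with the unique minimizer $\hat{\phi}_{\rm min}$.
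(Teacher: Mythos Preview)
Your existence/uniqueness of the minimizer and the passage from $L^\infty$ to the weak Euler--Lagrange equation and $H^2$-regularity match the paper.

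The $L^\infty$ step, however, is mischaracterized in a way that would make the argument fail. It is \emph{not} the Dirichlet gain that forces $|\{|\hat\phi_{\rm min}|>T\}|=0$: that gain can be zero on the excess set (think of $\hat\phi_{\rm min}$ constant there) even when the set has positive measure, so ``Dirichlet gain dominates'' cannot produce a contradiction. In the paper the mechanism runs the other way: the Dirichlet drop only supplies the sign $0\ge J[\hat\psi_\lambda]-J[\psi]+(\text{entropy difference})$, and the contradiction comes from the \emph{entropy} side via the convex function $B(u)=\sum_s N_s\alpha_s^{-1}e^{-q_su}$ and $B'(\pm\infty)=\pm\infty$. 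A key preparatory move you are missing is to first solve the linear Poisson problem $\nabla\cdot\ve\nabla\phi_0=-\rho-|\Omega|^{-1}\sum_s q_sN_s$ (this is where $\ve\in C^1$ enters, giving $\phi_0\in L^\infty$), set $\psi=\hat\phi_{\rm min}-\phi_0$, and truncate $\psi$ rather than $\hat\phi_{\rm min}$. This shift removes the $\rho$-term from the comparison; without it, the change in $-\int\rho\,\phi$ under truncation is only controlled by $\|\rho\|_{L^2}\|\hat\phi_{\rm min}-\phi^T\|_{L^2}$, which has no useful sign. The paper also treats the single-sign case (all $q_s>0$ or all $q_s<0$) separately, since then $B'$ blows up on only one side and a one-sided truncation plus elliptic regularity is needed; your sketch does not account for this.

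Your uniqueness argument for the weak solution (``any critical point of a strictly convex functional is the minimizer'') is different from the paper's and is not complete as stated: to invoke it you must show that $t\mapsto\hat I[\phi+t\xi]$ is differentiable at $t=0$ for every $\xi\in\mathring H^1_{\rm per}$ when $\phi$ is merely a weak solution, which requires dominated convergence for $e^{-q_s(\phi+t\xi)}$ that does not follow from $e^{-q_s\phi}\in L^2$ and $\xi\in H^1\not\subset L^\infty$ in $d=3$. The paper instead tests \reff{weakCCPBE} for two weak solutions with $\xi=\phi_1-\phi_2$ and uses convexity of $\hat B_i(u)=\sum_s N_s a_{i,s}^{-1}e^{-q_su}$ to obtain $0\ge\sum_s N_s(a_{1,s}a_{2,s})^{-1}\bigl(\int_\Omega(e^{-q_s\phi_1}-e^{-q_s\phi_2})\,dx\bigr)^2\ge0$, which avoids any differentiability issue.
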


% {\color{blue}{[Weak solution can be in fact a strong solution with $\ve$ smooth. I made the change. -Bo]}}

\begin{remark}
%%    {\rm (1)} For the existence and uniqueness of a minimizer of the 
%%    functional $I$ over $\mathring{H}_{\rm per}^1(\Omega)$, we need only to assume that 
%%    $\ve\in L^\infty(\Omega)$ and it satisfies \reff{epsilon}.
%%    {\rm (2)} 
    These results are generally known for the case that $q_s > 0$ for some $s$ and 
    $q_s < 0$ for some other $s$
    \cite{Lee_JMathPhys2014}. Here we include the case that all $q_s > 0$ or all $q_s < 0$. 
    Moreover, we present a proof with a key difference.
    %%from that   in \cite{Lee_JMathPhys2014}. 
    We obtain the $L^\infty(\Omega)$-bound
    of the minimizer by a comparison argument; cf.\ \cite{LiChengZhang_SIAP11}.
    The bound  allows us to apply the Lebesgue Dominated Convergence
    Theorem to show that the minimizer is a weak solution to the CCPBE.
    %%, the   Euler--Lagrange equation of the corresponding energy functional. 
    The comparison method used in obtaining the $L^\infty$ bound will also be used in 
    section~\ref{ss:FDCCPBE} to obtain
    a uniform bound for finite-difference approximations of the solution to CCPBE. 
\end{remark}

%\noindent
%{\color{blue}{I revised a lot the first part of the proof. 
%Need to check again the threecases.}}

%{\color{blue}{[Maybe we should move the proof of this theorem to Appendix, as 
%this is more or less known, though the method of proof is different from
%a previous paper? Also, our work is more about the numerical analysis. We 
%can also put it here and see if referees will raise this issue of too much 
%PDE analysis. -Bo]}}

 \begin{proof}[Proof of Theorem~\ref{t:CCPBenergy}]
The existence  of a minimizer 
$\hat{\phi}_{\rm min} \in \mathring{H}^1_{\rm per}(\Omega)$ follows from Lemma~\ref{l:Iphi} and a standard argument by direct methods in the calculus of variations; 
cf.\ e.g., \cite{Lee_JMathPhys2014}. 
The uniqueness of a minimizer follows from the strict convexity of the functional 
$\hat{I}.$

We now assume in addition that $\ve \in C_{\rm per}^1(\overline{\Omega})$ and 
prove that $\hat{\phi}_{\rm min} \in L_{\rm per}^\infty(\Omega).$
Let $\phi_0\in \mathring{H}^1_{\rm per}(\Omega)$ be the unique 
weak solution to Poisson's equation 
%\[
$\nabla \cdot \ve \nabla \phi_0 = - \rho - (1/|\Omega|) \sum_{s=1}^M q_s N_s$
%\nabla \cdot \ve \nabla \phi_0 = - \rho - \frac{1}{|\Omega|} \sum_{s=1}^M q_s N_s
%\]
with the periodic boundary condition, defined by 
%%$\phi_0\in \mathring{H}^1_{\rm per}(\Omega)$ and 
\[
\int_\Omega \ve \nabla \phi_0 \cdot \nabla \xi \, dx 
= \int_\Omega \rho \xi \, dx + 
\left( \sum_{s=1}^M q_s N_s \right) \dashint_\Omega \xi\, dx 
= \int_\Omega \rho \xi \, dx
\qquad \forall \xi \in \mathring{H}_{\rm per}^1(\Omega);
\]
cf.\ Theorem~\ref{t:PoissonEnergy}.
%% and \reff{neutrality}.
%%the charge neutrality \reff{neutrality}. 
%%guarantees the solution compatibility. 
%Since $\ve\in C^1(\overline{\Omega})$ and $\rho \in L^2(\Omega)$, we have by 
By the regularity theory, $\phi_0 \in L_{\rm per}^\infty(\Omega)$
%%\cap H^2_{\rm per}(\Omega)$
 \cite{GilbargTrudinger98}.
We define 
\begin{equation}
    \label{hatIpsi}
J[\psi] = \int_\Omega \frac{\ve}{2} | \nabla \psi |^2 dx
+ \sum_{s=1}^M N_s \log \left( \sA_\Omega ( e^{-q_s (\phi_0+\psi)})\right)
%%\left( \int_\Omega e^{-q_s (\phi_0 + \psi)} dx \right)
\qquad \forall \psi \in {H}_{\rm per}^1(\Omega).
\end{equation}
Let $\psi \in H^1_{\rm per}(\Omega)$ and set $\bar{\psi} = \sA_\Omega (\psi)$; 
cf.\ \reff{AveAu}. 
%\[ 
%\bar{\psi} = \dashint_\Omega \psi \, dx.
%\]
We verify directly that
%%{\color{blue}{[Check this. -Bo]}}
\begin{equation}
    \label{IpsiIphi}
    {J}[\psi] = {J}[\psi - \bar{\psi}] - \bar{\psi} \sum_{s=1}^M q_s N_s
    = \hat{I}[\phi] + \int_\Omega \frac{\ve}{2} | \nabla \phi_0 |^2 dx
    - \overline{\psi}\sum_{s=1}^M q_s N_s,
\end{equation}
%%%%%%%%%%%%%%%%
\begin{comment}
\begin{align}
    \label{hatIpsibarpsi}
    &\hat{I}[\psi] = \hat{I}[\psi - \bar{\psi}] - \bar{\psi} \sum_{s=1}^M q_s N_s, 
    \\ 
    \label{psiaok}
    &\hat{I}[\psi] = I[\phi] + \int_\Omega \frac{\ve}{2} | \nabla \phi_0 |^2 dx
    - \overline{\psi} \left( \sum_{s=1}^M q_s N_s \right). 
\end{align}
\end{comment}
%%%%%%%%%%%%%%%%%%%
where $\phi := \psi - \bar{\psi} + \phi_0 \in \mathring{H}^1_{\rm per}(\Omega). $ 
%%This and Lemma~\ref{l:Iphi} imply that $\hat{I}[\psi] = \hat{I}[\psi + a ]$ for 
%%any constant $a \in \R.$
If $\psi = \phi - \phi_0 \in \mathring{H}_{\rm per}^1(\Omega)$ 
with $\phi \in \mathring{H}^1_{\rm per}(\Omega)$, then 
\[
{J}[\psi] = \hat{I}[\phi] +\int_\Omega \frac{\ve}{2} | \nabla \phi_0 |^2 dx.
\]
Thus, ${\psi}_{\rm min} := \hat{\phi}_{\rm min}-\phi_0\in \mathring{H}^1_{\rm per}(\Omega)$ 
is the unique minimizer
of ${J}: \mathring{H}^1_{\rm per}(\Omega) \to \R \cup \{ \infty \},$
and ${J}[\psi_{\rm min}]$ is finite since $\hat{I}[\phi_{\rm min}]$ is. 
%%Note that $e^{-q_s(\phi_0 + {\psi}_{\rm min})} = e^{-q_s \hat{\phi}_{\rm min} } \in L^1(\Omega)$
%%for all $s = 1, \dots, M.$
%%Note by Lemma~\ref{l:Iphi} and the charge neutrality \reff{neutrality}
%%that $\hat{I}[\psi + a ] = \hat{I}[\psi]$ for 
%%any $\psi \in \mathring{H}^1_{\rm per}(\Omega)$ and any constant $a \in \R.$

We show that $\psi: = \psi_{\rm min} \in L_{\rm per}^\infty(\Omega)$
%%\cap H^2_{\rm per}(\Omega)$
which implies $\hat{\phi}_{\rm min} \in L_{\rm per}^\infty(\Omega).$ 
%%\cap H^2_{\rm per}(\Omega).$
We consider three cases. 

Case 1: there exist $s^\prime, s^{\prime\prime} \in \{ 1, \dots, M \}$
such that $q_{s^\prime} > 0$ and $q_{s^{\prime\prime}} < 0$. Let $\lambda > 0$ and define 
\begin{align}
\label{psihatlambda}
\hat{\psi}_\lambda = \left\{
\begin{aligned}
    &\psi  \quad & &\mbox{if } |\psi | \le \lambda, &
    \\
    &\lambda \quad & &\mbox{if } \psi > \lambda, &
    \\
    &-\lambda \quad & &\mbox{if } \psi < -\lambda, &
\end{aligned}
\right.
\qquad \mbox{and} \qquad 
\psi_\lambda = \hat{\psi}_\lambda - \sA_\Omega(\hat{\psi}_\lambda).
%%\dashint_\Omega \hat{\psi}_\lambda \, dx. 
\end{align}
%%$\hat{\psi}_\lambda = \psi$ if $| \psi | \le \lambda$, $\hat{\psi}_\lambda = \lambda $ if 
%%${\psi} > \lambda$, $\hat{\psi}_\lambda = -\lambda $ if $\psi < -\lambda, $
%%and 
%%\begin{equation*}
%\label{psilambda}
%%\psi_\lambda = \hat{\psi}_\lambda - \dashint_\Omega \hat{\psi}_\lambda \, dx. 
%%\end{equation*} 
Clearly, 
%%{\color{blue}{[Need to show this. First, it's in $H^1$ then it is priodic, using
%%approximations by $C^1$ periodic functions. -Bo]}} 
$\hat{\psi}_\lambda\in H_{\rm per}^1(\Omega)$ and 
$\psi_\lambda \in \mathring{H}^1_{\rm per}(\Omega).$ 
%%Moreover, 
%%$\hat{I}[\hat{\psi_\lambda}] = \hat{I}[\psi_\lambda];$ cf.\ Lemma~\ref{l:Iphi}. 
Since $\psi  = \psi_{\rm min}$, we have 
${J} [{\psi_\lambda} ] \ge {J}[\psi]$. Therefore, it
follows from \reff{IpsiIphi}, \reff{psihatlambda}, and Jensen's inequality
applied to $u\mapsto -\log u$ that 
%Denoting
%\[
%\eta_\lambda = \dashint_\Omega \hat{\psi}_\lambda \, dx. 
%\]
%%we have by the convexity of $u \to \log u$ and Jensen's inequality that 
\begin{align}
\label{case1main}
0 & \ge - \int_{\{ | \psi | > \lambda \} }\frac{\ve}{2} | \nabla \psi |^2 dx
\nonumber \\
& = \int_{\Omega} \frac{\ve}{2} \left( |\nabla \hat{\psi}_\lambda|^2
- | \nabla \psi |^2 \right) dx 
\nonumber \\
& = {J}[\hat{\psi}_\lambda] - {J}[\psi] + 
\sum_{s=1}^M N_s \left[ \log \left( \dashint_\Omega e^{-q_s (\phi_0+\psi) }  dx \right)
-  \log \left( \dashint_\Omega e^{-q_s (\phi_0+\hat{\psi}_\lambda) } dx 
\right)\right]
\nonumber \\
& = {J}[{\psi}_\lambda] - {J}[\psi] - 
\sA_\Omega ( \hat{\psi}_\lambda) \sum_{s=1}^M q_s N_s 
\nonumber \\
&\qquad + 
\sum_{s=1}^M N_s \left[ \log \left( \int_\Omega e^{-q_s (\phi_0+\psi) }  dx \right)
-  \log \left( \int_\Omega e^{-q_s (\phi_0+\hat{\psi}_\lambda) } dx 
\right)\right]
\nonumber \\
%%& \ge 
%%\sum_{s=1}^M N_s \left[ \log \left( \int_\Omega e^{-q_s (\phi_0+\psi)}  dx \right)
%%-  \log \left( \int_\Omega e^{-q_s (\phi_0 + \hat{\psi}_\lambda - \eta_\lambda) } dx 
%%\right)\right] - \left(\sum_{s=1}^M q_s N_s \right) 
%%\dashint_\Omega \hat{\psi}_\lambda\, dx
%%\\
%& \ge \sum_{s=1}^M \frac{N_s}{\alpha_{s}} 
%\int_{\Omega} \left( e^{-q_s (\phi_0+\psi)}  - e^{-q_s (\phi_0+\psi_\lambda)} \right) dx
%- \left(\sum_{s=1}^M q_s N_s \right) 
%\dashint_\Omega \hat{\psi}_\lambda\, dx
%\\
& \ge \int_\Omega \left[ B (\phi_0 + \psi) -  B (\phi_0 + \hat{\psi}_\lambda ) \right] dx 
- \sA_\Omega ( \hat{\psi}_\lambda)\sum_{s=1}^M q_s N_s, 
%\dashint_\Omega \hat{\psi}_\lambda\, dx,
\end{align}
where 
\begin{align}
\label{alphasBu}
B(u) = \sum_{s=1}^M \frac{N_s} {\alpha_s} e^{-q_s u}
\qquad \mbox{and} \qquad 
\alpha_s = \int_\Omega e^{ -q_s (\phi_0 + \psi)} dx.
\end{align}
%%and $\alpha_{s, \lambda} $ is in between 
%%\[
%%\beta_s := \int_\Omega e^{-q_s (\phi_0 + \psi)}dx \qquad \mbox{and}  \qquad 
%%\beta_{s, \lambda}:= \int_\Omega e^{-q_s (\phi_0 + \psi_\lambda)}dx.
%%\]
Note that $ \alpha_s > 0$ for each $s$. Since 
 $J[\psi]$ is finite, we also have $\alpha_s < \infty$  for each $s$. 
Denoting $a = (1/|\Omega|) \sum_{s=1}^M q_s N_s$, 
we have by \reff{psihatlambda} and the fact that 
$\psi \in \mathring{H}^1_{\rm per}(\Omega)$ that 
%%and using the definition of  $\hat{\psi}_\lambda$ that 
\begin{align}
\label{a}
- \left(\sum_{s=1}^M q_s N_s \right) \sA_\Omega (\hat{\psi}_\lambda)
%\dashint_\Omega \hat{\psi}_\lambda\, dx
 &= a \int_\Omega ( \psi - \hat{\psi}_\lambda)\, dx 
 \nonumber \\
&= a \int_{\{\psi > \lambda \}} ( \psi - \lambda) \, dx + a \int_{\{\psi < - \lambda\}}
(\psi + \lambda) \, dx. 
\end{align}
%%Note each $\alpha_s$ $(1 \le s \le M)$ is finite, since $\psi$ is the minimmizer
%%of $\hat{I}: \mathring{H}^1_{\rm per}(\Omega) \to \R.$
%%By considering the integral over $\{ \psi > 0 \}$ and that over
%%$\{ \psi < 0 \}$ and using the Monotone Convergence Theorem and the Lebesgue Dominated
%%Convergence Theorem, we have $\beta_{s, \lambda} \to \beta_s$, and hence 
%%$\alpha_{s, \lambda} \to \beta_s$, as $\lambda \to \infty$
%%for each $s$. Moreover, 
We can verify directly that $B$ is convex.
Moreover, since $q_{s'} > 0$ and $q_{s''} < 0$, 
$B'(-\infty) = -\infty$ and $B'(+\infty) = +\infty$.
Thus, since $\phi_0 \in L_{\rm per}^\infty(\Omega)$, 
%%we choose choose $\lambda > 0$ large enough so that 
$B'(\phi_0 + \lambda) + a \ge 1 $ and 
%\qquad \mbox{and} \qquad 
$B'(\phi_0 - \lambda) + a \le -1 $ a.e.\ $\Omega$, 
if $\lambda > 0$ is large enough.
Consequently, it follows from 
\reff{case1main}, \reff{a}, and an application
of Jensen's inequality that
%%$B_\lambda (\phi_0 + \lambda) \ge 1 $ and 
%%\qquad \mbox{and}\qquad 
%%$B_\lambda (\phi_0 -\lambda) \le - 1 $ for a.e.\ $\Omega.$ 
%%Consequently, 
\begin{align*}
    0 &\ge  
    \int_{\{ \psi > \lambda\}} \left[ B (\phi_0 + \psi) - B (\phi_0 + \lambda ) \right] dx 
+ \int_{\{ \psi < -\lambda\} } \left[ B(\phi_0 + \psi) - B(\phi_0 - \lambda)\right] dx 
\\
&\qquad 
+ a \int_{\{\psi > \lambda \}} ( \psi - \lambda) \, dx + a \int_{\{\psi < - \lambda\}}
(\psi + \lambda) \, dx
\\
&\ge 
\int_{\{ \psi > \lambda \} }
     \left[ B' (\phi_0 + \lambda)+a\right] ( \psi - \lambda ) \, dx 
     + \int_{ \{ \psi < -\lambda \} } 
     \left[ B' (\phi_0 - \lambda) + a \right] ( \psi + \lambda ) \, dx
\\
%& \qquad 
%+ a \int_{\{\psi > \lambda \}} ( \psi - \lambda) \, dx + a \int_{\{\psi < - \lambda\}}
%(\psi + \lambda) \, dx
%\\
&\ge \int_{\{ | \psi | > \lambda \}} \left| \, | \psi | - \lambda \, \right| \, dx.
\end{align*}
Hence, $| \{ | \psi | > \lambda \} | = 0$, i.e., 
 $|\psi| \le \lambda$ a.e.\ $\Omega$. Thus, $\psi \in L_{\rm per}^\infty(\Omega).$

Case 2: all $q_s < 0$ $( 1 \le s \le M).$ In this case, $B = B(u)$ defined 
in \reff{alphasBu} is convex and $B'(+\infty) = +\infty.$ 
For any $\lambda > 0$, we define now
$\hat{\psi}_\lambda = \psi$ if $\psi \le \lambda$ and 
$\hat{\psi}_\lambda = \lambda$ if $\psi > \lambda, $ and 
$\psi_\lambda = \hat{\psi}_\lambda - \sA_\Omega ( \hat{\psi}_\lambda).$
%\begin{align*}
%\label{psihatlambda}
%\hat{\psi}_\lambda = \left\{
%\begin{aligned}
%    &\psi  \quad & &\mbox{if } \psi  \le \lambda, &
%    \\
%    &\lambda \quad & &\mbox{if } \psi > \lambda, &
%\end{aligned}
%\right.
%\qquad \mbox{and} \qquad 
%\psi_\lambda = \hat{\psi}_\lambda - \sA_\Omega ( \hat{\psi}_\lambda).
%%- \dashint_\Omega \hat{\psi}_\lambda \, dx. 
%\end{align*}
Clearly, $\hat{\psi}_\lambda\in H^1_{\rm per}(\Omega)$ and 
$\psi_\lambda \in \mathring{H}^1_{\rm per}(\Omega)$. 
Carrying out the same calculations as above with $ \{ \psi > \lambda\} $ replacing
 $\{ | \psi | > \lambda\}$, we get for $\lambda  > 0$ large enough that 
%\[
%\psi^- = \hat{\psi}^- - \dashint_\Omega \hat{\psi}^- \, dx \in 
%\mathring{H}_{\rm per}^1(\Omega),
%\]
%and $\{ \psi > 0 \},$
%respectively, we obtain by the same argument that 
\[
0 \ge 
%%\sum_{s=1}^M \frac{N_s}{\alpha_{s, \lambda}} 
\int_{\{ \psi > \lambda \}} \left[ B'(\phi_0 + \lambda) + a\right]
(\psi - \lambda)\, dx \ge \int_{\{ \psi > \lambda \}}
(\psi - \lambda)\, dx \ge 0,
%e^{-q_s \phi_0} \left( e^{-q_s \psi} - 1 \right) dx \ge 0.
\]
where $a$ is the same as in \reff{a}. 
%%The right-hand side of this inequality is non-negative, if we choose $\lambda > 0$ 
%%large enough as $B'(+\infty) = +\infty$ and $\phi_0 \in L^\infty(\Omega).$
Thus, $\psi \le \lambda $ a.e.\ $\Omega.$ Since $\phi_0 \in L_{\rm per}^\infty(\Omega)$
and all $q_s < 0$,
$e^{-q_s (\phi_0 + \psi)}\in L_{\rm per}^\infty(\Omega)$ for each $s$ $(1 \le s \le M).$
Since $\psi $ is the minimizer of $J$ defined in \reff{hatIpsi} over
$\mathring{H}_{\rm per}^1(\Omega)$, we now have by direct calculations that 
\[
\int_\Omega \ve \nabla \psi \cdot \nabla \xi \, dx 
- \sum_{s=1}^M N_s q_s \left( \int_\Omega e^{-q_s (\phi_0 + \psi) }dx \right)^{-1}
         \int_\Omega e^{-q_s (\phi_0 + \psi)} \xi \, dx = 0
         \qquad \forall \xi \in \mathring{H}^1_{\rm per}(\Omega). 
\]
Since $q_s < 0$ and $\psi $ is bounded above, $e^{-q_s (\phi_0+\psi)}\in 
L^\infty_{\rm per}(\Omega)$ for each $s$. Thus, 
$\nabla \cdot \ve \nabla \psi \in L^\infty_{\rm per}(\Omega)$ weakly. 
Consequently, $\Delta \psi = (\nabla \ve \cdot \nabla \psi - \nabla \cdot \ve
\nabla \psi)/\ve \in L^2_{\rm per}(\Omega)$ weakly. 
%% $\nabla \cdot \ve \nabla \psi  = \ve \Delta \psi 
%%+ \nabla \ve \cdot \nabla \psi \in L_{\rm per}^\infty(\Omega)$ and further
%%$\Delta \psi \in L_{\rm per}^2(\Omega)$, since $\ve \in C^1(\overline{\Omega})$ and it
%%satisfies \reff{epsilon}. 
Hence, $\psi \in H^2_{\rm per}(\Omega)$ and further
$\psi \in L_{\rm per}^\infty(\Omega).$

Case 3: all $q_s > 0$ $ (s = 1, \dots, M).$ This is similar to Case 2. 

Finally, since $\phi := \hat{\phi}_{\rm min} \in \mathring{H}_{\rm per}^1(\Omega) \cap 
L_{\rm per}^\infty(\Omega)$ 
is the unique minimizer of $\hat{I}: \mathring{H}_{\rm per}^1(\Omega)
\to \R \cup \{+ \infty \},$ we obtain by routine calculations 
the equation in \reff{weakCCPBE} with $\xi \in C^1_{\rm per}(\overline{\Omega}).$
By approximations, \reff{weakCCPBE} is true. Thus, 
$\phi$ is a weak solution to the CCPBE with the periodic boundary condition.
This also implies that $\nabla \cdot \ve \nabla \phi \in L^2(\Omega)$
in weak sense. The regularity theory then implies that 
$\phi \in H_{\rm per}^2(\Omega)$ and finally \reff{CCPBE} holds true a.e.\ in $\Omega.$

%%{\color{blue}{[Add something about the strong solution. -Bo]}}

Assume $\phi_1, \phi_2 \in \mathring{H}_{\rm per}^1(\Omega)$ are two 
weak solutions of the CCPBE. Denote 
\begin{align*}
 \hat{B}_i(u) = \sum_{s=1}^M \frac{N_s}{a_{i, s} } e^{-q_s u} \quad \mbox{with} \quad 
 a_{i, s} = \int_\Omega e^{-q_s \phi_i} dx, \qquad i = 1, 2.  
%%; \ s = 1, \dots, M. 
\end{align*}
Each $\hat{B}_i: \R \to \R$ $(i = 1, 2)$ is a convex function. Thus, 
\begin{align*}
&\hat{B}_1'(\phi_1) (\phi_1 - \phi_2) \ge \hat{B}_1(\phi_1 ) - \hat{B}_1 (\phi_2)
\quad \mbox{a.e.} \ \Omega, 
\\
&\hat{B}_2'(\phi_2) (\phi_1 - \phi_2) \le \hat{B}_2(\phi_1 ) - \hat{B}_2 (\phi_2)
\quad \mbox{a.e.} \ \Omega. 
\end{align*}
Consequently, it follows from
\reff{weakCCPBE} with $\phi = \phi_i$ $(i = 1, 2)$ and $\xi = \phi_1 - \phi_2$ that
\begin{align*}
0 & = \int_\Omega \ve | \nabla (\phi_1 - \phi_2) |^2 dx 
+\int_\Omega [ \hat{B}_1'(\phi_1) (\phi_1 - \phi_2) - 
\hat{B}_2'(\phi_2) (\phi_1 - \phi_2)]\, dx
\\
&\ge 
%%\int_\Omega \ve | \nabla (\phi_1 - \phi_2) |^2 dx + 
\int_\Omega \left[  \left(\hat{B}_1(\phi_1) - \hat{B}_1(\phi_2) \right)
- \left( \hat{B}_2(\phi_1) - \hat{B}_2 (\phi_2)\right) \right]  dx
\\
& \ge 
%%or \zhou{= ?} 
\sum_{s=1}^M \frac{N_s}{a_{1,s} a_{2,s} } 
\left[ \int_\Omega
\left( e^{-q_s \phi_1} - e^{-q_s \phi_2} \right) dx \right]^2
\\
& \ge 0.
\end{align*}
Hence, $\phi_1 = \phi_2$ in $\mathring{H}_{\rm per}^1(\Omega)$ and the weak solution 
is unique. 
\end{proof}

%%%%%%%%%%%%%%%%%%%%%%%%%
\subsection{The Poisson--Boltzmann energy}
\label{ss:ContPBenergy}

Let $\rho \in L_{\rm per}^2(\Omega) $ satisfy \reff{neutrality}. 
We consider now ionic concentrations $c = (c_1, \dots, c_M)\in 
L_{\rm per}^2(\Omega, \R^M)$ and the electric displacements
$D \in H_{\rm per}(\mbox{div}, \Omega)$ that satisfy the following: 
%%properties: 
\begin{align}
\label{positivity}
    &\mbox{Nonnegativity:}  & & c_s \ge 0 \qquad \mbox{ a.e. } \Omega,  \quad s = 1,\dots, M;  &
    \\
    \label{mass}
    &\mbox{Mass conservation:} & & \int_\Omega c_s \, dx = N_s,  \qquad s = 1, \dots, M; &
    \\
    \label{Gauss4Ions}
    &\mbox{Gauss' law:} & & \nabla \cdot D = \rho + \sum_{s=1}^M q_s c_s 
    \quad \mbox{ in } \Omega. &
\end{align}
%%%%%%%%%%%%%%%
\begin{comment}
\begin{compactenum}
\item[(1)]
Nonnegativity: 
\begin{equation}
    \label{positivity}
c_s \ge 0 \qquad \mbox{ a.e. } \Omega,  \quad s = 1,\dots, M; 
\end{equation}
\item[(2)]
Mass conservation: 
\begin{equation}
\label{mass}
\int_\Omega c_s \, dx = N_s,  \qquad s = 1, \dots, M; 
\end{equation}
\item[(3)]
Gauss' law: 
\begin{equation}
    \label{Gauss4Ions}
\nabla \cdot D = \rho + \sum_{s=1}^M q_s c_s \mbox{ in } \Omega. 
\end{equation}
\end{compactenum}
\end{comment}
%%%%%%%%%%%%%%%%%%%%%
We define 
%%{\color{blue}{[The definition of $\tilde{X}_0$ is fine, as that space is used for perturbation. 
%%See the proof of the next theorem. But I added part (2) in Remark 2.2 below about this. -Bo]}}
\begin{align}
\label{Xrho}
&X_\rho  = \biggl\{ (c, D)
%%= (c_1, \dots, c_M; D) 
\in L_{\rm per}^2(\Omega, \R^M)\times H_{\rm per}(\mbox{div}, \Omega): 
\mbox{ \reff{positivity}--\reff{Gauss4Ions} hold true.}\biggr\}, 
\\
\label{tildeX0}
&\tilde{X}_0 = \biggl\{ (\tilde{c}, \tilde{D}) 
= (\tilde{c}_1, \dots, \tilde{c}_M; \tilde{D}) \in L_{\rm per}^\infty(\Omega, \R^M)\times 
H_{\rm per}(\mbox{div}, \Omega): 
\nonumber \\
& \qquad \qquad \qquad \qquad \qquad \qquad 
\int_\Omega \tilde{c}_s \, dx = 0 \ (s = 1, \dots, M) \mbox{ and }
\nabla \cdot \tilde{D} = \sum_{s=1}^M q_s \tilde{c}_s \biggr\}.
\end{align}
%%cf.\ \reff{S0} for the notation $S_0.$

\begin{lemma}
\label{l:Xrho} Let $\rho \in L_{\rm per}^2(\Omega).$ 
Then, $X_\rho \ne \emptyset$ if and only if \reff{neutrality} holds true.
\end{lemma}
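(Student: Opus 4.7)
The plan is to prove both implications directly, with the ``only if'' direction following from an integration of Gauss' law and the ``if'' direction from an explicit construction that reduces to the solvability criterion for $S_\rho$ already noted after \reff{S0}.

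For the ``only if'' direction, suppose $(c,D) \in X_\rho$. I would integrate the Gauss' law \reff{Gauss4Ions} over $\Omega$ to obtain
\begin{equation*}
\int_\Omega \nabla \cdot D \, dx = \int_\Omega \rho \, dx + \sum_{s=1}^M q_s \int_\Omega c_s \, dx
= \int_\Omega \rho \, dx + \sum_{s=1}^M q_s N_s,
\end{equation*}
where the second equality uses the mass conservation constraint \reff{mass}. The left-hand side vanishes because $D \in H_{\rm per}(\mathrm{div},\Omega)$: by definition there exist $D_k \in C^1_{\rm per}(\overline{\Omega}, \R^d)$ with $D_k \to D$ and $\nabla \cdot D_k \to \nabla \cdot D$ in $L^2(\Omega)$, and for each $D_k$ the divergence theorem together with the cancellation of fluxes on opposite faces of $\overline{\Omega}$ (due to periodicity) gives $\int_\Omega \nabla \cdot D_k \, dx = 0$. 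Passing to the limit yields \reff{neutrality}. This is essentially the same observation that justifies the criterion $S_\rho \ne \emptyset \iff \sA_\Omega(\rho)=0$ already recorded after \reff{S0}.

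For the ``if'' direction, assume \reff{neutrality} holds and construct an admissible pair explicitly. Take the constant concentrations
\begin{equation*}
c_s(x) \equiv \frac{N_s}{|\Omega|}, \qquad s=1,\dots,M.
\end{equation*}
Then \reff{positivity} and \reff{mass} are immediate (since $N_s > 0$). Setting
\begin{equation*}
f := \rho + \sum_{s=1}^M q_s c_s = \rho + \frac{1}{|\Omega|}\sum_{s=1}^M q_s N_s,
\end{equation*}
the neutrality condition \reff{neutrality} gives $\int_\Omega f \, dx = 0$, i.e., $\sA_\Omega(f) = 0$. Hence by the criterion recalled after \reff{S0}, the set $S_f$ defined as in \reff{Srho} with $f$ in place of $\rho$ is nonempty, so there exists $D \in H_{\rm per}(\mathrm{div},\Omega)$ with $\nabla \cdot D = f$, which is exactly Gauss' law \reff{Gauss4Ions}. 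Thus $(c,D) \in X_\rho$.

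There is no real obstacle here; the only point requiring a little care is the justification that $\int_\Omega \nabla \cdot D \, dx = 0$ for $D \in H_{\rm per}(\mathrm{div},\Omega)$, handled by the density of $C^1_{\rm per}(\overline{\Omega},\R^d)$-functions in the definition of $H_{\rm per}(\mathrm{div},\Omega)$. Alternatively, one could construct $D$ in the sufficiency direction as $D = -\nabla \phi$, where $\phi \in \mathring{H}^1_{\rm per}(\Omega)$ solves $-\Delta \phi = f$ via Lax--Milgram, which likewise avoids appealing to the (commented-out) Lemma on $S_\rho$.
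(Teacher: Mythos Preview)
Your proof is correct and follows essentially the same approach as the paper: integrate Gauss' law and use mass conservation for the ``only if'' direction, and for the ``if'' direction take constant concentrations $c_s = N_s/|\Omega|$ and invoke the nonemptiness of $S_{\rho+\rho_{\rm ion}}$ via the criterion $\sA_\Omega(\cdot)=0$. The only difference is that you spell out the density argument for $\int_\Omega \nabla\cdot D\,dx = 0$, which the paper leaves implicit.
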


\begin{proof}
If $X_\rho \ne \emptyset$ and $(c, D) \in X_\rho$, then by integrating both sides of \reff{Gauss4Ions} and using
\reff{mass}, we obtain \reff{neutrality}. Conversely, 
    let $c_s = N_s/ |\Omega| $ in $\Omega$ for all $s = 1, \dots, M$ and
$\rho_{\rm ion} = \sum_{s=1}^M q_s c_s$. By \reff{neutrality}, 
$\sA_\Omega (\rho+\rho_{\rm ion}) = 0.$ Thus, 
%%and Lemma~\ref{l:Srho},  
$S_{\rho+\rho_{\rm ion}} \ne \emptyset;$ cf.\ \reff{Srho}. 
If $D \in S_{\rho+\rho_{\rm ion}}$ and 
$c = (c_1, \dots, c_M)$, then
$(c, D) \in X_\rho.$ Hence, $X_\rho \ne \emptyset.$
\end{proof}

Let $\ve \in L^\infty_{\rm per}(\Omega)$ satisfy \reff{epsilon}. 
We define $\hat{F}: X_\rho \to \R \cup \{ +\infty \}$ by  
\begin{equation}
    \label{PBEnergy}
\hat{F}[c, D] = \int_\Omega \left( \frac{|D|^2}{2 \ve} 
+ \sum_{s = 1}^M c_s \log c_s \right) dx.  
\end{equation}
%%Here and below we set $u \log u = 0$ for $u = 0.$

\begin{theorem}
\label{t:PBEnergy}
Let $\ve \in C_{\rm per}^1(\overline{\Omega})$ satisfy \reff{epsilon}
and $\rho \in L_{\rm per}^2(\Omega)$ satisfy \reff{neutrality}. 

{\rm (1)}  Let $(\hat{c}_{\rm min}, \hat{D}_{\rm min}) = (\hat{c}_{{\rm min}, 1}, 
\cdots, \hat{c}_{{\rm min}, M}, \hat{D}_{\rm min}) $ be given by 
\begin{align}
\label{definecmins}
&\hat{c}_{{\rm min}, s} = N_s 
\left( \int_\Omega e^{- q_s \hat{\phi}_{\rm min}} dx \right)^{-1}
e^{- q_s \hat{\phi}_{\rm min}}\qquad \mbox{in } \R^d,  \ s = 1, \dots, M,
\\
\label{defineDmin}
&
\hat{D}_{\rm min} = - \ve \nabla \hat{\phi}_{\rm min} \qquad \mbox{in } \R^d, 
\end{align}
where $\hat{\phi}_{\rm min} \in \mathring{H}_{\rm per}^1(\Omega)$ is the unique weak solution 
to the CCPBE as given in Theorem~\ref{t:CCPBenergy}.
Then $(\hat{c}_{\rm min}, \hat{D}_{\rm min}) \in X_\rho$ is the unique minimizer of
$\hat{F}: X_\rho \to \R \cup \{+ \infty \}.$

%\begin{equation}
%\label{hatFmin}
%\hat{F} [c_{\rm min}, D_{\rm min}] = \min_{(c, D)\in X_\rho}F[c, D]. 
%\end{equation}

{\rm (2)} Let $(c, D) = (c_1, \dots, c_M, D)
\in  X_\rho$. Then $(c, D) = (\hat{c}_{\rm min}, \hat{D}_{\rm min})$ 
if and only if the following conditions are satisfied: 
\begin{compactenum}
    \item[{\rm (i)}]
    {\em Positive bounds:} 
    %%{\color{blue}{[I changed this from Positivity. -Bo]}} 
    There exist $\theta_1, \theta_2 \in \R$ such that 
    $
    0 < \theta_1 \le c_s(x) \le \theta_2 $ for a.e.\ $ x \in \Omega$ and 
    all $
    s = 1,\dots, M; 
    $
    \item[{\rm (ii)}] 
    {\em Global equilibrium:}
\begin{equation}
\label{tildeDc}
\int_\Omega \left( \frac{1}{\ve} D \cdot \tilde{D} \, 
+ \sum_{s=1}^M \tilde{c}_s \log c_s \right) dx = 0 \qquad 
\forall (\tilde{c}, \tilde{D}) \in \tilde{X}_0. 
\end{equation}
\end{compactenum}
\end{theorem}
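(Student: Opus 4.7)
The plan is to deduce both parts simultaneously by combining the strict convexity of $\hat{F}$ on $X_\rho$ with an explicit verification of the global equilibrium condition \reff{tildeDc} at the candidate. First I would check that $(\hat{c}_{\rm min},\hat{D}_{\rm min})$ defined by \reff{definecmins}--\reff{defineDmin} lies in $X_\rho$: \reff{positivity} is immediate from \reff{definecmins}, \reff{mass} follows from the normalization constant in \reff{definecmins}, and \reff{Gauss4Ions} for $\hat{D}_{\rm min}=-\ve\nabla\hat{\phi}_{\rm min}$ is exactly the CCPBE \reff{CCPBE}, which is satisfied a.e.\ in $\Omega$ by Theorem~\ref{t:CCPBenergy}. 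The $L^\infty$-bound on $\hat{\phi}_{\rm min}$ together with \reff{definecmins} furnishes constants $0<\theta_1\le \theta_2$ with $\theta_1\le \hat{c}_{{\rm min},s}(x)\le \theta_2$, so the candidate already satisfies condition (i) of Part (2). Strict convexity of $\hat{F}$ on $X_\rho$ is routine: $D\mapsto |D|^2/(2\ve)$ is strictly convex by \reff{epsilon}, and $u\mapsto u\log u$ is strictly convex on $[0,\infty)$ with the convention $0\log 0=0$.

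The core step is the verification of \reff{tildeDc} at $(\hat{c}_{\rm min},\hat{D}_{\rm min})$. Fix $(\tilde{c},\tilde{D})\in \tilde{X}_0$. Since $\hat{D}_{\rm min}/\ve = -\nabla\hat{\phi}_{\rm min}$ with $\hat{\phi}_{\rm min}\in H_{\rm per}^1(\Omega)\cap L_{\rm per}^\infty(\Omega)$ and $\nabla\cdot\tilde{D} = \sum_{s=1}^M q_s\tilde{c}_s$, integration by parts (justified via the density of $C_{\rm per}^1(\overline{\Omega},\R^d)$-fields in the definition of $H_{\rm per}(\mbox{div},\Omega)$ together with periodicity) gives
\[
\int_\Omega \frac{1}{\ve}\hat{D}_{\rm min}\cdot \tilde{D}\,dx
 = -\int_\Omega \nabla\hat{\phi}_{\rm min}\cdot \tilde{D}\,dx
 = \sum_{s=1}^M q_s\int_\Omega \hat{\phi}_{\rm min}\,\tilde{c}_s\,dx.
\]
On the other hand, \reff{definecmins} yields $\log\hat{c}_{{\rm min},s} = \alpha_s - q_s\hat{\phi}_{\rm min}$ for a constant $\alpha_s = \log N_s -\log\int_\Omega e^{-q_s\hat{\phi}_{\rm min}}dx$, and the constraint $\int_\Omega \tilde{c}_s\,dx=0$ kills the constant part, so summing against $\tilde{c}_s$ produces exactly the negative of the right-hand side above, proving \reff{tildeDc}.

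With these two ingredients I would close both parts via the supporting-hyperplane inequality for the convex functional $\hat{F}$ at $(\hat{c}_{\rm min},\hat{D}_{\rm min})$. For Part (1), given any $(c,D)\in X_\rho$ with $\hat{F}[c,D]$ finite, the positive lower bound $\theta_1$ on $\hat{c}_{\rm min}$ makes $(c-\hat{c}_{\rm min},D-\hat{D}_{\rm min})$ admissible in $\tilde{X}_0$, after a truncation argument to handle possible unboundedness of $c$ from above and after noting that the mass conservation absorbs the constant "$+1$" coming from the derivative of $c_s\log c_s$. The convexity inequality combined with \reff{tildeDc} then gives $\hat{F}[c,D]\ge \hat{F}[\hat{c}_{\rm min},\hat{D}_{\rm min}]$, and strict convexity supplies uniqueness. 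The forward implication of Part (2) is already contained in the above, and the reverse implication is symmetric: if $(c,D)\in X_\rho$ meets (i) and (ii), then (i) makes $(\hat{c}_{\rm min}-c,\hat{D}_{\rm min}-D)$ admissible in $\tilde{X}_0$, and applying the same convexity inequality with the roles swapped together with \reff{tildeDc} at $(c,D)$ forces $\hat{F}[\hat{c}_{\rm min},\hat{D}_{\rm min}]\ge \hat{F}[c,D]$, so strict convexity yields $(c,D) = (\hat{c}_{\rm min},\hat{D}_{\rm min})$.

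The main obstacle I anticipate is the rigorous handling of admissible perturbations at points where some $c_s$ may merely be nonnegative, because $\log c_s$ is singular at zero and the difference $c-\hat{c}_{\rm min}$ need not belong to $L^\infty$ a priori. This is precisely why condition (i) must be built into the characterization in Part (2); on the candidate side it is supplied for free by the $L^\infty$-bound on $\hat{\phi}_{\rm min}$ from Theorem~\ref{t:CCPBenergy}. A subsidiary technicality is the integration-by-parts identity against $\tilde{D}\in H_{\rm per}(\mbox{div},\Omega)$, which is resolved by approximation by $C_{\rm per}^1(\overline{\Omega},\R^d)$-fields, and a careful verification that the candidate energy $\hat{F}[\hat{c}_{\rm min},\hat{D}_{\rm min}]$ is finite, which is immediate from the $L^\infty$-bounds on $\hat{\phi}_{\rm min}$ and $\hat{c}_{{\rm min},s}$.
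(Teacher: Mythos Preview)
Your proposal is correct and follows essentially the same route as the paper: verify $(\hat{c}_{\rm min},\hat{D}_{\rm min})\in X_\rho$ via the CCPBE and the $L^\infty$-bound on $\hat{\phi}_{\rm min}$, use convexity of $u\mapsto u\log u$ together with the integration-by-parts identity $\int_\Omega \ve^{-1}\hat{D}_{\rm min}\cdot\tilde{D}\,dx=\sum_s q_s\int_\Omega\hat{\phi}_{\rm min}\tilde{c}_s\,dx$ and the formula $\log\hat{c}_{{\rm min},s}=\alpha_s-q_s\hat{\phi}_{\rm min}$, and close with strict convexity for uniqueness; Part (2) reverse is handled symmetrically with $(\hat{c}_{\rm min}-c,\hat{D}_{\rm min}-D)\in\tilde{X}_0$.

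One small organizational point: for Part (1) the paper does the convexity computation \emph{directly} for an arbitrary competitor $(c,D)\in X_\rho$, using only that $\log\hat{c}_{{\rm min},s}\in L^\infty$ so that $\int_\Omega(c_s-\hat{c}_{{\rm min},s})\log\hat{c}_{{\rm min},s}\,dx$ is well defined for $c_s\in L^2\subset L^1$. This sidesteps entirely the truncation argument you mention, since one never needs $c-\hat{c}_{\rm min}$ to lie in $\tilde{X}_0$. Your route via first establishing \reff{tildeDc} and then invoking it is equivalent once you observe that the identity you derived for \reff{tildeDc} actually extends to $\tilde{c}\in L^2$ perturbations (the computation only uses $\int_\Omega\tilde{c}_s\,dx=0$ and $\log\hat{c}_{{\rm min},s}\in L^\infty$), so the truncation is unnecessary.
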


%\noindent
%{\color{blue}[I added part (2) in these remarks. -Bo]}
 %%%%%%%%%%%%%%%%%%%%
 \begin{comment}
 
 \begin{remark}
  %%  {\rm (1)} For the existence and uniqueness of a minimizer of the 
  %%  functional $\hat{F}$ over $X_\rho$, we need only to assume that 
 %%   $\ve\in L^\infty(\Omega)$ and it satisfies \reff{epsilon}.

 {\rm (1)} We consider $c_s \in L^2(\Omega)$ instead of $c_s \in L^1(\Omega)$ 
    $(s = 1, \dots, M).$ 
%%    {\color{red}(How to understand this? does L1 imply c log c in L1 -SG?)} 
%%    {\color{blue}{[ c is L1 does not imply c log c in L1. c in L2 then c log c is also in L2. 
%%    If we consider minimizing the functional of (c, D) directly, then we should use 
 %%   L1 for c as the boundedness of integrals of ck log ck (ck is a minimizing sequence will 
%%    imply that ck has a subseq weakly conv in L1. But here we approach it differently - we start
%%    with ccPBE. More importantly: our minimizer c is in Linfty. -BL]}}
    The space $L^2(\Omega)$ may not seem to be a natural choice due
    to the integral of $c_s \log c_s$ in the functional
    $\hat{F}$ \reff{PBEnergy}. However, it allows
    the ionic charge density to be in $L^2(\Omega)$ which is compatible with our set up
    for the space of electric displacement; cf.\ \reff{Gauss4Ions}. Moreover, the
concentration part of the minimizer $\hat{c}_{\rm min} = (\hat{c}_{{\rm min}, 1}, \dots, 
\hat{c}_{{\rm min}, M})$ is in fact in $L^\infty(\Omega).$

%%{\rm (2)} We consider $\tilde{c} \in L^\infty(\Omega, \R^M)$, instead of
%%$L^2(\Omega, \R^M)$, in the definition of $\tilde{X}_0,$ as $\tilde{c}$ is used
%%as a perturbation; cf.\ \reff{tildeDc}. 
%%{\rm (2)} 
The results in the above theorem are expected, though not necessary exactly for
our case, e.g., periodic boundary conditions and formulation using $(c, D)$ instead
of $(c, \phi)$ with $\phi$ being the electrostatic potential;
cf.\ {\rm \cite{Li_SIMA09,Li_Nonlinearity09, LiChengZhang_SIAP11, Lee_JMathPhys2014}.}
Here we present a brief proof. 
%%that combines different proofs and clarify some steps in known proofs; 
\end{remark}

\end{comment} 
%%%%%%%%%%%%%%%%%%%%%%

\begin{proof}
%%[Proof of Theorem \ref{t:PBEnergy}]
(1) Since $\hat{\phi}_{\rm min} \in L_{\rm per}^\infty(\Omega) $ 
by Theorem~\ref{t:CCPBenergy}, we verify
that $(\hat{c}_{\rm min}, \hat{D}_{\rm min})\in X_\rho.$ Let $(c, D) \in X_\rho$ and denote 
$\tilde{c} = c-\hat{c}_{\rm min}$ and $\tilde{D} = D - \hat{D}_{\rm min}$. 
By the divergence theorem and the periodic boundary condition, 
%the Gauss' law for $D$ and $D_{\rm min}$ (cf.\ \reff{Gauss4Ions})
the convexity of the function $u \mapsto u \log u $ $(u \ge 0)$, 
and \reff{definecmins} and \reff{defineDmin}, 
%%the mass conservation for $c$ and $c_{\rm min}$ (cf.\ \reff{mass}), 
we obtain
\begin{align*}
    & \hat{F}[c, D]-\hat{F}[\hat{c}_{\rm min}, \hat{D}_{\rm min}] 
    \\
    &\quad = \int_\Omega  \frac{1}{2\ve} \left( | \hat{D}_{\rm min} + \tilde{D}|^2 
    - |\hat{D}_{\rm min}|^2 \right) dx 
    \\
    &\quad \quad + \sum_{s=1}^M \int_\Omega \left[ 
    (\hat{c}_{{\rm min}, s} + \tilde{c}_s) \log ( \hat{c}_{{\rm min}, s} + \tilde{c}_s ) 
    -  \hat{c}_{{\rm min}, s} \log \hat{c}_{{\rm min}, s} \right] 
    \\
    &\quad 
    \ge - \int_\Omega \nabla \hat{\phi}_{\rm min} \cdot ( D - \hat{D}_{\rm min})  \, dx 
    + \sum_{s=1}^M \int_\Omega \tilde{c}_s ( 1 + \log \hat{c}_{{\rm min}, s})  \, dx
    \\
    & \quad = \sum_{s = 1}^M \int_\Omega q_s \hat{\phi}_{\rm min}
    ( c_s - \hat{c}_{{\rm min}, s}) \, dx
    \qquad \mbox{[by integration by parts and Gauss' law \reff{Gauss4Ions}]}
    \\
    & \quad \quad 
    + \sum_{s=1}^M \int_\Omega( c_s - \hat{c}_{{\rm min},s}) 
    \left[ 1 + \log N_s - \log \left( \int_\Omega e^{-q_s \hat{\phi}_{\rm min}(y)} dy \right)
    - q_s \hat{\phi}_{\rm min} \right] dx
    \qquad \mbox{[by \reff{definecmins}]}
    \\
    & \quad = \sum_{s = 1}^M \left[ 1 + \log N_s - \log \left( 
    \int_\Omega e^{-q_s \hat{\phi}_{\rm min}(y) } dy\right) \right] 
    \int_\Omega (c_s - \hat{c}_{{\rm min}, s})\, dx
    \\
    & \quad = 0. \qquad \mbox{[by mass conservation \reff{mass}]}
\end{align*}
Hence $(\hat{c}_{\rm min}, \hat{D}_{\rm min})$ is a minimizer of $\hat{F}: X_\rho \to \R \cup \{ + \infty \}.$
The uniqueness follows from the strict convexity of the functional $\hat{F}.$

(2) Since $\hat{\phi}_{\rm min} \in L_{\rm per}^\infty(\Omega)$ 
(cf.\  Theorem~\ref{t:CCPBenergy}), the minimizer 
$(\hat{c}_{\rm min}, \hat{D}_{\rm min})$
satisfies (i). If $(\tilde{c}, \tilde{D}) \in \tilde{X}_0$, then 
$(\hat{c}_{\rm min}+t \tilde{c}, \hat{D}_{\rm min}
+t \tilde{D}) \in X_\rho$ and  
$
\hat{F}[\hat{c}_{\rm min}, \hat{D}_{\rm min}]\le \hat{F}[ 
\hat{c}+t \tilde{c}, \hat{D}_{\rm min}+t \tilde{D} ],
$
 if $|t| $ is small enough, and hence
$
(d/dt)|_{t=0} \hat{F}[ \hat{c}_{\rm min}+t \tilde{c}, \hat{D}_{\rm min}+t \tilde{D} ] =0.
$
%%$(d/dt)|_{t=0} \hat{F}[ c+t \tilde{c}, D+t \tilde{D} ] =0$. 
This leads to 
\reff{tildeDc}.
Suppose $(c, D)\in X_\rho$ satisfies
(i) and (ii). Let $(\tilde{c}, \tilde{D}) = (\hat{c}_{\rm min}-c, \hat{D}_{\rm min}-D)\in \tilde{X}_0$.
Then we have 
\begin{align*}
     & \hat{F}[\hat{c}_{\rm min}, \hat{D}_{\rm min}]-\hat{F}[c, D] 
    \\
    &\quad = \int_\Omega  \frac{1}{2\ve} \left( | D + \tilde{D}|^2 
    - |D|^2 \right) dx + \sum_{s=1}^M \int_\Omega \left[ 
    (c_{ s} + \tilde{c}_s) \log ( c_{s} + \tilde{c}_s ) 
    -  c_{ s} \log c_{s} \right] 
    \\
    &\quad 
    \ge \int_\Omega  \frac{1}{\ve} D \cdot \tilde{D}  \, dx 
    + \sum_{s=1}^M \int_\Omega \tilde{c}_s ( 1 + \log c_{s})  \, dx
    \qquad \mbox{[by the convexity of $u \mapsto u\log u$]}
    \\
    & \quad = \int_\Omega  \frac{1}{\ve} D \cdot \tilde{D}  \, dx 
     + \sum_{s=1}^M \int_\Omega \tilde{c}_s \log c_{s}  \, dx
     \qquad \mbox{[by mass conservation \reff{mass} for $\hat{c}_{\rm min}$ and $c$]}
     \\
    & \quad = 0. \qquad \mbox{[by \reff{tildeDc}]}
\end{align*}
Hence, $(c, D)$ is also a minimizer and $(c, D) = (\hat{c}_{\rm min}, \hat{D}_{\rm min})$, since the minimizer is unique. 
\end{proof}

\section{Finite-Difference Approximations}
\label{s:Discretization}

%%{\textcolor{blue}{structure of this part: (1) finite different grid - notations, definition, 
%%and the discrete energy functionals for both Poisson and PB;  (2) brief theorems and proofs
%%of the existence and uniqueness and characterization of energy minimizers of discrete
%%energy minimizers; (3) finite difference error estimates - we are still working on that.}}

%%We discretize the energy functionals \eqref{PoissonEnergy} and \reff{PBEnergy}
%%by finite differences and prove some basic properties of the 
%%discretized energy functionals. 
%%%%Similar to the analysis in section~\ref{s:EnergyMin}, 
%%%%our approach is to we study the 
%%%%discrete electrostatic potentials and express the discrete electric 
%%displacement as
%%%%the discrete gradient of the potential, multiplied by the discrete dielectric coefficient. 
%%We then provide the error estimates for the 
%%finite-difference approximations of the energy minimizers and the minimum energy values. 

We shall focus on the dimension $d = 3$ from now on. 
The case that the dimension $d = 2$ is similar and simpler. 
Moreover, since we focus on the local algorithms and their convergence, 
we consider for the simplicity of presentation only uniform 
finite-difference grids. 

%%%%%%%%%%%%%%%%%%%%%%%%%%%%%%
%%\subsection{Notations and Lemmas}
%%\subsection{Discrete differential operators}
\subsection{Finite-difference operators}
\label{ss:FDnotations}

Let $N \ge 1 $ be an integer. We cover $\overline{\Omega} 
= [0, L]^3$ with a uniform finite-difference grid of size $h = L/N.$ 
Denote $h\Z^3
= \{ (ih, jh, kh): i, j, k \in \Z \}$. 
For any (complex-valued) grid function 
%%(i.e., a function defined on grid points)
$\phi: h\Z^3 \to \C$ and any $i, j, k \in \Z$, we denote 
$ \phi_{i,j,k} = \phi(ih, jh, kh)$ and 
%%{\color{blue}{[I changed $\delta_i$ to $\partial^h_i$ as later we use 
%%$\delta_i $ a lot for others. -Bo]}}
%%for all $i,j,k\in \Z.$ We define 
\[
\partial^h_1 \phi_{i,j,k} = \frac{ \phi_{i+1, j, k} - 
\phi_{i, j, k}}{h}, \quad
\partial^h_2 \phi_{i,j,k} = \frac{ \phi_{i, j+1, k} - 
\phi_{i, j, k}}{h}, \quad
\partial^h_3 \phi_{i,j,k} = \frac{\phi_{i, j, k+1} - 
\phi_{i, j, k}}{h}. 
\]
We define the discrete forward gradient 
%%$\nabla_h \phi: h \Z^3\to \C^3$ by 
$\nabla_h \phi = (\partial^h_1\phi, \partial^h_2 \phi, \partial^h_3 \phi)$ on $h\Z^3$
and the discrete backward gradient $\nabla_{-h} \phi$ by 
%%: h\Z^3 \to \C^3$ by 
%%\BL{Check the definition.}
%$\nabla_h \phi_{i, j, k} = (\partial^h_1 \phi_{i, j,k}, 
%(\partial^h_2 \phi)_{i, j, k}, (\partial^h_3 \phi)_{i, j, k})$
$
\nabla_{-h} \phi_{i, j, k} = (\partial^h_1 \phi_{i-1, j,k}, 
\partial^h_2 \phi_{i, j-1, k}, \partial^h_3 \phi_{i, j, k-1})
$ for all $i, j, k \in \Z.$
The discrete Laplacian $\Delta_h \phi: h\Z^3 \to \C$ is defined to be 
$\Delta_h \phi = \nabla_{-h} \cdot \nabla_h \phi = \nabla_h \cdot \nabla_{-h} \phi,$ 
with the standard seven-point stencil. 
Given $\Phi = (u, v, w): h\Z^3 \to \C^3$, we define the discrete forward 
and backward divergence $\nabla_h \cdot \Phi \to \C$ and 
$\nabla_{-h} \cdot \Phi \to \C,$ respectively, by
\begin{align*}
&(\nabla_h \cdot \Phi)_{i, j, k} = (\partial^h_1 u)_{i,jk} +
(\partial^h_2 v)_{i, j, k} + (\partial^h_3 w)_{i, j,k}, 
\\
&(\nabla_{-h} \cdot \Phi)_{i, j, k} = (\partial^h_1 u)_{i-1,j,k} +
(\partial^h_2 v)_{i, j-1, k} + (\partial^h_3 w)_{i, j,k-1}.
\end{align*}

A grid function $\phi: h\Z^3 \to \C$ is $\overline{\Omega}$-periodic, 
%%boundary condition, we embed the grid into the infinite grid (or lattice) 
%%Such a function is $\overline{\Omega}$-periodic 
%%equivalently $N$-periodic in each coordinate direction, 
if $\phi_{i+N, j, k} = \phi_{i, j+N, k} = 
\phi_{i, j, k+N} = \phi_{i,j,k} $ for all $i, j, k \in \Z.$ 
%%\qquad \forall i, j, k \in \Z. 
Given two $\overline{\Omega}$-periodic grid functions $\phi, \, \psi: h\Z^3 \to \C$, we define
%%{\textcolor{blue}{[$\nabla_h$ not defined yet.]}}
\begin{align}
\label{phipsih}
&\langle \phi, \psi \rangle_h  =
   h^3 \sum_{i,j,k=0}^{N-1} \phi_{i,j,k} \overline{\psi_{i,j,k}} \qquad \mbox{and} \qquad 
   \| \phi \|_h = \sqrt{ \langle \phi, \phi \rangle}_h, 
   %\qquad \forall \phi, \psi \in V_h + \sqrt{-1}V_h,
   \\
   \label{dphidpsih}
    &\langle \nabla_h \phi, \nabla_h \psi \rangle_h  =
    h^3 \sum_{i,j,k=0}^{N-1} (\nabla_h \phi)_{i,j,k}
   \cdot \overline{(\nabla_h\psi)_{i,j,k}} \quad \mbox{and} \quad 
   \| \nabla_h \phi \|_h = \sqrt{ \langle \nabla_h \phi, \nabla_h \phi \rangle}_h,
   %%\qquad \forall \phi, \psi \in V_h+\sqrt{-1} V_h. 
\end{align}
where an over line denotes the complex conjugate. 
For any $\overline{\Omega}$-periodic grid function $\phi: h\Z^3 \to \C$, we define the discrete average
%{\color{blue}{[This is a new notation. I found it very useful. -Bo. How about replace it by $\mathscr{A}_h$? OK. We need also to change that for the integral case. -Bo]}}
\begin{equation}
\label{Aveh}
    \sA_h (\phi) = \frac{1}{N^3} \sum_{i,j,k=0}^{N-1} \phi_{i, j, k}
    = \left( \frac{h}{L} \right)^3\sum_{i,j,k=0}^{N-1} \phi_{i, j, k}.
\end{equation}

The proof of the following lemma is given in Appendix: 

\begin{lemma}
%\begin{compactenum}
    \label{l:FourierBasis} 
Let $\phi, \, \psi: h\Z^3 \to \C$ and $\Phi: h\Z^3 \to \C^3$ be 
$\overline{\Omega}$-periodic. The following hold true: 
%%Let $\phi, \psi \in V_h + V_h \sqrt{-1}$ and    $\Phi \in (V_h+V_h \sqrt{-1})^3.$ We have: 
\begin{compactenum}
    \item[{\rm (1)}]
    {\em The first discrete Green's identity:}
    $  \langle \nabla_{ \pm h} \cdot \Phi, \phi\rangle_{h} = - \langle \Phi, 
 \nabla_{\mp h} \phi \rangle_h;$

\item[{\rm (2)}]
 %%\[ 
%% \langle \nabla_{ h} \cdot \Phi, \phi\rangle_{h} = - \langle \Phi, 
 %%\nabla_{- h} \phi \rangle_h \quad \mbox{and} \quad 
%% \langle \nabla_{- h} \cdot \Phi, \phi\rangle_{h} = - \langle \Phi, 
%% \nabla_{ h} \phi \rangle_h; 
%% \]
{\em The second discrete Green's identity:}
$\langle \nabla_h \phi, \nabla_h \psi \rangle_h = 
- \langle \Delta_h \phi, \psi \rangle_h$. 

%%\item[{\rm (3)}] 

\item[{\rm (3)}]
{\em The discrete Poincar{\'e}'s inequality:} 
$\| \phi \|_h \le (L/4\sqrt{3}) \| \nabla_h \phi \|_h $ if $\sA_h(\phi) = 0.$
%%for all $\phi \in \mathring{V}_h+\mathring{V}_h \sqrt{-1}.$
%\end{compactenum}
\qed
\end{compactenum}
\end{lemma}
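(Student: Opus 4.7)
The plan is to establish parts (1) and (2) by discrete summation by parts using the $\overline{\Omega}$-periodicity, and to establish part (3) via a discrete Fourier analysis.

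For part (1), I would expand $\langle \nabla_h \cdot \Phi, \phi\rangle_h$ componentwise and treat each term separately. For the first component,
\[
\langle \partial_1^h u, \phi\rangle_h = h^2 \sum_{i,j,k=0}^{N-1} \bigl( u_{i+1,j,k} - u_{i,j,k}\bigr) \overline{\phi_{i,j,k}}.
\]
Shift the index $i\to i-1$ in the sum containing $u_{i+1,j,k}$; $\overline{\Omega}$-periodicity of both $u$ and $\phi$ keeps the summation range invariant and produces $u_{i,j,k}\overline{\phi_{i-1,j,k}}$. Regrouping yields $-\langle u, (\nabla_{-h}\phi)_1\rangle_h$. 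Summing the three components gives the first identity for the forward divergence; the backward case is identical after exchanging roles.

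Part (2) is then immediate: applying part (1) with $\Phi = \nabla_h \phi$ and using the factorization $\Delta_h = \nabla_{-h}\cdot \nabla_h$ gives
\[
\langle \Delta_h \phi,\psi\rangle_h = \langle \nabla_{-h}\cdot \nabla_h \phi,\psi\rangle_h = -\langle \nabla_h \phi, \nabla_h \psi\rangle_h,
\]
which is the second Green's identity after conjugation.

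For part (3), I would introduce the discrete Fourier basis
\[
\xi^{(l,m,n)}_{i,j,k} = L^{-3/2} e^{2\pi\sqrt{-1}(li+mj+nk)/N}, \qquad 0 \le l,m,n \le N-1,
\]
and verify orthonormality in $\langle\cdot,\cdot\rangle_h$ by a direct geometric-series computation. Each $\xi^{(l,m,n)}$ is an eigenfunction of $\partial_1^h$ with eigenvalue $\mu_l = (e^{2\pi\sqrt{-1} l/N}-1)/h$ of modulus squared $4\sin^2(\pi l/N)/h^2$, and analogously for $\partial_2^h,\partial_3^h$. Expanding $\phi = \sum_{\alpha}\hat\phi_\alpha \xi^\alpha$ and applying Parseval gives
\[
\|\phi\|_h^2 = \sum_{\alpha\ne 0} |\hat\phi_\alpha|^2,
\qquad
\|\nabla_h\phi\|_h^2 = \frac{4}{h^2}\sum_\alpha |\hat\phi_\alpha|^2 \bigl[\sin^2(\pi l/N)+\sin^2(\pi m/N)+\sin^2(\pi n/N)\bigr],
\]
where the zero-mode coefficient $\hat\phi_{(0,0,0)}$ vanishes by $\sA_h(\phi)=0$. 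For every remaining multi-index, at least one of $l,m,n\in\{1,\dots,N-1\}$, so the bracket is bounded below by $\sin^2(\pi/N)$, and Jordan's inequality $\sin(\pi/N)\ge 2/N$ converts this to the constant $L^2/16$ (up to the stated factor).

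The summation-by-parts steps in (1)–(2) are routine once the periodicity shift is set up. The main technical hurdle lies in part (3): assembling the discrete Fourier machinery — orthonormality of $\{\xi^{(l,m,n)}\}$, the eigenvalue identity for $\partial_j^h$, and a careful bookkeeping of Parseval for the gradient — and then extracting the sharp lower bound on the sum of squared sines when only the zero mode is excluded. Since that minimum is attained when just one of $l,m,n$ is nonzero, the resulting Poincaré constant is governed purely by the one-dimensional estimate $\sin(\pi/N)\ge 2/N$, as expected.
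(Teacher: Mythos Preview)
Your proposal is correct and follows exactly the paper's approach: summation by parts with periodicity for (1), the factorization $\Delta_h=\nabla_{-h}\cdot\nabla_h$ for (2), and the discrete Fourier expansion plus Jordan's inequality for (3). Your remark that the spectral-gap argument yields the constant $L/4$ rather than $L/(4\sqrt{3})$ is accurate—the minimum of $\sin^2(\pi l/N)+\sin^2(\pi m/N)+\sin^2(\pi n/N)$ over nonzero $(l,m,n)$ is only $\sin^2(\pi/N)$, and the paper's final step asserting the factor $48/L^2$ does not follow from the preceding bound.
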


%{\color{blue}{[We may need some descriptions of Fourier basis functions, projections, etc. 
%Some are in the proof of Poincare inequality, now in Appendix. We may need cite some
%existing results, as some lemma. Or: we can put them in Section 4 on error estimates. -BL]}}

%{\textcolor{blue}{[Perhaps we should move the proof to appendix? Or keep it
%unless referees ask us to remove it or move the proof to appendix. -Bo]}}

In what follows, we shall consider real-valued grid functions. We define
\begin{align}
\label{Vh}
    &V_h = \{ \mbox{all $\overline{\Omega}$-periodic grid functions } \phi: h\Z^3\to \R \},
    \\
    \label{Vh0}
    &\mathring{V}_h = \left\{ \phi \in V_h: \sA_h(\phi) = 0 \right\}.
 %%   \\
    %%\sum_{i,j,k=0}^{N-1}  \phi_{i, j, k} = 0 \right\}.
%%\label{Mh}
%%&\M = \{ (i+1/2, j, k), (i, j+1/2, k), (i, j, k+1/2): i, j, k \in \Z\},
%%\\
%%\label{Wh}
%%   & W_h = \{ \mbox{all $\overline{\Omega}$-periodic functions } u:  h\M \to \R \},
   \end{align}
%%We use the symbol $\sqrt{-1}$ instead of $i$ to denote the imaginary unit: 
%%i.e., the complex number defined by 
%%$\sqrt{-1}^2 = -1.$ We denote 
%%$V_h + V_h \sqrt{-1} = \{ \phi +\psi \sqrt{-1}: \phi, \psi \in V_h \}, $ and 
%%$\mathring{V}_h + \mathring{V}_h \sqrt{-1} = \{ \phi+\psi \sqrt{-1}: \phi, \psi \in 
%%\mathring{V}_h \}.$
%%\begin{align*}
%%&V_h + V_h \sqrt{-1} = \{ \phi +\psi \sqrt{-1}: \phi, \psi \in V_h \}, 
%%\\
%%&\mathring{V}_h + \mathring{V}_h \sqrt{-1} = \{ \phi+\psi \sqrt{-1}: \phi, \psi \in 
%%\mathring{V}_h \}. 
%%\end{align*}
%%By \reff{Vh}, $V_h + V_h \sqrt{-1}= \{ \phi + \sqrt{-1} \psi: \phi, \psi \in V_h \}$ %consists of all 
%complex-valued, $\overline{\Omega}$-periodic grid functions. 
%where $u: h\M \to \C $ is $\overline{\Omega}$-periodic if 
%$u(\xi + hN e) = u(\xi)$ for any $\xi \in h\M $ and any$e \in \{ (1, 0, 0), (0, 1, 0), (0, 0, 1)\}.$
The restriction of any $\phi \in C_{\rm per}(\overline{\Omega})$ onto $h \Z^3$, 
still denoted $\phi$, is in $V_h.$ 
Let $\ve \in C_{\rm per}(\overline{\Omega})$ satisfy
\reff{epsilon}. 
%%For each $h > 0$, the restriction of 
%%$\ve$ to $h \Z^3$ is also denoted by $\ve$ when no confusion arises. Clearly, 
%%$\ve\in V_h$. 
We define a new function on half grid points 
$(i+1/2, j, k), $ $(i, j+1/2, k)$, and $(i, j, k+1/2)$, also denoted $\ve$, by 
%%when no confusion arises, by 
\begin{equation}
\label{halfve}
\ve_{i+1/2, j, k } = \frac{\ve_{i, j, k} + \ve_{i+1, j, k}}{2}, 
\ \ 
\ve_{i, j+1/2, k} = \frac{ \ve_{i, j, k} + \ve_{i, j+1, k}}{2}, 
\ \ 
\ve_{i, j, k+1/2} = \frac{\ve_{i, j, k} + \ve_{i, j, k+1}}{2}
\end{equation} 
for all $i, j, k \in \Z.$ For any $\phi \in V_h$, we define
$A_h^\ve [\phi ] \in V_h $ by 
%%{\color{blue}{[I tried to use the notation $\delta_1, ...$ but it 
%%does not save space. The current expression clearer. So, no change here. -Bo]}}
%%+ V_h \sqrt{-1} $ by 
%{\color{blue}{[Can we make a connection of this scheme to the one in Pruitt's 2015 paper? -Bo]}}
%%%%%%%%%%%%%%%%%%%%
\begin{comment}
\begin{align}
\label{Ahvephi}
(A_h^\ve [\phi])_{i, j, k} = 
&\frac{1}{h^2}
[ \ve_{i+1/2, j, k} \left( \phi_{i+1, j, k }-\phi_{i,j,k} \right)
- \ve_{i-1/2, j, k} \left( \phi_{i, j, k}-\phi_{i-1, j, k}\right)
\nonumber \\
& 
+ \ve_{i, j+1/2, k} \left( \phi_{i, j+1, k} - \phi_{i,j,k} \right)
- \ve_{i, j-1/2, k} \left( \phi_{i, j, k} - \phi_{i, j-1, k} \right)
\nonumber \\
&
+ \ve_{i, j, k+1/2} \left(\phi_{i,j, k+1} - \phi_{i, j, k}\right)
- \ve_{i,j,k-1/2} \left( \phi_{i,j,k} -\phi_{i,j,k-1} \right)
]
%%\qquad \forall i, j, k \in \Z.
\end{align}
\end{comment}
%%%%%%%%%%%%%%%%%%%%%%%
%\zhou{How about ?
\begin{align}\label{Ahvephi}
A_h^\ve [\phi]_{i, j, k} = \partial^h_1(\ve_{i-1/2, j, k}\partial^h_1 \phi_{i-1,j,k}) 
+\partial^h_2(\ve_{i, j-1/2, k}\partial^h_2 \phi_{i,j-1,k})
+\partial^h_3(\ve_{i, j, k-1/2}\partial^h_3 \phi_{i,j,k-1})
\end{align}
%}
for all $i, j, k \in \Z.$
Clearly, $A_h^\ve: V_h  \to V_h$ is a linear 
%%+ V_h \sqrt{-1}$ is a linear
operator.  If $\ve = 1$ identically, then $A_h^\ve = \Delta_h,$ which is
the discrete Laplacian. 
We denote for any $\phi, \psi \in V_h$ that 
\begin{align*}
&\langle \nabla_h \phi, \nabla_h \psi\rangle_{\ve, h}
= h^3 \sum_{i,j,k=0}^{N-1}
(\ve_{i+1/2, j, k}\partial_1^h\phi_{i,j,k} \partial_1^h
\psi_{i,j,k}
+  \ve_{i, j+1/2, k} \partial_2^h\phi_{i, j, k} \partial_2^h \psi_{i,j,k} 
\\
& \qquad \qquad \qquad \qquad \qquad \quad 
+  \ve_{i, j, k+1/2} \partial_3^h\phi_{i, j, k}\partial_3^h \psi_{i,j,k} ), 
\\
& \| \nabla_h \phi \|_{\ve, h} = \sqrt{ \langle \nabla_h \phi, \nabla_h \phi \rangle_{\ve, h}}. 
%%{V}_h + V_h \sqrt{-1}. 
\end{align*}
%%%%%%%%%%%%%%%%%%%%%%%%%%%%%%%%%%%%%%%%%%%%%%%
\begin{comment}
\begin{align*}
&\langle \nabla_h \phi, \nabla_h \psi\rangle_{\ve, h}
= h \sum_{i,j,k=0}^{N-1}
\left[ \ve_{i+1/2, j, k}
\left( \phi_{i+1, j, k }-\phi_{i,j,k} \right)
{ \left( \psi_{i+1, j, k} - \psi_{i, j, k} \right) }
\right.
\\
& \qquad \qquad \qquad \quad 
+  \ve_{i, j+1/2, k} \left( \phi_{i, j+1, k }-\phi_{i,j,k} \right)
{ \left( \psi_{i, j+1, k} - \psi_{i, j, k} \right) }
\\
& \qquad \qquad \qquad \quad 
\left. 
+  \ve_{i, j, k+1/2} \left( \phi_{i, j, k+1 }-\phi_{i,j,k} \right)
{ \left( \psi_{i, j, k+1} - \psi_{i, j, k} \right) } \right], 
\\
& \| \nabla_h \phi \|_{\ve, h} = \sqrt{ \langle \nabla_h \phi, \nabla_h \phi \rangle_{\ve, h}}. 
%%{V}_h + V_h \sqrt{-1}. 
\end{align*}
\end{comment}
%%%%%%%%%%%%%%%%%%%%%%%
The discrete Poincar{\' e}'s inequality implies that 
$\langle\cdot, \cdot \rangle_{\ve, h}$ is an inner product and 
$ \| \cdot \|_{\ve, h}$ the corresponding norm
of $\mathring{V}_h.$ If $\ve = 1$ then these are the same
as defined in \reff{dphidpsih}. 
%%+ \mathring{V}_h \sqrt{-1}; $ cf.\ \reff{Vh0}  
%%for the notation $\mathring{V}_h.$

%%{\color{blue}{[Need to change some notations. -BL]}}

%We denote $V_h(\R)$, $\mathring{V}_h(\R)$, and $W_h(\R)$ the sub-classes of 
%functions of $V_h$, $\mathring{V}_h$, and $W_h$, respectively, that consist
%of real-valued functions. 

%%{\color{blue}{[Need to change $\rho$ to $\mathring{\rho}_h$. -BL]}}
%To discretize a given charge density $\rho \in L^2(\Omega)$, we shall consider two linear operators
%$\p_h: L^2(\Omega)\to V_h$ and $\mathring{\p}_h: L^2_0(\Omega) \to \mathring{V}_h$ for each 
%$h > 0$, where $L^2_0(\Omega) = \{ f \in L^2(\Omega): \sA_\Omega(f) = 0 \}. $ Specific forms
%of these operators will be given when we study the error estimates in section~\ref{s:ErrorEstimates}. 
%We shall denote $\rho_h = \p_h \rho_h \in V_h$, and $\mathring{\rho}_h = \mathring{\p}_h \rho$
%if $\sA_\Omega(\rho) = 0.$
%Given $\ve \in V_h$ as the restriction of an $\ve\in C_{\rm per}(\R^3)$ satisfying \reff{epsilon}

Let $\ve \in C_{\rm per}(\overline{\Omega})$ satisfy \reff{epsilon}
and let ${\rho}^h \in \mathring{V}_h.$
%%Assume there exist $\ve_{\rm min} > 0$  
%%and $\ve_{\rm max}> 0$, independent of $h$, such that 
%%\begin{equation}
%%  \label{veboundh}
%%  0 < \ve_{\rm min} \le \ve \le \ve_{\rm max} \qquad     \mbox{on } h \Z^3
%%\end{equation}
%%for all $h > 0.$
Define
\[
I_h[\phi] = \frac{1}{2} \| \nabla_h \phi \|^2_{\ve, h} - \langle 
%%\zhou{\rho^\p}, 
{\rho}^h, \phi \rangle_h \qquad \forall \phi \in \mathring{V}_h.
\]
As usual, we denote by $ \| \cdot \|_\infty$ the maximum-norm on 
$V_h$.
%%$ \| \phi \|_{\infty} = \max_{i, j, k \in \Z} |\phi_{i, j, k}|$
%%for any $\phi \in V_h$. 
We use the notation $\sup_{h}$ to denote the supremum 
over $h = L/N$ for all $N \in \N.$

\begin{lemma}
\label{l:Ihmin}
\begin{compactenum}
    \item[{\rm (1)}]
There exists a unique minimizer $\phi^h_{{\rm min}} $ of $I_h: \mathring{V}_h \to \R. $  
 \item[{\rm (2)}]
 If ${\phi} \in \mathring{V}_h$ then the following are equivalent: 
 {\rm (i)} $\phi = \phi^h_{\rm min}$; 
 {\rm (ii)}   $\langle \nabla_h \phi, \nabla_h \xi \rangle_{ \ve, h }
    = \langle {\rho}^h, \xi \rangle_h $ for all $\xi \in \mathring{V}_h$; and 
 {\rm (iii)}  $A_h^\ve [\phi ] = -{\rho}^h $ on $h \Z^3.$
%%%%%%%%%%%%%%%%%%%%%%%%
\begin{comment}
 \begin{compactenum}
 \item[{\rm (i)}] $\phi = \phi^h_{\rm min}$; 
 \item[{\rm (ii)}]
   $\langle \nabla_h \phi, \nabla_h \xi \rangle_{ \ve, h }
    = \langle {\rho}^h, \xi \rangle_h $ for all $\xi \in \mathring{V}_h$; 
   \item[{\rm (iii)}]
   $A_h^\ve [\phi ] = -{\rho}^h $ on $h \Z^3.$
% $\nabla_h \cdot D = \rho-\bar{\rho}$ on $h\Z^3$ with
%   $\bar{\rho} = (1/N^3) \sum_{i,j,k=0}^{N-1} \rho_{i,j,k}$ 
%and $D = (u, v, w) \in Y_h$ given by
\end{compactenum}
\end{comment}
%%%%%%%%%%%%%%%%%%%%%%%%%%%
\item[{\rm (3)}]
{\rm (Uniform discrete $L^\infty$ and $W^{1,\infty}$ stability \cite{Pruitt_M2AN2015})}
   The linear operator $A_h^\ve: \mathring{V}_h \to \mathring{V}_h$ is 
   invertible  and 
$ \| (A_h^\ve)^{-1} \|_\infty + \max_{m=1, 2, 3} 
   \| \partial^h_m (A_h^\ve)^{-1} \|_\infty \le C $
with $C>0$ independent of $h.$
If $\sup_{h} \| {\rho}^{h} \|_\infty < \infty$, then 
$\|\phi^h_{\rm min} \|_\infty+ \| \nabla_h \phi^h_{\rm min} \|_\infty
\le C$ with $C >0$ independent of $h.$
%%where $\| \cdot \|$ is the $l^2$ norm of vectors in $\R^3.$
\end{compactenum}
\end{lemma}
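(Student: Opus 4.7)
The plan is to treat the three parts in order, leveraging the fact that $\mathring{V}_h$ is a finite-dimensional real vector space on which $\langle \nabla_h \cdot, \nabla_h \cdot \rangle_{\ve,h}$ is an inner product (by the discrete Poincar{\'e} inequality and \reff{epsilon}), so standard Hilbert-space arguments apply verbatim.

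For Part (1), I would observe that $\phi \mapsto \| \nabla_h \phi \|^2_{\ve,h}$ is a strictly convex, coercive quadratic form on $\mathring{V}_h$ (coercivity follows from $\|\nabla_h \phi\|_{\ve,h}^2 \ge \ve_{\min} \|\nabla_h \phi\|_h^2$ and discrete Poincar{\'e}), while $\phi \mapsto \langle \rho^h, \phi\rangle_h$ is linear and bounded. Hence $I_h$ is strictly convex and coercive on the finite-dimensional space $\mathring{V}_h$, which yields existence and uniqueness of $\phi^h_{\min}$.

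For Part (2), the equivalence (i)$\Leftrightarrow$(ii) is just the first-order optimality condition: computing $(d/dt)|_{t=0} I_h[\phi + t\xi] = 0$ for all $\xi \in \mathring V_h$ gives the variational identity, and conversely this identity combined with strict convexity of $I_h$ forces $\phi$ to be the minimizer. For (ii)$\Leftrightarrow$(iii), I would apply summation by parts (the first discrete Green's identity from Lemma~\ref{l:FourierBasis}(1)) to rewrite
\[
\langle \nabla_h \phi, \nabla_h \xi \rangle_{\ve,h} = -\langle A_h^\ve[\phi], \xi \rangle_h
\]
for all $\xi \in \mathring V_h$, using the definition \reff{Ahvephi} of $A_h^\ve$. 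Since both $A_h^\ve[\phi] + \rho^h$ (which has zero average, as can be checked directly from \reff{Ahvephi} by telescoping and periodicity, together with $\sA_h(\rho^h)=0$) and arbitrary $\xi$ range over $\mathring V_h$, the identity $\langle A_h^\ve[\phi] + \rho^h, \xi\rangle_h = 0$ for all such $\xi$ forces $A_h^\ve[\phi] = -\rho^h$ pointwise.

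For Part (3), the invertibility of $A_h^\ve : \mathring V_h \to \mathring V_h$ is immediate from Parts (1)--(2): given any $g \in \mathring V_h$, apply the already-proved existence/uniqueness with $\rho^h$ replaced by $-g$ to produce the unique preimage. The uniform $L^\infty$ and $W^{1,\infty}$ bounds on $(A_h^\ve)^{-1}$, however, are the substantive content, and these I would simply cite from \cite{Pruitt_M2AN2015,Pruitt_NumerMath2014} (with \cite{Beale_SINUM2009} as a precursor); these are proved there via a discrete Green's function analysis for variable-coefficient divergence-form operators on periodic grids. Given those stability estimates, the bound on $\phi^h_{\min}$ follows at once from $\phi^h_{\min} = (A_h^\ve)^{-1}(-\rho^h)$ and the hypothesis $\sup_h \|\rho^h\|_\infty < \infty$. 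The main obstacle is therefore Part (3), specifically the uniform discrete $W^{1,\infty}$ stability of $(A_h^\ve)^{-1}$ with variable $\ve$, which is nontrivial and for which I would rely entirely on the cited results rather than reprove.
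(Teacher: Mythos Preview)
Your proposal is correct and matches the paper's approach essentially line for line: the paper declares Parts (1) and (2) ``standard'' (with the same summation-by-parts identity $\langle \nabla_h \phi, \nabla_h \xi\rangle_{\ve,h} = -\langle A_h^\ve[\phi], \xi\rangle_h$ underlying (ii)$\Leftrightarrow$(iii)) and cites Pruitt \cite{Pruitt_M2AN2015,Pruitt_NumerMath2014} and Beale \cite{Beale_SINUM2009} for Part (3), exactly as you do.
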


\begin{proof}
Parts (1) and (2) are standard.  Part (3) is 
proved by Pruitt \cite{Pruitt_M2AN2015,Pruitt_NumerMath2014} (cf.\ also
\cite{Beale_SINUM2009}).
\end{proof}

We define a discretized electric displacement as a vector-valued function
$D = (u, v, w): h(\Z+1/2)^3\to \R^3$ with 
%%, defined on centers of all grid boxes, and denote $D = (u, v, w)$ with 
\begin{equation}
    \label{Dijk}
D_{i+1/2, j+1/2, k+1/2} = (u_{i+1/2, j, k}, v_{i, j+1/2, k}, w_{i, j, k+1/2}) 
\qquad \forall i, j, k \in \Z.
\end{equation}
Here, $u_{i+1/2, j, k}$, $v_{i, j+1/2, k}$, and $w_{i,j,k+1/2}$ are
approximations of 
the first, second, and third components of a 
displacement at $((i+1/2)h, jh, kh)$, $(ih, (j+1/2)h, kh)$, and 
$(ih, jh, (k+1/2)h)$, the midpoints of the corresponding edges of the grid box, 
respectively. 
%%We can extend such vector-valued function to be complex-valued:
%%$D: h(\Z+1/2)^3 \to \C^3.$
We denote
\begin{align}
\label{Yh}
Y_h = \{ \mbox{$\overline{\Omega}$-periodic functions } 
D = (u, v, w): h(\Z+1/2)^3\to \R^3 \mbox{ in the form \reff{Dijk}} \},
%\\
%&Y_h(\R)  = \{ D \in Y_h: D \mbox{ is real-valued}\},
%%+ Y_h \sqrt{-1} = \{ D + H\sqrt{-1}: D, H \in Y_h \}.
\end{align}
where $D: h(\Z+1/2)^3 \to \R^3$ is $\overline{\Omega}$-periodic
%For any $D = (u, v, w): h \M_h \to \R^3$, we denote 
%$D_{x, y, z} = D(hx, hy, hz)$ for any $(x, y, z) \in \M_h.$
%Such a discrete vector field is $\overline{\Omega}$-periodic, if
if $D(\xi + hN e) = D(\xi) $ for any $\xi 
\in h(\Z+1/2)^3$ and $e \in \{ (1, 0, 0), (0, 1, 0), (0, 0, 1)\}.$
%%The space $Y_h + \sqrt{-1} Y_h$ consists of all 
%%$\overline{\Omega}$-periodic, complex and vector-valued
%%functions $D = (u, v, w): h(\Z+1/2)^3 \to \C^3 $ with the form \reff{Dijk}. 
Given $D = (u, v, w) \in Y_h,$ we denote 
\begin{align*}
 \sA_h(D) &= ( \sA_h(u), \sA_h(v), \sA_h (w))
=\frac{1}{N^3} \sum_{i, j, k=0}^{N-1} 
(u_{i+1/2, j, k}, v_{i,j+1/2, k}, w_{i, j, k+1/2}). 
\end{align*}
%%and $\sA_h(v)$ and $\sA_h(w)$ similar. 
%\begin{align*}
%    \sA_h(D) &= ( \sA_h(u), \sA_h(v), \sA_h (w)) = \frac{1}{N^3} \sum_{i,j,k=0}^{N-1}
%    \left(u_{i+1/2, j, k},  v_{i, j+1/2, k},  w_{i, j, k+1/2} \right). \\
%&=\frac{1}{N^3} \left( \sum_{i,j,k=0}^{N-1} u_{i+1/2, j, k}, 
% \sum_{i,j,k=0}^{N-1} v_{i, j+1/2, k}, 
%     \sum_{i, j, k=0}^{N-1} w_{i, j, k+1/2} \right).
%\end{align*}
%%{\color{blue}[I removed the terminology ``zero total displacement" and ``condition of 
%%zero total filed. Not useful. We can simply use the notation $\sA_h.$ -BL]}
% We say that $D$ satisfies
% {\it the condition  of zero total displacement}, if $\sA_h(D) = 0$ in $\R^3.$ 
% Similarly, we say that $D/\ve\in Y_h$ satisfies 
% {\it the condition of zero total field}, if $\sA_h(D/\ve) = 0$ in $\R^3.$
%%$+Y_h \sqrt{-1}$,  
We also define the discrete divergence
$\nabla_h \cdot D: h\Z^3 \to \R $ and the discrete curl
$\nabla_h \times D: h(\Z+1/2)^3 \to \R^3$, respectively, by
\begin{align*}
%\label{nhcdotD}
&(\nabla_h \cdot D)_{i,j,k}= \frac{1}{h} \left(  u_{i+1/2,j,k}-u_{i-1/2,j, k}+v_{i,j+1/2, k}-v_{i,j-1/2,k} +w_{i,j,k+1/2}-w_{i,j,k-1/2} \right), 
\\
%\label{nhtimesD}
&(\nabla_h \times D)_{i+{1}/{2},j+{1}/{2},k+{1}/{2}}= \frac{1}{h}
\begin{pmatrix}
%%    v_{i,j+1/2,k}-v_{i,j+1/2,k+1}+w_{i,j+1,k+1/2}-w_{i,j,k+1/2}\\
%%   u_{i+1/2,j,k+1}-u_{i+1/2,j,k}+w_{i,j,k+1/2}-w_{i+1,j,k+1/2}\\
%%    u_{i+1/2,j,k}-u_{i+1/2,j+1,k}+v_{i+1,j+1/2,k}-v_{i,j+1/2,k}
w_{i,j+1,k+1/2}-w_{i,j,k+1/2} -v_{i,j+1/2,k+1} + v_{i,j+1/2,k} \\
u_{i+1/2,j,k+1}-u_{i+1/2,j,k} -w_{i+1,j,k+1/2} +w_{i,j,k+1/2} \\
v_{i+1,j+1/2,k}-v_{i,j+1/2,k} -u_{i+1/2,j+1,k}+ u_{i+1/2,j,k}
\end{pmatrix}.
\end{align*} 
Note that the discrete curl at $(i+1/2, j+1/2, k+1/2)$ is defined through 
the three grid faces of the grid box $(i,j,k)+[0, 1]^3$ sharing
the same grid $(i, j, k).$ Each component of the vector 
represents the total electric displacement, an algebraic sum of the corresponding 
components of $D$, through the four edges of such a face. 
For instance, the last component of the curl is the algebraic sum of  
$u_{i+1/2,j,k}$, $u_{i+1/2, j+1, k}$, $v_{i, j+1/2, k},$ and $v_{i+1,j+1/2,k}$
corresponding to the edges of the face on the plane $z = kh$ which 
is the square with vertices $(i, j, k)$, $(i+1, j,k)$, $(i+1,j+1,k),$
and $(i, j+1, k)$. The signs of the $u$ and $v$ values 
in the sum are determined by circulation directions; cf.\ Figure~3.1. 
Note also that the components of the  discrete curl are
$ \partial_2^h w_{i, j, k+1/2} - \partial_3^h v_{i, j+1/2, k},$
$\partial_3^h u_{i+1/2, j, k} - \partial_1^h w_{i, j, k+1/2}, $
and $ \partial_1^h v_{i, j+1/2, k} - \partial_2^h u_{i+1/2, j, k},$
respectively, approximating those of the curl of a differentiable vector field.

%%%%%%%%%%%%%%%%%
\begin{comment}
%the counterclockwise directions with the usual
%$xy$-cooridinate system, positive or negative if the current direction 
%points to a positive or negative coordinate direction.  
%for all $i, j, k \in \Z.$
%%{\color{blue}{[Should that be $(\nabla_h \times D)_{i, j, k}$ instead of 
%%$(\nabla_h \times D)_{i+1/2, j+1/2, k+1/2}$? -Bo]}} 
%%\zhou{Current form is good. Is $\nabla_h \times D$ a column vector or row vector -sz}
%%{\color{blue}{[It is vector-valued function defined on all the half grids: 
%%$\nabla_h \times D: h (\Z + 1/2)^3 \to \R^3$.  -BL]}}
%%Note that the discrete divergence for $D\in Y_h$ and that 
%%+Y_h \sqrt{-1}$ and that
%%for $\Phi \in  V_h^3$
%%+ V_h \sqrt{-1})^3$ 
%%are different, though both defined on $h\Z^3.$
%%{\textcolor{blue}{[Put this part inside the proof; combine two figures together
%%into one.]}}
%%{\textcolor{blue}{[Please check this paragraph. I changed electric
%%current to electric displacement. -Bo]}}
%%If $D = (u, v, w) \in Y_h$ and $(p, q, r) \in \Z^3$, then
%%{\color{blue}{[I removed a remark on the components of curl and Figure~1, 
%%they are not used later. -BL]}}

%%This remark and Figure~1 are not uesful. We should remove them. -BL]}}

Note that each component of the  discrete 
curl vector $(\nabla_h \times D)_{i+1/2, j+1/2, k+1/2}$ represents
the total electric displacement, an algebraic sum of the corresponding 
components of $D$, through the four edges of a face of the grid
box with $(i, j, k)$ a vertex. 
%%circulation of the is an algebraic sum of the corresponding components of $D$
%%represents the circulation of $D$, an algebraic sum of the corresponding components of 
%$D$ along the four edges of electric currents,
%%i.e., components of $D$ at the respective midpoints of edges. 
%%We remark that, by the definition of discrete curl operator, 
%%if $\nabla_h \times D = 0$ then for any face of a 
%grid box the circulation of $D$, an algebraic sum of 
%the corresponding component of $D$ along the four edges, vanishes. 
For instance, if the face is the square with the 4 vertices
$(i, j, k)$, $(i+1, j, k)$, $(i+1, j+1, k)$, and $(i, j+1, k)$, then 
the displacement is 
\begin{equation}
    \label{0Circulation}
u_{i+1/2, j, k} - u_{i+1/2, j+1, k} + v_{i+1,j+1/2, k} - v_{i, j+1/2, k}, 
\end{equation}
which is the third component (multiplied by $h$) of the discrete curl 
$(\nabla_h \times D)_{i+1/2, j+1/2, k+1/2}$; 
cf.\ Figure~\ref{f:curl}, where arrows indicate the sign in the algebraic sum
of the currents along the edges.

\begin{figure}[h]
  	\centering
  	%\includegraphics[scale=0.5]{curl.eps}
        \includegraphics[scale=0.5]{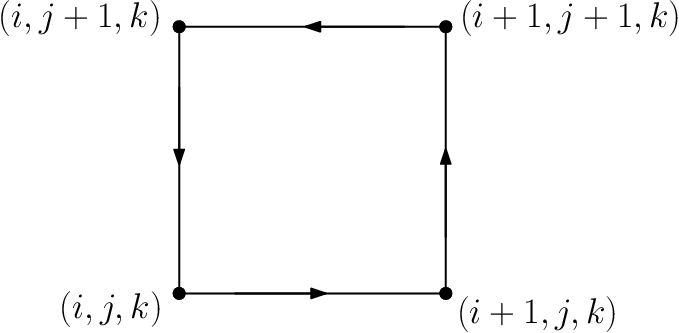}
  	\caption{The direction of the displacement along each edge corresponds to 
   the sign of the displacement in the sum in \eqref{0Circulation}. Counterclockwise arrows 
 %%point to the right or upper ward 
   indicate positive terms while clock wise arrows 
%%   %%those pointing to the left or downward 
   indicate negative terms.}
 \label{f:curl}
\end{figure}

\end{comment}
%%%%%%%%%%%%%%%%%%%%%%%%%%%%%%%%%%

\medskip

\begin{minipage}[c]{2 in}
%\begin{center}

\hspace{-16 pt}
\includegraphics[scale=0.44]{curl2.eps} $ \ $ 
\label{f:curl}
%\end{center}
\end{minipage}
\begin{minipage}{4.2 in} 
{\small {Figure~3.1.
%%\ref{f:curl}
The face of the grid box $(i, j, k)+[0, 1]^3$ sharing the vertex $(i, j,k)$
on which the last component of the curl 
 $(\nabla_h \times D)_{i+1/2, j+1/2, k+1/2}$ is defined. 
The counterclockwise direction of the displacement circulation along 
the edges determines the sign of the displacement components, 
positive (or negative) if the arrow points to a positive (or negative)
coordinate direction.}}
\end{minipage}

\medskip

Let $D = (u, v, w) \in Y_h$ and $\ve \in C_{\rm per}(\overline{\Omega})$ satisfy
\reff{epsilon}. We define $D/\ve \in Y_h$ by 
\begin{equation}
    \label{Doverve}
\left(\frac{D}{\ve}\right)_{i+1/2, j+1/2, k+1/2}
= \left( \frac{u_{i+1/2, j, k}}{ \ve_{i+1/2, j, k}}, 
\frac{v_{i, j+1/2, k}}{\ve_{i, j+1/2, k}}, 
\frac{w_{i, j, k+1/2}}{\ve_{i,j,k+1/2}}\right) \qquad \forall i, j, k \in \Z.
\end{equation}
If $\phi \in V_h$, we also define $ D_h^\ve[\phi] = (u, v, w) \in Y_h $ by 
 %%$+ Y_h \sqrt{-1}$ by 
\begin{align}
\label{uvephi}
&u_{ i+1/2, j, k} = - \ve_{i+1/2, j, k} \partial_1^h \phi_{i, j, k},
\ \
v_{ i,j+1/2, k} = - \ve_{i,j+1/2, k} \partial_2^h \phi_{i, j, k},
\ \ 
w_{ i, j, k+1/2} = - \ve_{i,j,k+1/2} \partial_3^h \phi_{i, j, k}.
%\\
%    &u_{ i+1/2, j, k} = - \ve_{i+1/2, j, k} (\phi_{i+1, j, k} - \phi_{i, j,k})/h, \\
 %   \label{vvephi}
 %   &v_{ i,j+1/2, k} = - \ve_{i,j+1/2, k} ( \phi_{i, j+1, k} - \phi_{i, j, k} )/h, %\\
%    \label{wvephi}
%    &w_{ i, j, k+1/2} = - \ve_{i,j,k+1/2} ( \phi_{i, j, k+1} - \phi_{i, j, k})/h.
\end{align}
%%We shall write  $D_h^\ve[\phi] = - \ve \nabla_h \phi$. 
It follows 
from the definition of $ A_h^\ve$ (cf.\ \reff{Ahvephi}) that 
%%{\color{blue}{[check this. -Bo]}}
\begin{equation}
    \label{AhDh}
A_h^\ve[\phi] =  -\nabla_h \cdot D_h^\ve [\phi] \qquad \forall \phi \in V_h.
%%+ V_h \sqrt{-1}. 
\end{equation}

\begin{lemma}
\label{l:curlfree}
    If $D = (u, v, w) \in Y_h$ satisfies $\nabla_h \times D = 0$ on
    $h(\Z+1/2)^3$ and 
    %%satisfies the condition of zero total displacement, i.e.,
    $\sA_h(D) = 0$ in $\R^3,$
    then there exists a unique $\phi \in \mathring{V}_h$ such that 
   %% $D = -\nabla_h \phi$,  i.e., 
   $D = D_h^\ve [\phi]$ with $\ve = 1$ identically. 
  %%  i.e., $D = (u, v, w) \in Y_h$ is given by 
   %% \reff{uvephi}--\reff{wvephi} with $\ve = 1.$
%\[
% D_{i+1/2, j+1/2, k+1/2} = - \left( \frac{\phi_{i+1, j, k} - \phi_{i,j,k}}{h}, 
%\frac{\phi_{i,j+1, k}-\phi_{i,j,k}}{h}, 
%\frac{\phi_{i,j,k+1}-\phi_{i,j,k}}{h} \right) \qquad \forall i,j,k\in \Z.
%\]
\end{lemma}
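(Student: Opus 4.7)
The plan is to construct $\phi$ directly by discrete path integration. The vanishing discrete curl will ensure that the resulting $\phi$ satisfies $D = D_h^1[\phi]$ globally (not just along the integration path), and the zero-average condition $\sA_h(D)=0$ will ensure that $\phi$ is periodic. Writing $D=(u,v,w)$, I set
\[
\phi_{i,j,k} = -h \sum_{l=0}^{i-1} u_{l+1/2,\,0,\,0}
-h \sum_{m=0}^{j-1} v_{i,\,m+1/2,\,0}
-h \sum_{n=0}^{k-1} w_{i,\,j,\,n+1/2}
\]
for $0\le i,j,k\le N-1$, corresponding to integration of $D$ along the path $(0,0,0)\to(i,0,0)\to(i,j,0)\to(i,j,k)$, and extend $\phi$ to $h\Z^3$ by periodicity.

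First I would verify that $D = D_h^1[\phi]$, i.e., by \reff{uvephi} with $\ve\equiv 1$, the three identities $\partial_1^h\phi_{i,j,k}=-u_{i+1/2,j,k}$, $\partial_2^h\phi_{i,j,k}=-v_{i,j+1/2,k}$, $\partial_3^h\phi_{i,j,k}=-w_{i,j,k+1/2}$. The $w$-identity is immediate from the definition. For the $v$-identity, $\phi_{i,j+1,k}-\phi_{i,j,k}$ contains the term $-h\,v_{i,j+1/2,0}$ plus a difference of the third sums; the vanishing of the first component of $\nabla_h\times D$, namely $\partial_3^h v_{i,j+1/2,n} = \partial_2^h w_{i,j,n+1/2}$, collapses that difference by telescoping to $-h(v_{i,j+1/2,k}-v_{i,j+1/2,0})$, and the two boundary terms cancel. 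The $u$-identity is analogous, using the second and third components of $\nabla_h\times D=0$ to telescope the differences in the second and third sums.

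The main obstacle is periodicity of $\phi$. Given that $D=D_h^1[\phi]$ on the fundamental cell, periodicity in $x$ reduces to $\sum_{l=0}^{N-1} u_{l+1/2,j,k}=0$ for every $(j,k)$, with analogous statements for $v$ and $w$. To prove this, I would sum the third component of $\nabla_h\times D=0$ over $l\in\{0,\dots,N-1\}$: the $v$-differences telescope and vanish by periodicity of $v$, leaving $\sum_l u_{l+1/2,j+1,k}=\sum_l u_{l+1/2,j,k}$, so the sum is independent of $j$. The second curl component gives independence of $k$. Hence the common value equals $N^{-2}\sum_{j,k,l}u_{l+1/2,j,k}= N\,\sA_h(u)=0$ by hypothesis, and periodicity follows. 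The sums of $v$ and $w$ are handled symmetrically.

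Finally, replacing $\phi$ by $\phi - \sA_h(\phi)$ produces an element of $\mathring{V}_h$ without altering $D_h^1[\phi]$, which gives existence. Uniqueness is immediate: if $D_h^1[\phi_1]=D_h^1[\phi_2]$ with $\phi_1,\phi_2\in\mathring{V}_h$, then $\nabla_h(\phi_1-\phi_2)=0$, so $\phi_1-\phi_2$ is constant, and the zero-average condition forces this constant to be zero.
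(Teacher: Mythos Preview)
Your proof is correct and takes a genuinely different route from the paper. The paper proceeds indirectly: it sets $\rho^h=\nabla_h\cdot D$, solves $\Delta_h\phi=-\rho^h$ via Lemma~\ref{l:Ihmin}, lets $\hat D=D_h^1[\phi]$, and then shows the residual $\tilde D=D-\hat D$ vanishes by proving that each of its components is discretely harmonic (combining the divergence-free and curl-free identities at neighboring grid points into a seven-point mean-value relation) and invoking the discrete maximum principle together with $\sA_h(\tilde D)=0$. Your argument is the direct discrete analogue of the classical line-integral construction of a potential for a curl-free field: you write down $\phi$ explicitly along a fixed staircase path, use the three curl components to telescope the cross-differences and recover all of $\nabla_h\phi$, and then use the curl identities once more (summed along a line, with the transverse term telescoping to zero by periodicity of $D$) to show that the line sums $\sum_l u_{l+1/2,j,k}$, $\sum_m v_{i,m+1/2,k}$, $\sum_n w_{i,j,n+1/2}$ are independent of the remaining indices, hence equal $N$ times the relevant component of $\sA_h(D)=0$, which yields periodicity of $\phi$. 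Your approach is more elementary and entirely self-contained---it does not call on the solvability of the discrete Poisson problem---while the paper's harmonic/maximum-principle argument, though less direct here, fits naturally with the Hodge-type viewpoint used elsewhere in the analysis. Interestingly, the authors sketched (and then commented out) a path-integration proof before settling on the harmonic argument; you have carried that sketch to completion.
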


\begin{proof}
%%[Proof of Lemma~\ref{l:curlfree}]
If $\phi_1, \phi_2 \in \mathring{V}_h$ and 
$\nabla_h \phi_1 = \nabla_h \phi_2$, then $\nabla_h (\phi_1 - \phi_2) = 0$. 
Thus $\phi_1 - \phi_2 $ is a constant on $h\Z^3$. Since $\phi_1 - \phi_2 
\in \mathring{V}_h$,  this constant must be $0$ and hence $\phi_1 = \phi_2.$ 
This is the uniqueness. 

%%We now prove the existence of $\phi \in \mathring{V}_h$ with the desired property. 
Let $\rho^h = \nabla_h \cdot D \in V_h.$ The periodicity of $D$
implies that $\rho^h \in \mathring{V}_h.$ 
%%cf.\ \reff{nhcdotD} and \reff{Vh0}. 
%%By \reff{ZeroCurrent}, $\rho\in \mathring{V}_h.$
By Lemma~\ref{l:Ihmin} with $\ve = 1$, there exists a unique 
$ \phi \in \mathring{V}_h$ that minimizes $I_h: \mathring{V}_h\to \R$. 
Moreover, $A_h^\ve[\phi] = - \rho^h$ on $h\Z^3$ with $\ve = 1.$  
We define $\hat{D}= (\hat{u}, \hat{v}, \hat{w})\in Y_h$
by $\hat{D}  = D_h^\ve[\phi]$ with $\ve = 1$, i.e., by \reff{uvephi}
%%--\reff{wvephi} 
with $\hat{u}, $ $\hat{v}$, and $\hat{w}$ replacing
$u$, $v$, and $w$, respectively, and with $\ve = 1$ identically. 
Since $\ve = 1$, $\sA_h(\hat{D}) = 0$. 
%\reff{ZeroCurrent} with $\hat{u}$, $\hat{v}$, and $\hat{w}$
%replacing $u$, $v$, and $w$, respectively, 
By \reff{AhDh}, 
 $\nabla_h \cdot\hat{D} = - \nabla_h \cdot D^\ve_h[\phi]=
- A_h^\ve[\phi] = \rho^h$ on $h\Z^3.$ 
%%by Part (3) of Lemma~\ref{l:Ihmin}.
By the definition of discrete curl operator
%%\reff{nhtimesD} 
and direct calculations using \reff{uvephi}
%%--\reff{wvephi} 
with $\hat{u}, $ $\hat{v}$, and $\hat{w}$ replacing
$u$, $v$, and $w$, respectively, we have 
$\nabla_h \times \hat{D} = 0$ on $h (\Z+1/2)^3.$
Denoting $\tilde{D} = (\tilde{u}, \tilde{v}, \tilde{w}) := D - \hat{D} \in Y_h$, 
we have $\nabla_h \cdot \tilde{D} = 0$ on  $h\Z^3$, 
$\nabla_h \times \tilde{D} = 0$ on $h (\Z+1/2)^3$, and 
$\sA_h(\tilde{D}) = 0$ in $\R^3.$
%\reff{ZeroCurrent} with $\tilde{u}$, $\tilde{v}$, and $\tilde{w}$
%replacing $u$, $v$, and $w$, respectively. 
We shall show that $\tilde{D} = 0$ identically which will imply
that $D = \hat{D} = D_h^\ve[\phi]=-\nabla_h \phi$, the desired existence.

%%%%%%%%%%%%%%%%%
\begin{comment}
%We remark that we can extend $D = (u, v, w)$ to be complex-valued
%and also extend the divergence $\nabla \cdot D$ to such complex-valued
%discrete vector fields. 

%%{\textcolor{blue}{[Need to reconcile the divergence $\nabla_h \cdot \nabla_h \phi$ and 
%%$\nabla_h \cdot D$ as $\nabla_h \phi$ are defined on grid points and $D$ on midpoints of edges.]}}
%%{\textcolor{blue}{[Remove some of the above equations if not used later.]}}
%%{\textcolor{blue}{[Need a scaling factor, some power of $h$? ]}}
%\begin{align*}
%   \\
%   &\langle D_1, D_2 \rangle_h = 
% \sum_{i,j,k=1}^N \left( u_{1,i+\frac{1}{2}, j, k} u_{2,i+\frac{1}{2}, j, k}
% + v_{1,i, j+\frac{1}{2}, k} v_{2,i, j+\frac{1}{2}, k}
%  + w_{1,i, j, k+\frac{1}{2}} w_{2,i, j, k+\frac{1}{2}} \right)\\
%  & \qquad \qquad \forall D_1 = (u_1, v_1, w_1), D_2 = (u_2, v_2, w_2) \in W_h, 
% \\
% & \| D \|_h = \sqrt{\langle D,D\rangle_h} \qquad \forall D \in W_h. 
%\end{align*}
%%{\textcolor{blue}{[Need a scaling factor, some power of $h$? ]}}

We shall only consider those discrete electric displacements
$D = (u, v, w) \in W_h^3$ such that 
\begin{equation}
    \label{SpecialD}
    v_{i+1/2, j, k} = w_{i+1/2, j, k} = 0, \quad 
    u_{i, j+1/2, k} = w_{i, j+1/2, k} = 0, \quad 
    u_{i,j, k+1/2} = v_{i, j, k+1/2} = 0,
\end{equation}
for all $i, j, k \in \Z.$
\end{comment}
%%%%%%%%%%%%%%%%%%%%%%%%%%%

We first claim that 
each component of $\tilde{D} = (\tilde{u}, \tilde{v}, \tilde{w})$ satisfies
a discrete mean-value property, or equivalently, is a discrete harmonic function. 
%%at each point labeled by $(i+1/2, j, k)$ for any$i, j, k \in \Z.$ 
Let us fix $i, j, k\in \Z$. We consider the two adjacent grid points labeled by 
$A = (i, j, k)$ and $B = (i+1, j, k)$, and also the four faces of grid boxes that 
share the common edge $AB$ connecting these two grid points; cf.\ Figure~3.2.
%%\ref{f:harmo}.
Since $-(\nabla_h \cdot \tilde{D})_{i,j,k} = 0$ and 
$(\nabla_h \cdot \tilde{D})_{i+1, j, k}=0$, we have 
%%by \reff{nhcdotD} that 
\begin{align}
    \label{DDi}
&\tilde{u}_{i-1/2,j,k}-\tilde{u}_{i+1/2,j, k}+\tilde{v}_{i,j-1/2, k}
-\tilde{v}_{i,j+1/2,k} +\tilde{w}_{i,j,k-1/2}-\tilde{w}_{i,j,k+1/2}
= 0,
\\
\label{DDiplus1}
&\tilde{u}_{i+3/2,j,k}-\tilde{u}_{i+1/2,j, k}+\tilde{v}_{i+1,j+1/2, k}
-\tilde{v}_{i+1,j-1/2,k} 
+\tilde{w}_{i+1,j,k+1/2}-\tilde{w}_{i+1,j,k-1/2} = 0.
\end{align}
Two of the four faces sharing the edge $AB$ are on the plane
$y = jh$, one with the vertices $A$, $B$, $(i, j, k-1)$, and $(i+1, j, k-1)$,
and the other $A, $ $B, $ $(i, j, k+1)$, and $(i+1, j, k+1)$,
respectively.  The other two 
%%of the four faces sharing the edge $AB$ 
are on the coordinate plane $z = kh$, with vertices 
$A$, $B,$ $(i, j-1, k),$ and $(i+1, j-1, k),$ 
and $A$, $B,$ $(i, j+1, k)$, and $(i+1, j+1, k)$, respectively. 
 Since $\nabla_h \times \tilde{D} = 0$, we have, by
keeping the term $u_{i+1/2,j,k}$ with a negative sign, 
the four circulation-free equations on these four faces (cf.\ Figure~3.2)
%%(cf.\ \reff{0Circulation}) 
\begin{align}
    \label{0C1}
    & \tilde{u}_{i+1/2, j, k-1}- \tilde{u}_{i+1/2,j,k} + 
    \tilde{w}_{i+1,j,k+1/2}- \tilde{w}_{i,j,k+1/2} = 0,
\\
\label{0C2}
& \tilde{u}_{i+1/2, j, k+1} - \tilde{u}_{i+1/2,j,k}  
+ \tilde{w}_{i,j,k+1/2} - \tilde{w}_{i+1,j,k+1/2} = 0,
\\
\label{0C3}
&\tilde{u}_{i+1/2, j-1, k} - \tilde{u}_{i+1/2, j, k} 
+ \tilde{v}_{i+1, j-1/2,k} -\tilde{v}_{i,j-1/2,k} = 0,
    \\
\label{0C4}
& \tilde{u}_{i+1/2,j+1, k}-\tilde{u}_{i+1/2, j, k} 
+ \tilde{v}_{i, j+1/2, k} - \tilde{v}_{i+1, j+1/2,k} = 0.
\end{align}
Consequently, by adding the same sides of all \reff{DDi}--\reff{0C4}, we obtain that
\begin{align}
    \label{meanvalue}
&\tilde{u}_{i+3/2, j, k} + \tilde{u}_{i-1/2, j, k} 
+ \tilde{u}_{i+1/2, j, k-1} + \tilde{u}_{i+1/2, j+1, k}
+ \tilde{u}_{i+1/2, j, k-1} + \tilde{u}_{i+1/2, j+1, k}
\nonumber \\
&\qquad 
-6 \tilde{u}_{i+1/2, j, k} = 0. 
\end{align}
Since $i, j, k\in \Z$ are arbitrary, $\tilde{u}$
satisfies the discrete mean-value property, i.e., 
$\tilde{u}$ is a discrete harmonic function. 
Similarly,  $\tilde{v}$ and $\tilde{w}$ are discrete harmonic functions.

\medskip

\begin{minipage}[c]{1.6 in}
%\centering
  %% \includegraphics[scale=0.5]{curl2.eps}\hspace{4 mm}
%\includegraphics[scale=0.5]{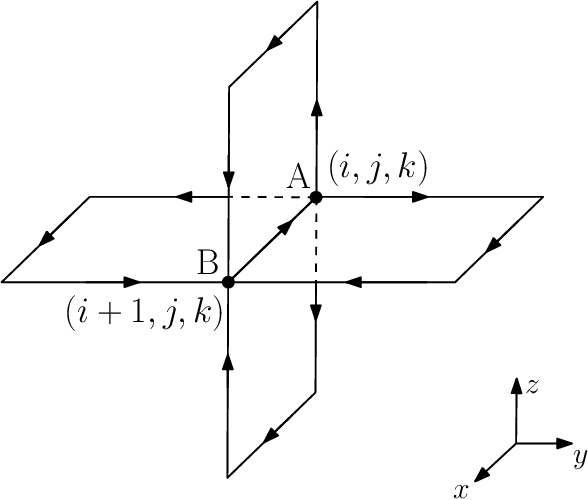}
$ \ $
\hspace{-20 pt}
\includegraphics[scale=0.42]{harmo.eps}
	\label{f:harmo}
 % 	\caption{ 
   %%(Left) The direction of the current along each edge corresponds to 
   %%the sign of the current in the sum in \eqref{0Circulation}. Arrows pointing
   %%to the right or upper ward indicate positive terms while those pointing to the left or downward indicate negative terms.
   %%(Right) 
\end{minipage}
\begin{minipage}{4.6 in} 
   {\small {Figure~3.2.
%%\ref{f:harmo}. 
The divergence-free of the displacement $\tilde{D}$ at the two 
vertices $A$ and $B$ (cf.\ \reff{DDi} and \reff{DDiplus1})
 and the zero circulation along the four edges
 of each of the four faces sharing the edge $AB$ that result from 
   the curl-free of $\tilde{D}$ (cf.\ \reff{0C1}--\reff{0C4}) 
   lead to the discrete harmonicity
   of the $\tilde{u}$-component of $\tilde{D}$ at the midpoint of the edge
   $AB$ (cf.\ \reff{meanvalue}).  
An arrow indicates the sign of a component
of $\tilde{D}$, positive (negative) if the arrow points in the 
positive (negative) coordinate direction.  
Note that the current from $B$ to $A$ is counted six times.}}
\end{minipage}

\medskip

To show finally that $\tilde{D} = 0$, 
%%identically, and this will imply 
%%that $D = \hat{D} = -\nabla_h \phi$ and complete the proof. It 
it suffices to 
show $\tilde{u} = 0$ identically as we can similarly show that $\tilde{v} = 0$ 
and $\tilde{w} = 0$ identically. Let $p, q, r \in \Z$ be such that 
$\tilde{u}_{p+1/2, q, r} = \max_{i, j, k\in \Z} \tilde{u}_{i+1/2, j, k}.$
Then, it follows from the mean-value property \reff{meanvalue} with 
$(i, j, k) = (p, q, r)$ that $\tilde{u}$ also achieves its maximum value
at the $6$ neighboring points. 
%%Hence, all $\tilde{u}$-values in
%%\reff{meanvalue} with $(i,j,k) = (p, q, r)$ are the same. 
Applying this argument to these $6$ neighboring points, and to the $6$ points
neighboring each of these $6$ points, and so on, we see that all
$\tilde{u}_{i+1/2, j, k}$ equal the maximum value. Hence $\tilde{u}$ is a constant. 
But, $\sum_{i,j,k=0}^{N-1} \tilde{u}_{i+1/2, j, k} = 0$. 
Hence, $\tilde{u} = 0$ identically. 
\end{proof}

%%%%%%%%%%%%%%%%%%%%%%%%%%%%%%%%%%
\subsection{Approximation of the Poisson energy}
\label{ss:DiscretePoissonEnergy}

%%We now discretize the electrostatic energy of an underlying
%%charged system without ions, using the electric displacement. 
%%first the electrostatic potential
%%and then the electric displacement.
%%{\color{blue}{[Change it to $\rho^h.$ -BL.]}}
%%(cf.\ \reff{Vh}), 
%the discrete Gauss' law is $\nabla_h \cdot D = \rho$ on $h\Z^3$, i.e., 
%\begin{equation}
    %\label{DiscreteGauss}
  %  (\nabla_h \cdot D)_{i,j,k} = \rho_{i,j, k}\qquad\forall i, j, k\in \Z.
%\end{equation}

Given $\rho^h \in V_h$, we define (cf.\ \reff{Srho} and \reff{S0})
\begin{align}
\label{Srhoh}
& S_{\rho, h} = \{ D = (u, v, w) \in Y_h: \nabla_h \cdot D = \rho^h \mbox{ on } h\Z^3 \}, 
\\
\label{S0h}
& {S}_{0, h} = \{ D = (u, v, w) \in Y_h: \nabla_h \cdot D = 0 \mbox{ on } h\Z^3 \}.
\end{align}
%%cf.\ \reff{Yh} for the notation $Y_h$.
The notation $S_{\rho, h}$ indicates that $\rho^h$ is 
a discrete approximation of a fixed 
$\rho\in L_{\rm per}^2(\Omega)$; cf.\ section~\ref{s:ErrorEstimates}. 
Clearly, $S_{0, h} \ne \emptyset$ as $D = 0$ is an element in $S_{0, h}.$

%%{\color{blue}{[Remove the following lemma? -Bo]}}
\begin{lemma}
\label{l:Srhoh}
    Let $\rho^h \in V_h$. Then $S_{\rho, h} \ne \emptyset$ if and only if 
    $\rho^h \in \mathring{V}_h.$
\end{lemma}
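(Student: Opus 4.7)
The plan is to mirror the continuous argument used for $S_\rho$, replacing the divergence theorem by summation-by-parts/telescoping on the periodic lattice, and replacing the Lax--Milgram construction by the discrete Poisson solver provided by Lemma~\ref{l:Ihmin}.

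For the forward direction, suppose $D = (u,v,w) \in S_{\rho,h}$, so $\nabla_h \cdot D = \rho^h$ on $h\Z^3$. I would sum $(\nabla_h \cdot D)_{i,j,k}$ over $i,j,k = 0,\ldots,N-1$. Each of the three partial-difference sums telescopes in its own coordinate, and $\overline{\Omega}$-periodicity of $D$ ensures that the two endpoint values in each telescoped sum coincide and cancel. Hence $\sum_{i,j,k=0}^{N-1} \rho^h_{i,j,k} = 0$, i.e., $\sA_h(\rho^h) = 0$ by \reff{Aveh}, so $\rho^h \in \mathring{V}_h$.

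For the converse, assume $\rho^h \in \mathring{V}_h$. Apply Lemma~\ref{l:Ihmin} with $\ve \equiv 1$ to produce a unique $\phi \in \mathring{V}_h$ with $A_h^\ve[\phi] = -\rho^h$ on $h\Z^3$ (note that with $\ve \equiv 1$, $A_h^\ve$ coincides with $\Delta_h$). Define $D := D_h^\ve[\phi] \in Y_h$ via \reff{uvephi} with $\ve \equiv 1$, so $D = -\nabla_h \phi$. By identity \reff{AhDh} we have $\nabla_h \cdot D = -A_h^\ve[\phi] = \rho^h$ on $h\Z^3$, and therefore $D \in S_{\rho,h}$, so $S_{\rho,h} \ne \emptyset$.

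There is no real obstacle here: both directions reduce to routine bookkeeping given the tools already established (telescoping under periodicity for necessity, and the existence theory for the discrete Poisson equation in Lemma~\ref{l:Ihmin} together with the structural identity \reff{AhDh} for sufficiency). The only point worth watching is that applying Lemma~\ref{l:Ihmin} requires the right-hand side to be in $\mathring{V}_h$, which is exactly the hypothesis of this direction.
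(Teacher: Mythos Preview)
Your proposal is correct and follows essentially the same argument as the paper: summing the discrete divergence and using periodicity to telescope for the forward direction, and invoking Lemma~\ref{l:Ihmin} with $\ve\equiv 1$ together with \reff{AhDh} to construct $D=D_h^\ve[\phi]$ for the converse. The paper's proof is just a terser rendering of exactly these steps.
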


\begin{proof}
    If $S_{\rho, h} \ne \emptyset$ then there exits $D \in Y_h$ such that 
    $\nabla_h \cdot D = \rho^h$ on $h\Z^3$. Thus, 
    $ \sum_{i,j,k=0}^{N-1} \rho^h_{i,j,k} = \sum_{i,j,k=0}^{N-1} 
    (\nabla_h \cdot D)_{i,j,k}  =0,     $
    and hence $\rho^h \in \mathring{V}_h.$ Suppose $\rho^h \in \mathring{V}_h.$
    Let $\phi^h_{\rm min}$ be the minimizer of $I_h: \mathring{V}_h \to \R$ 
    with $\ve = 1$ identically, and hence 
$-\Delta_h \phi^h_{\rm min} = \rho^h$ on $h\Z^3$; cf.\ Lemma~\ref{l:Ihmin}. 
%%       Then, by Lemma~\ref{l:Ihmin} and the discrete
%%    Green's identity, we have for $\xi = \Delta_h \phi^h_{\rm min}
 %%   + \rho^h \in \mathring{V}_h$ that $  \langle \rho^h, \xi\rangle_h = 
  %%  %%\langle\langle \nabla_h \phi, \nabla_h \xi \rangle\rangle_{\ve, h}= 
 %%   \langle \nabla_h \phi^h_{\rm min}, \nabla_h \xi \rangle_h
 %%   = - \langle \Delta_h \phi^h_{\rm min}, \xi \rangle_h, $
 %%   and hence $- \Delta_h  \phi^h_{\rm min} = \rho^h$ on $h\Z^3.$
    Let $D = D_h^\ve[\phi^h_{\rm min}]  \in Y_h $  be defined by \reff{uvephi}
    %%--\reff{wvephi} 
    with $\ve = 1$ identically. 
   %% for all $i, j, k \in \Z.$ 
    We thus have $\nabla_h \cdot D = -\Delta_h \phi^h_{\rm min} = \rho^h$
    and hence $D \in S_{\rho, h}.$
\end{proof}

Let $\ve \in C_{\rm per}(\overline{\Omega})$ satisfy \reff{epsilon}. 
Define for any $D = (u, v, w), \tilde{D} = (\tilde{u}, \tilde{v}, \tilde{w}) \in Y_h$ 
\begin{align}
\label{DtildeDveh}
&\langle D, \tilde{D} \rangle_{1/\ve, h} = {h^3} \sum_{i,j,k=0}^{N-1}
\left(  \frac{u_{i+1/2, j, k} \tilde{u}_{i+1/2,j,k}}{\ve_{i+1/2, j, k}}
+ \frac{v_{i, j+1/2, k} \tilde{v}_{i,j+1/2,k}}{\ve_{i, j+1/2, k}}
+ \frac{w_{i,j,k+1/2} \tilde{w}_{i,j,k+1/2}}{\ve_{i,j,k+1/2}} \right), 
\\
\label{DiscreteDnorm}
&
\| D \|_{1/\ve, h} = \sqrt{\langle D, D \rangle_{1/\ve, h} }.
\end{align}
These are an inner product and the corresponding norm of the finite-dimensional
space $Y_h.$
Let $\rho^h \in \mathring{V}_h$. We define $F_h: S_{\rho, h} \to \R$ by
\begin{equation}
\label{Fh}
F_h[D] = \frac{1}{2} \| D \|^2_{1/\ve, h}
\qquad \forall D = (u, v, w) \in Y_h. 
\end{equation}
%{\textcolor{blue}{Check if we need to add the zero total current in (3) in the theorem.}}

The following theorem provides some equivalent conditions on a minimizer 
of the functional $F_h: S_{\rho, h} \to \R$ that will be used to prove the
convergence of local algorithms: 

%%in section~\ref{s:LocalAlg}: 

\begin{theorem}
\label{t:DiscreteEnergy} 
%%Let $\rho \in \mathring{V}_h$ and $\ve \in V_h$  satisfying \reff{veboundh}.
There exists a unique minimizer $D^h_{\rm min} 
= (u^h_{\rm min}, v^h_{\rm min}, w^h_{\rm min}) $ of $F_h: S_{\rho, h} \to \R$
given by  $D^h_{\rm min} = D_h^\ve [\phi^h_{\rm min}]$, 
%%{\rm (}cf.\ \reff{uvephi}{\rm )}, 
%\reff{uvephi}--\reff{wvephi} with $(u, v, w)$ replaced by 
%$(u_{\rm min}, v_{\rm min}, w_{\rm min}),$ 
where $ \phi^h_{\rm min}\in \mathring{V}_h$ is
the unique minimizer of $I_h: \mathring{V}_h \to \R$ as in Lemma~\ref{l:Ihmin}.
If $D = (u, v, w) \in S_{\rho, h}$, then the following are equivalent:
\begin{compactenum}
    \item[{\rm (1)}] 
    {\em Minimizer:} $D = D^h_{\rm min};$
    \item[{\rm (2)}] 
%\begin{equation}
%    \label{DtildeDnoIons}
{\em Global equilibrium:}
$\langle D, \tilde{D} \rangle_{1/\ve, h} = 0 $ for all
$ \tilde{D} \in S_{0, h};$
%\end{equation}
\item[{\rm (3)}]
\begin{compactenum}
\item[{\rm (i)}] 
{\em Local equilibrium:}
$D/\ve$ is curl free, i.e., $\nabla_h \times D/\ve = 0$  
on $h(\Z+1/2)^3$; and 
\item[{\rm (ii)}] 
{\em Zero total field:} $\sA_h(D/\ve) = 0$ in $\R^3$. 
%%satisfies  \reff{ZeroCurrent} with $D/\ve$ replacing $D.$
\end{compactenum}
\end{compactenum}
\end{theorem}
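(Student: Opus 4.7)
The plan is to first establish existence/uniqueness together with the equivalence (1) $\Leftrightarrow$ (2) by a standard variational argument on the affine space $S_{\rho,h}$, and then to obtain (1) $\Leftrightarrow$ (3) by combining Lemma~\ref{l:Ihmin} with the potential-recovery Lemma~\ref{l:curlfree} applied to the field $D/\ve$.

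First, I would take $D^h_{\rm min} := D_h^\ve[\phi^h_{\rm min}]$ with $\phi^h_{\rm min}$ from Lemma~\ref{l:Ihmin}, and verify $D^h_{\rm min}\in S_{\rho,h}$ using $\nabla_h\cdot D_h^\ve[\phi^h_{\rm min}] = -A_h^\ve[\phi^h_{\rm min}] = \rho^h$ via \reff{AhDh} and Lemma~\ref{l:Ihmin}. To check (2) for this $D^h_{\rm min}$, I would rewrite, using \reff{uvephi} and \reff{DtildeDveh},
\[
\langle D_h^\ve[\phi^h_{\rm min}], \tilde D \rangle_{1/\ve,h} = -\,h^3\sum_{i,j,k}\bigl(\partial^h_1\phi^h_{\rm min}\, \tilde u_{i+1/2,j,k}+\partial^h_2\phi^h_{\rm min}\, \tilde v_{i,j+1/2,k}+\partial^h_3\phi^h_{\rm min}\, \tilde w_{i,j,k+1/2}\bigr),
\]
and then invoke the first discrete Green's identity from Lemma~\ref{l:FourierBasis} to rewrite this as $\langle \phi^h_{\rm min},\nabla_h\cdot\tilde D\rangle_h$, which vanishes since $\tilde D\in S_{0,h}$. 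For (2)$\Rightarrow$ minimizer, given any competitor $\bar D\in S_{\rho,h}$ we would use $\tilde D := \bar D - D\in S_{0,h}$ to get $\|D\|^2_{1/\ve,h}=\langle D,\bar D\rangle_{1/\ve,h}\le \|D\|_{1/\ve,h}\|\bar D\|_{1/\ve,h}$ by Cauchy--Schwarz, whence $F_h[D]\le F_h[\bar D]$. Conversely, a minimizer satisfies (2) by setting to zero the derivative at $t=0$ of $t\mapsto F_h[D+t\tilde D]$ (a quadratic in $t$). Uniqueness then follows: if $D_1,D_2$ both satisfy (2), then $D_1-D_2\in S_{0,h}$ and testing (2) at each against $\tilde D = D_1-D_2$ and subtracting yields $\|D_1-D_2\|^2_{1/\ve,h}=0$.

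For (1)$\Rightarrow$(3), I would use that $D^h_{\rm min}/\ve = -\nabla_h\phi^h_{\rm min}$ componentwise, which follows directly from \reff{uvephi} and \reff{Doverve}. The curl-free property (3)(i) then reduces, on each face of a grid box, to an identity of the form $\partial^h_a\partial^h_b\phi-\partial^h_b\partial^h_a\phi=0$, which holds on the staggered stencil by direct inspection. The zero-average property (3)(ii) follows from periodicity of $\phi^h_{\rm min}$ via telescoping of $\sum_i(\phi_{i+1,j,k}-\phi_{i,j,k})$ in each coordinate direction. For (3)$\Rightarrow$(1), I would apply Lemma~\ref{l:curlfree} to the field $D/\ve\in Y_h$; its hypotheses are exactly (3)(i) and (3)(ii), so it yields a unique $\phi\in\mathring V_h$ with $D/\ve = -\nabla_h\phi$, equivalently $D = D_h^\ve[\phi]$. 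Then $\rho^h = \nabla_h\cdot D = -A_h^\ve[\phi]$ by \reff{AhDh}, and Lemma~\ref{l:Ihmin}(2) forces $\phi=\phi^h_{\rm min}$, hence $D=D^h_{\rm min}$.

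I do not expect any serious obstacle. The main bookkeeping issue is keeping track of where each component of $D$, $D/\ve$, $\ve$, and $\nabla_h\phi$ lives on the staggered grid, so that the identifications $D_h^\ve[\phi]/\ve=-\nabla_h\phi$ and the discrete integration-by-parts formula hold in the intended pointwise sense; once this bookkeeping is set up, all three implications reduce to combining Lemma~\ref{l:Ihmin}, the discrete Green identity of Lemma~\ref{l:FourierBasis}, and the potential-recovery Lemma~\ref{l:curlfree}.
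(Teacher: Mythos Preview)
Your proposal is correct and follows essentially the same strategy as the paper: existence/uniqueness via convexity and the variational first-order condition give (1)$\Leftrightarrow$(2), the explicit form $D^h_{\rm min}=D_h^\ve[\phi^h_{\rm min}]$ is verified to satisfy (2) via summation by parts, and the loop through (3) goes via Lemma~\ref{l:curlfree} applied to $D/\ve$. The only cosmetic differences are that the paper closes the cycle by proving (3)$\Rightarrow$(2) (potential recovery then summation by parts again) whereas you prove (3)$\Rightarrow$(1) directly (potential recovery then Lemma~\ref{l:Ihmin} to identify $\phi=\phi^h_{\rm min}$), and the paper derives (3)(ii) variationally by differentiating $F_h[D+(a,b,c)]$ rather than by telescoping; both routes are equally valid and of the same length.
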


%%{\textcolor{blue}{[$D/\ve$, satisfies the zero total current condition. contradicts with proposition 5.1] %%[This theorem is correct. But the proof of Proposition 5.1 is
%%incorrect. Or: the local algorithm for Poisson does not work in general
%5for variable $\ve.$ -Bo]}}

%%%%%%%%%%%%%%%%%%%%%%%%%%%%%%%
\begin{comment}
{\rm (2)}
Let $D = (u, v, w) \in S_{\rho, h}.$ Then, $D = D_{\rm min}$ if and only
if
$\langle D, \tilde{D} \rangle_{1/\ve, h} = 0$ for all $\tilde{D} \in S_{0, h},$
if and only $D/\ve$ is curl free, i.e., 
$\nabla_h \times D/\ve = 0$  on $h\M_h$. 
%Let $D = (u, v, w) \in S_{\rho, h}.$ Then, $D$ is the unique minimizer
%of $F_h: S_{\rho, h} \to \R$ if and only if the following are satisfied: 
%$D_m=(u_m,v_m,w_m)\in S_{\rho,h}$. Then $D_m$ is the unique minimizer of $F_h[D]$ if and %only if $D_m$ satisfies the following properties:
%\begin{compactenum}
 %   \item[{\rm (a)}] and  
   %\item[{\rm (b)}] 
%     Zero total current: For all $i, j, k\in \Z$, 
%    \begin{equation} \label{ztc}
%        \sum_{i,j,k=1}^{N}u_{i+1/2,j,k}=0, \quad  \sum_{i,j,k=1}^{N}v_{i,j+1/2,k}=0, 
%    \quad \sum_{i,j,k=1}^{N}w_{i,j,k+1/2}=0.
%    \end{equation}
%%\end{compactenum}
\end{comment}
%%%%%%%%%%%%%%%%%%%%%%%%

\begin{proof} 
By Lemma~\ref{l:Srhoh}, $S_{\rho, h} \ne \emptyset.$ 
%%Note that $Y_h$, defined in \reff{Yh}, 
%%is a finitely dimensional inner-product space with the inner product
%%$\langle  \cdot, \cdot, \rangle_{1/\ve, h}$, and 
Note that $Y_h$ is a finite-dimensional inner-product space, $S_{\rho, h}$
is a closed and convex subset of $Y_h$, and
$F_h: S_{\rho, h} \to \R$ is strictly convex. The existence of a unique minimizer, $D^h_{\rm min}
%%= (u^h_{\rm min}, v^h_{\rm min},  w^h_{\rm min})
\in S_{\rho, h}$,   of $F_h: S_{\rho, h} 
\to \R$ follows from standard arguments. 

Before proving $D^h_{\rm min} = D_h^\ve [\phi^h_{\rm min}],$ we first
prove that Part (2) implies Part (1). Suppose $D \in S_{\rho, h}$ and
$\langle D, \tilde{D} \rangle_{1/\ve, h} = 0$ 
for all $\tilde{D} \in S_{0, h}$. With $\tilde{D} :=  D^h_{\rm min}-D\in S_{0,h}$, 
it follows
%$        F_h[D^h_{\rm min}]-F_h[D] =F_h[D+\tilde{D}]-F_h[D]
%=   \| \tilde{D}\|^2_{1/\ve, h} / 2 \ge 0, $
 \begin{align*}
      F_h[D^h_{\rm min}]-F_h[D] &=F_h[D+\tilde{D}]-F_h[D]
=  \frac{1}{2} \| \tilde{D}\|^2_{1/\ve, h} \ge 0.
       \end{align*}
Thus $D$ is also a minimizer of $F_h: S_{\rho, h} \to \R$ and hence 
%Since the minimizer is unique, 
$D = D^h_{\rm min}.$ Thus Part (2) implies Part (1). 

We now show that $D^h_{\rm min} = D_h^\ve [\phi^h_{\rm min}].$ First, 
it follows from Part (2) of Lemma~\ref{l:Ihmin} and \reff{AhDh} that
$\nabla_h \cdot D_h^\ve[\phi^h_{\rm min}] = - A_h^\ve[\phi^h_{\rm min}] = \rho^h$
on $h\Z^3.$ Thus, 
$D_h^\ve[\phi^h_{\rm min}] \in S_{\rho, h}.$ Since Part (2) implies Part (1), it now
suffices to show $\langle D^\ve_h[\phi^h_{\rm min}], \tilde{D} \rangle_{1/\ve, h} = 0$
for  any $\tilde{D} =(\tilde{u}, \tilde{v}, \tilde{w}) \in S_{0, h}.$
%that $D_h^\ve[\phi^h_{\rm min}]$ is a global equilibrium as defined in Part (2). 
%Denote $\phi = \phi^h_{\rm min}$ and 
Denote $\phi = \phi^h_{\rm min} \in \mathring{V}_h$ and 
 $D = D_h^\ve[\phi] = (u, v, w)$. Then, 
the components of $D$ are given by \reff{uvephi}.
%%--\reff{wvephi}. 
%Let $\tilde{D} = (\tilde{u}, \tilde{v}, \tilde{w})
%\in S_{0, h}$. 
For fixed $j$ and $k$, we have by \reff{uvephi} and summation by parts that 
%%\zhou{[simply put that discrete integration by parts?- SZ]}
%%{\color{blue}{[Changed. -BL]}}
\begin{align}
\label{uminMore}
\sum_{i=0}^{N-1} \frac{u_{ i+1/2, j, k} \tilde{u}_{i+1/2, j, k}}{
\ve_{i+1/2, j, k}} 
%&= - \frac{1}{h} \sum_{i=0}^{N-1} (\phi_{i+1, j, k} - \phi_{i, j, k}) \tilde{u}_{i+1/2, j, k}
%\nonumber \\
%& = - \frac{1}{h} \sum_{i=0}^{N-1} \phi_{i+1, j, k} \tilde{u}_{i+1/2, j, k} + 
%\frac{1}{h} \sum_{i=1}^N \phi_{i,j,k} \tilde{u}_{i+1/2, j, k}
%\nonumber \\
%& 
= \frac{1}{h} 
\sum_{i=0}^{N-1} \phi_{i, j, k} ( \tilde{u}_{i+1/2, j, k}-\tilde{u}_{i-1/2, j, k}).
\end{align}
Similar identities hold true for the $v$ and $w$ components. Summing both sides of 
all these identities, we obtain by the fact that $\nabla_h \cdot \tilde{D} = 0$ and the 
definition \reff{DtildeDveh} that $\langle D, \tilde{D} \rangle_{1/\ve, h} 
= \langle \phi, \nabla_h \cdot \tilde{D} \rangle_h = 0.$ 
Hence, 
$D^h_{\rm min} = D_h^\ve [\phi^h_{\rm min}].$ 
%%Thus, $D$ is a global equilibrium and hence $D = D^h_{\rm min}.$

We now prove that all Part (1), Part (2), and Part (3) are equivalent. 
%The equivalence of (1) and (2) can be proved by the same argument used in 
%the proof of Lemma~\ref{l:Ihmin}. 
If $D = D^h_{\rm min}$,
%%$ \in Y_h$ is the minimizer of $F_h: S_{\rho, h} \to \R$
then for any $\tilde{D}\in S_{0, h}$, $g(t): = F_h [D + t \tilde{D}]$
$(t \in \R)$
attains its minimum at $t = 0$. Hence, $g'(0) = 0$, leading to 
$\langle D, \tilde{D}  \rangle_{1/\ve, h} = 0.$ 
Thus, Part (1) implies Part (2).  We already proved above that Part (2) implies 
Part (1). 

If $D = D^h_{\rm min}=D^h_{\ve} [\phi^h_{\rm min}]$, then $D := (u, v, w)$ is 
given by \reff{uvephi}
%%--\reff{wvephi} 
with $\phi^h_{\rm min}$ replacing $\phi$. 
%%and thus 
%%$\left({D}/{\ve}\right)_{i+1/2,j+1/2,k+1/2} = - (\nabla_h \phi)_{i,j,k}$
%%for all $i, j, k \in \Z.$
Now by the definition of $D/\ve$ (cf.\ \reff{Doverve}) and that of the 
discrete curl operator, 
%%{\color{blue}{[Need check it. We don't define the curl operator for 
%%vector valued grid functions defined on $h\Z^3.$ -Bo]}} 
%%\reff{nhtimesD}, 
we can directly 
verify that $D/\ve$ is curl free. Hence, Part (1) implies (i) in Part (3). 
For any constant $(a, b, c) \in \R^3$, $ D + (a, b, c) \in S_{\rho, h}$.
Since 
$ g(a, b, c):= F_h[ D + (a, b, c)]$ 
%% = F_h [u+a, v+b, w+c] \qquad  
$(a, b, c\in \R)$ 
reaches its minimum at $a = b = c = 0$, we have 
$
\partial_a g(0, 0, 0) = \partial_b g(0, 0, 0) = \partial_c g(0, 0, 0) = 0.
$
These imply 
%%by direct calculations that $D/\ve$ satisfies the condition of zero total field, i.e., 
(ii) in Part (3).
%%{\color{red}[Sum with epsilon in the denominator.]} {\color{blue}{[Corrected. Check it.]}}
Thus, Part (1) implies Part (3). 

Suppose Part (3) is true.  It follows
from Lemma~\ref{l:curlfree}, applied to ${D}/\ve$, that 
${D}/\ve = -\nabla_h {\phi}$ for a unique  ${\phi} \in 
\mathring{V}_h$,  and thus $(D/\ve)_{i+1/2, j+1/2, k+1/2}=-\nabla_h \phi_{i,j, k}$
for all $i, j, k \in \Z.$
 %   \[
 %   \left( \frac{{D}}{\ve} \right)_{i+1/2, j+1/2, k+1/2} = -
  %  \left( \frac{{\phi}_{i+1, j, k} - {\phi}_{i,j,k}}{h}, 
%    \frac{{\phi}_{i,j+1, k}-{\phi}_{i,j,k}}{h}, 
%    \frac{{\phi}_{i,j,k+1}-{\phi}_{i,j,k}}{h} \right) \quad \forall i,j,k\in \Z.
 %   \]
Consequently, setting $D = (u, v, w),$ 
we have by the same argument used above (cf.\ \reff{uminMore}) that 
$\langle D, \tilde{D}\rangle_{1/\ve, h} = 0$ 
for any $\tilde{D} = (\tilde{u}, \tilde{v}, \tilde{w}) \in S_{0,h}$. Thus, 
Part (3) implies Part (2). 
%{\textcolor{blue}{[Proof is not complete.]}}
\end{proof}

%%%%%%%%%%%%%%%%
\begin{comment}
        &=  
        \frac{h^3}{2} \left[ \langle\langle D+\tilde{D},D+\tilde{D} \rangle 
        \rangle_{1/\ve, h}  - \langle \langle D, D\rangle \rangle_{1/\ve, h} \right]  
        \\
        &= \frac{h^3}{2}  \left[ \langle\langle \tilde{D}, \tilde{D}
        \rangle \rangle_{1/\ve, h} +2 \langle \langle {D}, \tilde{D}
        \rangle \rangle_{1/\ve, h} \right] 
        \\
        &= \frac{h^3}{2}  \langle \langle \tilde{D},\tilde{D}
        \rangle \rangle_{1/\ve, h}
        \\
        & \geq 0.

    1. If $D_m$ is the minimizer of $F_h$ over $S_{\rho,h}$. There exists $\phi^*\in \mathring{V}_h$ that satisfying $-\nabla_h \cdot \varepsilon \nabla _h \phi =\rho$. One sets $D^*=(u^*,v^*,w^*)=-\varepsilon\nabla_h \phi^*$. Obviously, $\nabla_h\cdot D^*=\rho$ and $\nabla_h \times D^*/ \varepsilon=0$. Let $D^0=(u^0,v^0,w^0)=(D_m-D^*)$, then $D^0\in S_{0,h}$. We can get
    
    where the summation by parts has been used.
   Thus $F[D_m]=F[D^*]$ and $\nabla_h \times D/ \varepsilon=0$.
    
%%    By considering the perturbation of $D_m=(u_m,v_m,w_m)$ to $(u_m+a,v_m+b,w_m+c)$ for any $a,b,c\in \mathcal{R}$, the function $p(a,b,c)=F_h(u_m+a,v_m+b,w_m+c)$ is minimized at $a=b=c=0$ with $\partial _a p(0,0,0)=\partial_b p(0,0,0)=\partial _c p(0,0,0)=0$. Hence (2) is satisfied.

%    2. Suppose $D=(u,v,w)\in$ satisfies (1) and (2). Let $D_m=(u_m,v_m,w_m)= \operatorname{argmin} _{S_{\rho,h}} F_h$, then $D_m$ satisfies (1) and (2). Let $D_0=D-D_m$. Then $D_0=(u^0,v^0,w^0)\in S_{0,h}$ and $D_0$ satisfies (1) and (2). By the lemma \ref{l:Dis0}, $D_0=0$. Hence $D=D_m$ is the unique minimizer of $F_h$ over $S_{\rho,h}$.

\end{comment}
%%%%%%%%%%%%%%%%%%

%%%%%%%%%%%%%%%%%%%%%%%%%%%%%%%%%%%%%%%%%%%%%%%%%
\subsection{The discrete charge-conserved Poisson--Boltzmann equation}
\label{ss:FDCCPBE}

%%%%%%%%%%%%%%%%%%%
\begin{comment}
We study now the finite-difference approximation of the charge-conserved Poisson--Boltzmnn 
(PB) equation (CCPBE). Our analysis here is in parallel to that in section~\ref{ss:FECCPBE}. 
Finite-difference solutions to such equation can be used to construct the
approximations of ionic concentrations in the discrete PB energy functional. 
A key  result here is the uniform $L^\infty$-bound, uniform with respect to all the grid size $h > 0$, 
of the finite-difference solutions 
to the discrete CCPBE. Such a uniform bound leads to the uniform positive lower and upper
bounds of the finite-difference minimizers of the PB energy functional. 
\end{comment}
%%%%%%%%%%%%%%%%%%%%

Let $\rho^h \in V_h$ and assume (cf.\ \reff{neutrality})
%%be given (cf.\ \reff{Vh} for the notation 
%%$V_h$) and assume the following condition of the {\em discrete charge neutrality}
%%(cf.\ \reff{neutrality}):  
\begin{equation}
    \label{DiscreteNeutrality}
 \mbox{Discrete charge neutrality:} \qquad \qquad  \sum_{s=1}^M q_s N_s 
 + h^3\sum_{i,j,k=0}^{N-1} \rho^h_{i, j, k} = 0. \qquad \qquad \qquad 
\end{equation}
Let  $\ve \in C_{\rm per}(\overline{\Omega})$ satisfy \reff{epsilon}.
We define (cf.\ \reff{Iphi} and \reff{Aveh})
\begin{equation}
\label{hatIhphi}
\hat{I}_h [\phi] = \frac12 \| \nabla_h \phi \|^2_{\ve, h } 
-\langle \rho^h, \phi \rangle_h  + \sum_{s=1}^M N_s \log ( \sA_h(e^{-q_s \phi}))
%%\left(  h^3 \sum_{i,j,k=0}^{N-1} e^{-q_s \phi_{i, j, k}} \right) 
\qquad \forall \phi \in V_h. 
\end{equation}
%%cf.\ \reff{AveAu} for the notation ${\rm Ave}_h$. 
As in section~\ref{ss:CCPBE}, we can verify that $\hat{I}_h [\phi + a ] = \hat{I}_h [\phi]$ for any $\phi \in V_h$ 
and any constant $a \in \R$, the functional $\hat{I}_h: \mathring{V}_h \to \R$ is 
strictly convex, and by the discrete Poincar{\'e} inequality 
(cf.\ Lemma~\ref{l:FourierBasis}), 
there exist constant $K_1 > 0$ and $K_2 \in \R$, independent of $h$, such that 
$\hat{I}_h[\phi] \ge K_1 \| \nabla_h \phi\|_{\ve, h}^2 + K_2$ for all $\phi \in \mathring{V}_h.$

\begin{theorem}
    \label{t:hCCPB}
  %  Let $\rho \in V_h$ satisfy \reff{DiscreteNeutrality} and 
  %  $\ve \in V_h$ satisfy \reff{veboundh}. 
    There exists a unique $\hat{\phi}^h_{\rm min}\in \mathring{V}_h$ such that
    $ \hat{I}_h [\hat{\phi}^h_{\rm min}] = \min_{\phi\in \mathring{V}_h} \hat{I}_h [\phi].$
    The minimizer $\phi:=\hat{\phi}^h_{\rm min}$ is also the unique solution in $\mathring{V}_h$ to the 
    discrete CCPBE: 
    \begin{equation}
        \label{discreteCCPBE}
    %%\nabla_h \cdot \ve \nabla_h \phi 
    A_h^\ve[\phi] +\sum_{s=1}^M \frac{q_s N_s}{  L^3 \sA_h(e^{-q_s \phi})}
 e^{-q_s \phi} = - \rho^h \qquad \mbox{on } h\Z^3. 
    \end{equation} 
Moreover, if in addition $\sup_{h} \| \rho^h \|_\infty < \infty$, then
$ \sup_{h} \| \hat{\phi}^h_{\rm min} \|_\infty < \infty.$
%there exists a constant $C > 0$ independent of $h$ such that $|\hat{\phi}^h_{\rm min} | \le C$ on $h\Z^3.$
%\[
%|\hat{\phi}^h_{{\rm min}, i, j, k} | \le C \qquad \forall i, j, k \in \Z.
%\]
\end{theorem}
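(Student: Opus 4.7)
My plan closely mirrors the argument for Theorem~\ref{t:CCPBenergy}, transplanted to the discrete setting. Since $\mathring{V}_h$ is a finite-dimensional real vector space and $\hat{I}_h\colon \mathring{V}_h\to\R$ is continuous, strictly convex, and coercive (by the lower bound $\hat{I}_h[\phi]\ge K_1\|\nabla_h\phi\|_{\ve,h}^2+K_2$ stated just above the theorem, combined with the discrete Poincar\'e inequality from Lemma~\ref{l:FourierBasis}), a unique minimizer $\hat{\phi}^h_{\rm min}\in\mathring{V}_h$ exists by standard arguments of the direct method.

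For the equivalence with the discrete CCPBE, I would compute the directional derivative of $\hat{I}_h$ at $\hat{\phi}^h_{\rm min}$ in the direction of any $\xi\in\mathring{V}_h$ and set it to zero. Differentiating the log-sum term produces $-\sum_s q_s N_s (L^3\sA_h(e^{-q_s\phi}))^{-1}\langle e^{-q_s\phi},\xi\rangle_h$, while the quadratic part contributes $\langle\nabla_h\phi,\nabla_h\xi\rangle_{\ve,h}=-\langle A_h^\ve[\phi],\xi\rangle_h$ by summation by parts and the definition of $A_h^\ve$ in \reff{Ahvephi}. To pass from the variational identity on $\mathring{V}_h$ to the pointwise equation on $h\Z^3$, I would observe that the residual $A_h^\ve[\phi]+\sum_s q_s N_s(L^3\sA_h(e^{-q_s\phi}))^{-1}e^{-q_s\phi}+\rho^h$ has zero discrete average by \reff{DiscreteNeutrality}, hence lies in $\mathring{V}_h$, so choosing $\xi$ equal to that residual forces it to vanish, as in the proof of Lemma~\ref{l:Ihmin}. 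Conversely, any solution in $\mathring{V}_h$ is a critical point of the strictly convex $\hat{I}_h$, hence the unique minimizer; this gives uniqueness of the solution.

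The main obstacle and technical heart of the theorem is the uniform $L^\infty$ bound, which I would obtain by a discrete comparison/truncation argument parallel to the proof of Theorem~\ref{t:CCPBenergy}. First, let $\phi_0^h\in\mathring{V}_h$ be the unique solution of $A_h^\ve[\phi_0^h]=-\rho^h-L^{-3}\sum_{s=1}^M q_s N_s$, which is well-posed by Lemma~\ref{l:Ihmin} because the right-hand side lies in $\mathring{V}_h$ by \reff{DiscreteNeutrality}. Pruitt's uniform $L^\infty$ stability in Lemma~\ref{l:Ihmin}(3) then gives $\|\phi_0^h\|_\infty\le C$ with $C$ independent of $h$ under the assumption $\sup_h\|\rho^h\|_\infty<\infty$. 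Writing $\hat{\phi}^h_{\rm min}=\phi_0^h+\psi^h$ with $\psi^h\in\mathring{V}_h$, I would verify that $\psi^h$ is the unique minimizer on $\mathring{V}_h$ of the discrete analogue
\begin{equation*}
J_h[\psi]=\tfrac{1}{2}\|\nabla_h\psi\|_{\ve,h}^2+\sum_{s=1}^M N_s\log\bigl(\sA_h(e^{-q_s(\phi_0^h+\psi)})\bigr),
\end{equation*}
which satisfies the shift identity $J_h[\psi]=J_h[\psi-\bar\psi]-\bar\psi\sum_s q_sN_s$ for $\bar\psi=\sA_h(\psi)$.

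For a level $\lambda>0$, I would define a pointwise truncation $\hat{\psi}^h_\lambda$ of $\psi^h$ on $h\Z^3$ (two-sided when there exist both a $q_{s'}>0$ and a $q_{s''}<0$, and one-sided when all $q_s$ share a sign), and set $\psi^h_\lambda=\hat{\psi}^h_\lambda-\sA_h(\hat{\psi}^h_\lambda)\in\mathring{V}_h$. The minimality inequality $J_h[\psi^h_\lambda]\ge J_h[\psi^h]$, the shift identity above, and Jensen's inequality applied to $-\log$ on the discrete average $\sA_h$ together yield a discrete analogue of \reff{case1main}, driven by the convex function $B(u)=\sum_s (N_s/\alpha_s)e^{-q_s u}$ with $\alpha_s=h^3\sum_{i,j,k}e^{-q_s(\phi_0^h+\psi^h)_{i,j,k}}$. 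Since $\|\phi_0^h\|_\infty$ is bounded uniformly in $h$ and $B'(\pm\infty)=\pm\infty$ (or the appropriate one-sided version), choosing $\lambda$ large enough---depending only on $\ve_{\rm min},\ve_{\rm max},\{q_s,N_s\},L,$ and $\sup_h\|\rho^h\|_\infty$---forces the grid set $\{|\psi^h|>\lambda\}$ to be empty. The crux is to verify that this threshold $\lambda$ is genuinely independent of $h$, which is exactly where Pruitt's uniform stability enters in an essential way. The one-sided-sign cases are handled just as in Cases 2 and 3 of Theorem~\ref{t:CCPBenergy}, with the discrete elliptic regularity via $A_h^\ve$ replacing the Sobolev regularity used there.
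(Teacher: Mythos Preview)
Your outline is correct and follows the paper's approach almost verbatim: existence/uniqueness via strict convexity and coercivity on the finite-dimensional space $\mathring{V}_h$, the Euler--Lagrange equation via summation by parts and the observation that the residual lies in $\mathring{V}_h$, and the uniform $L^\infty$ bound via the decomposition $\hat\phi^h_{\rm min}=\phi_0^h+\psi^h$ with Pruitt's stability controlling $\phi_0^h$ and a truncation/comparison argument controlling $\psi^h$.

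There is one point you gloss over that the paper handles explicitly and that is essential for the $h$-independence of $\lambda$. Your function $B(u)=\sum_s (N_s/\alpha_s)e^{-q_s u}$ depends on $h$ through $\alpha_s=\alpha_{s,h}=L^3\sA_h\bigl(e^{-q_s(\phi_0^h+\psi^h)}\bigr)$, which in turn involves the unknown $\psi^h$ itself. Merely knowing $B'(\pm\infty)=\pm\infty$ for each fixed $h$ does not give a threshold $\lambda$ uniform in $h$; you need $0<C_1\le\alpha_{s,h}\le C_2$ with $C_1,C_2$ independent of $h$. The paper obtains the lower bound $\alpha_{s,h}\ge 1$ from Jensen applied to $\sA_h$ (using $\sA_h(\phi_0^h+\psi^h)=0$), and the upper bound from $\sum_s N_s\log\alpha_{s,h}\le J_h[\psi^h]\le J_h[0]\le C$, the last inequality using $\|\phi_0^h\|_\infty\le C$. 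Once these bounds are in place, $B_h'$ can be compared to an $h$-independent monotone function and your argument goes through; without them the claimed dependence of $\lambda$ on only $\ve_{\min},\ve_{\max},\{q_s,N_s\},L,\sup_h\|\rho^h\|_\infty$ is unjustified. Pruitt's stability is indeed essential here, but its role is to bound $\phi_0^h$, which then feeds into the bound on $J_h[0]$ and hence on $\alpha_{s,h}$.
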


\begin{proof}
The space $\mathring{V}_h$ is finitely dimensional and the 
functional $\hat{I}_h$ on $\mathring{V}_h$ is strictly convex. It then follows 
that there exists a unique minimizer 
$\phi_{\rm min}^h \in \mathring{V}_h$ of $\hat{I}_h: \mathring{V}_h \to \R.$ 
%%and the minimizer is exactly the critical point of $\hat{I}_h$, which is the solution to
%%the corresponding Euler--Lagrange equation. Note by \reff{DiscreteNeutrality} that
Consequently, $\phi:=\phi_{\rm min}^h$ satisfies
\[
\langle \nabla_h \phi, \nabla_h \xi \rangle_{\ve, h}
- \langle \rho^h, \xi\rangle_h - \sum_{s=1}^M \frac{N_s q_s}{L^3\sA_h (e^{-q_s \phi})}
\langle e^{-q_s \phi}, \xi \rangle_h = 0 \qquad \forall \xi \in \mathring{V}_h. 
\]
Since $\rho^h + \sum_{s=1}^M q_s N_s (L^3 \sA_h ( e^{-q_s \phi} ) )^{-1} e^{-q_s \phi}
\in \mathring{V}_h$ by \reff{DiscreteNeutrality} and 
$\langle \nabla_h \phi, \nabla_h \xi \rangle_{\ve, h} = \langle
-A_h^\ve[\phi], \xi \rangle_h $ by summation by parts, we obtain \reff{discreteCCPBE}. 
%%By direct calculations, we verify that \reff{discreteCCPBE} is
%%the Euler--Lagrange equation. 
%%%%The existence of a unique minimizer $\hat{\phi}^h_{\rm min}$ of the functional
%%%%$\hat{I}_h: \mathring{V}_0 \to \R$ follows from that the space $\mathring{V}_0$ is
%%%%finitely dimensional and the functional
%%%%$\hat{I}_h: \mathring{V}_0 \to \R$ is strictly convex. 
%%%%Routine calculations then lead to \reff{discreteCCPBE}. 
%%{\color{blue}{[I will write some more about the equivalence of the minimizer
%%and the solution. -Bo]}}
%%there exists a unique minimizer $\phi^h_{\rm min} \in \mathring{V}_h$ of the 
%%%functional $I_h: \mathring{V}_h \to \R.$ It is clear that ???

Now assume $\sup_{h} \| \rho^h \|_\infty < \infty$. 
Let $\phi_0^h \in \mathring{V}_h$ be such that 
$\langle \nabla_h \phi^h_0, \nabla_h \xi \rangle_{\ve, h} = 
\langle \rho^h, \xi \rangle_h $ for all $\xi \in \mathring{V}_h; $
cf.\ Lemma~\ref{l:Ihmin}.  By Part (3) of Lemma~\ref{l:Ihmin}, 
there exists a constant $C > 0$, independent of $h$, such that 
\begin{equation}
    \label{maxphi0h}
    | \phi^h_{0, i, j, k} | \le C \qquad \forall i, j, k \in \Z. 
\end{equation}
%\begin{equation}
%\label{discretemaxphih0}
%|\phi^h_{0, i, j, k} | \le C \qquad \forall i, j, k \in \Z. 
%\end{equation} 
Define (cf.\ \reff{hatIpsi})
%%$\hat{I}_h: V_h \to \R$ by 
\[
J_h [\psi] = \frac{1}{2} \| \nabla_h \psi \|^2_{\ve, h}
%%\langle\langle \nabla_h \psi, \nabla_h \psi \rangle \rangle_{\ve, h } 
+ \sum_{s=1}^M N_s \log \left( \sA_h(e^{-q_s (\phi_0^h + \psi)}) \right) 
\qquad \forall \psi\in V_h.
\]
Let $\psi \in V_h$ and denote $\bar{\psi} = \sA_h (\psi).$ 
Since $\langle \nabla_h \phi_0^h, \nabla_h \psi\rangle_{\ve, h}
= \langle \rho^h, \psi-\bar{\psi} \rangle_h$ and $ \| \nabla_h \phi_0^h \|^2_{\ve,h}
= \langle \rho^h, \phi_0^h \rangle_h,$
we have by direct calculations that (cf.\ \reff{IpsiIphi})
%%{\color{blue}{[Check this. -Bo]}}
\begin{align*}
J_h[\psi] &= J_h [\psi - \bar{\psi}] 
- \bar{\psi} \sum_{s=1}^M q_s N_s 
= \hat{I}_h[\psi-\bar{\psi}+\phi_0^h] + \frac12  \| \nabla_h \phi_0^h \|_{\ve, h}^2 
%%\langle \nabla_h \phi_0^h, \nabla_h \phi_0^h \rangle_{\ve, h}
- \bar{\psi} \sum_{s=1}^M q_s N_s. 
\end{align*}
In particular, if $\psi \in \mathring{V}_h$ and 
$\phi = \psi+\phi_0^h \in \mathring{V}_h$, then
$J_h [\psi] = \hat{I}_h[\phi] + (1/2) \| \nabla_h \phi_0^h \|_{\ve, h}^2.$
Thus, $\psi^h_{\rm min} := \hat{\phi}^h_{\rm min} - \phi_0^h \in \mathring{V}_h$
is the unique minimizer of $J_h: \mathring{V}_0 \to \R.$ We show
that $\psi^h_{\rm min}$ is bounded uniformly with respect to $h.$ This will lead to 
the desired bound for $\hat{\phi}^h_{\rm min}.$

For convenience, let us denote $\psi=\psi^h_{\rm min}$ and 
$ \phi_0 = \phi_0^h$. We consider three cases as in the proof of Theorem~\ref{t:CCPBenergy}. 

Case 1: there exist $s', s''\in \{ 1, \dots, M \}$ 
such that $q_{s'} > 0$ and $q_{s''} < 0$.  Let $\lambda > 0$ and define 
\begin{align}
\label{hpsihatlambda}
\hat{\psi}_\lambda = \left\{
\begin{aligned}
    &\psi \quad & &\mbox{if } |\psi | \le \lambda, &
    \\
    &\lambda \quad & &\mbox{if } \psi > \lambda, &
    \\
    &-\lambda \quad & &\mbox{if } \psi < -\lambda, &
\end{aligned}
\right.
\qquad \mbox{and} \qquad 
\psi_\lambda = \hat{\psi}_\lambda - \sA_h (\hat{\psi}_\lambda).
%%\dashint_\Omega \hat{\psi}_\lambda \, dx. 
\end{align}
We show that there exists $\lambda > 0$ sufficiently large and independent of $h$
such that for all $h$, 
\begin{equation}
\label{boundbylambda}
| \psi_{i, j, k} | \le \lambda \qquad \forall i, j, k \in \Z.
\end{equation}

It is clear that $\hat{\psi}_\lambda\in V_h$ and 
$\psi_\lambda \in \mathring{V}_h$, and hence $J_h [\psi] \le J_h[\psi_\lambda].$
Consider two neighboring grid points, e.g., $(i, j, k)$ and $(i+1, j, k)$. 
Let $\alpha = \psi_{i, j, k}$ and $\beta= \psi_{i+1,j,k}$, and 
assume $\alpha \le \beta$. (The case that $\beta \ge \alpha$ is similar.)
By checking the following six cases, we obtain
 $| \psi_{i+1, j, k} - \psi_{i, j, k}| \ge 
| \hat{\psi}_{\lambda, i+1, j, k} - \hat{\psi}_{\lambda, i, j, k}|$: 
(1) $\alpha \le \beta \le -\lambda$; 
(2) $\alpha \le - \lambda \le \beta \le \lambda;$
(3) $\alpha \le - \lambda < \lambda \le \beta$; 
(4) $ -\lambda \le \alpha \le \beta \le \lambda;$ 
(5) $-\lambda \le \alpha \le \lambda \le \beta;$ and 
(6) $\lambda \le \alpha \le \beta.$ 
Thus, $| \nabla_h \psi | \ge | \nabla_h \hat{\psi}_\lambda | 
= | \nabla_h \psi_\lambda |$ on $h\Z^3.$
Repeating \reff{case1main} with the summation
 replacing the integral over $\Omega$, we thus have
\begin{align}
\label{AvehAveh}
%%\label{case1main}
%0 & \ge - \frac{h^3}{2} \sum_{i,j,k:\, | \psi_{i,j,k}| > \lambda  } 
%\left[\ve_{i+1/2, j, k} (\delta_1 \psi_{i, j, k})^2 
%+ \ve_{i, j+1/2, k} ( \delta_2 \psi_{i, j, k})^2 
%+ \ve_{i, j, k+1/2} (\delta_3 \psi_{i, j, k})^2
%\right]
%\nonumber \\
0 & \ge  \frac{1}{2} \| \nabla_h \hat{\psi}_\lambda \|_{\ve, h}^2
- \frac{1}{2} \| \nabla_h \psi \|_{\ve, h}^2 
\nonumber \\
& = J_h[\hat{\psi}_\lambda] - J_h[\psi] + 
\sum_{s=1}^M N_s \left[ \log \left( \sA_h (  e^{-q_s (\phi_0+\psi)} ) \right)
-  \log \left(  \sA_h ( e^{-q_s (\phi_0+\hat{\psi}_\lambda) } ) \right)\right]
\nonumber \\
& = J_h[{\psi}_\lambda] - J_h[\psi] - 
\sA_h( \hat{\psi}_\lambda) \sum_{s=1}^M q_s N_s 
\nonumber \\
&\qquad + 
\sum_{s=1}^M N_s \left[ \log \left( \sA_h (  e^{-q_s (\phi_0+\psi)} ) \right)
-  \log \left(  \sA_h ( e^{-q_s (\phi_0+\hat{\psi}_\lambda) } ) \right)\right]
\nonumber \\
%%& \ge 
%%\sum_{s=1}^M N_s \left[ \log \left( \int_\Omega e^{-q_s (\phi_0+\psi)}  dx \right)
%%-  \log \left( \int_\Omega e^{-q_s (\phi_0 + \hat{\psi}_\lambda - \eta_\lambda) } dx 
%%\right)\right] - \left(\sum_{s=1}^M q_s N_s \right) 
%%\dashint_\Omega \hat{\psi}_\lambda\, dx
%%\\
%& \ge \sum_{s=1}^M \frac{N_s}{\alpha_{s}} 
%\int_{\Omega} \left( e^{-q_s (\phi_0+\psi)}  - e^{-q_s (\phi_0+\psi_\lambda)} \right) dx
%- \left(\sum_{s=1}^M q_s N_s \right) 
%\dashint_\Omega \hat{\psi}_\lambda\, dx
%\\
& \ge \sA_h \left( B_h (\phi_0 + \psi) - 
B_h(\phi_0 + \hat{\psi}_\lambda ) \right)  
- \sA_h ( \hat{\psi}_\lambda)\sum_{s=1}^M q_s N_s, 
%\dashint_\Omega \hat{\psi}_\lambda\, dx,
\end{align}
where $ B_h(u) = \sum_{s=1}^M (N_s/\alpha_{s, h}) e^{-q_s u} $ and 
$\alpha_{s, h}  = \sA_h ( e^{ -q_s (\phi_0 + \psi)} ).$

We claim that there are positive constants $C_1 $ and $C_2$, independent of 
$h$, such that 
\begin{equation}
\label{boundsalphash}
0 < C_1 \le \alpha_{s, h} \le C_2 \qquad \forall s = 1, \dots, M.
\end{equation}
In fact, by applying Jensen's inequality to $u \mapsto -\log u$ and the fact that 
$\phi_0, \psi \in \mathring{V}_h$,  we obtain that 
$\log \alpha_{s, h} \ge - q_s \sA_h (\phi_0 + \psi) = 0.$ 
Hence, $\alpha_{s, h} \ge 1 =:C_1$. 
%\[
%\log \alpha_{s, h} \ge \log L^3 - \frac{q_s}{N^3} \sum_{i,j,k=0}^{N-1} 
%( \phi_{0, i, j, k} + \psi_{i, j, k}) = \log L^3.
%\]
%Hence, $\alpha_{s,h} \ge C_1:= L^3.$ On the other hand, 
Note that 
$
 \sum_{s=1}^M N_s \log (\alpha_{s, h} ) \le J_h [\psi] \le J_h[0] \le C,
$
where $C$ is a constant independent of h; cf.\ \reff{maxphi0h}. Since 
each $\alpha_{s, h} \ge C_1$, we have that each $\alpha_{s, h} \le C_2$
for some constant $C_2$ independent of $h.$ Thus, \reff{boundsalphash} is true. 

Suppose the desired property is not true. Then for any $\lambda > 0$
there is some $h$ such that with $\psi = \psi^h_{\rm min}$ the set
$\{ (i, j, k): \psi_{i, j, k} > \lambda \} \cup 
\{ i, j, k): \psi_{i, j, k} < -\lambda \} \ne \emptyset.$
We may assume both of these subsets of indices are nonempty as the case
that one of them is empty is similar. Set $b = \sum_{s=1}^M q_s N_s.$
It is clear that $B_h$ is a convex function. Thus, by Jensen's 
inequality and the fact that $\sA_h (\psi) = 0,$ we can continue 
from \reff{AvehAveh} to get
\begin{align}
\label{Avehb}
0 & \ge \sA_h  \left( [ B_h'(\phi_0 + \hat{\psi}_\lambda))
+ b ] (\psi - \hat{\psi}_\lambda) \right) 
\nonumber \\
& = h^3 \sum_{i,j, k:\, \psi_{i, j, k} > \lambda} [B_h'(\phi_{0, i, j, k}+ \lambda) + b]
(\psi_{i, j, k} - \lambda) 
\nonumber \\
&\qquad 
+ h^3 \sum_{i,j, k:\, \psi_{i, j, k} < -\lambda} [B_h'(\phi_{0, i, j, k} - \lambda) + b]
(\psi_{i, j, k} + \lambda).
\end{align}
%%%%%%%%%%%%%%%%%%%%%%%%%%%%%%
\begin{comment}
Since $\psi \in \mathring{V}_h$, 
\begin{align*}
%\label{a}
- \mbox{Ave}_h (\hat{\psi}_\lambda) = \mbox{Ave}_h  ( \psi - \hat{\psi}_\lambda)
 =  h^3\sum_{i, j, k: \, \psi_{i, j, k} > \lambda } ( \psi_{i, j,k} - \lambda)  
+  h^3 \sum_{i, j, k: \, \psi_{i, j, k} < - \lambda} (\psi_{i, j, k} + \lambda).
\end{align*}
Moreover, one can check that $B_h$ is a convex function. Thus, by Jensen's inequality, we
continue from \reff{AvehAveh} to get

%\[
%B(u) = \sum_{s=1}^M \frac{N_s} {\alpha_{s, h}} e^{-q_s u} \qquad \mbox{and} \qquad 
%\alpha_{s, h}  = {\rm Ave}_h ( e^{ -q_s (\phi_0^h + \psi)} ), \quad 1 \le s \le M. 
%\]
\end{comment}
%%%%%%%%%%%%%%%%%%%%%%%%%%%%%%%
Since $q_{s'} > 0$ and $q_{s''} < 0$, it follows from 
 \reff{boundsalphash} that for any $u \in \R$
\[
B_h'(u) = \sum_{s=1}^M \frac{N_s} {\alpha_{s, h}} (-q_s) e^{-q_s u}
\ge \sum_{s: \, q_s > 0}  \frac{N_s}{C_1} (-q_s) e^{-q_s u} +
\sum_{s: \, q_s < 0}  \frac{N_s}{C_2} (-q_s) e^{-q_s u} =:b_{h}(u). 
\]
The $h$-dependent function $b_{h}(u)$ is an increasing 
function of $u \in \R$. Moreover,  $b_{h}(+\infty) = +\infty $ 
and $b_h (-\infty) = - \infty.$ 
%%\zhou{It is confusing to define $h$-independent function as $b_{+, h}(u)$? 
%%It seems dependent. -SZ.} {\color{blue}{[corrected - it depends on h. -BL]}}
By \reff{maxphi0h}, we can then find $\lambda_+ > 0$ sufficiently 
large and independent of $h$ such that 
\[ 
B_h'(\phi_{0, i, j, k}+ \lambda) + b \ge b_{h}(\phi_{0, i, j, k}+\lambda) 
+ b \ge 1 \qquad \forall  \lambda \ge \lambda_+ \ \forall i, j, k \in \Z. 
\]
Similarly, there exists $\lambda_- > 0$ sufficiently large and independent of $h$ such that
\[
B_h'(\phi_{0, i, j, k} -\lambda) + b \le -1 \qquad \forall \lambda \ge \lambda_- \
\forall i, j, k \in \Z. 
\]
Let $\lambda \ge \max\{ \lambda_+, \lambda_-\}. $ It thus follows from \reff{Avehb} that
\[
0 \ge \sum_{i,j, k:\, \psi_{i, j, k} > \lambda} |\psi_{i, j, k} - \lambda| + 
\sum_{i,j, k:\, \psi_{i, j, k} < - \lambda} | \psi_{i, j, k} + \lambda |.
\]
This is impossible. Thus, \reff{boundbylambda} is true for all $h$. 

Case 2: all $q_s < 0$ $( 1 \le s \le M).$ 
For any $\lambda > 0$, we define now
$\hat{\psi}_\lambda = \psi$ if $\psi \le \lambda$ and 
$\hat{\psi}_\lambda = \lambda$ if $\psi > \lambda, $ and 
$\psi_\lambda = \hat{\psi}_\lambda - \sA_h( \hat{\psi}_\lambda).$
%%%%%%%%%%%%%%%%%%%%%%
\begin{comment}
\begin{align*}
%\label{psihatlambda}
\hat{\psi}_\lambda = \left\{
\begin{aligned}
    &\psi  \quad & &\mbox{if } \psi  \le \lambda, &
    \\
    &\lambda \quad & &\mbox{if } \psi > \lambda, &
\end{aligned}
\right.
\qquad \mbox{and} \qquad 
\psi_\lambda = \hat{\psi}_\lambda - \sA_h ( \hat{\psi}_\lambda).
%%- \dashint_\Omega \hat{\psi}_\lambda \, dx. 
\end{align*}
\end{comment}
%%%%%%%%%%%%%%%%%%%%%%%%
In this case, the function $B_h(u)$ defined above (below \reff{AvehAveh}) is convex
and 
\[
B_h'(u) \ge \sum_{s=1}^M \frac{(-q_s ) N_s}{C_2}  e^{-q_s u} =:{b}_{+, h} (u)
\qquad \forall u \in \R, 
\]
 where $C_2$ is the same as in \reff{boundsalphash}.  Thus, 
${b}_{+, h}(u)$ is an increasing function of $u \in \R$ and ${b}_{+, h}(+\infty)
= +\infty.$ Thus, 
carrying out the same calculations as above with $ \{ \psi > \lambda\} $ replacing
 $\{ | \psi | > \lambda\}$, we get $ \psi  \le \lambda$ on $h\Z^3$ for any $\lambda$
 large enough and independent of $h.$

 Since $\psi = \psi^h_{\rm min}$ is the minimizer of $J_h: \mathring{V}_h \to \R$, it
 is a critical point of $J_h$, which implies
 %%means $\psi$ satisfies
 \[
%%\nabla_h \cdot \ve \nabla_h \psi
    A_h^\ve[\psi] +\sum_{s=1}^M \frac{q_s N_s}{L^3 \alpha_{s,h}}
     e^{-q_s (\phi_0 + \psi)} = 0  \quad \mbox{on }  h\Z^3,
     \]
 where $\alpha_{s, h}$ is the same as above (defined below \reff{AvehAveh}). 
 Since $q_s < 0$ for all $s$, $\phi_0=\phi_0^h$ is uniformly bounded, and 
 $\psi $ is uniformly bounded above, we have by \reff{boundsalphash} and 
the uniform $L^\infty$-stability of the inverse of the operator 
$A_h^\ve: \mathring{V}_h \to \mathring{V}_h$
%%$\nabla_h \cdot \ve \nabla_h$ 
(cf.\ Lemma~\ref{l:Ihmin}) that 
 %%Lemma~\reff{l:Pruitt}) that 
 $\psi$ is also bounded below uniformly with respect to all $h > 0.$ 

Case 3: all $q_s > 0$ $ (s = 1, \dots, M).$ This is similar to Case 2. 
\end{proof}

%%%%%%%%%%%%%%%%%%%%%%%%%%%%%%%%%%%%%%%%%%%%%%%%%
\subsection{Approximation of the Poisson--Boltzmann energy}
\label{ss:FDapprPBenergy}

%%%%%%%%%%%%%%%%
\begin{comment}
We now discretize the electrostatic free energy \reff{PBEnergy} of an 
underlying charged system with ions, using first the
electrostatic potential and then the electric displacement. 
We recall that we have $M$-species of ions, and for the $s$-th species 
we denote by $q_s$ the (scaled) charge and $N_s$ the (scaled) total amount of
the concentration $c_s.$
\end{comment}
%%%%%%%%%%%%%%%%%%%%

%{\color{blue}{[Change $\rho$ to $\rho_h$. -BL]}}. 

Let $\ve \in C_{\rm per}(\overline{\Omega})$ satisfy \reff{epsilon}
%and denote by the same $\ve$ its restriction on $h\Z^3$ so $\ve \in V_h.$
and $\rho^h \in V_h$  satisfy \reff{DiscreteNeutrality}. 
We consider discrete ionic concentrations
$c_s \in V_h$ $(s = 1, \dots, M)$ and the discrete electric displacement $D \in Y_h$ 
%%(cf.\ \reff{Yh} for the notation $Y_h$) 
that satisfy the following conditions: 
\begin{align}
    \label{hpos}
    &\mbox{Nonnegativity:} & & c_{s,i,j,k} \geq 0,  \quad s=1,\dots, M;  \ i,j,k=1,\dots, N;& \\
    \label{hcons}
    & \mbox{Discrete mass conservation:} & & h^3 \sum_{i,j,k=0}^{N-1} c_{s,i,j,k}=N_s,  \quad  s = 1, \dots, M; &\\
    \label{hGauss}
    & \mbox{Discrete Gauss' law:} & & \nabla_h \cdot D= \rho^h 
    + \sum_{s=1}^M q_s c_s \quad  \mbox{on } h\Z^3. &
\end{align}
%We shall also consider perturbations  $\tilde{c}_s \in \mathring{V}_h$ of $c_s$
%for $s = 1, \dots, M$, where $\mathring{V}_h$ is defined in \reff{Vh0}. 
%%%%%%%%%%%%%%%%
\begin{comment}
%We assume that both $\ve \in C(\R^d, \R^d)$ 
%and $\rho \in C(\R^d, \R^d)$ are $\overline{\Omega}$-periodic. 
Denote 
\begin{align*}
&S_{\rho, h} = \{  D\in H(\operatorname{div},\Omega):\nabla_h \cdot D_{i,j}=\rho_{i,j},\quad \forall i,j\}. 
\\
& S_{0, h} = \{  D \in H(\operatorname{div},\Omega):\nabla_h \cdot D_{i,j}=0,\quad \forall i,j\}. 
\end{align*}
where $\rho_{i,j}=\sum_{s=1}^M q_s c_{s,i,j}$.

Define the discrete inner product and norm, $D_1=(u_1,v_1,w_1),D_2=(u_2,v_2,w_2)$
\[
\begin{aligned}
&(c_1,c_2)=h^3\sum_{i,j,k=1}^N c_{1, i, j, k} c_{2, i, j, k}, \\
&(D_1,D_2)=h^3 \sum_{i,j,k=1}^N \left( u_{1,i+\frac{1}{2}, j, k} u_{2,i+\frac{1}{2}, j, k}
+ v_{1,i, j+\frac{1}{2}, k} v_{2,i, j+\frac{1}{2}, k}
+ w_{1,i, j, k+\frac{1}{2}} w_{2,i, j, k+\frac{1}{2}} \right),\\
& \| D_1 \| = (D_1,D_1) ^{\frac{1}{2}}.
\end{aligned}
\]

\end{comment}
%%%%%%%%%%%%%%%%%%%%%%%%%%%%%%%%
We define (cf.\ \reff{Xrho} and \reff{tildeX0})
\begin{align}
\label{Xrhoh}
&X_{\rho, h}=\{(c,D)=(c_1,\dots, c_M; D)
 \in V_h^M \times Y_h:  \text{ (\ref{hpos})--(\ref{hGauss}) hold true}\},
\\
&
\label{X0h}
\tilde{X}_{0, h}=\{(\tilde{c},\tilde{D}) = (\tilde{c}_1, \dots, 
\tilde{c}_M; \tilde{D}) \in 
\mathring{V}_h^M \times \in Y_h:  \nabla_h \cdot \tilde{D} = 
\sum_{s=1}^M q_s \tilde{c}_s \mbox{ on } h\Z^3 \}.  
\end{align}
%%Clearly, $\tilde{X}_{0, h} \ne \emptyset.$

%%\zhou{Do we want to distinguish total charge density $\rho$ from fixed charge $\rho^f$? -SZ.}
%%{\color{blue}{[We use rho for fixed charge density for both Poisson and PB cases. It may not 
%%be necessary to introduce rhof. If needed we can make a note that the r.h.s of (3.41) - the 
%%discrete Gauss - is the total charge. -BL]}}

\begin{lemma}
    \label{l:Xrhohnotempty}
    If $\rho^h \in V_h$ satisfies the condition \reff{DiscreteNeutrality}, then
    $X_{\rho, h} \ne \emptyset.$
\end{lemma}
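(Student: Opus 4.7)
The plan is to mimic the proof of Lemma~\ref{l:Xrho} from the continuous setting, adapting it to the finite-difference framework. I would explicitly construct an element $(c, D) \in X_{\rho, h}$ by taking the concentrations to be constant and then invoking Lemma~\ref{l:Srhoh} to obtain an appropriate displacement.

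First, I would set $c_{s, i, j, k} = N_s / L^3$ for all $s \in \{1, \dots, M\}$ and all $i, j, k \in \Z$. Since $N_s > 0$, the nonnegativity condition \reff{hpos} is trivially satisfied. For the discrete mass conservation \reff{hcons}, note that $h^3 \sum_{i,j,k=0}^{N-1} c_{s, i, j, k} = h^3 N^3 \cdot (N_s / L^3) = N_s$, because $hN = L$. Now define the effective charge density $\rho^h_{\mathrm{tot}} := \rho^h + \sum_{s=1}^M q_s c_s \in V_h$. Using the discrete charge neutrality \reff{DiscreteNeutrality} and the fact that each $c_s$ is constant, I compute
\begin{equation*}
h^3 \sum_{i,j,k=0}^{N-1} \rho^h_{\mathrm{tot}, i, j, k}
= h^3 \sum_{i,j,k=0}^{N-1} \rho^h_{i,j,k} + h^3 N^3 \sum_{s=1}^M q_s \frac{N_s}{L^3}
= h^3 \sum_{i,j,k=0}^{N-1} \rho^h_{i,j,k} + \sum_{s=1}^M q_s N_s = 0,
\end{equation*}
so $\rho^h_{\mathrm{tot}} \in \mathring{V}_h$.

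Next, applying Lemma~\ref{l:Srhoh} to $\rho^h_{\mathrm{tot}}$, I conclude that $S_{\rho_{\mathrm{tot}}, h} \ne \emptyset$. Choose any $D \in S_{\rho_{\mathrm{tot}}, h}$; then $\nabla_h \cdot D = \rho^h_{\mathrm{tot}} = \rho^h + \sum_{s=1}^M q_s c_s$ on $h\Z^3$, which is exactly the discrete Gauss' law \reff{hGauss}. Hence $(c, D) \in X_{\rho, h}$, and $X_{\rho, h} \ne \emptyset.$

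This argument is essentially a one-shot construction, so there is no real obstacle; the only subtlety is verifying that the discrete zero-average condition on the effective source holds, which is precisely where the discrete charge neutrality hypothesis \reff{DiscreteNeutrality} is used. The rest is bookkeeping with the identity $h^3 N^3 = L^3$ and a direct appeal to the already-established Lemma~\ref{l:Srhoh}.
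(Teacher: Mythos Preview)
Your proof is correct and follows essentially the same approach as the paper: set each $c_s$ to the constant $N_s/L^3$, verify that the resulting effective source lies in $\mathring{V}_h$ via the discrete neutrality condition, and then invoke Lemma~\ref{l:Srhoh} to produce $D$. Your write-up is slightly more detailed in checking \reff{hpos}, \reff{hcons}, and the zero-average computation, but the argument is identical in substance.
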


\begin{proof}
    Let $c_s  = N_s / L^3 > 0$ on all the grids and for $s = 1, \dots, M.$ 
    Define $\tilde{\rho}^h = \rho^h
    + \sum_{s=1}^M q_s c_s \in \mathring{V}_h.$ Then, 
    by Lemma~\ref{l:Srhoh} with $\tilde{\rho}^h$ replacing $\rho^h$, there exists
    $D \in Y_h$ such that $\nabla_h \cdot D = \tilde{\rho}^h$ on $h\Z^3.$ 
    Consequently, $(c_1, \dots, c_s; D) \in X_{\rho, h}$. 
\end{proof}

%%%%%%%%%%%%%%%%
\begin{comment}
\begin{definition}
    We say that $c = (c_{1}, \dots, c_{M})\in X_{\rho, h}$, parameterized by $h$,
    is {\rm uniformly bounded and positive,}
    if there exist positive constants $\theta_1$ and $\theta_2$, independent of $h$, such that
    $0 < \theta_1 \le c_{s} \le \theta_2$ on $h\Z^3$ for all $s = 1, \dots, M.$
\end{definition}
\end{comment}
%%%%%%%%%%%%%%%%%%%%%%%

%%Let $\ve \in V_h$ be the restriction of $\ve \in C_{\rm per}(\R^3)$ satisfing \reff{epsilon}
%%\reff{veboundh} and 
%%and let $\rho^h \in V_h$ satisfy \reff{DiscreteNeutrality}.
%%(cf.\ \reff{Mh} and \reff{Wh} for the definition of $\M$ and $W_h$). 
We define the discrete Poisson--Boltzmann (PB) energy
\begin{align}
\label{hatFh}
% &
%F_h [D] = \frac{h^2}{2} \sum_{i,j,k=1}^N 
%\left( \frac{ u_{i+1/2, j, k}^2 }{\ve_{i+1/2, j, k}} 
%+ \frac{v_{i, j+1/2, k}^2}{\ve_{i, j+1/2, k}} 
%+ \frac{w_{i, j, k+1/2}^2}{\ve_{i,j,k+1/2}}
%\right), 
%\left[
%\left( \frac{ u_{i+1/2, j, k}^2 }{\ve_{i+1/2, j, k}} 
%+ \frac{v_{i, j+1/2, k}^2}{\ve_{i, j+1/2, k}} 
%+ \frac{w_{i, j, k+1/2}^2}{\ve_{i,j,k+1/2}}
%\right) 
%+ \sum_{s=1}^M c_{s, i, j, k} \log c_{s, i, j, k} \right]. 
\hat{F}_h [c,D] =  \frac{1}{2} \| D \|^2_{1/\ve, h}
+ h^3 \sum_{s=1}^M \sum_{i,j,k=0}^{N-1} c_{s, i, j, k} \log c_{s, i, j, k}
\qquad \forall (c, D) \in X_{\rho, h}.
\end{align}
%where $\langle \langle \cdot, \cdot \rangle\rangle_{1/\ve, h}$ 
%is defined in \reff{DtildeDveh}.
Let $\hat{\phi}^h_{\rm min} $ be the unique minimizer of the functional 
$\hat{I}_h: \mathring{V}_h \to \R$ as in Theorem~\ref{t:hCCPB}. Define
\begin{align}
\label{hatchmin}
    &\hat{c}^h_{{\rm min}, s} = \frac{N_s}{L^3\sA_h (e^{-q_s \hat{\phi}^h_{\rm min}})}
    e^{-q_s \hat{\phi}^h_{\rm min}}, \qquad s = 1, \dots, M, 
    \\
    \label{hatDhmin}
    &\hat{D}^h_{\rm min} = D^\ve_h[\hat{\phi}^h_{\rm min}]; 
%% = - \ve \nabla_h \hat{\phi}^h_{\rm min}; 
\end{align}
cf.\ \reff{uvephi} for the definition of $D^\ve_h$.
Denote $\hat{c}^h_{\rm min} = (\hat{c}^h_{{\rm min}, 1}, 
\dots, \hat{c}^h_{{\rm min}, M}). $ 

%%{\color{blue}{[I added a lemma. It includes the curl free property. This property
%%is taken out now from the next theorem. -Bo]}} 

\begin{lemma}
    \label{l:hminPB}
    Let $(c, D) = (\hat{c}^h_{\rm min}, \hat{D}^h_{\rm min})$ be defined as above. Then
    $(c, D) \in X_{\rho, h}$, $\nabla_h \times (D/\ve) = 0$ on $h (\Z+1/2)^3$. If 
    in addition $\sup_{h} \| \rho^h \|_\infty < \infty$, then
    there exist positive constants $\theta_1$ and $\theta_2$, independent of $h$, satisfying 
    \begin{equation}
    \label{UPB}
    \mbox{\rm  Uniform positive bounds:} \qquad 
    0 < \theta_1 \le c_s \le \theta_2  
    \qquad \mbox{on  } h\Z^3,  \  s = 1, \dots, M.
    \end{equation}
\end{lemma}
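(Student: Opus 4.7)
The plan is to verify the three assertions in turn, all of which follow from the characterization of $\hat{\phi}^h_{\rm min}$ established in Theorem~\ref{t:hCCPB} together with the identity \reff{AhDh} relating $A_h^\ve$ and $\nabla_h \cdot D_h^\ve$.

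First, I would check that $(c, D) \in X_{\rho, h}$ by verifying \reff{hpos}--\reff{hGauss} in order. Nonnegativity is immediate from \reff{hatchmin}, since $N_s > 0$, the exponential factor is positive, and the denominator $L^3 \sA_h(e^{-q_s \hat{\phi}^h_{\rm min}})$ is a positive average of positive quantities; in fact one gets strict positivity pointwise. Discrete mass conservation follows by multiplying \reff{hatchmin} by $h^3$, summing over one period of grid points, and invoking the definition \reff{Aveh} of $\sA_h$, which cancels the normalizing factor and returns $N_s$. For the discrete Gauss' law, I would combine \reff{AhDh} with the discrete CCPBE \reff{discreteCCPBE}: from $\hat{D}^h_{\rm min} = D_h^\ve[\hat{\phi}^h_{\rm min}]$ we obtain $\nabla_h \cdot \hat{D}^h_{\rm min} = -A_h^\ve[\hat{\phi}^h_{\rm min}]$, and substituting \reff{discreteCCPBE} together with the identity $\frac{q_s N_s}{L^3 \sA_h(e^{-q_s \hat{\phi}^h_{\rm min}})} e^{-q_s \hat{\phi}^h_{\rm min}} = q_s \hat{c}^h_{{\rm min}, s}$ yields $\nabla_h \cdot \hat{D}^h_{\rm min} = \rho^h + \sum_s q_s \hat{c}^h_{{\rm min}, s}$ on $h\Z^3$.

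Second, the vanishing of $\nabla_h \times (\hat{D}^h_{\rm min}/\ve)$ is a direct calculation from the definitions. By \reff{uvephi} applied to $\phi = \hat{\phi}^h_{\rm min}$ and the definition \reff{Doverve} of $D/\ve$, each component of $\hat{D}^h_{\rm min}/\ve$ at a half-grid point is precisely $-\partial^h_m \hat{\phi}^h_{\rm min}$ for $m = 1, 2, 3$. Expanding each entry of the discrete curl and using that the forward difference operators $\partial^h_m$ commute on arbitrary grid functions, every term cancels in pairs, so the discrete curl vanishes on $h(\Z+1/2)^3$.

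Finally, for the uniform bounds \reff{UPB}, I would invoke Theorem~\ref{t:hCCPB}: the hypothesis $\sup_{h} \|\rho^h\|_\infty < \infty$ yields a constant $K$ independent of $h$ such that $\|\hat{\phi}^h_{\rm min}\|_\infty \le K$. Then for each $s$, the pointwise values of $e^{-q_s \hat{\phi}^h_{\rm min}}$ lie in the interval $[e^{-|q_s| K}, e^{|q_s| K}]$, and so does the discrete average $\sA_h(e^{-q_s \hat{\phi}^h_{\rm min}})$, being a convex combination of such values. Inserting these two-sided bounds into \reff{hatchmin} gives $h$-independent positive constants $\theta_1, \theta_2$ with $\theta_1 \le \hat{c}^h_{{\rm min}, s} \le \theta_2$ on $h\Z^3$.

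No step presents a real obstacle: each amounts to a routine manipulation once the discrete CCPBE and the uniform $L^\infty$-bound of Theorem~\ref{t:hCCPB} are in hand. The only point that requires mild care is the bookkeeping in the discrete Gauss' law step, where one must match the normalization factor $L^3 \sA_h(e^{-q_s \hat{\phi}^h_{\rm min}})$ in \reff{discreteCCPBE} to the definition \reff{hatchmin} of the discrete Boltzmann distribution.
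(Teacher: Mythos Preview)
Your proposal is correct and follows essentially the same approach as the paper, which simply states that direct calculations using \reff{AhDh} and \reff{discreteCCPBE} verify membership in $X_{\rho,h}$ and the curl-free property, and that the bounds follow from Theorem~\ref{t:hCCPB}. You have merely spelled out those direct calculations in detail, and each step is accurate.
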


\begin{proof}
Direct calculations using \reff{AhDh} and \reff{discreteCCPBE} verify
that $(\hat{c}^h_{\rm min}, \hat{D}^h_{\rm min}) \in X_{\rho, h}$  and 
$\nabla_h \times (D/\ve) = 0$. 
The bounds \reff{UPB} follow from  Theorem~\ref{t:hCCPB}. 
\end{proof}

%%{\textcolor{blue}{Check if we need to add the zero total current in (3) in the theorem.}}
%%{\color{blue}{[Curl free of $D/\ve$ in (3) is a consequence of the Boltzmann relation
%%\reff{logloglog}. Removed this part and put it in the above lemma. Simplify the proof. 
%%For Part (2) and Part (3) I change it back to positivity as it's need to prove the
%%convergence of local algoirthm (where we cannot show the uniform positive boundedness. -Bo]}}

\begin{theorem}
\label{t:DiscretePBEnergy} 
%%Let $\ve \in V_h$ satisfy \reff{veboundh} and 
%%$\rho^h \in V_h$ satisfy the condition \reff{DiscreteNeutrality}. Then
The pair of concentrations and displacement
$(\hat{c}^h_{\rm min}, \hat{D}^h_{\rm min})$ defined in \reff{hatchmin} and \reff{hatDhmin} 
is the unique minimizer of $\hat{F}_h: X_{\rho, h} \to \R$. 
Moreover, if $(c, D)=(c_1, \dots, c_M; u, v, w) \in X_{\rho, h}$, then the following are equivalent: 
\begin{compactenum}
 \item[{\rm (1)}] 
 $(c, D) = (\hat{c}^h_{\rm min}, \hat{D}^h_{\rm min})$; 
 \item[{\rm (2)}]
%% $(c, D)$ satisfies the following conditions:  
 \begin{compactenum}
\item[{\rm (i)}]
{\em Positivity:} $c_s > 0$ on $h \Z^3$ for all $s = 1,\dots, M;$ and 
%%    {\em Uniform positive bounds} \reff{UPB}; 
    %there exist positive constants 
   % $\theta_1$ and $\theta_2$, independent of $h$, such that 
   % $0 < \theta_1 \le c_s \le \theta_2 $ on $h\Z^3 $ for all $s = 1, \dots, M;$ 
\item[{\rm (ii)}]
    {\em Global equilibrium:} 
     \begin{equation}
         \label{GlobalEquil}
   \langle D,\tilde{D} \rangle_{1/\varepsilon, h}+
    \sum_{s=1}^M \langle \tilde{c}_s, \log c_s \rangle_h = 0
    %\sum_{i, j,k=0}^{N-1} \tilde{c}_{s, i, j, k} \log c_{s, i, j, k}=0
    \qquad \forall (\tilde{c}, \tilde{D}) = ( \tilde{c}_1, 
    \dots, \tilde{c}_M; \tilde{D}) \in \tilde{X}_{0, h}; 
    \end{equation}
\end{compactenum}
\item[{\rm (3)}]
%%$(c, D)$ satisfies the following conditions: 
\begin{compactenum}
\item[{\rm (i)}]
%%   {\em Uniform positive bounds} \reff{UPB}; 
   {\em Positivity:} $c_s > 0$ on $h \Z^3$ for all $s = 1,\dots, M;$ and 
% {\em Uniform positive bounds:} there exist positive constants 
%    $\theta_1$ and $\theta_2$, independent of $h$, such that 
%    $0 < \theta_1 \le c_s \le \theta_2 $ on $h\Z^3 $ for all $s = 1, \dots, M;$ 
\item[{\rm (ii)}]
    {\em Local equilibrium---finite-difference Boltzmann distributions:}

$(\nabla \log c_s)_{i, j, k} = h q_s (D/\ve)_{i+1/2,j+1/2,k+1/2}$, i.e.,  
     \begin{equation}
         \label{logloglog}
         \left\{
         \begin{aligned}
& \log \frac{c_{s, i+1, j, k}}{c_{s, i, j, k}} 
    = \frac{hq_s u_{i+1/2, j, k}}{\ve_{i+1/2, j,k}}, \
    \\
    & \log \frac{c_{s, i, j+1, k}}{c_{s, i, j, k}} = 
    \frac{hq_s v_{i, j+1/2, k}}{\ve_{i, j+1/2, k}}, \
   \\
   & \log \frac{c_{s, i, j, k+1}}{c_{s, i, j, k}} = 
    \frac{hq_s w_{i, j, k+1/2}}{\ve_{i, j, k+1/2}}, 
    \end{aligned}
    \right.
    \qquad \forall s \in \{ 1, \dots, M\} \ \forall i, j, k \in \Z.
    \end{equation}
%%for any $s \in \{ 1, \dots, M \}$ and any $i, j, k \in \Z.$
\end{compactenum}
\end{compactenum}
\end{theorem}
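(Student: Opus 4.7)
The plan is to adapt the continuous proof of Theorem~\ref{t:PBEnergy} to the finite-difference setting, leveraging the analysis of the discrete CCPBE in Theorem~\ref{t:hCCPB} and the uniform positive bounds in Lemma~\ref{l:hminPB}. First I would establish that $(\hat c^h_{\rm min}, \hat D^h_{\rm min})$ is the unique minimizer of $\hat F_h$ on $X_{\rho,h}$ (which is nonempty by Lemma~\ref{l:Xrhohnotempty}). Uniqueness is immediate from the strict convexity of $\hat F_h$ on the convex set $X_{\rho,h}$ (the $\| \cdot \|^2_{1/\ve,h}$ term is strictly convex in $D$ and $u \mapsto u\log u$ is strictly convex in $c_s$). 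To identify the minimizer, I would write any competitor as $(c, D) = (\hat c^h_{\rm min} + \tilde c, \hat D^h_{\rm min} + \tilde D)$ with $(\tilde c, \tilde D) \in \tilde X_{0, h}$, apply the convexity inequality $u\log u - u_0 \log u_0 \ge (1+\log u_0)(u-u_0)$ (valid because $\hat c^h_{\rm min, s} > 0$ by Lemma~\ref{l:hminPB}), and carry out a discrete summation by parts on the quadratic term, then substitute the defining formulas \reff{hatchmin}--\reff{hatDhmin} together with the discrete CCPBE \reff{discreteCCPBE} and the discrete mass conservation \reff{hcons}. The same telescoping cancellation that works in the continuous case produces $\hat F_h[c, D] - \hat F_h[\hat c^h_{\rm min}, \hat D^h_{\rm min}] \ge 0$.

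Next I would prove the equivalence (1) $\Leftrightarrow$ (2). The direction (1) $\Rightarrow$ (2) is straightforward: positivity is exactly the uniform lower bound \reff{UPB}, and since $c_s > 0$ on the finite grid, for every $(\tilde c, \tilde D) \in \tilde X_{0,h}$ the path $(c + t\tilde c, D + t\tilde D)$ lies in $X_{\rho, h}$ for all sufficiently small $|t|$, so the first-order condition $\frac{d}{dt}\bigl|_{t=0}\hat F_h = 0$ yields \reff{GlobalEquil}. For (2) $\Rightarrow$ (1), given $(c, D)$ satisfying (i)--(ii), I would take $(\tilde c, \tilde D) = (\hat c^h_{\rm min} - c, \hat D^h_{\rm min} - D)$, which belongs to $\tilde X_{0,h}$ because both pairs satisfy the same discrete Gauss law \reff{hGauss} and mass conservation \reff{hcons}. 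Applying the convexity inequality for $u\log u$ centered at $c_s$ (using the positivity (i)) together with \reff{GlobalEquil} produces $\hat F_h[\hat c^h_{\rm min}, \hat D^h_{\rm min}] - \hat F_h[c, D] \ge 0$; combined with the minimizing property from the first step, uniqueness forces equality.

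For the equivalence (1) $\Leftrightarrow$ (3), the direction (1) $\Rightarrow$ (3) is a direct calculation: taking the discrete logarithmic gradient of \reff{hatchmin} gives $\partial_m^h \log \hat c^h_{\rm min, s} = -q_s \partial_m^h \hat\phi^h_{\rm min}$ at every edge, while \reff{hatDhmin} combined with \reff{uvephi} gives $(\hat D^h_{\rm min}/\ve)|_{\rm edge} = -\nabla_h \hat\phi^h_{\rm min}$, so multiplying by $hq_s$ yields \reff{logloglog}. The converse (3) $\Rightarrow$ (1) is the most subtle step, and I expect it to be the main obstacle. The idea is that \reff{logloglog} says $(1/q_s)\nabla_h \log c_s$ equals the edge quantity $hD/\ve$ and is therefore independent of $s$; hence $(1/q_s)\log c_s - (1/q_{s'})\log c_{s'}$ is constant on $h\Z^3$. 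Choosing $\phi \in \mathring V_h$ with $\nabla_h \phi = -D/\ve$ (this is well-defined by a discrete potential argument paralleling Lemma~\ref{l:curlfree}, since the right-hand side is curl-free and the additive constant is fixed by $\sA_h(\phi) = 0$), I obtain $\log c_s = -q_s\phi + \alpha_s$ for constants $\alpha_s$ uniquely determined by the mass constraint \reff{hcons}, which forces $c_s = (N_s/(L^3 \sA_h(e^{-q_s \phi}))) e^{-q_s\phi}$ exactly as in \reff{hatchmin}. Finally, $D = D^\ve_h[\phi]$ together with the Gauss law \reff{hGauss} and \reff{AhDh} shows that $\phi$ solves the discrete CCPBE \reff{discreteCCPBE}, whence $\phi = \hat\phi^h_{\rm min}$ by the uniqueness part of Theorem~\ref{t:hCCPB}, and $(c, D) = (\hat c^h_{\rm min}, \hat D^h_{\rm min})$.

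The routine parts are the convexity manipulations and the summations by parts. The delicate part is the reconstruction of a common discrete potential $\phi$ from species-wise local Boltzmann relations in (3); it requires verifying that the implied edge field $-D/\ve$ is the discrete gradient of some $\phi \in \mathring V_h$, which rests on the zero-curl property that holds automatically once the discrete Boltzmann relations coincide across all species, and then matching $\phi$ with the unique solution of the discrete CCPBE via mass conservation and Gauss' law.
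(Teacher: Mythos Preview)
Your proposal is correct, and for existence/uniqueness of the minimizer and the equivalence $(1)\Leftrightarrow(2)$ it matches the paper essentially step for step (convexity inequality for $u\log u$, summation by parts, mass conservation).

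The two proofs diverge on the handling of condition~(3). For $(1)\Rightarrow(3)$ you simply differentiate the explicit Boltzmann formula \reff{hatchmin} and compare with \reff{hatDhmin}; the paper instead derives \reff{logloglog} variationally, by perturbing $(c,D)$ along a single edge (moving mass $\zeta$ from one vertex to its neighbor and adjusting the corresponding $D$-component) and setting the first variation to zero. Your route is shorter; the paper's route has the advantage of being exactly the computation that later drives the local algorithm, so it earns double duty. For the converse, you go $(3)\Rightarrow(1)$: reconstruct a potential $\phi\in\mathring V_h$ with $D=D_h^\ve[\phi]$, read off $c_s$ as Boltzmann in $\phi$, plug into the discrete Gauss law to see that $\phi$ solves \reff{discreteCCPBE}, and invoke the uniqueness clause of Theorem~\ref{t:hCCPB}. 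The paper instead proves $(3)\Rightarrow(2)$ directly: it uses \reff{logloglog} to rewrite $\langle D,\tilde D\rangle_{1/\ve,h}$ via summation by parts in terms of $\log c_\sigma$ for a fixed species $\sigma$, then constructs the same potentials $\phi_s=-(1/q_s)\log c_s+\xi_s$ you build, shows $\phi_s=\phi_{s'}$ for all $s,s'$, and concludes that the residual in \reff{GlobalEquil} is a sum of constants against $\tilde c_s\in\mathring V_h$, hence zero. Your argument is cleaner conceptually but imports the uniqueness of the discrete CCPBE; the paper's is more self-contained.

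One small remark: you flag the reconstruction of $\phi$ from curl-freeness (via Lemma~\ref{l:curlfree}) as ``the main obstacle,'' but in fact \reff{logloglog} already \emph{exhibits} $D/\ve$ as the discrete gradient of $(1/q_s)\log c_s$, so no curl argument is needed; periodicity of $\log c_s$ also gives $\sA_h(D/\ve)=0$ for free. The paper exploits exactly this observation.
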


\begin{proof}
%%[Proof of Theorem~\ref{t:DiscretePBEnergy}]
Note that, with $h$ fixed, the functional $\hat{F}_h: X_{\rho, h} \to \R$ is defined on 
a compact subset of a finitely dimensional space. It is strictly convex
and bounded below, 
%%since $x\log x \geq -1/e$ if $x \ge 0$. Moreover, 
and $\hat{F}_h [c, D] \to \infty$ if $\| (c, D) \| \to +\infty$ with 
respect to any fixed norm on the underlying finitely dimensional 
space. Therefore, it has a unique minimizer. 

%%of the functional$\hat{F}_h: X_{\rho, h} \to \R$.
%The uniqueness follows from the convexity of the set $X_{\rho, h}$ and
%the strict convexity of the free-energy functional $\hat{F}_h: X_{\rho, h} \to \R$. 

Denoting $(c, D) := (\hat{c}^h_{\rm min}, \hat{D}^h_{\rm min})$, we show it 
is the minimizer.
We first show that it satisfies the condition
of global equilibrium \reff{GlobalEquil}.
Let $(\tilde{c},\tilde{D})=(\tilde{c}_1, \dots, \tilde{c}_M; 
\tilde{D}) \in \tilde{X}_{0, h}.$ Then, $\nabla \cdot \tilde{D} = \sum_{s=1}^M 
q_s \tilde{c}_s.$ It follows from the definition of $D^\ve_h$ (cf.\ \reff{uvephi})
%%--\reff{wvephi}), \reff{hatDhmin},  
and summation 
by parts (cf.\ \reff{uminMore}) that 
\begin{equation}
    \label{hGlobal1}
\langle D, \tilde{D}\rangle_{1/\ve, h} = \langle \hat{\phi}^h_{\rm min}, \nabla_h \cdot\tilde{D}
\rangle_h = \sum_{s=1}^M q_s \langle \hat{\phi}^h_{\rm min}, \tilde{c}_s \rangle_h. 
\end{equation}
Noting that $\sA(\tilde{c}_s) = 0$ for all $s \in \{ 1, \dots, M\}, $ we get 
by \reff{hatchmin} that 
\begin{equation}
   \label{hGlobal2}
\sum_{s=1}^M \langle \tilde{c}_s, \log c_s \rangle_h = 
-\sum_{s=1}^M q_s \langle \tilde{c}_s, \hat{\phi}^h_{\rm min}\rangle_h.  
\end{equation}
Now \reff{hGlobal1} and \reff{hGlobal2} imply \reff{GlobalEquil}.

Denoting by $(c_{\rm m}, D_{\rm m})\in X_{\rho, h}$ 
the unique minimizer of $\hat{F}_h$ over $X_{\rho, h}$ and 
$(\tilde{c}, \tilde{D}) = (c_{\rm m}-c, D_{\rm m} - D)\in X_{0, h},$ we have
by the convexity of $x \mapsto x\log x$, the fact
that $\sum_{i,j,k=0}^{N-1} \tilde{c}_{s, i, j, k} = 0$ for all $s \in \{ 1, 
\dots, M \}$, and the global equilibrium property \reff{GlobalEquil} that
\begin{align}
\label{FFFhhh}
    & \hat{F}_h[c_{\rm m},D_{\rm m}]-\hat{F}_h[c,D]
   \nonumber \\
    & \qquad = \hat{F}_h [c+\tilde{c},D+\tilde{D}] - \hat{F}_h[c,D]
    \nonumber\\
    & \qquad \ge   \langle {D}, \tilde{D}  \rangle_{1/\ve, h} 
    + h^3 \sum_{s=1}^M \sum_{i,j,k=0}^{N-1}
    \left[ (c_{s, i, j, k}+\tilde{c}_{s, i, j,k} ) 
    \log (c_{s, i, j, k}+\tilde{c}_{s, i, j,k} ) 
   - c_{s, i, j, k}\log c_{s, i, j, k} \right] 
    \nonumber\\
     &\qquad \ge 
   \langle {D}, \tilde{D}  \rangle_{1/\ve, h} 
    %%+ {\color{red}h^3} 
    + h^3 \sum_{s=1}^M \sum_{i,j,k=0}^{N-1} \tilde{c}_{s, i, j,k} 
    (1 + \log c_{s, i, j, k}) 
   \nonumber \\
    & \qquad 
     = \langle {D}, \tilde{D}  \rangle_{1/\ve, h} 
    + h^3 \sum_{s=1}^M \sum_{i,j,k=0}^{N-1}
    \tilde{c}_{s, i, j,k} \log c_{s, i, j, k}
    \nonumber\\
    & \qquad = 0.
\end{align}
Thus,  $(c,D)=(c_{\rm m}, D_{\rm m})$ is the  minimizer of $\hat{F}_h:
X_{\rho, h} \to \R. $ 

We now prove that all Part (1)--Part (3) are equivalent. 
First, we prove that Part (1) implies Part (2). Suppose Part (1) is true: 
$(c, D) = (\hat{c}^h_{\rm min}, \hat{D}^h_{\rm min}).$ The positivity (i)
of Part (2)
%%condition of uniform positive bounds (i) 
follows from Lemma~\ref{l:hminPB}.  The condition
of global equilibrium (ii) of Part (2) is proved above; cf.\ \reff{hGlobal1} and 
\reff{hGlobal2}. Thus, Part (2) is true.

The fact that Part (2) implies Part (1) is proved above; cf.\ \reff{FFFhhh}, 
where only the positivity of $c$ instead of the uniform positive boundedness is
needed. 

We now prove that Part (1) implies Part (3). 
Let $(c, D) = (\hat{\phi}^h_{\rm min}, \hat{D}^h_{\rm min}) \in 
X_{\rho, h}$ be the minimizer of 
$\hat{F}_h: X_{\rho, h} \to \R.$ We need only to prove
the local equilibrium property  \reff{logloglog}.
Let us fix $s \in \{ 1, \dots, M \}$ and a grid point $(i, j, k)$ 
with $0\le i, j, k \le N-1$.  Define 
$\hat{c}_s = c_s$ at all $(p, q, r) $ with $0 \le p, q, r\le N-1$
except $ \hat{c}_{s,i, j, k} = c_{s, i, j, k} + \delta $ and 
$ \hat{c}_{s,i+1,j,k}=c_{s,i+1,j,k}-\delta$,  
where $\delta \in \R $ is such that $-c_{s, i, j, k} < \delta <  c_{s, i+1, j, k}.$
Extend $\hat{c}_s$ periodically. For $s'\ne s$, we set $\hat{c}_{s'} = c_{s'}$. 
Let us also define $\hat{D} = (\hat{u}, \hat{v}, \hat{w}) \in Y_h$ by setting
$\hat{v} = v$ and $\hat{w} = w$ everywhere, and $\hat{u} = u$ everywhere except 
$ \hat{u}_{i+1/2,j,k}=u_{i+1/2,j,k}+h q_s \delta$ (extended periodically). 
We can verify that $(\hat{c}, \hat{D})
= (\hat{c}_1, \dots, \hat{c}_M; \hat{D}) \in X_{\rho, h}.$ Let 
\begin{align*}
g(\delta) & := \hat{F}_h[\hat{c}, \hat{D}] - \hat{F}_h [c, D]
\\
& = \frac{1}{2} h^3  \frac{ \left(u_{i+1/2,j,k}+h q_s \delta \right)^2- 
    u_{i+1/2,j,k}^2 }{ \ve_{i+1/2, j, k} }
    \\
    &\quad +h^3\left[ (c_{s, i, j, k}  + \delta) 
    \log (c_{s, i, j, k} + \delta) - c_{s, i, j, k} \log c_{s, i, j, k}
    \right.
    \\
    &\quad 
    \left. 
    + \left( c_{s,i+1,j,k}-\delta \right) 
    \log \left( c_{s,i+1,j,k}-\delta \right)-c_{s,i+1,j,k} 
    \log  c_{s,i+1,j,k} \right].
\end{align*}
If $\delta = 0$ then $(\hat{c}, \hat{D} ) = (c, D)$, which is the minimizer of 
$\hat{F}_h: X_{\rho, h} \to \R$. Thus,  $g'(\delta) = 0.$ 
With direct calculations, this leads to the first equation in \reff{logloglog}. 
The other two equations can be proved by the same argument. Hence, Part (3) is true.

Finally, we prove that Part (3) implies Part (2). Let $(c, D) \in X_{\rho, h}$ and assume
it satisfies (i) and (ii) of Part (3). We need only 
to prove the global equilibrium property \reff{GlobalEquil}. 
Let $(\tilde{c}, \tilde{D}) = (\tilde{c}_1, \dots, \tilde{c}_M; \tilde{u}, 
\tilde{v}, \tilde{w}) \in \tilde{X}_{0, h}. $ Fix $\sigma \in \{ 1, \dots, M \}$ 
and fix  $j, k \in \{ 0, \dots, N-1 \}$. By \reff{logloglog} and 
summation by parts, we have 
\begin{align*}
    \sum_{i=0}^{N-1}
  \frac{u_{i+1/2, j, k} \tilde{u}_{i+1/2,j,k}}{\ve_{i+1/2, j, k}}
  & = \frac{1}{hq_{\sigma}} \sum_{i=0}^{N-1}
  \left( \log c_{\sigma, i+1, j, k} - \log c_{\sigma, i, j, k}
  \right) \tilde{u}_{ i+1/2, j, k} 
  \\
  & = - \frac{1}{hq_{\sigma} } \sum_{i=0}^{N-1} 
  \left( \tilde{u}_{ i+1/2, j, k} - \tilde{u}_{ i-1/2, j, k} \right)
\log c_{\sigma, i, j, k}. 
\end{align*}
%%{\color{red}{[U do not have subscript sigma.]}} {\color{blue}{[Corrected.]}}
Similar identities for $\tilde{v}$ and $\tilde{w}$ hold true. Therefore, 
it follows from the definition of $\nabla_h \cdot \tilde{D}$ and the 
%%(cf.\ \reff{nhcdotD}) and the 
fact that $\nabla_h \cdot \tilde{D} = \sum_{s=1}^M q_s \tilde{c}_s$ as 
$(\tilde{c}, \tilde{D}) \in X_{0, h}$ that 
\begin{align*}
\langle D, \tilde{D}  \rangle_{1/\ve, h} &= 
- \frac{h^3}{q_{\sigma} } \sum_{i,j,k=0}^{N-1} 
(\nabla_h \cdot \tilde{D})_{i,j,k} \log c_{\sigma, i, j, k} 
= - \frac{h^3}{q_{\sigma} } \sum_{s=1}^M \sum_{i,j,k=0}^{N-1} q_s \tilde{c}_s
\log c_{\sigma, i, j, k}. 
\end{align*}
Consequently, 
\begin{align}
\label{beforefinal}
    &\langle D, \tilde{D}  \rangle_{1/\ve, h} 
    + h^3 \sum_{s=1}^M \sum_{i, j, k=0}^{N-1}\tilde{c}_{s, i, j, k}
    \log c_{s, i, j, k}
 %   \\
%    & \qquad = - \frac{h^3}{q_{\sigma} }\sum_{i, j, k = 1}^N
 %   \log c_{\sigma,i, j, k} \left( \nabla_h \cdot \tilde{D} \right)_{i, j, k}
  %   + h^3 \sum_{s=1}^M \sum_{i, j, k=1}^N \tilde{c}_{s, i, j, k}
  %  \log c_{s, i, j, k}
    %\\
   % &\qquad = - \frac{h^3}{q_{\sigma} }\sum_{i, j, k = 1}^N
   % \log c_{\sigma,i, j, k} \left( \sum_{s=1}^M q_s \tilde{c}_{s,i, j, k}\right)
   %   + h^3 \sum_{s=1}^M \sum_{i, j, k=1}^N \tilde{c}_{s, i, j, k}
    %\log c_{s, i, j, k}
\nonumber 
    \\
    & \qquad = h^3 \sum_{s = 1}^M q_s \left[ \sum_{i, j, k = 0}^{N-1}
     \tilde{c}_{s, i, j, k} 
    \left( \frac{1}{q_s} \log c_{s, i, j, k} - \frac{1}{q_\sigma} 
    \log c_{\sigma, i, j, k}\right) \right].
\end{align}
%Since $\sum_{i, j, k =0}^{N-1} \tilde{c}_{i, j, k} = 0$, it suffices to 
%show that $(1/q_s) \log c_{s, i, j, k}- (1/q_\sigma )
%    \log c_{\sigma, i, j, k}$ is independent of $(i, j, k)$
%for each $s \in \{ 1, \dots, M\}.$
For each $s$, we define $\phi_s \in {V}_h$ by 
$ \phi_{s, i, j, k} = - q_s^{-1} \log c_{s, i, j, k} + \xi_s $ for all
$i, j, k \in \Z,$ where 
$\xi_{s } = N^{-3}q_s^{-1} \sum_{p, q, r = 0}^{N-1} \log c_{s, p, q, r}.$
Clearly,  $\phi_s \in \mathring{V}_h.$ It follows from \reff{logloglog} that 
\begin{align*}
(\nabla_h \phi_s)_{i,j,k} = -\frac{1}{q_s} (\nabla_h \log c_s)_{i,j,k}
%\\
% &= -\frac{1}{h q_s} (\log c_{s, i+1, j, k} - \log c_{s, i, j, k}, 
% \log_{s, i, j+1, k} - \log_{s, i, j, k}, 
% \log_{s, i, j, k+1}-\log_{s, i, j, k})
%\\
 = - h \left( \frac{u_{i+1/2, j, k}}{\ve_{i+1/2, j, k}}, 
\frac{v_{i, j+1/2, k}}{\ve_{i, j+1/2, k}}, 
\frac{w_{i, j, k+1/2}}{\ve_{i, j, k+1/2}} \right)
 % (D/\ve)_{i+1/2, j+1/2, k+1/2} 
\quad \forall i, j, k \in \Z.
\end{align*}
The right-hand side is independent of $s$. So, 
if $s, s' \in \{ 1, \dots, M \} $, then
%%and $s\ne s'$, then
$\nabla_h ( \phi_s - \phi_{s'} ) = 0$ on $h\Z^3$,  
which implies $\phi_s = \phi_{s'}$, since $\sA_h(\phi_s - \phi_{s'}) = 0$.  Thus, 
\[
\frac{1}{q_s}  \log c_{s, i, j, k}- \frac{1}{q_\sigma} 
    \log c_{\sigma, i, j, k} = \xi_s - \xi_\sigma
\qquad \forall i, j, k \in \Z. 
\]
Since $\sA_h(\tilde{c}_s) = 0$ for each $s$, this and \reff{beforefinal}
 imply \reff{GlobalEquil}. 
%%  Part (2) is thus true.
\end{proof}

%%%%%%%%%%%%%%%%%%%
\begin{comment}
    
{\color{blue}{[Here, we immediately have $\phi_s$ independent of $s$, using that fact
that $\sA(\phi_s) = 0.$ Check and then change the proof. -Bo]}} 
Consequently, if $\psi \in \mathring{V}_h$ then
\begin{align*}
&\langle \nabla_h \phi_s, \nabla_h \psi_h  \rangle_{\ve, h}
\\
& \quad = - \sum_{i,j,k=1}^N \left[ u_{i+1/2, j, k} (\psi_{i+1, j, k} - \psi_{i, j, k})
+ v_{i, j+1/2, k} ( \psi_{i, j+1, k} - \psi_{i, j, k} )
\right.
\\
&\qquad \qquad \qquad 
\left.
+ w_{i, j, k+1/2} ( \psi_{i, j, k+1} - \psi_{i, j, k} )
\right] 
\\
&\quad  = \sum_{i, j, k=1}^N \left( u_{i+1/2, j, k}-u_{i-1/2, j, k}
+ v_{i, j+1/2, k}- v_{i, j-1/2, k} 
+ w_{i, j, k+1/2} - w_{i, j, k+1/2} \right) \psi_{i, j, k}
\\
& \quad = \langle \nabla_h \cdot D, \psi \rangle_h.
%%= \sum_{i, j,k=1}^N (\nabla_h \cdot D)_{i, j, k} \psi_{i, j,k}.
\end{align*}
Therefore, by Lemma~\ref{l:Ihmin}, $\phi_s$ is the unique
minimizer of the functional $I_h: \mathring{V}_h \to \R$
with $\rho$ replaced by $\nabla_h \cdot D = \rho + \sum_{s'=1}^M q_{s'} c_{s'}$
which is independent of $s$. Hence, $\phi_s = \phi_{s'}$
for any $s, s'\in \{ 1, \dots, M\},$ and 
$
({1}/{q_s} ) \log c_s - ({1}/q_{s'}) \log c_{s'} = \xi_s - \xi_{s'}
$
is independent of $i, j, k.$  
\end{comment}
%%%%%%%%%%%%%%%%%%%%

%%%%%%%%%%%%%%%%%%%%%%%%%%%%%%%%%%%%%%%%%%%%%%%%%%%%%%
%%\section{Finite-Difference Error Estimates}
\section{Error Estimates}
%%Approximations: Convergence and Error Estimates}
\label{s:ErrorEstimates}

%We first prove some lemmas and then provide error estimates for 
%minimizing the discrete Poisson energy and Poisson--Boltzmann (PB) energy functionals. 

We shall denote by $C$ a generic positive constant that is independent of
the grid size $h.$ Sometimes
we denote by $C = C(a, b, \dots, c)$ to indicate that the constant $C $ can depend on 
the quantities $a, b, \dots, c$ but is still independent of $h$. A statement is true for 
all $h > 0$ means it is true for all $h = L/N$ with any $N \in \N.$
Let $f\in C_{\rm per}(\overline{\Omega})$. 
%%Its restriction to $h\Z^3$ is also denoted $f$. so $f \in V_h.$ 
Define $\sQ_h f \in V_h$ (cf.\ \reff{Vh}) by 
\begin{equation}
\label{defineQhf}
\sQ_h f = f + \sA_\Omega(f) - \sA_h(f) \qquad \mbox{on } h\Z^3. 
%(\sQ_h f)_{i, j, k } = f(ih, jh, kh) - \sA_\Omega (f) + \sA_h (f) \qquad \forall i, j, k \in \Z.
\end{equation}

\begin{lemma}
  \label{l:rhoerror} 
If $f \in C^2_{\rm per}(\overline{\Omega})$, then 
there exists a constant $C = C(f, \Omega) > 0$, independent of $h$, such that 
%%$| \sQ_h f - f | = | \sA_\Omega (f) - \sA_h (f) | \le Ch^2$ on $h\Z^3.$
\[
|\sQ_h f - f | = | \sA_\Omega(f) - \sA_h (f) | \le Ch^2  \qquad \forall i, j, k \in \Z.  
\]
\end{lemma}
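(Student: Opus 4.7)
The equality $\sQ_h f - f = \sA_\Omega(f) - \sA_h(f)$ is immediate from the definition \reff{defineQhf}, since the right-hand side is a constant on $h\Z^3$. The content of the lemma is therefore the quadrature estimate
\[
\Bigl| \int_\Omega f\, dx - h^3\sum_{i,j,k=0}^{N-1} f(x_i, y_j, z_k)\Bigr| \le C h^2 L^3,
\]
after multiplying through by $L^3$. The plan is to control this left-endpoint Riemann sum error via a two-step Taylor expansion that exploits periodicity, which is what upgrades the naive $O(h)$ bound to $O(h^2)$ in the $C^2$-setting.

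First I would partition $\overline\Omega$ into the closed cubes $Q_{i,j,k} = [x_i,x_{i+1}]\times[y_j,y_{j+1}]\times[z_k,z_{k+1}]$ and, on each $Q_{i,j,k}$, write the exact first-order Taylor expansion of $f$ about the corner $P_{i,j,k} = (x_i, y_j, z_k)$ with integral remainder. Integrating this expansion over $Q_{i,j,k}$ and summing yields
\[
\int_\Omega f\, dx - h^3\sum_{i,j,k} f(P_{i,j,k}) = \frac{h^4}{2}\sum_{m=1}^3 \sum_{i,j,k}\partial_m f(P_{i,j,k}) + R_h,
\]
where the remainder satisfies $|R_h| \le C N^3 h^5 \|f\|_{C^2} = C L^3 h^2 \|f\|_{C^2}$. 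This handles the second-order Taylor contribution directly; the remaining task is to show that the explicit first-order term on the right is also $O(h^2 L^3)$.

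The key step, and I expect the main technical point, is the bound
\[
h^3 \sum_{i,j,k} \partial_m f(P_{i,j,k}) = O(L^3 h), \qquad m = 1, 2, 3,
\]
for the Riemann sum of the periodic function $\partial_m f$, which has mean zero by the divergence theorem. I would prove this by freezing the two indices orthogonal to $m$ and applying a one-dimensional estimate: for $g \in C^1_{\rm per}([0,L])$ with $\int_0^L g = 0$, writing $g = G'$ with $G$ periodic and Taylor-expanding $G(x_{i+1}) = G(x_i) + h G'(x_i) + \tfrac{h^2}{2}G''(\xi_i)$ gives, after telescoping,
\[
h \sum_{i=0}^{N-1} g(x_i) = -\frac{h^2}{2}\sum_{i=0}^{N-1} g'(\xi_i) = O(L h\, \|g'\|_\infty),
\]
where the $C^2$-regularity of $f$ is used to ensure $g' = \partial_m^2 f(\cdot, y_j, z_k)$ exists and is bounded uniformly in the fixed indices. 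Summing over the remaining two indices contributes a factor $h^2 \cdot N^2 = L^2$, yielding the claimed $O(L^3 h)$ bound.

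Combining the two estimates, the first-order term contributes $\tfrac{h}{2}\cdot O(L^3 h) = O(L^3 h^2)$ and the remainder $R_h$ is already $O(L^3 h^2)$, so dividing by $L^3$ gives the asserted bound with a constant $C$ depending only on $\|f\|_{C^2}$ and $L$. The only subtlety is that one must not apply a generic $d$-dimensional Riemann-sum estimate (which would give $O(h)$); instead, the one-dimensional periodicity in each coordinate must be invoked separately, and the availability of $\partial_m^2 f$ from $f \in C^2_{\rm per}$ is exactly what powers the integration-by-parts-style argument producing the extra factor of $h$.
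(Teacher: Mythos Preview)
Your argument is correct, but it takes a different route from the paper. The paper Taylor-expands both $\dashint_B f\,dx$ and the vertex average $\tfrac{1}{8}\sum_{p=1}^8 f(V_p)$ about the \emph{center} $P$ of each grid box $B$; since $\int_B (x-P)\,dx = 0$ and $\sum_{p}(V_p - P) = 0$ by symmetry, the first-order terms vanish cell-by-cell, giving $\bigl|\dashint_B f - \tfrac{1}{8}\sum_p f(V_p)\bigr|\le Ch^2$ directly. Periodicity then enters only to identify $\tfrac{1}{N^3}\sum_B \tfrac{1}{8}\sum_p f(V_p(B))$ with $\sA_h(f)$ (each grid point is a vertex of exactly eight boxes). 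Your approach expands about the \emph{corner} instead, so the first-order term survives locally and you must control it globally via the one-dimensional telescoping argument $h\sum_i g(x_i) = -\tfrac{h^2}{2}\sum_i g'(\xi_i)$ for mean-zero periodic $g$. Both are valid; the paper's center-based expansion is shorter and avoids the secondary estimate, while your argument makes the mechanism of periodic cancellation more explicit and is closer in spirit to the Euler--Maclaurin derivation of the trapezoidal-rule error.
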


\begin{proof}
%Since $\sA_{\Omega}(\rho) = 0$ for the Poisson case and $\rho$ satisfies the charge
%neutrality \reff{neutrality}, it follows from \reff{definerhoh4both} that
%\[
%\alpha_h := \dashint_\Omega \rho dx - \frac{1}{N^3} \sum_{l, m, n = 0}^{N-1}
%\rho(lh, mh, nh) = \rho^h - \rho  \quad \mbox{on } h\Z^3
%\]
%for both cases. (See \reff{AveAu} for the notation of an averaged integral.) Note that 
%$\alpha_h$ is a constant on $h \Z^3.$ It suffices to show that $| \alpha^h_{i, j, k} | \le C h^2. $

Let $B$ be any grid box and denote by
$P=P(B)$  and $V_i  = V_i(B)$ $(i =1, \dots, 8)$ its center and $8$ vertices, respectively. 
Denote $x=(x_1, x_2, x_3)$. 
Note that $|B| = h^3$, $\sum_{p=1}^8 (V_p - P) = 0$, and the integral of 
$x - P$ over $x \in B$ vanishes. 
Since $f \in C^2_{\rm per}(\overline{\Omega})$, it follows from Taylor's expansion that 
\[
\left| \dashint_B f \, dx - \frac{1}{8} \sum_{p=1}^8 f(V_p) \right|
\le \left| \dashint_B \left[ f(x) - f(P)\right] \, dx \right|
+ \left| \frac{1}{8} \sum_{p=1}^8 \left[ f(V_p) - f(P) \right] \right| \le Ch^2. 
\]
There are a total of $N^3$ grid boxes and, due to the $\overline{\Omega}$-periodicity
of $f$, each grid point is a vertex of $8$  
grid boxes. Thus, denoting by $\sum_{B}$ the sum over
all the $N^3$ grid boxes $B$, we have 

\begin{align*}
| (\sQ_h f)_{i, j, k} - f(ih, jh, kh)| = | \sA_\Omega (f) - \sA_h(f) | 
 = \left| \frac{1}{N^3} \sum_{B} \left[ \dashint_B f  \, dx - 
\frac{1}{8} \sum_{p=1}^8 f(V_p(B)) \right] \right|\le Ch^2 
%%\le \frac{1}{N^3} \sum_{B} \left| \dashint_B \rho \, dx 
%%- \frac{1}{8} \sum_{p=1}^8 V_p(B)\right| 
\end{align*}
for any $i, j, k \in \Z$, completing the proof. 
\end{proof}

%%%%%%%%%%%%%%%%%%%%%%%%%%%%%%
\begin{comment}

 Let  $\rho\in C^2_{\rm per}(\R^3)$ satisfy
$\sA_{\Omega}(\rho)  = 0$ for the Poisson case and the charge neutrality
\reff{neutrality} for the Poisson--Boltzmann (PB) case.  i.e., 
\begin{equation}
\label{rhointegral4PPB}
    \int_\Omega \rho \, dx
    = \left\{
\begin{aligned}
& 0  \quad & & \mbox{for the Poisson case},
\\
& - \sum_{s=1}^M q_s N_s \quad & & \mbox{for the PB case}.
\end{aligned}
    \right.
\end{equation}

Recall that $h = L/N$. We define $\sQ_h \rho \in V_h $ by
\begin{equation}
\label{definerhoh4both}
(\sQ_h \rho)_{i, j, k} = \rho (ih, jh, kh) - \frac{1}{N^3} \sum_{l,m,n=0}^{N-1} \rho(lh, mh, nh) -
 \left\{
\begin{aligned}
& 0  \quad & & \mbox{for Poisson},
\\
& \frac{1}{L^3} \sum_{s=1}^M q_s N_s \quad & & \mbox{for PB}.
\end{aligned}
    \right.
\end{equation}
For the Poisson case, $\rho^h \in \mathring{V}_h$, and it will be used to define 
$S_{\rho, h}$; cf.\ \reff{Srhoh}.  For the PB case, $\rho^h\in V_h$ satisfies
the discrete charge density \reff{DiscreteNeutrality} and will be used
to define $X_{\rho, h}$; cf.\ \reff{Xrhoh}.  

\end{comment}
%%%%%%%%%%%%%%%%%%%%%%%%

Let $D = (u, v, w) \in C_{\rm per}(\overline{\Omega}, \R^3)$. We define $\mathscr{P}_h D\in Y_h$
(cf.\ \reff{Yh} for the notation $Y_h$) by 
\begin{align}
\label{definePhD}
&( \mathscr{P}_h D )_{i+1/2, j+1/2, k+1/2} 
\nonumber \\
& \quad = (u((i+1/2)h, jh, kh), 
v(ih, (j+1/2)h, kh), w(ih, jh, (k+1/2)h)) \quad \forall i, j, k \in \Z.
\end{align}
Recall that $D_h^\ve[\phi]$ and $A_h^\ve[\phi]$ 
are defined in \reff{uvephi} and \eqref{Ahvephi}, respectively. 

\begin{lemma}
    \label{l:expandD}
\begin{compactenum}
\item[{\rm (1)}]
 If $D \in C^3_{\rm per}(\overline{\Omega}, \R^3)$,
 then for each $h$ there exists $\sigma^h \in V_h$ such that 
\begin{equation}
\label{dhDhe} 
 \nabla_h \cdot \sP_h D = \nabla \cdot D + \sigma^h h^2
\quad \mbox{and} \quad | \sigma^h| \le C \qquad \mbox{on } h\Z^3. 
\end{equation}

\item[{\rm (2)}]
 If $\ve \in C^2_{\rm per}(\overline{\Omega}) $ satisfies \reff{epsilon}, 
$\phi \in C^3_{\rm per}(\overline{\Omega})$, and $D  = -\ve \nabla \phi
\in C^3_{\rm per}(\overline{\Omega}, \R^3)$, then for each $h$ there exists
$T^h \in Y_h$ such that
\begin{equation}
\label{Deh}
 \sP_h D = D_h^{\ve}[\phi] + h^2 T^h 
\quad \mbox{ and } \quad |T^h| \le C \qquad \mbox{on } h (\Z+1/2)^3.  
\end{equation}
\item[{\rm (3)}]
 If $\ve \in C^2_{\rm per}(\overline{\Omega}) $ satisfies \reff{epsilon}, 
$\phi \in C^4_{\rm per}(\overline{\Omega})$, and $D = -\ve \nabla \phi
\in C^3_{\rm per}(\overline{\Omega}, \R^3)$, then for each $h$ there exists 
$\tau^h \in V_h$ such that 
\begin{equation}
\label{Trunc}
\nabla \cdot \ve \nabla \phi = A_h^\ve[\phi] + h^2 \tau^h  
\quad \mbox{and} \quad |\tau^h| \le C \qquad \mbox{on } h\Z^3.
\end{equation}
\end{compactenum}
\end{lemma}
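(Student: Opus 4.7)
The plan is to prove all three parts by routine Taylor expansions, building up from Part~(1) to Part~(3). For Part~(1), the discrete divergence $(\nabla_h \cdot \sP_h D)_{i,j,k}$ is a sum of three standard central differences such as $(u(x_{i+1/2},y_j,z_k) - u(x_{i-1/2},y_j,z_k))/h$, which is the classical second-order central approximation of $\partial_1 u$ at $(x_i, y_j, z_k)$. Since $D \in C^3$, Taylor expansion around each grid point yields the bound with constant depending on $\|D\|_{C^3}$.

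For Part~(2), I would work edge-by-edge, say on the $u$-component at the midpoint $(x_{i+1/2}, y_j, z_k)$, and Taylor-expand both factors around this midpoint. The midpoint formula \reff{halfve} together with a two-term Taylor expansion gives $\ve_{i+1/2,j,k} = \ve(x_{i+1/2},y_j,z_k) + O(h^2)$ with constant controlled by $\|\ve\|_{C^2}$, while the standard central-difference-about-the-midpoint identity gives $\partial_1^h\phi_{i,j,k} = \partial_1\phi(x_{i+1/2},y_j,z_k) + O(h^2)$ with constant controlled by $\|\phi\|_{C^3}$. Multiplying these and recalling $u = -\ve\partial_1\phi$ produces the required $h^2$-bound on the $u$-component; the $v$- and $w$-components are analogous.

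Part~(3) I plan to deduce from Parts~(1) and~(2) together with the identity $A_h^\ve[\phi] = -\nabla_h \cdot D_h^\ve[\phi]$ from \reff{AhDh}. Substituting \reff{Deh} into this identity and then invoking \reff{dhDhe} gives
\[
A_h^\ve[\phi] = -\nabla_h \cdot \sP_h D + h^2 \nabla_h \cdot T^h = -\nabla \cdot D + h^2(\nabla_h \cdot T^h - \sigma^h) = \nabla \cdot (\ve \nabla \phi) + h^2 \tau^h,
\]
with $\tau^h := \nabla_h \cdot T^h - \sigma^h$. The remaining step is to check that $\nabla_h \cdot T^h$ is uniformly bounded in $h$, i.e., that the $O(h^2)$ remainder in Part~(2) is itself smooth enough to admit a bounded centered difference. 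To see this I would revisit the Taylor expansion of Part~(2) one order further: under the additional regularity $\phi \in C^4$, the leading part of $T^h$ at a midpoint is the pointwise evaluation of an explicit smooth function of $\ve,\ve'',\phi',\phi''',\ldots$ at that midpoint, plus a higher-order correction, so its centered difference is bounded by constants depending on $\|\phi\|_{C^4}$ and $\|\ve\|_{C^2}$.

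I expect the main obstacle to be precisely that last regularity check. If $\ve$ had only $C^2$ regularity and the remainder in $T^h$ were merely continuous rather than Lipschitz, then $\nabla_h \cdot T^h$ would not be obviously bounded, and I would instead fall back on a direct Taylor expansion of $A_h^\ve[\phi]_{i,j,k}$ around $(x_i,y_j,z_k)$ in the flux form $(1/h)(\ve_{i+1/2,j,k}\partial_1^h\phi_{i,j,k} - \ve_{i-1/2,j,k}\partial_1^h\phi_{i-1,j,k})$, whose symmetry about the grid point cancels the odd-order terms and yields the $O(h^2)$ truncation error directly, at the cost of a somewhat longer and less illuminating calculation.
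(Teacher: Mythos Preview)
Your proposal is correct and follows essentially the same route as the paper: Parts~(1) and~(2) by direct Taylor expansion about the grid point and the edge midpoint respectively, and Part~(3) by composing \reff{dhDhe}, \reff{Deh}, and \reff{AhDh} to obtain $\tau^h = \sigma^h - \nabla_h\cdot T^h$ (your sign convention is opposite but equivalent). The paper does not need your fallback: it writes out $T^h_{i+1/2,j,k}$ explicitly as a combination of $\ve_{i+1/2,j,k}\,\partial_1^3\phi(\eta_{i,j,k})$ and $\partial_1^2\ve(\xi_{i,j,k})\,\partial_1\phi$, and then bounds the centered difference of the first piece via the add--subtract decomposition
\[
\ve_{i+1/2}\,\partial_1^3\phi(\eta_i) - \ve_{i-1/2}\,\partial_1^3\phi(\eta_{i-1})
= \ve_i\bigl[\partial_1^3\phi(\eta_i)-\partial_1^3\phi(\eta_{i-1})\bigr]
+ \tfrac{\ve_{i+1}-\ve_i}{2}\partial_1^3\phi(\eta_i)
+ \tfrac{\ve_i-\ve_{i-1}}{2}\partial_1^3\phi(\eta_{i-1}),
\]
which is $O(h)$ using only $\phi\in C^4$ (Lipschitz continuity of $\partial_1^3\phi$) and $\ve\in C^1$. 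Your concern about the $\partial_1^2\ve$ piece under merely $\ve\in C^2$ is well placed; the paper's displayed formula for $T^h$ carries an extra factor of $h$ on that term, which makes its divided difference trivially bounded, but you are right to flag that without such a factor one would need either slightly more regularity on $\ve$ or the direct flux-form expansion you describe.
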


\begin{proof}
(1) 
Let $D = (u, v, w)$ and $i, j, k \in \Z.$   
By the definition of $\sP_h D$ and $\nabla_h \cdot \sP_h D$, and  
Taylor expanding $u((i+1/2)h, jh, kh)$ and $u((i-1/2)h, jh, kh)$ 
at $u(ih, jh, kh)$, similarly for the $v$ and $w$ components of
$D$, we obtain \reff{dhDhe} with 
%%$\sigma^h \in V_h$ given  by 
\[
\sigma^h_{i, j, k} = \frac{1}{24} \left[ \partial_1^3 u (\alpha_{i, j, k})
+ \partial_2^3 v(\beta_{i,j,k})+\partial_3^3 w(\gamma_{i, j, k}) \right] 
\]
for some $\alpha_{i, j, k}, \beta_{i, j, k}, \gamma_{i, j, k} \in \R^3.$
%%Clearly, $| \sigma^h_{i, j, k} \le C$ for all $i, j, k \in \Z.$

(2) Note that $\ve_{i, j, k} = \ve(ih, jh, kh)$ 
and $\ve_{i+1/2, j, k} = (\ve_{i,j,h}+\ve_{i+1, j,k})/2$ for all $i, j, k$; 
cf.\ \reff{halfve}.  Let us write $\partial_j = \partial_{x_j}$ with $x = (x_1, x_2, x_3)$. 
It then follows from Taylor's expansion at the point
$((i+1/2)h, j h, kh)$ that 
%%\BL{Check these.}
\begin{align*}
&\ve((i+1/2)h,jh,kh) = \ve_{i+1/2, j,k} 
- \frac{1}{8}h^2 \partial_1^2 \ve(\xi_{i, j, k}),
\\
&\partial_{1} \phi((i+1/2)h, jh, kh) = \frac{1}{h} 
\left[ \phi((i+1)h, jh, kh) - \phi(ih, jh, kh)\right]
- \frac{1}{24} \partial_1^3 \phi(\eta_{i,j,k}) h^2, 
\end{align*}
where $\xi_{i, j, k}, \eta_{i, j, k} \in [(ih, jh, kh), ((i+1)h, jh, kh)].$ 
Consequently, 
with $D = (u, v, w)$, 
\begin{align*}
%%\label{D-trunE}
%%\begin{aligned}
&u((i+1/2)h, jh, kh) 
\nonumber \\
&\qquad = - \ve((i+1/2)h, jh, kh) \partial_1 \phi((i+1/2)h, jh, kh)
\nonumber \\
& \qquad = -\ve_{i+1/2, j,k} \partial_1 \phi((i+1/2)h, jh, kh)
+ \frac{1}{8}h^2 \partial_1^2 \ve(\xi_{i, j, k})\partial_1 \phi((i+1/2)h, jh, kh)
\nonumber \\
&\qquad = - \frac{\ve_{i+1/2, j, k} }{h} \left[ \phi((i+1)h, jh, kh) 
- \phi(ih, jh, kh)\right] + T^h_{i+1/2, j, k} h^2,
%%\end{aligned}
\end{align*}
where 
\begin{equation}
    \label{Th}
T^h_{i+1/2, j, k} = \frac{1}{8} h \partial_1^2 \ve(\xi_{i j, k}) \partial_1 \phi((i+1/2)h, j, k))
+\frac{1}{24} \ve_{i+1/2, j, k} \partial_1^3 \phi(\eta_{i, j, k}).
\end{equation}
%%for some $\xi_{i, j, k}, \eta_{i, j, k} \in \R^3.$
Similar expansions hold for $v(ih, (j+1/2)h, kh)$ and $w(ih, jh, (k+1/2)h)$, respectively. 
Setting $T^h = (T^h_{i+1/2, j, k}, T^h_{i,j+1/2, k}, T^h_{i,j,k+1/2})\in Y_h$, 
we then obtain \reff{Deh}.

(3)
%%Assume $\ve \in C^4_{\rm per}(\R^3).$ 
It follows from \eqref{dhDhe}, \eqref{Deh}, and \eqref{AhDh} that 
\begin{align*}
    \nabla \cdot \ve \nabla \phi & = - \nabla \cdot D 
    = -\nabla_h \cdot \sP_h D + \sigma^h h^2 
    \\
    & =- \nabla_h \cdot D_h^\ve [\phi ] - h^2 \nabla_h \cdot T^h + \sigma^h h^2
    = A_h^\ve[\phi] + \tau^h h^2  \qquad \mbox{on } h\Z^3,
\end{align*}
where $\tau^h = \sigma^h - \nabla_h \cdot T^h.$ 
Note that $\eta_{i, j, k}$ in \reff{Th} satisfies that 
$| \eta_{i, j, k} - (ih, jh, kh) | \le h$. 
Since 
\begin{align*}
&\ve_{i+1/2, j, k} \partial_1^3 \phi(\eta_{i, j, k})
- \ve_{i-1/2, j, k} \partial_1^3 \phi(\eta_{i-1, j, k})
= \ve_{i,jk} \left[ \partial_1^3 \phi(\eta_{i, j, k})
- \partial_1^3 \phi(\eta_{i-1, j, k})\right]
\\
& \qquad 
+ \frac{\ve_{i+1, j, k} - \ve_{i, j, k}}{2} \partial_1^3 \phi(\eta_{i, j, k})
+ \frac{\ve_{i,j,k} - \ve_{i-1, j, k}}{2} \partial_1^3 \phi(\eta_{i-1, j, k}),
\end{align*}
and similar expansions hold true for $\ve_{i,j+1/2, k}\partial_2^3\phi$
and $\ve_{i,j,k+1/2} \partial_3^3 \phi$ at respective points, 
Taylor's expansion and \reff{Th} imply
$|\nabla_h \cdot T^h | \le C$, and hence $|\tau^h| \le C$ on $h\Z^3.$
\end{proof}

We now present the error estimate for the finite-difference approximation
of the Poisson energy.  Let $\ve \in C_{\rm per}(\overline{\Omega}) $ satisfy \reff{epsilon} 
and $\rho \in C_{\rm per}(\overline{\Omega})$. If  $\sA_\Omega(\rho) = 0$, then
$\rho^h := \sQ_h \rho = \rho - \sA_h(\rho): h\Z^3 \to \R$ can be readily computed. 
Clearly, $\rho^h \in \mathring{V}_h$; cf.\ \reff{defineQhf}.
If $D, H \in Y_h$ (cf.\ \reff{Yh}), 
we denote $\langle D, H \rangle_h = \langle D, H \rangle_{1/\ve, h}$
and $ \| D \|_h = \| D \|_{1/\ve, h} $ with $\ve = 1; $ 
cf.\ \reff{DtildeDveh} and \reff{DiscreteDnorm}. 
For any $D = (u, v, w) \in C_{\rm per}(\overline{\Omega}, \R^3)$, 
we define $\| D \|_h  = \| \sP_h D \|_h.$  
%%where $\sP_h D \in Y_h$ is defined in \reff{definePhD}.
%%We also consider the norm $ \| \cdot \|_h$ of the space $Y_h$ defined in \reff{Yh}
%\begin{equation}
%\label{normD}
%\| D\|^2_h = \sum_{i, j, k=0}^{N-1} \left( u_{i+1/2, j, k}^2 
%+ v_{i, j+1/2, k}^2 + w_{i, j, k+1/2}^2 \right). 
%\end{equation} 
%Clearly, $\| \cdot \|_h $ is a norm on $Y_h$, which is equivalent to 
%$\| \cdot \|_{1/\ve, h}$ (cf.\ \reff{DiscreteDnorm}). 
%%\BL{Add  $L^\infty$ error estimate?}

\begin{theorem}
    \label{t:L2Poisson}
Assume  $\ve \in C^2_{\rm per}(\overline{\Omega})$ satisfies \reff{epsilon}, 
$\rho\in C^2_{\rm per}(\overline{\Omega})$ satisfies $\sA_{\Omega}(\rho)  = 0$, 
and $\rho^h := \sQ_h \rho \in \mathring{V}_h$. 
Let ${\phi}_{\rm min} \in \mathring{H}^1_{\rm per}(\Omega)$, 
$\phi^h_{\rm min} \in \mathring{V}_h$, $D_{\rm min } \in  S_{\rho}$, and 
 $D^h_{\rm min} \in S_{\rho, h}$ be the unique minimizers 
 of the functionals $I: \mathring{H}^1_{\rm per}(\Omega)\to \R$, 
 $I_h: \mathring{V}_h \to \R$, 
$F: S_\rho \to \R$, and  $F_h: S_{\rho, h} \to \R$, respectively. 
Assume that $ {\phi}_{\rm min} \in C^3_{\rm per}(\overline{\Omega})$ and 
 ${D}_{\rm min} \in C^3_{\rm per}(\overline{\Omega}, \R^3)$, then
 %%Moreover, 
there exists a constant $C = C(\ve, \rho, \Omega) > 0$, independent of $h,$
such that  
\[
\| \sP_h D_{\rm min} - D^h_{\rm min} \|_h \le C h^2.
\]
 If in addition ${\phi}_{\rm min} \in C^4_{\rm per}(\overline{\Omega})$, then
\[
\| \sP_h D_{\rm min} - D^h_{\rm min} \|_\infty \le C h^2.
\]
\end{theorem}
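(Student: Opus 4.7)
The plan is to reduce the vector error bound on $\sP_h D_{\rm min} - D^h_{\rm min}$ to a scalar error bound on $\phi^h_{\rm min}$ measured against an interpolant of $\phi_{\rm min}$, and then to invoke the appropriate stability property of $A_h^\ve$ for the two different norms. Concretely, introduce the interpolant $\phi^{\rm I} := \sQ_h \phi_{\rm min} \in \mathring{V}_h$ (cf.\ \reff{defineQhf}) and set $e := \phi^h_{\rm min} - \phi^{\rm I} \in \mathring{V}_h.$  Since adding a constant does not affect $D_h^\ve$, one has $D_h^\ve[\phi^{\rm I}] = D_h^\ve[\phi_{\rm min}|_{h\Z^3}]$, so Lemma~\ref{l:expandD}(2) gives $\|\sP_h D_{\rm min} - D_h^\ve[\phi^{\rm I}]\|_\infty \le Ch^2$ (hence also $\le Ch^2$ in $\|\cdot\|_{1/\ve,h}$). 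On the other hand, $D^h_{\rm min} - D_h^\ve[\phi^{\rm I}] = D_h^\ve[e]$ by linearity, so all the work reduces to bounding $D_h^\ve[e]$ in the appropriate norm.

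For the $L^\infty$ estimate under the stronger assumption $\phi_{\rm min} \in C^4$, I would argue as follows. Using $A_h^\ve[\phi^h_{\rm min}] = -\rho^h$ and Lemma~\ref{l:expandD}(3) with $\nabla\cdot\ve\nabla\phi_{\rm min} = -\rho$, one gets $A_h^\ve[e] = -\rho^h - A_h^\ve[\phi^{\rm I}] = \sA_h(\rho) - h^2\tau^h$ pointwise, where $|\tau^h|\le C$. Since $\sA_\Omega(\rho)=0$, Lemma~\ref{l:rhoerror} gives $|\sA_h(\rho)|\le Ch^2$, so $\|A_h^\ve[e]\|_\infty \le Ch^2$. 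Lemma~\ref{l:Ihmin}(3) (the $L^\infty/W^{1,\infty}$ stability of $(A_h^\ve)^{-1}$ on $\mathring{V}_h$) then yields $\|e\|_\infty + \|\nabla_h e\|_\infty \le Ch^2$, and since $|D_h^\ve[e]| \le \ve_{\max} |\nabla_h e|$ pointwise by \reff{uvephi}, this closes the $L^\infty$ estimate.

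The main obstacle is the first estimate under only $C^3$ regularity, because Lemma~\ref{l:expandD}(3) requires $C^4$ and pointwise $A_h^\ve[\phi^{\rm I}]$ has only $O(h)$ consistency with $\nabla\cdot\ve\nabla\phi_{\rm min}$ under $C^3$. The key trick is to extract the truncation error in divergence form rather than pointwise. Using $A_h^\ve[\phi^{\rm I}] = -\nabla_h\cdot D_h^\ve[\phi^{\rm I}]$ from \reff{AhDh}, write
\begin{equation*}
A_h^\ve[e] = -\rho^h - A_h^\ve[\phi^{\rm I}] = \sA_h(\rho) + h^2\sigma^h + \nabla_h\cdot R,
\end{equation*}
where $\sigma^h$ comes from Lemma~\ref{l:expandD}(1) applied to $D_{\rm min} \in C^3$ (using $\nabla\cdot D_{\rm min} = \rho$) and $R := D_h^\ve[\phi^{\rm I}] - \sP_h D_{\rm min}$ satisfies $\|R\|_\infty \le Ch^2$ by Lemma~\ref{l:expandD}(2) under $\phi_{\rm min}\in C^3$, $\ve\in C^2$.

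Pair both sides with $e$ in $\langle\cdot,\cdot\rangle_h$ to get a discrete energy identity. By summation by parts (Part (1) of Lemma~\ref{l:FourierBasis}) and $\|\nabla_h\psi\|_{\ve,h}^2 = -\langle A_h^\ve[\psi],\psi\rangle_h$, this gives
\begin{equation*}
\|\nabla_h e\|_{\ve,h}^2 = -\sA_h(\rho)\langle 1, e\rangle_h - h^2\langle\sigma^h, e\rangle_h + \langle R, \nabla_h e\rangle_h.
\end{equation*}
The first term vanishes because $e\in\mathring{V}_h$; the remaining two terms are controlled by Cauchy--Schwarz, the discrete Poincar\'e inequality (Part (3) of Lemma~\ref{l:FourierBasis}) applied to $e$, and the uniform bounds on $\sigma^h, R$, giving $\|\nabla_h e\|_{\ve,h}\le Ch^2$. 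Then $\|D_h^\ve[e]\|_{1/\ve,h} = \|\nabla_h e\|_{\ve,h} \le Ch^2$ (a direct consequence of \reff{uvephi} and \reff{DtildeDveh}), and combining with the $\|\cdot\|_{1/\ve,h}$ bound on $\sP_h D_{\rm min} - D_h^\ve[\phi^{\rm I}]$ completes the proof of the first inequality. The hard part is spotting that the $O(h)$ pointwise truncation error sits inside a discrete divergence of an $O(h^2)$ quantity, so that one gains a full order back via summation by parts even without the extra derivative.
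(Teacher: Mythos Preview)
Your proof is correct, and for the $L^\infty$ estimate it coincides with the paper's argument (both compute $A_h^\ve$ of the potential error pointwise and invoke the Pruitt stability in Lemma~\ref{l:Ihmin}(3); the paper writes $r_h^\phi=\phi_{\rm min}-\phi^h_{\rm min}$ rather than your $e$, but these differ by a constant). There is a harmless sign slip in your display ($+h^2\tau^h$ rather than $-h^2\tau^h$), which does not affect the bound.

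For the $\|\cdot\|_h$ estimate, however, you take a genuinely different route from the paper. The paper works directly in the displacement variable: it sets $e_h^D=\sP_hD_{\rm min}-D^h_{\rm min}$, uses the global equilibrium $\langle D^h_{\rm min},\tilde D\rangle_{1/\ve,h}=0$ for $\tilde D\in S_{0,h}$ from Theorem~\ref{t:DiscreteEnergy}, observes $\nabla_h\cdot e_h^D=h^2q^h$ with $|q^h|\le C$, constructs a lifting $G^h=-\nabla_h\psi^h$ by solving $\Delta_h\psi^h=-q^h$, and then tests the near-orthogonality relation with $\tilde D=e_h^D-h^2G^h\in S_{0,h}$. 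Your approach instead passes immediately to the scalar variable via $D^h_{\rm min}=D_h^\ve[\phi^h_{\rm min}]$, and your key observation---that under only $C^3$ regularity the consistency error $A_h^\ve[e]$ decomposes as a pointwise $O(h^2)$ term plus the discrete divergence of an $O(h^2)$ remainder $R$---lets a single energy pairing and summation by parts close the estimate without any auxiliary discrete problem. The paper's argument is more native to the displacement formulation that the local algorithm operates in; yours is shorter and avoids the construction of $G^h$, at the cost of relying from the start on the potential characterization of $D^h_{\rm min}$.
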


\begin{proof}
Let us denote 
\begin{equation}
\label{notations4P}
D = D_{\rm min}, \quad  \phi = \phi_{\rm min}, \quad 
D^h = D^h_{\rm min}, \quad \phi^h = \phi^h_{\rm min}, 
\quad  e_h^D=\sP_h D -D^h \in Y_h.
\end{equation}
By Lemma~\ref{l:expandD}, $\sP_h D  = D_h^{\ve}[\phi] + h^2 T^h $
with $T^h\in Y_h$ satisfying $| T^h | \le C $ on $h (\Z+1/2)^3.$
%With counterparts for $v^e$ and $w^e$ components, one 
%%has $D^e=D^{\ve}_h [\phi^e] + T \mbox{ on } h\Z^3$, where 
%%$T=\ve(\partial^3 \phi^e/\partial x^3,\partial^3 
%%\phi^e/\partial y^3 ,\partial^3 \phi^e/\partial z^3  )h^2/24$. 
%%we have by the definition of $\langle \cdot, \cdot \rangle_{1/\ve, h}$
%%(cf.\ \reff{DtildeDveh}) and 
%%Lemma~\ref{l:FourierBasis} that 
%%{\color{blue}[Check this: after discrete integration 
%%by parts $\Delta_h$ becomes $\Delta_{-h}.$ -BL]}
For any $\tilde{D}\in S_{0,h}$, which means $\nabla_h \cdot \tilde{D} = 0$, 
we have by summation by parts that 
$\langle D^{\ve}_h [\phi],\tilde{D}  \rangle_{1/\ve,h}= 0.$
%%\langle -\nabla_h \phi^e, \tilde{D} \rangle = \langle \phi^e, \nabla_h \cdot \tilde{D} \rangle = 0.
Thus, $ \langle \sP_h D,  \tilde{D} \rangle_{1/\ve, h} \le Ch^2 \| \tilde{D}\|_h.$
By Theorem~\ref{t:DiscreteEnergy}, 
$\langle  D^h,\tilde{D} \rangle_{1/\ve,h}=0$. Hence, 
\begin{equation}\label{e:errorP}
    \langle  e_h ^D,\tilde{D} \rangle_{1/\ve,h} \leq C h^2 \| \tilde{D}\|_h
    \qquad \forall \tilde{D}\in S_{0,h}.
\end{equation}

%%%%%%%%%%%%%%%%%%%%%%%%%%
\begin{comment}
%%is, together with \reff{Deh} and the Cauchy--Schwarz inequality imply 
%\begin{equation}
%\label{e:LTE}
%\langle \sP D,  \tilde{D} \rangle_{1/\ve, h} = 
    % \langle  D^e,\tilde{D} \rangle_{1/\ve,h} = \langle D^{\ve}_h [\phi^e],\tilde{D} 
  %%  \rangle_{1/\ve, h}  + 
%  h^2 \langle  T^h,\tilde{D}   \rangle_{1/\ve,h} 
    %%\leq  C   \langle 1,\tilde{D}\rangle h^2 
 %   \leq C \| \tilde{D}\|_h \,  h^2 \qquad \forall \tilde{D} \in S_{0, h}. 
%\end{equation}
%%where $C>0$ is a constant related to the derivatives of $\phi^e$.
%\BL{I have only worked on Section 4 up to this point.  I will wait for Shenggao to correctsome thing - see below and polishing the proof.}
% We first show the local truncation error for 
% \[
% \langle \langle D^e,\tilde{D} \rangle \rangle_{\frac{1}{\ve},h} \leq M h^2 \| \tilde{D}\|_{2},
% \quad \forall \tilde{D}\in S_{0,h},
% \]
% where 
% \[
% M= \max\{|\partial_{xx}(u^e_{i+1/2,j,k})|,|\partial_{yy }(v^e_{i,j+1/2,k})|,|\partial_{zz}(w^e_{i,j,k+1/2})|\}/24. 
% \]
% By Theorem~\ref{t:PoissonEnergy}, there exists a unique solution $\phi$ to Poisson's equation $-\nabla \cdot \ve \nabla \phi=\rho$ satisfying $D^e=-\ve \nabla \phi$. After discretization and using Taylor expansion,
% \[
% u^e_{i+\frac{1}{2},j,k}= -\ve_{i+\frac{1}{2},j,k} \frac{\phi(x_{i+1,j,k})-\phi(x_{i,j,k})}{ h } + \ve_{i+\frac{1}{2},j,k}  \frac{\partial^3 \phi }{\partial x^3 }\frac{h^2}{24} .
% \]
% The $v$- and $w-$ components have similar identifies. Then, $D^e=(u^e,v^e,w^e)=D^{\ve}_h [\phi] + I$ where $I=\ve(\partial^3 \phi/\partial x^3,\partial^3 \phi/\partial y^3 ,\partial^3 \phi/\partial z^3  )h^2/24$. With Lemma~\ref{l:FourierBasis},
% \[
% \langle \langle D^{\ve}_h [\phi],\tilde{D} \rangle \rangle_{\frac{1}{\ve},h}= \langle -\nabla_h \phi,\tilde{D} \rangle = \langle \phi, \nabla_h \cdot \tilde{D} \rangle = 0.
% \]
% Then,
% \begin{equation}\label{e:LTE}
%     \langle \langle D^e,\tilde{D} \rangle \rangle_{\frac{1}{\ve},h}\leq \langle \langle D^{\ve}_h [\phi],\tilde{D} \rangle \rangle_{\frac{1}{\ve},h} + \langle \langle I,\tilde{D} \rangle \rangle_{\frac{1}{\ve},h} \leq  M   \langle 1,\tilde{D}\rangle h^2 \leq M \| \tilde{D}\|_{2} h^2,
% \end{equation}
%%%% where $\langle 1,\tilde{D}\rangle \leq \| \tilde{D}\|_{2}$ 
%%%is proved by Cauchy--Schwarz inequality. 
\end{comment}
%%%%%%%%%%%%%%%%%%%%%%%%%%

Since $D  \in C^3_{\rm per}
(\overline{\Omega}, \R^3)$ and $D \in S_\rho$ which means $\nabla \cdot D = \rho$, 
it follows from Lemma~\ref{l:expandD} that 
$\nabla_h \cdot \sP_h D = \rho + \sigma^h h^2$ on $h \Z^3,$
where $\sigma^h \in V_h$ satisfies $| \sigma^h | \le C $ on $h\Z^3.$
%the definition of $D_h^{\rm e}$ and 
%$\nabla \cdot D_h^{\rm e}$, and  Taylor's expansion at $(ih, jh, kh)$ that 
%%\BL{I use $\sigma$ instead of $H$ or $H_1$ as $H$ corresponds to $D$.}
%\begin{equation} 
%\label{dhDheAgain} 
%(\nabla_h \cdot \sP_h D)_{i, j, k} 
%%= (\nabla \cdot D)(ih, jh, kh)+ \sigma^h_{i, j, k} h^2 
%= \rho (ih, jh, kh) + \sigma^h_{i, j, k} h^2 \qquad \forall i, j, k \in \Z,
%\end{equation}
%\begin{equation}
%    \label{sigmahijk}
%\sigma^h_{i, j, k} = \frac{1}{24} \left[ \partial_x^3 u^{\rm e} (\xi_{i, j, k})
%+ \partial_y^3 v^{\rm e}(\eta_{i,j,k})+\partial_z^3 w^{\rm e}(\zeta_{i, j, k}) \right] 
%\end{equation}
%for some $\xi_{i, j, k}, \eta_{i, j, k}, \zeta_{i, j, k} \in \R^3.$
%%we have by that $| q_h | \le C$. Thus, 
Since $D^h \in S_{\rho, h}$ which impiles $\nabla_h \cdot D^h = \rho^h$, 
it follows that 
\[
\nabla_h \cdot e^D_h = \nabla_h \cdot (\sP_h D - D^h) =  h^2 q^h,
\]
where $q^h := h^{-2} (\rho-\rho^h) + \sigma^h $ satisfies
$| q^h | \le C$ on $h \Z^3$ by  Lemma~\ref{l:rhoerror}.
Moreover, $q^h \in \mathring{V}_h$ as $e^D_h$ is periodic. 
%%the periodic boundary condition. 
Thus, by Lemma~\ref{l:Ihmin}, there exists $\psi^h \in \mathring{V}_h$ such that 
$\Delta_h \psi^h = -q^h$ with $| \psi^h |\le C$ on $h\Z^3.$ 
%\zhou{Pure Poisson or with variable epsilon.} \BL{Pure Poisson is fine.}
Let $G^h=-\nabla_h \psi^h \in Y_h$. Then $\nabla_h \cdot G^h=q^h$ on $h\Z^3$. 
Moreover, by summation by parts and the Cauchy--Schwarz inequality, 
%\BL{Need to use a proper inner product - notation.}
\begin{equation}
\label{G}
\| G^h \|_h^2 = \langle G^h, -\nabla_h \psi^h \rangle_h 
= \langle \nabla_h \cdot G^h, \psi^h \rangle_h = \langle q^h, \psi^h \rangle_h
\le \| q^h \|_h \, \| \psi^h \|_h \le C.
\end{equation}
Setting now 
%%$\tilde{D}=D^e-D^h-G_1 h^2=
$\tilde{D} = e_h^D-h^2 G^h \in S_{0,h}$ in \reff{e:errorP}, 
one then obtains 
%% which satisfies $\nabla_h \cdot \tilde{D}=0$ and $\tilde{D}\in S_{0,h}$, then 
%%substitute $\tilde{D}$ into the error function \reff{e:errorP}
\begin{equation*} 
%%\label{e:errorP2}
%%  \langle e_h^D,e_h^D-G_1 h^2  \rangle_{1/\ve,h} \leq C 
%% \| e_h^D-G_1 h^2\|_{2}  h^2 \le C \left( \| e_h^D\|_2 +  \| G_1\| h^2\right) h^2 .
    \langle e_h^D,e_h^D-h^2 G^h  \rangle_{1/\ve,h} \leq C h^2 \| e_h^D-h^2 G^h\|_h  
\le C h^2  \| e_h^D\|_h  +  C h^4.
\end{equation*}
%%%%%%%%%%%%%%%%%%%%%%
\begin{comment} 
Denoting $q^h = (\rho-\rho^h) h^{-2}$ on $h\Z^3$, 
%%we have by that $| q_h | \le C$. Thus, 
since $\nabla_h \cdot D^h = \rho^h$, we have by Lemma~\ref{l:rhoerror},
\reff{dhDhe} and \reff{sigmahijk} that 
$\nabla_h \cdot e^D_h = (q^h + \sigma^h  ) h^2$ and $| q^h + \sigma^h| \le C $ on $h \Z^3$. 
Moreover, by the periodic boundary condition, $q^h +\sigma^h \in \mathring{V}_h$. Thus, 
%%Since $D^{\rm e} \in C^3_{\rm per}(\R^3, \R^3),$ by Taylor expansion, one has 
% we show the local truncation error for Gauss's law
% \[
% \left(\nabla_h \cdot D^e\right)_{i,j,k}=\rho(x_{i,j,k})+H_1(x_{i,j,k}) h^2,
% \]
%%\[
%%\nabla_h \cdot \left(D^e- D^h\right)=(H_1-\alpha) h^2 \mbox{ on } h\Z^3,
%%\]
%%where 
% $$
% H_1(x_{i,j,k})=\frac{1}{24} \left( \frac{\partial^3 u^e}{\partial x^3}+\frac{\partial^3 v^e}{\partial y^3}+\frac{\partial^3 w^e}{\partial z^3} \right).
% $$
%%$$
%%H_1=\frac{1}{24} \left( \frac{\partial^3 u^e}{\partial x^3}+\frac{\partial^3 v^e}{\partial y^3}+\frac{\partial^3 w^e}{\partial z^3} \right).
%%$$
%%By periodic boundary conditions for both $D^e$ and $D^h$, one has 
%%$H_1-\alpha\in \mathring{V}_h$. 
by Lemma~\ref{l:Srhoh}, there exists $G^h\in Y_h$ satisfying 
$\nabla_h \cdot G^h=q^h + \sigma^h$ on  $h\Z^3$. Setting 
%%$\tilde{D}=D^e-D^h-G_1 h^2=
$\tilde{D} = e_h^D-G^h h^2\in S_{0,h}$
in \reff{e:errorP}, 
% which satisfies $\nabla_h \cdot \tilde{D}=0$ and $\tilde{D}\in S_{0,h}$, then substitute $\tilde{D}$ into the error function \reff{e:errorP}
one obtains by \reff{e:errorP} that
\begin{equation} \label{e:errorP2}
    \langle e_h^D,e_h^D-G_1 h^2  \rangle_{1/\ve,h} \leq C  \| e_h^D-G_1 h^2\|_{2}  h^2 \le C \left( \| e_h^D\|_2 +  \| G_1\| h^2\right) h^2 .
\end{equation}
\end{comment}
%%%%%%%%%%%%%%%%%%%%%%%%%%%%%%%%%%%%%%%
This, together with \reff{G} and the identity
\begin{equation*}
%%\label{LeftError}
%% %& \geq \frac{1}{{\ve_{\max}}}\langle e^D,e^D-G_1 h^2 \rangle 
 \| e_h^D- h^2 G^h\|^2_{1/\ve,h} +\| e_h^D\|^2_{1/\ve,h}
= 2 \langle  e_h^D,e_h^D-h^2 G^h \rangle_{1/\ve,h} + h^4 \| G^h\|^2_{1/\ve,h},  
\end{equation*}
% The right hand of \reff{e:errorP2}
% \begin{equation}\label{RightError}
% M\| e^D-G_1 h^2\|_{2}  h^2 \leq M \left( \| e^D\|_2 +  \| G_1\|_2 h^2\right) h^2
% \end{equation}
implies
\begin{align*}
\| e_h^D \|^2_{h} 
\le 2 \langle  e_h^D,e_h^D-h^2 G^h \rangle_{1/\ve,h} + h^4 \| G^h\|^2_{1/\ve,h}
 \le Ch^2  \| e_h^D \|_{h} + Ch^4 
\le \frac12 \| e_h^D \|_h^2 + Ch^4.
%%%\| e_h^D-G_1 h^2\|^2_{1/\ve,h} +\| e_h^D\|^2_{1/\ve,h}- \| G_1\|^2_{1/\ve,h} h^4 
%%\leq 2 C  \left( \| e_h^D\|_2 h^2+ \| G_1\|_2 h^4 \right) .
\end{align*}
Consequently, we obtain  $ \| \sP_h D - D^h\|_h = \| e^D_h \|_h \le C h^2.$

Assume now $\phi\in C^4_{\rm per}(\overline{\Omega})$ 
and denote the error $r_h^\phi:= \phi- \phi^h$. 
By Lemma~\ref{l:expandD} and Lemma~\ref{l:rhoerror}, 
$| \nabla \cdot \ve \nabla \phi  - A_h^\ve[\phi] | \le C h^2 $ and 
$|\rho - \rho^h| \le C h^2$ on $h\Z^3.$
Since $\nabla \cdot \ve \nabla  \phi=-\rho$ and  
$ A_h^\ve[\phi^h]=-\rho^h$,  it follows that 
$A_h^\ve[{r}_h^\phi] = h^2 \alpha^h $ on $h\Z^3$ for some $\alpha^h \in V_h$  
with $ \| \alpha^h \|_\infty\le C.$  Clearly, $\alpha^h \in \mathring{V}_h$. 
Moreover, letting $\bar{r}_h^\phi = r_h^\phi - \sA_h(r_h^\phi)
\in \mathring{V}_h$, we get  
$A_h^\ve[\bar{r}_h^\phi] =A_h^\ve[r_h^\phi] =  \alpha^h h^2 $. 
Since $A_h^\ve: \mathring{V}_h \to \mathring{V}_h$
is linear and invertible, we have $\bar{r}^\phi_h = - h^2 (-A_h^\ve)^{-1}[\alpha^h]$, and 
further $ \partial_m^h \bar{r}^\phi_h = - h^2 \partial_m^h (-A_h^\ve)^{-1}[\alpha^h]$
for $m = 1, 2, 3.$ It now follows from Lemma~\ref{l:Ihmin} that 
\[
\| \partial_m^h r_h^\phi \|_\infty= 
\|\partial_m^h \bar{r}_h^\phi \|_\infty 
\le h^2 \| \partial_m^h (A_h^\ve)^{-1} \|_\infty \| \alpha^h \|_\infty 
\le C h^2, \qquad m = 1, 2, 3.
\]
This, together with \reff{dhDhe} in Lemma~\ref{l:expandD} and 
the fact that $D^h = D_h^\ve [\phi^h]$ by Theorem~\ref{t:DiscreteEnergy}, implies  
\[
\| \sP_h D - D^h \|_{ \infty} = \| D_h^\ve[r_h^\phi] + h^2 T^h \|_\infty
\le C \| \nabla_h r_h^\phi \|_\infty + h^2 \| T^h \|_\infty \le C h^2,  
\]
where $T^h \in Y_h$ is the same as in \reff{dhDhe}. 
%%The proof is complete. 
\end{proof}

For any $D = (u, v, w)\in Y_h$ (cf.\ \reff{Yh}), we define 
$m_h[D]: h\Z^3 \to \R^3$ by 
\begin{equation}
\label{mhD}
(m_h[D])_{i, j, k} = \left( \frac{u_{i+1/2, j, k} + u_{i-1/2, j, k}}{2}, 
\frac{v_{i, j+1/2, k}+v_{i,j-1/2, k}}{2}, 
\frac{w_{i, j, k+1/2} + w_{i, j, k-1/2}}{2} \right)
\end{equation}
for all $i, j, k \in \Z. $
The following corollary shows that a simple post process of the computed
$D^h_{\rm min}$ super-approximates the gradient $\nabla \phi_{\rm min} $ at all
the grid points $(i, j, k)$: 

\begin{corollary}
    \label{c:D4GradphiP}
With the same assumptions as in Theorem~\ref{t:DiscreteEnergy}, including
$\phi_{\rm min} \in C^4_{\rm per}(\overline{\Omega})$, there exists a constant $C > 0$, 
independent of $h$, such that 
\[
\left\| \frac{m_h [-D^h_{\rm min}]}{\ve} - \nabla \phi_{\rm min}
 \right\|_\infty \le C h^2. 
\]
\end{corollary}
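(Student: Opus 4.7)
The plan is to reduce the corollary to two ingredients: the maximum-norm estimate already established in Theorem~\ref{t:L2Poisson}, and a purely local Taylor-expansion argument that shows the midpoint averaging $m_h$ recovers the continuous displacement at grid points with second-order accuracy. Since $D_{\rm min} = -\ve \nabla \phi_{\rm min}$ by Theorem~\ref{t:PoissonEnergy}(3), dividing by $\ve$ will then convert a second-order bound on $m_h[-D^h_{\rm min}] - D_{\rm min}$ into the stated bound on $m_h[-D^h_{\rm min}]/\ve - \nabla \phi_{\rm min}$, using that $\ve \ge \ve_{\rm min} > 0$.

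First I would insert $m_h[-\sP_h D_{\rm min}]$ as an intermediate quantity and write
\[
\frac{m_h[-D^h_{\rm min}]}{\ve} - \nabla \phi_{\rm min}
= \frac{m_h[\sP_h D_{\rm min} - D^h_{\rm min}]}{\ve}
+ \left( \frac{m_h[-\sP_h D_{\rm min}]}{\ve} - \nabla \phi_{\rm min} \right).
\]
For the first term I would observe from \eqref{mhD} that $m_h$ is a convex combination, so $\| m_h[E] \|_\infty \le \| E \|_\infty$ for any $E \in Y_h$; combined with $\ve \ge \ve_{\rm min}$ and Theorem~\ref{t:L2Poisson}, this term is $O(h^2)$ in the maximum norm. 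This step is essentially free.

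The main work is the second term, which is the genuine ``superconvergence'' contribution at grid points. Fix $(i,j,k)$ and consider the first component. By \eqref{definePhD} and \eqref{mhD},
\[
(m_h[-\sP_h D_{\rm min}])_{i,j,k,1}
= \tfrac12 \bigl[ \ve \partial_1 \phi_{\rm min} \bigr]\!\bigl((i+\tfrac12)h, jh, kh\bigr)
+ \tfrac12 \bigl[ \ve \partial_1 \phi_{\rm min} \bigr]\!\bigl((i-\tfrac12)h, jh, kh\bigr).
\]
Since $\ve \in C^2_{\rm per}(\overline{\Omega})$ and $\phi_{\rm min} \in C^4_{\rm per}(\overline{\Omega})$, the product $\ve \partial_1 \phi_{\rm min}$ is $C^2_{\rm per}(\overline{\Omega})$, so Taylor expansion about $(ih,jh,kh)$ gives
\[
(m_h[-\sP_h D_{\rm min}])_{i,j,k,1}
= \ve_{i,j,k} \partial_1 \phi_{\rm min}(ih,jh,kh) + O(h^2),
\]
the cancellation being the standard one for a symmetric two-point midpoint average. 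The analogous identities hold in the other two components (using the appropriate half-shifts in the $y$ and $z$ directions). Dividing by $\ve_{i,j,k} \ge \ve_{\rm min} > 0$ yields $\| m_h[-\sP_h D_{\rm min}]/\ve - \nabla \phi_{\rm min} \|_\infty \le C h^2$.

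Adding the two estimates gives the desired bound. No serious obstacle is anticipated: the averaging $m_h$ is a contraction for $\| \cdot \|_\infty$, the ellipticity bound \eqref{epsilon} converts divisions by $\ve$ into constant factors, and the $C^2$-regularity of $\ve$ together with the $C^4$-regularity of $\phi_{\rm min}$ is precisely what is needed to execute the symmetric Taylor expansion to order $h^2$.
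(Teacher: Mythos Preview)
Your proposal is correct and follows essentially the same approach as the paper: both insert $m_h[\sP_h D_{\rm min}]$ as an intermediate, use the symmetric Taylor expansion of $\ve\,\partial_m\phi_{\rm min}$ about the grid point to handle the ``consistency'' piece, and invoke the $L^\infty$-estimate of Theorem~\ref{t:L2Poisson} together with the contraction property of $m_h$ for the ``stability'' piece. The only cosmetic difference is that you divide by $\ve$ before splitting, while the paper first bounds $|m_h[\sP_h D_{\rm min}] + \ve\nabla\phi_{\rm min}|$ and $|m_h[D^h_{\rm min}] - m_h[\sP_h D_{\rm min}]|$ and divides by $\ve$ at the end; the arguments are otherwise identical.
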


%%\BL{Make sure the $\infty$ norm is for grid points not half grid ponts.}

\begin{proof}
Let us use the notations in \reff{notations4P}. Since $D = (u, v, w) 
= - \ve \nabla \phi$, 
Taylor expanding  $(\ve \partial_1 \phi) ((i+1/2)h, jh, kh)$ and 
$ (\ve\partial_1 \phi ((i-1/2)h, jh, kh)$ at $(\ve\partial_1 \phi)(ih, jh, kh)$ leads to
\[
 \left|\frac{u_{i+1/2, j, k} + u_{i-1/2, j, k}}{2} + (\ve \partial_1 \phi) (i, j, k)
\right| \le C h^2 \qquad \forall i, j, k \in \Z.
\]
Similar inequalities hold with respect to $\partial_2$
and $\partial_3$. Hence, $| m_h [ \sP_h D ] + \ve \nabla \phi| \le C h^2 $ on $h \Z^3.$
But $ | m_h [D^h] - m_h [\sP_h D] | \le C h^2$ on $h\Z^3$ by Theorem~\ref{t:L2Poisson}. 
Thus, the desired inequality follows.
%%from  the triangle inequality. 
\end{proof}

We now present the error estimate for the minimizer of the finite-difference 
approximation of the PB energy functional
that is the same as the finite-difference solution to the discrete charge-conserved 
PB equation (CCPBE). 
Let $\rho\in C_{\rm per}(\overline{\Omega})$ satisfy \reff{neutrality}. By \reff{defineQhf} 
and \reff{neutrality}, 
\begin{equation}
\label{definerhoh4PB}
\sQ_h \rho = \rho + \sA_\Omega (\rho) - \sA_h(\rho) = 
\rho - \frac{1}{L^3} \sum_{s=1}^M q_s N_s 
-\frac{1}{N^3} \sum_{l,m,n=0}^{N-1} \rho(lh, mh, nh).
\end{equation}
So, $\sQ_h \rho$ can be computed readily. 
For any $(c, D) = (c_1, \dots, c_s; u, v, w)\in X_{\rho, h}$, we denote 
$ \| c \|_h$ by $ \|c\|_h^2 = \sum_{s=1}^M \| c_s \|_h^2$, where $\| \cdot \|_h$ is 
the norm of $V_h.$

\begin{theorem}\label{t:pbL2error}
Let $\ve \in C^2_{\rm per}(\overline{\Omega}) $ satisfy \reff{epsilon},  
$\rho \in C^2_{\rm per}(\overline{\Omega}) $ satisfy \reff{neutrality}, and 
$\rho^h := \sQ_h \rho$ be given by \reff{definerhoh4PB}. 
Let $\hat{\phi}_{\rm min} \in \mathring{H}^1_{\rm per}(\Omega)$, $\hat{\phi}^h_{\rm min}
\in \mathring{V}_h$, $(\hat{c}_{\rm min}, \hat{D}_{\rm min} ) \in X_{\rho}$,
and $(\hat{c}^h_{\rm min}, \hat{D}^h_{\rm min})\in X_{\rho, h}$ be the 
unique minimizer of $\hat{I}: \mathring{H}^1_{\rm per}(\Omega) \to \R \cup \{ + \infty \}$, 
$\hat{I}_h: \mathring{V}_h \to \R$, $\hat{F}: X_{\rho} \to \R \cup \{ + \infty \}$,
and  $\hat{F}_h: X_{\rho, h} \to \R$, respectively.
%%	Assume $D^e\in C^3(\Omega, \R^d)$, $c_s^e\in H^4(\Omega, \R)$, and  $\rho\in H^4(\Omega).$
 %%$\rho\in H^4(\Omega, \R)$.  
 Assume that $ \hat{\phi}_{\rm min} \in C^3_{\rm per}(\overline{\Omega})$ and 
 $\hat{D}_{\rm min} \in C^3_{\rm per}(\overline{\Omega}, \R^3)$. Then
 there exists a constant $C=C(\Omega, \ve, \rho, q_1, \dots, q_s, N_1, \dots, N_M)> 0$, 
 independent of $h$, such that
 \begin{align}
 \label{PBL2}
&
 \|\hat{c}_{\rm min} - \hat{c}^h_{\rm min} \|_h +  
\| \sP_h \hat{D}_{\rm min} - \hat{D}^h_{\rm min} \|_h 
 \le Ch^2, 
 \\
 \label{CCPBL2}
 & \| \hat{\phi}_{\rm min} - \hat{\phi}_{\rm min}^h \|_h \le Ch^2.
 \end{align}
 If in addition $\hat{\phi}_{\rm min} \in C^4_{\rm per}(\overline{\Omega})$, then
 \begin{align}
 \label{PBLinfty}
 & \|\hat{c}_{\rm min} - \hat{c}^h_{\rm min} \|_{ \infty}
 +  \| \sP_h
\hat{D}_{\rm min} - \hat{D}^h_{\rm min} \|_{\infty} \le Ch^2.
 \end{align}
 %%depending on $L$, $\varepsilon_{\max}$, $\varepsilon_{\min}$, and derivatives of $D^e$ such that
%%	\[
%% \| \hat{D}_{\rm min} - \hat{D}^h_{\rm min} \|_h 
%% + \|\hat{c}_{\rm min} - \hat{c}^h_{\rm min} \|_h \le Ch^2. 
	%%\| D^e-D^h \|_h  + \sum_{s=1}^M \| c^e_s-c^h_s  \|_h  \leq Ch^2.
%%	\]
\end{theorem}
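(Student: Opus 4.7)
The plan is to adapt the strategy from the Poisson case (Theorem~\ref{t:L2Poisson}) to handle the nonlinear Boltzmann coupling, and then to bootstrap from a discrete $L^2$ estimate to the $L^\infty$ estimate via the CCPBE and Pruitt's stability (Lemma~\ref{l:Ihmin}(3)). To begin, write $\phi^e = \hat{\phi}_{\rm min}$, $\phi^h = \hat{\phi}^h_{\rm min}$, $e^\phi = \phi^e - \phi^h$, and similarly $e^D_h = \sP_h\hat{D}_{\rm min} - \hat{D}^h_{\rm min}$, $e^c_s = \sQ_h\hat{c}_{{\rm min},s} - \hat{c}^h_{{\rm min},s}$. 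By Theorem~\ref{t:CCPBenergy} and Theorem~\ref{t:hCCPB} combined with Lemma~\ref{l:hminPB}, both $\phi^e$ and $\phi^h$ are uniformly bounded in $L^\infty$ independently of $h$, and there exist constants $0<\theta_1 \le \hat{c}_{{\rm min},s}, \hat{c}^h_{{\rm min},s} \le \theta_2$ on $\overline\Omega$ (resp. on $h\Z^3$) uniformly in $h$. Using Lemma~\ref{l:expandD}(3) together with Lemma~\ref{l:rhoerror} applied to $f = e^{-q_s \phi^e}$, the smooth solution satisfies the discrete CCPBE up to an $O(h^2)$ consistency error; and by Lemma~\ref{l:expandD}(2), $\sP_h\hat{D}_{\rm min} = D_h^\ve[\phi^e] + h^2 T^h$ with $\|T^h\|_\infty \le C$. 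Lemma~\ref{l:rhoerror} gives $\|\rho-\rho^h\|_\infty \le Ch^2$.

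The first step is the $L^2$ estimate. Testing the global equilibrium identity \eqref{GlobalEquil} with a suitable perturbation of $(e^c_s, e^D_h)$---specifically, as in the Poisson case, subtract a correction $h^2 G^h$ so the tuple lies in $\tilde{X}_{0,h}$; here the correction must account for both the Gauss-law residual (handled exactly as in Theorem~\ref{t:L2Poisson} via Lemma~\ref{l:Ihmin} to construct $G^h\in Y_h$ with $\nabla_h\cdot G^h$ equal to the discrepancy and $\|G^h\|_h\le C$) and the mass-conservation residual for each $c_s$ (a scalar correction of size $O(h^2)$ since $\sA_h(\sQ_h \hat{c}_{{\rm min},s}) = \sA_\Omega(\hat{c}_{{\rm min},s}) + O(h^2) = N_s/L^3 + O(h^2)$). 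Combine this with strong convexity of $u\mapsto u\log u$ on $[\theta_1/2,2\theta_2]$ (where the uniform bounds place both $\hat c_s$ and $\hat c^h_s$) and of $D\mapsto \|D\|_{1/\ve,h}^2/2$, expressed as Bregman-type inequalities, to conclude
\begin{equation*}
  \|e^c\|_h^2 + \|e^D_h\|_h^2 \le C h^2\bigl(\|e^c\|_h + \|e^D_h\|_h\bigr) + Ch^4,
\end{equation*}
which yields \eqref{PBL2}. The potential estimate \eqref{CCPBL2} then follows because $\hat{D}^h_{\rm min} = D_h^\ve[\phi^h]$ and $\sP_h\hat{D}_{\rm min} = D_h^\ve[\phi^e] + O(h^2)$ give $\|\nabla_h e^\phi\|_h \le Ch^2$, after which the discrete Poincar\'e inequality (Lemma~\ref{l:FourierBasis}) applied to $e^\phi - \sA_h(e^\phi) \in \mathring V_h$ and control of $\sA_h(e^\phi)$ by averaging the continuous/discrete CCPBE yield $\|e^\phi\|_h \le Ch^2$.

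For the $L^\infty$ estimate \eqref{PBLinfty} under the stronger regularity $\hat\phi_{\rm min}\in C^4_{\rm per}$, subtract the two CCPBEs on $h\Z^3$ to obtain
\begin{equation*}
  A^\ve_h[e^\phi] = -\sum_{s=1}^M q_s\bigl(\hat c_{{\rm min},s} - \hat c^h_{{\rm min},s}\bigr) + h^2 \alpha^h,\qquad \|\alpha^h\|_\infty \le C.
\end{equation*}
Linearize the Boltzmann map $g_s(\phi)_{i,j,k} = N_s e^{-q_s\phi_{i,j,k}}/(L^3 \sA_h(e^{-q_s\phi}))$ along $\phi_\theta = \theta\phi^e + (1-\theta)\phi^h$: the Gateaux derivative is $Dg_s(\phi)\cdot\psi = -q_s g_s(\phi)(\psi - \langle\psi\rangle_\phi)$ with the weighted average $\langle\psi\rangle_\phi = \sA_h(e^{-q_s\phi}\psi)/\sA_h(e^{-q_s\phi})$. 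Integrating in $\theta$ and using that $g_s(\phi_\theta)$ is bounded uniformly above (by uniform $L^\infty$ bounds on $\phi^e,\phi^h$) leads to the identity
\begin{equation*}
  A^\ve_h[e^\phi] - L^h[e^\phi] = h^2 \alpha^h,
\end{equation*}
where $L^h$ is symmetric positive semidefinite on $V_h$ (as verified by a direct Cauchy--Schwarz/Jensen computation on $\langle L^h\psi,\psi\rangle_h$). Now bootstrap: from \eqref{CCPBL2} we have $\|e^\phi\|_h \le Ch^2$, hence $\langle e^\phi\rangle_{\phi_\theta}$, which is a single real number bounded by a weighted $\ell^1$ pairing, is $O(h^2)$. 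Therefore $L^h[e^\phi] = V^h e^\phi + O(h^2)$ where $V^h(x) = \int_0^1 \sum_s q_s^2 g_s(\phi_\theta)\,d\theta \ge 0$ is a uniformly bounded nonnegative grid function. This reduces the equation to a \emph{local} discrete Schr\"odinger problem $(A^\ve_h - V^h I)[e^\phi] = O(h^2)$ in $\ell^\infty$, and the $L^\infty$-stability for $A^\ve_h - V^h I$ on $\mathring V_h$ with $V^h\ge 0$ follows from Lemma~\ref{l:Ihmin}(3) together with a standard discrete maximum-principle perturbation argument (the nonnegative zeroth-order term only strengthens the sign structure). This gives $\|e^\phi\|_\infty \le Ch^2$, and then $\|e^c_s\|_\infty \le Ch^2$ follows from Taylor expansion of the exponential using the uniform bounds on $\phi^e, \phi^h$ and Lemma~\ref{l:rhoerror} applied to $e^{-q_s\phi^e}$, while $\|e^D_h\|_\infty \le Ch^2$ follows exactly as in the second half of Theorem~\ref{t:L2Poisson} from $L^\infty$-stability of $\partial^h_m(A^\ve_h)^{-1}$.

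The main obstacle is the $L^\infty$ step, specifically justifying $L^\infty$-stability of the linearized CCPBE operator $A^\ve_h - L^h$ despite the nonlocal term $L^h$. The bootstrap via the already-established $L^2$ bound is essential: it reduces the nonlocal perturbation to a local one at the cost of an $O(h^2)$ right-hand side, after which Pruitt's stability and a maximum-principle argument for operators with nonnegative potential carry through. A secondary technical point is the construction of the $\tilde X_{0,h}$-correction in the $L^2$ step: unlike the pure Poisson case, it must simultaneously restore the discrete Gauss' law and the mass-conservation identity for all $M$ species while remaining of size $O(h^2)$ in the discrete norm.
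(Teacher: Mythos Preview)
Your outline is correct and follows essentially the same three-stage strategy as the paper: global-equilibrium/convexity for the $L^2$ bound, discrete Poincar\'e for \eqref{CCPBL2}, then a bootstrap through the linearized CCPBE for $L^\infty$. The $L^2$ step and the passage to \eqref{CCPBL2} match the paper almost exactly (the paper writes the convexity input as the mean-value identity $\langle c_s-c_s^h,\log c_s-\log c_s^h\rangle_h \ge C_2^{-1}\|c_s-c_s^h\|_h^2$ rather than in Bregman language, but this is the same inequality).

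One point in the $L^\infty$ step deserves tightening. Your reduced operator $A_h^\ve - V^hI$ acts on $V_h$, not $\mathring V_h$ (indeed $e^\phi\notin\mathring V_h$ in general, since $\sA_h(\phi^e)\ne 0$), so Lemma~\ref{l:Ihmin}(3) is not the right citation. More importantly, $V^h$ is not merely $\ge 0$ but bounded below by a positive constant $C_4>0$ (from the uniform positive lower bounds on $g_s(\phi_\theta)$), and this strict positivity is what drives the estimate. The paper makes this explicit: the matrix $-A_h^\ve + b^hI$ with $b^h\ge C_4>0$ is strictly diagonally dominant with dominance margin $\ge C_4$, so Varah's bound gives $\|(-A_h^\ve+b^hI)^{-1}\|_\infty\le C_4^{-1}$ directly on all of $V_h$. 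This is the clean ``maximum-principle perturbation'' you allude to; invoking Pruitt's result here is neither necessary nor sufficient. A second cosmetic difference: rather than carrying the Gateaux integral, the paper first uses the $L^2$ bound together with Cauchy--Schwarz and Lemma~\ref{l:rhoerror} to show $|\sA_\Omega(e^{-q_s\phi^e})-\sA_h(e^{-q_s\phi^h})|\le Ch^2$, which lets it freeze the denominator and then apply a pointwise mean-value theorem to the numerator. This reaches the same local Schr\"odinger equation with slightly less bookkeeping than the path-integral linearization.
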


%%\BL{Need higher regularity for $\phi$ to get the needed truncation error. $\phi\in C^4.$}

\begin{remark}
%%{\rm (1)} 
%The $L^2$-estimate \reff{PBL2} is a consequence of the $L^\infty$-estimate \reff{PBLinfty}. 
%However, we 
We need the $L^2$-estimate \reff{PBL2} to get the estimate \reff{CCPBL2}, which is 
needed for proving the $L^\infty$-estimate \reff{PBLinfty}. 
%%{\rm (2)} We do not expect to have $\| \nabla \hat{\phi}_{\rm min}
%%- \nabla_h \hat{\phi}_{\rm min}^h \|_h = O(h^2)$, even we have 
%%$\| \hat{D}_{\rm min} - \hat{D}^h_{\rm min} \|_h  = O(h^2)$ by \reff{PBL2}. 
\end{remark}

%%%%%%%%%%%%%%%%%%%%%%
\begin{comment}

% First, we will show the truncation error for 
% \begin{equation}\label{e:LTE2}
%     \langle \langle D^*,\tilde{D} \rangle \rangle_{1/\varepsilon, h}
%     + h^3\sum_{s=1}^M 
%     \sum_{i, j,k=0}^{N-1} \tilde{c}_{s, i, j, k} \log c^\p_{s, i, j, k} \leq M \| \tilde{D}\|_{2} h^2  + R \sum_{s=1}^M \|\tilde{c}_s\|_2 h^{m-2}
%     \quad \forall (\tilde{c}, \tilde{D}) \in \tilde{X}_{0, h}.
% \end{equation}
%{\color{blue} $m$ needs to $m\geq 2$.}

By Theorem~\ref{t:PBEnergy} and Theorem~\ref{t:CCPBenergy},
$(\hat{c}_{\rm min}, \hat{D}_{\rm min} )$ is 
given by \reff{definecmins} and \reff{defineDmin} through $\hat{\phi}_{\rm min} \in 
\mathring{H}^1_{\rm per}(\R^3)$ which is also
%%which  is the unique minimizer of the functional $\hat{I}: \mathring{H}^1_{\rm per}(\R^3) 
%%\to \R \cup \{ + \infty \}$ (cf.\ \reff{Iphi}) and 
the unique weak solution to the charge conserved PBE \reff{CCPBE}.
By Theorem~\ref{t:DiscretePBEnergy} and Theorem~\ref{t:hCCPB}, 
$(\hat{c}^h_{\rm min}, \hat{D}^h_{\rm min})$
is given by \reff{hatchmin} and \reff{hatDhmin}, through $\hat{\phi}^h_{\rm min}
\in \mathring{V}_h$  which is also 
%%which is the unique minimizer of $\hat{F}_h: X_{\rho, h} \to \R$ (cf.\ \reff{hatIhphi}) and 
the unique solution to the discrete charge conserved PBE \reff{discreteCCPBE}.  
By Theorem~\ref{t:CCPBenergy}, $\phi_{\rm min} \in L^\infty(\Omega)\cap H^2_{\rm per}
(\R^3)$. Thus, it follows from the regularity theory \cite{GilbargTrudinger98} and 
the bootstrapping argument that
$\phi_{\rm min} \in W^{4, p}_{\rm per}(\R^3)$ for any $p > 1$. In particular, 
$\phi_{\rm min} \in C^3_{\rm per}(\R^3)$ and $D_{\rm min} \in C^3_{\rm per}(\R^3, \R^3).$
\end{comment}
%%%%%%%%%%%%%%%%%%%%%%

\begin{proof}[Proof of Theorem~\ref{t:pbL2error}]
Let us denote 
\begin{equation}
\label{notation5}
%%c^{\rm e} = \hat{c}_{\rm min}, \quad 
%%D^{\rm e} = \hat{D}_{\rm min}, \quad  \phi^{\rm e} = \hat{\phi}_{\rm min}, \quad 
\phi= \hat{\phi}_{\rm min}, \quad 
\phi^h = \hat{\phi}^h_{\rm min}, \quad 
c = \hat{c}_{\rm min}, \quad 
D = \hat{D}_{\rm min}, \quad  
c^h = \hat{c}^h, \quad 
D^h = \hat{D}^h_{\rm min}. 
\end{equation}
%%Let $D_h^{\rm e} = P_h D^{\rm e} \in Y_h$ (cf.\ \reff{definePhD}). 
By Theorem~\ref{t:PBEnergy} and Theorem~\ref{t:CCPBenergy}, $(c, D )$ is 
given by \reff{definecmins} and \reff{defineDmin} through ${\phi}\in 
\mathring{H}^1_{\rm per}({\Omega})$ which is also
%%which  is the unique minimizer of the functional $\hat{I}: \mathring{H}^1_{\rm per}(\R^3) 
%%\to \R \cup \{ + \infty \}$ (cf.\ \reff{Iphi}) and 
the unique weak solution to the CCPBE \reff{CCPBE}.
By Theorem~\ref{t:DiscretePBEnergy} and Theorem~\ref{t:hCCPB}, $({c}^h, {D}^h)$
is given by \reff{hatchmin} and \reff{hatDhmin} through ${\phi}^h
\in \mathring{V}_h$  which is also 
%%which is the unique minimizer of $\hat{F}_h: X_{\rho, h} \to \R$ (cf.\ \reff{hatIhphi}) and 
the unique solution to the discrete CCPBE \reff{discreteCCPBE}.  

It follows from Lemma~\ref{l:expandD} that 
$\sP_h D = D_h^\ve[\phi]+h^2 T^h$ with $ | T^h | \le C $  on $ h(\Z+1/2)^3. $
%%By similar calculations for \reff{e:LTE}, one has
Let $(\tilde{c}, \tilde{D}) \in \tilde{X}_{0, h}.$
%%It follows from the global equilibrium condition \reff{GlobalEquil} and 
Summation by parts leads to 
%%By summation by parts, we have for any $\tilde{D} \in $
\begin{equation} \label{DND}
   \langle \sP_h D,\tilde{D}  \rangle_{1/\varepsilon, h}
%%  \leq \langle  D^{\ve}_h [\phi^e],\tilde{D} \rangle_{1/\varepsilon, h} 
%%  + C \langle 1,\tilde{D} \rangle h^2
  \leq \langle \phi,\nabla_h \cdot \tilde{D} \rangle_h + C h^2 \| \tilde{D}\|_{h}.
\end{equation}
%%where $C$ is a constant related to the derivatives of $\phi^e$. 
By \reff{definecmins} in Theorem~\ref{t:PBEnergy},  
%%By Theorem~\ref{t:DiscretePBEnergy} and \reff{hatchmin}, 
$\log c_s =\xi_s - q_s \phi$ for each $s,$ where 
$ \xi_s = -\log (N^{-1}_s L^{3} \sA_h ( e^{-q_s \phi})) $. 
%%%%%%%%%%%%%%%%%%%%%%%%%%%%%%%%%%%%%%%%%%%%%%%%
\begin{comment}
%%Since $c_s^e$ satisfies the Boltzmann distribution, one has 
%%$\xi_s$ is a constant related to ionic mass. 

% \[
% \begin{aligned}
% h^3\sum_{s=1}^M \sum_{i, j,k=0}^{N-1} \tilde{c}_{s, i, j, k} \log c^e_{s, i, j, k} =h^3\sum_{s=1}^M \sum_{i, j,k=0}^{N-1} \tilde{c}_{s, i, j, k} \left ( \frac{1}{c^\theta_{s, i, j, k}} (c^\p_{s, i, j, k}- c^e_{s, i, j, k}) + \xi_s - q_s \phi^\p_{i,j,k} -q_s (\phi^e_{i,j,k} - \phi^\p_{i,j,k}) \right),
% \end{aligned}
% \]
% \[
% \begin{aligned}
% &h^3\sum_{s=1}^M \sum_{i, j,k=0}^{N-1} \tilde{c}_{s, i, j, k} \log c^\p_{s, i, j, k} \\
% &\quad= h^3\sum_{s=1}^M \sum_{i, j,k=0}^{N-1} \tilde{c}_{s, i, j, k} \left (\log c^\p_{s, i, j, k}- \log c^e_{s, i, j, k} + \xi_s - q_s \phi^e_{i,j,k}\right)\\
% &\quad=h^3\sum_{s=1}^M \sum_{i, j,k=0}^{N-1} \tilde{c}_{s, i, j, k} \left ( \frac{1}{c^\theta_{s, i, j, k}} (c^\p_{s, i, j, k}- c^e_{s, i, j, k}) + \xi_s - q_s \phi^\p_{i,j,k} -q_s (\phi^e_{i,j,k} - \phi^\p_{i,j,k}) \right),
% \end{aligned}
% \]
% where $c^\theta_{s, i, j, k}$ is between $c^\p_{s, i, j, k}$ and $c^e_{s, i, j, k}$ by the mean-value theorem. Let $c_{\min}$ be the minimum value of $c^e_{s, i,j,k}$ and $c^\p_{s, i,j,k}$ for $s=1, \cdots, M$ and $i,j,k=0, \cdots, N-1$. By \reff{UPB} and similar arguments for $c^\p_s$, one has $c^\theta_{s, i, j, k}>c_{\min}>0$. 
\end{comment}
%%%%%%%%%%%%%%%%%%%%%%%%%%%%%%%%%%%%%%%%
Since $(\tilde{c}, \tilde{D}) \in \tilde{X}_{0, h}$ (cf.\ \reff{X0h}), 
each  $\tilde{c}_s \in \mathring{V}_h$ (cf.\ \reff{Vh0}) and 
$\nabla_h \cdot \tilde{D} =\sum_{s=1}^M q_s\tilde{c}_{s}.$ Hence, 
% \reff{proj_c}, and \reff{proj_phi} that
\begin{equation} 
\label{cslogcN}
\sum_{s=1}^M \langle \tilde{c}_s, \log c_s \rangle_h
%h^3\sum_{s=1}^M \sum_{i, j,k=0}^{N-1} \tilde{c}_{s, i, j, k} \log c_{s, i, j, k}
=\sum_{s=1}^M \langle \tilde{c}_s, \xi_s  - q_s \phi \rangle_h 
%=h^3\sum_{s=1}^M \sum_{i, j,k=0}^{N-1} \tilde{c}_{s, i, j, k} \left( \xi_s - q_s \phi_{i,j,k} \right)
= -\langle \phi,\nabla_h \cdot \tilde{D} \rangle_h.
%\end{aligned}
\end{equation}
%%where  has been used. 
% where the constant $C$ in the last line depends on $\| c_s^e\|_{H^4}$, $\| \rho\|_{H^4}$, $q_s$, and $c_{\min}$. 
The combination of \reff{DND} and \reff{cslogcN} leads to
\begin{equation}\label{e:LTE2}
    \langle \sP_h D,\tilde{D}  \rangle_{1/\varepsilon, h}  +\sum_{s=1}^M 
    \langle \tilde{c}_s, \log c_s \rangle_h 
%%  \sum_{i, j,k=0}^{N-1} \tilde{c}_{s, i, j, k} \log c_{s, i, j, k} 
 \leq C h^2 \| \tilde{D}\|_{h} \qquad \forall (\tilde{c}, \tilde{D}) \in \tilde{X}_{0, h}.
\end{equation}
% \begin{equation}\label{e:LTE2}
% \begin{aligned}
%     &\langle \langle D^e,\tilde{D} \rangle \rangle_{1/\varepsilon, h}
%     + h^3\sum_{s=1}^M 
%     \sum_{i, j,k=0}^{N-1} \tilde{c}_{s, i, j, k} \log c^e_{s, i, j, k} \\
%     &\qquad\leq C h^2 \left( \| \tilde{D}\|_{2}  +  \sum_{s=1}^M \|\tilde{c}_s\|_2 \right)
%     \quad \forall (\tilde{c}, \tilde{D}) \in \tilde{X}_{0, h}.
%     \end{aligned}
% \end{equation}
% From the above estimations~\reff{proj_phi} and~\reff{proj_logc}, it can be derived that $\log c^\p_{s, i, j, k} =\xi_s - q_s \phi^*_{i,j,k} + R h^{m-2} $ where $R=(C+C/c_{\min}) \| c_s^e\|_{H^m}$.
% The second term in left hand of~\reff{e:LTE2}, 
%{\color{blue} Here we need $\log c^\p_{s, i, j, k} =\xi_s - q_s \phi^*_{i,j,k} + R h^m $.
%If $c^e_{i,j,k}-c^\p_{i,j,k}<h^{m+1}$, $D^e-D^*<h^m$?? and $\log c^e_{i,j,k} -\log c^\p_{i,j,k} < 1/\chi h^{m+1}$??. Then, $\phi^e_{i,j,k}-\phi^*_{i,j,k}<h^{m}$. $\Rightarrow \log c^\p_{s, i, j, k} =\xi_s - q_s \phi^*_{i,j,k} + R h^m $
%}
%%Let $e^{c}_h=c-c^h$ and $e^D_h =\sP_h D-D^h$. 
Let $e^D_h =\sP_h D-D^h$.  By Theorem~\ref{t:DiscretePBEnergy}, 
$(c^h, D^h) \in {X}_{\rho, h}$ satisfies the global equilibrium condition
\eqref{GlobalEquil}: $\langle  D^h,\tilde{D} \rangle_{1/\varepsilon, h} +\sum_{s=1}^M
\langle \tilde{c}_{s},\log c^h_{s} \rangle_h =0$. 
This and \eqref{e:LTE2} imply
\begin{equation}
    \label{e:errorPB1}
%%    \begin{aligned}
    \langle  e^D_h,\tilde{D}  \rangle_{1/\varepsilon, h}
    + \sum_{s=1}^M \langle \tilde{c}_s, \log c_s - \log c^h_s \rangle_h
 %%   + h^3 \sum_{s=1}^M   \sum_{i, j,k=0}^{N-1} \tilde{c}_{s, i, j, k}   
%%\left( \log c_{s, i, j, k}-\log c^h_{s, i, j, k} \right) 
 %%   \\
 %%   &\quad=\langle \langle e^D,\tilde{D} \rangle \rangle_{1/\varepsilon, h}   + h^3\sum_{s=1}^M 
  %%  \sum_{i, j,k=0}^{N-1} \tilde{c}_{s, i, j, k} e^{c}_{s, i, j, k} \frac{1}{\chi_{s, i, j, k}} \\
\leq C h^2 \| \tilde{D}\|_h \qquad \forall (\tilde{c}, \tilde{D}) \in \tilde{X}_{0, h}.
    % \quad\leq C \| \tilde{D}\|_{2} h^2 +R \sum_{s=1}^M \|\tilde{c}_s\|_2 h^{m-2}
    % \qquad  \qquad \qquad\forall (\tilde{c}, \tilde{D}) \in \tilde{X}_{0, h}.
\end{equation}
%%where $\chi_{s, i, j, k}$ is between $c^e_{s, i, j, k}$ 
%%and $ c^h_{s, i, j, k}$ by the mean-value theorem. 
%%It follows from Theorem~\ref{t:PBEnergy} and Lemma~\ref{l:hminPB} 
%%that $0<\theta_1 \leq \chi_{s, i, j, k}\leq \theta_2$ with $\theta_1$ 
%%and $\theta_2$ independent of $h$.

Since $(c, D) \in X_\rho$ and $(c^h, D^h) \in X_{\rho, h}$, we have 
$\nabla \cdot D = \rho + \sum_{s=1}^M q_s c_s$ in $\R^3$ and 
$\nabla_h \cdot D^h = \rho^h + \sum_{s=1}^M q_s c_s^h$ on $h\Z^3.$
Moreover, by Lemma~\ref{l:expandD}, 
$\nabla_h \cdot \sP_h D = \nabla \cdot D + \sigma^h h^2$ on $h \Z^3$ for 
some $\sigma^h \in V_h$ such that $ | \sigma^h| \le C $ on $ h\Z^3.$ Therefore, 
%By Taylor's expansion, one has 
\begin{equation}\label{De-Dh}
\nabla_h \cdot e^D_h = 
\nabla_h \cdot \left( \sP_h D- D^h\right)
= \sum_{s=1}^M q_s (c_s-c_s^h) + \rho - \rho^h + \sigma^h h^2
\qquad \mbox{on } h\Z^3. 
%%\nabla_h \cdot \left(D^e- D^h\right)= \sum_{s=1}^M q_s (c_s^e-c_s^h) +(H_2-\alpha) h^2 \mbox{ on } h\Z^3,
\end{equation}
Define 
\[
\tilde{c}_s = c_s - c_s^h + \sA_\Omega(c_s) - \sA_h (c_s), 
\qquad s = 1, \dots, M.
\]
Since $c\in X_{\rho}$ (cf.\ \reff{Xrho}) and $c^h \in X_{\rho, h}$
(cf.\ \reff{Xrhoh}), $\sA_\Omega(c_s) = \sA_h (c_s^h) = N_s L^{-3}$. Hence
$\tilde{c}_s \in \mathring{V}_h$. 
%%Moreover, it follows from Lemma~\ref{l:rhoerror} that 
%%$| \gamma_s^h | \le C $ on $h\Z^3.$ 
It then follows from \reff{De-Dh} that 
\begin{equation}
    \label{newDhe}
\nabla_h \cdot e^D_h 
= \sum_{s=1}^M q_s \tilde{c}_s  + h^2 \gamma^h,
\end{equation}
where 
\[
h^2 \gamma^h = -\sum_{s=1}^M q_s  \left[ \sA_\Omega(c_s) - \sA_h (c_s) \right]
+ \rho - \rho^h + \sigma^h h^2. 
\]
By Lemma~\ref{l:rhoerror}, $|\gamma^h| \le C$ on $h \Z^3. $
Moreover, $\gamma^h \in \mathring{V}_h$, since $e^D_h$ is periodic and
each $\tilde{c}_s \in \mathring{V}_h$.  Thus, by Lemma~\ref{l:Ihmin}, 
there exists ${\psi}^h \in \mathring{V}_h$ such that 
$\Delta_h \psi^h = -\gamma^h$ with $| \psi^h |\le C$ on $h\Z^3.$ 
Denoting ${G}^h = - \nabla_h {\psi}^h \in Y_h$ and 
$\tilde{D} = e^D_h - h^2 {G}^h \in Y_h$, we then have by \reff{newDhe} that
$ \nabla_h \cdot \tilde{D} = \sum_{s=1}^M q_s\tilde{c}_s.  $
Hence, setting $\tilde{c} = (\tilde{c}_s, \dots, \tilde{c}_M)$,
we have $ (\tilde{c}, \tilde{D})  \in \tilde{X}_{0, h}$. 

Now, plugging the newly constructed $(\tilde{c}, \tilde{D}) \in \tilde{X}_{0, h}$ in
\eqref{e:errorPB1}, we obtain
\[
 \langle  e^D_h, e^D_h - h^2 G^h \rangle_{1/\varepsilon, h}
 + \sum_{s=1}^M \langle c_s - c_s^h + \sA_\Omega(c_s) - \sA_h(c_s), 
 \log c_s - \log c^h_s \rangle_h \leq C h^2 \| e_h^D - h^2 G^h\|_h. 
\]
Consequently, since $| \sA_\Omega(c_s) - \sA_h (c_s) | \le C h^2$ for all $s$ 
by Lemma~\ref{l:rhoerror}, we have 
\begin{align}
\label{newconstruct}
& \langle  e^D_h, e^D_h - h^2 G^h \rangle_{1/\varepsilon, h}
 + \sum_{s=1}^M \langle c_s - c_s^h, \log c_s - \log c^h_s \rangle_h 
\nonumber \\
& \qquad \leq C h^2 \| e_h^D\|_h +  C h^4 \|G^h\|_h 
+ Ch^2 \| \log c_s - \log c_s^h \|_h.  
\end{align}
%%The concavity of $a \mapsto \log a $ $(a > 01)$ and the fact that
Since $0 < C_1 \le c_s, c_s^h \le C_2$ on $h\Z^3$ for all $h$ and $s$
(cf.\ Theorem~\ref{t:PBEnergy} and Theorem~\ref{t:DiscretePBEnergy}), 
%%\frac{1}{c_s^h} (c_s - c_s^h) \le \log c_s - \log c^h_s 
%%\le \frac{1}{ c_s } ( c_s - c_s^h)
we have by the Mean-Value Theorem that 
\begin{align}
\label{MVT1} 
&\langle c_s - c_s^h, \log c_s - \log c_s^h \rangle_h \ge \frac{1}{C_2}
\| c_s - c_s^h \|_h^2, 
\\
\label{MVT2}
& \|\log c_s - \log c_s^h \|_h  \le \frac{1}{C_1} \| c_s - c_s^h \|_h.
\end{align}
Moreover, by summation by parts and the Cauchy--Schwarz inequality, 
%\BL{Need to use a proper inner product - notation.}
\begin{equation}
\label{Gagain}
\| G^h \|_h^2 = \langle G^h, -\nabla_h \psi^h \rangle_h 
= \langle \nabla_h \cdot G^h, \psi^h \rangle_h = \langle \gamma^h, \psi^h \rangle_h
\le \| \gamma^h \|_h \, \| \psi^h \|_h \le C.
\end{equation}
It now follows from \reff{newconstruct}--\reff{Gagain} 
and the equivalence of the norms $\| \cdot \|_{1/\ve, h}$ and $\| \cdot \|_h$ that 
\begin{align*}
&\| e^D_h  \|_{1/\ve, h}^2 + \frac{1}{C_2} \|c-c^h\|_h^2
\\
&\qquad \le \langle e^D_h,  e^D_h - h^2 G^h \rangle_{1/\ve, h} 
+ \langle e^D_h, h^2G^h \rangle_{1/\ve, h}  +  \sum_{s=1}^M 
\langle c_s - c_s^h, \log c_s - \log c_s^h \rangle_h 
\\
& \qquad \le  C h^2 \| e_h^D \|_h  + C  h^4 + C h^2 \| c_s - c_s^h \|_h
\\
%& \qquad \le C \| e_h^D \|_h h^2 + Ch^4 + C h^2 \| c_s - c_s^h \|_h
&\qquad \le \frac12 \| e^D_h \|_h^2 + \frac{1}{2C_2} \| c_s - c_s^h \|_h^2 + Ch^4,  
\end{align*}
leading to \reff{PBL2}.

%%\begin{corollary}
%%\label{c:L2phiPB}
%%Let $\ve$, $\rho$, and $\rho^h$ be the same as in Theorem~\ref{t:pbL2error}.
%%Let $\hat{\phi}_{\rm min} \in \mathring{H}^1_{\rm per}(\R^3)$ and $\hat{\phi}^h_{\rm min}
%%\in \mathring{V}_h$ be the unique minimizer of
%% $\hat{I}: \mathring{H}^1_{\rm per}(\R^3) \to \R \cup \{ + \infty \}$ 
%% and $\hat{I}_h: \mathring{V}_h \to \R$, respectively. 
%%There exists a constant $C = C(\Omega, \ve, \rho) > 0$, independent of $h$, such that 
%%$ \| \hat{\phi}_{\rm min} - \hat{\phi}_{\rm min}^h \|_h \le Ch^2. $
%%+ \| \nabla \hat{\phi}_{\rm min} - \nabla_h \hat{\phi}_{\rm min}^h \|_h \le C h^2. 
%%\end{corollary}

%\BL{Writing done the proof, I realize that we don't have $\| \nabla \phi - \nabla_h \phi^h 
%\|_h \le Ch^2.$ It's $\nabla_h \phi$ not $\nabla \phi$ in the proof.}

%%Denote $\phi = \hat{\phi}_{\rm min}$, $D = -\ve \nabla \phi, $ 
%%$\phi^h = \hat{\phi}^h$, and $D^h = D^\ve_h[\phi^h]. $

By Lemma~\ref{l:expandD} (cf.\ \reff{Deh}) and the fact that 
$D^h = D_h^\ve[\phi^h]$, we have 
\[
\| \nabla_h \phi - \nabla_h \phi^h \|_h \le C_3 \| D^\ve_h[\phi] - D_h^\ve[\phi^h] \| 
\le C_3 \| \sP_h D - D^h \|_h + C_3 h^2 \le C h^2.
\]
Since $\phi^h $ and $\sQ_h \phi$ are in $ \mathring{V}_h$
and $\phi - \sQ_h \phi$ is constant on $h\Z^3$, the 
discrete Poincar{\'e} inequality (cf.\ Lemma~\ref{l:FourierBasis}) then
implies that 
\[
\| \sQ_h \phi - \phi^h \|_h \le C \| \nabla_h \sQ_h \phi - \nabla_h \phi^h \|_h 
= C \| \nabla_h \phi - \nabla_h \phi^h \|_h \le C h^2.
\]
This and Lemma~\ref{l:rhoerror} then imply \eqref{CCPBL2}. 

Assume now $\phi\in C^4_{\rm per}(\overline{\Omega})$.
Since $\phi$ and $\phi^h$ are solutions to the CCPBE \reff{CCPBE} and 
the discrete CCPBE \reff{discreteCCPBE}, respectively, it follows that 
\begin{equation}
\label{diffphiphih}
\nabla \cdot \ve \nabla \phi - A_h^\ve[\phi^h]
+\sum_{s=1}^M \frac{q_s N_s}{L^3}   \left[ \frac{ e^{-q_s \phi}} {\sA_\Omega(e^{-q_s \phi})}
- \frac{e^{-q_s \phi^h}}{L^3 \sA_h(e^{-q_s \phi^h})} \right] 
= \rho^h -  \rho \quad \mbox{on } h\Z^3.
\end{equation}
By Lemma~\ref{l:rhoerror}, Lemma~\ref{l:expandD}, 
the definition $\rho^h = \sQ_h \rho$, and \reff{definerhoh4PB}, we have
\begin{equation}
\label{twomore}
| \nabla \cdot \ve \nabla \phi  - A_h^\ve[\phi] | \le C h^2 
\quad \mbox{and} \quad 
|\rho - \rho^h| \le C h^2 \qquad \mbox{on }  h\Z^3.
\end{equation} 
Clearly, $ \| \rho^h\|_\infty \le C.$ Thus, it follows from Theorem~\ref{t:hCCPB} that $\|\phi^h\|_\infty \le C$ and that all $ \|e^{-q_s \phi^h} \|_\infty$, 
$\sA_\Omega( e^{-q_s \phi^h} )$, and $ \sA_h (e^{-q_s \phi^h})$ are bounded below and above by positive constants independent of $h.$ 
Consequently, the Mean-Value Theorem, the Cauchy--Schwarz inequality, 
and \reff{CCPBL2} together imply that for each $s$
\begin{align*}
\left| \sA_h (e^{-q_s \phi} ) - \sA_h (e^{-q_s \phi^h}) \right|
& \le \frac{1}{N^3} \sum_{i,j,k=0}^{N-1} \left| e^{-q_s \phi_{i,j,k} }
- e^{-q_s \phi_{i,j,k}^h} \right|
 \le \frac{C}{N^3} \sum_{i,j,k=0}^{N-1} 
\left| \phi_{i, j, k} - \phi^h_{i, j, k} \right| 
\\
& \le C \| \phi - \phi^h\|_h
 \le C h^2. 
\end{align*}
This and Lemma~\ref{l:rhoerror} imply 
\begin{equation}
 \label{AhAh2}
    | \sA_\Omega (e^{-q_s \phi} ) - \sA_h (e^{-q_s \phi^h})|
    \le | \sA_\Omega (e^{-q_s \phi} ) - \sA_h (e^{-q_s \phi}) |
    + | \sA_h (e^{-q_s \phi}) - \sA_h (e^{-q_s \phi^h}) | \le Ch^2.
%% | \sA_h ( e^{-q_s\phi} ) - \sA_h ( e^{-q_s \phi^h} )| \le C h^2
%%\quad \mbox{and} \quad  | \sA_\Omega (e^{-q_s \phi} ) - \sA_h (e^{-q_s \phi}) |\le C h^2.
%%\qquad \mbox{for each } s.
\end{equation}
%%Hence,
%%the bound $\|\phi^h \|_\infty  \le C$ and the triangle inequality together imply 
%%\begin{equation}
%%\label{leftleft}
%%\left| \sum_{s=1}^M \frac{q_s N_s}{L^3} 
%\left\| \left[ \frac{1}{\sA_\Omega(e^{-q_s \phi})}
%-\frac{1}{\sA_h (e^{-q_s \phi^h}) }\right] e^{-q_s \phi^h} \right\|_\infty 
%\le  C 
%%\sum_{s=1}^M  
%\left| \frac{1}{\sA_\Omega ( e^{-q_s\phi} )}
%- \frac{1}{\sA_h ( e^{-q_s \phi^h} )} \right| \le C h^2. 
%\end{equation}
%%$ | \sA_h ( e^{-q_s\phi^h} ) - \sA_\Omega ( e^{-q_s \phi} )| \le C h^2$.
Denote the error $r_h^\phi:= \phi- \phi^h$. By \reff{twomore} and \reff{AhAh2}, 
we can now rewrite \reff{diffphiphih} into
%%\BL{Some details for the $O(h^2)$ truncation error - see lemma 2.}
 %%on $h \Z^3$. Subtracting \reff{CCPBh} from \reff{CCPBTrun} leads to 
 \begin{equation*}
%     \label{alphah}
 A_h^\ve[r_h^\phi]+\sum_{s=1}^M \frac{q_s N_s }{L^3 \sA_\Omega(e^{-q_s \phi})}
 \left(  e^{-q_s \phi} - e^{-q_s \phi^h} \right)= h^2 \alpha^h \qquad 
 \mbox{on } h\Z^3, 
 \end{equation*}
 where $\alpha^h \in V_h$ satisfies $| \alpha^h | \le C $ on $h \Z^3.$  Since
 $e^{-q_s\phi} - e^{-q_s \phi^h} = - q_s e^{-q_s \psi_s^h } r_h^\phi$ for some
 $\psi_s^h \in V_h$ which lies in between $\phi$ and $\phi^h$ at each $(i, j, k),$
the above equation for the error $r_h^\phi$ becomes
 \begin{equation}
 \label{beforeMh}
     -A_h^\ve[r^\phi_h] + b^h r^\phi_h = - h^2 \alpha^h, 
 \end{equation}
 where 
 $ b^h = \sum_{s=1}^M q_s^2 N_s e^{-q_s \psi_s^h}/(L^3 \sA_\Omega(e^{-q_s \phi}))
 \in V_h$ and  $C_4 \le b^h \le C_5$ on $h\Z^3$ for some constants
 $C_4 > 0$ and $C_5 > 0$ independent of $h.$ 
 
 As $V_h$ is a vector space of dimension $N^3$, the 
linear operator $M_h: V_h \to V_h$ defined by 
\[
 M_h \xi_h = -A_h^\ve[\xi_h] + b^h \xi_h \qquad \forall \xi_h \in V_h
\]
can be represented by a matrix ${\bf M}_h:= {\bf B}_h - {\bf A}_h^\ve$, where 
${\bf B}_h$ is the diagonal matrix with diagonal entries $b^h_{i, j, k}$ $(0\le i,j, k \le N-1)$
and ${\bf A}_h^\ve$ is the matrix representing the difference operator $A_h^\ve.$ 
By \reff{Ahvephi} and \reff{halfve}, ${\bf B}_h - {\bf A}^\ve_h$ is strictly diagonally dominant. In fact, if 
$M_{h, (i, j, k), (l, m, n)}$ is the entry of ${\bf M}_h$ in the row and column corresponding
to $(i, j, k)$ and $(l, m, n)$, respectively, then we can verify that
\[
\min_{(i,j,k)} \biggl(  | M_{h, (i, j, k), (i, j, k)} |- \sum_{(l,m,n)\ne (i, j, k)}
| M_{h, (i, j, k), (l, m, n)} | \biggr) = \min_{(i,j, k)} b^h_{ i, j, k} \ge C_4 > 0. 
\]
Therefore, the matrix ${\bf M}_h$ is invertible and $ \| {\bf M}_h^{-1} \|_\infty 
\le 1/C_4$; cf.\ \cite{Varah75,Varga76}. Hence, $M_h: V_h \to V_h$ is invertible and 
$\|M_h^{-1}\|_\infty \le 1/C_4.$ Since $| \alpha^h | \le C$ on $h\Z^3$, we have by \reff{beforeMh} that 
\begin{equation}
    \label{rhveinfty}
\| r_h^\phi \|_{\infty} = h^2 \| M_h^{-1} \alpha^h \|_\infty 
\le h^2 \| M_h^{-1} \|_\infty \| \alpha^h \|_\infty \le C h^2.
\end{equation} 

By \reff{notation5}, Theorem~\ref{t:PBEnergy}, Theorem~\ref{t:DiscretePBEnergy}, 
\reff{AhAh2},  \reff{rhveinfty}, and the bound $\| \phi^h \|_\infty \le C$, we have 
\begin{equation}
\label{ccfinal}
\| c_s - c_s^h \|_\infty =\frac{N_s}{L^3} \left\| 
\frac{e^{-q_s \phi} }{ \sA_\Omega (e^{-q_s \phi})} 
- \frac{ e^{-q_s \phi^h}}{\sA_h (e^{-q_s \phi^h})}
 \right\|_\infty \leq C h^2, \qquad s = 1, \dots, M.
\end{equation}
If we denote $\bar{r}_h^\phi = r_h^\phi - \sA_h (r_h^\phi) \in \mathring{V}_h$ and
$\beta^h = h^2 \alpha^h + b^h r_h^\phi \in V_h$, then \reff{beforeMh} becomes
$A_h^\ve[\bar{r}_h^\phi] = \beta^h $ on $h\Z^3.$ This implies $\beta^h \in 
\mathring{V}_h$. Moreover, $\| \beta^h \|_\infty \le C h^2$ by \reff{rhveinfty}. 
Since $A_h^\ve: \mathring{V}_h \to \mathring{V}_h$ is invertible, we have 
$\bar{r}_h^\phi = (A_h^\ve)^{-1}\beta^h.$ 
%Hence, $\partial_m^h r_h^\phi = \partial_m^h \bar{r}_h^\phi= 
%\partial_m^h (A_h^\ve)^{-1} \beta^h.$
It follows now from Lemma~\ref{l:Ihmin} that 
\[
\| \partial_m^h r_h^\phi \|_\infty= 
\|\partial_m^h \bar{r}_h^\phi \|_\infty 
\le \| \partial_m^h (A_h^\ve)^{-1} \|_\infty \| \beta^h \|_\infty \le C h^2, \qquad m = 1, 2, 3.
\]
This and Lemma~\ref{l:expandD} imply
\[
\| \sP_h D - D^h \|_{ \infty} \le \| \sP_h D - D_h^\ve[\phi] \|_\infty
+ \| D_h^\ve[r^\phi_h] \|_\infty \le C h^2,
%% = \| D_h^\ve[r_h^\phi] + h^2 T^h \|_\infty
%%\le C \| \nabla_h r_h^\phi \|_\infty + h^2 \| T^h \|_\infty \le C h^2. 
\]
which together with \reff{ccfinal} imply \reff{PBLinfty}. 
%%The proof is complete. 
\end{proof}

The proof of the following corollary is similar to that of Corollary~\ref{c:D4GradphiP}:
%%and is therefore omitted: 
\begin{corollary}
    \label{c:D4GradphiPB}
With the same assumptions as in Theorem~\ref{t:DiscretePBEnergy}, including
$\hat{\phi}_{\rm min} \in C^4_{\rm per}(\overline{\Omega})$, there exists a constant $C > 0$, 
independent of $h$, such that 
\[
\left\| \frac{m_h [-\hat{D}^h_{\rm min}]}{\ve}
 - \nabla \hat{\phi}_{\rm min} \right\|_\infty \le C h^2. 
\tag*{\qed}
\]
\end{corollary}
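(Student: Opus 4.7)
The plan is to mimic the argument given for Corollary~\ref{c:D4GradphiP} in the Poisson case, using the $L^\infty$ error estimate \reff{PBLinfty} from Theorem~\ref{t:pbL2error} in place of the Poisson $L^\infty$ estimate. Throughout, I shall write $\phi = \hat{\phi}_{\rm min}$, $D = \hat{D}_{\rm min}$, $D^h = \hat{D}^h_{\rm min}$, and $D = (u, v, w)$.

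First, I would exploit the identity $D = -\ve \nabla \phi$ from Theorem~\ref{t:PBEnergy}. Writing $u((i+1/2)h, jh, kh) = -(\ve \partial_1 \phi)((i+1/2)h, jh, kh)$ and similarly at $((i-1/2)h, jh, kh)$, and Taylor expanding the $C^3$ function $\ve \partial_1 \phi$ (which is $C^3$ since $\ve \in C^2_{\rm per}$ and $\phi \in C^4_{\rm per}$) about $(ih, jh, kh)$, the first-order terms cancel upon averaging the two midpoint values and I obtain
\[
\left| \frac{u_{i+1/2,j,k} + u_{i-1/2,j,k}}{2} + (\ve \partial_1 \phi)(ih, jh, kh) \right| \le C h^2.
\]
The analogous bounds hold for the $v$ and $w$ components. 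Stacking these into the vector $m_h$ defined in \reff{mhD} yields $|m_h[\sP_h D] + \ve \nabla \phi| \le C h^2$ on $h \Z^3$.

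Next, by \reff{PBLinfty} in Theorem~\ref{t:pbL2error}, each component of $\sP_h D - D^h$ is bounded in the maximum norm by $C h^2$. Since $m_h$ is simply a two-point average of components, it is a bounded linear operator on $Y_h$ in the maximum norm, so $\| m_h[\sP_h D] - m_h[D^h]\|_\infty \le C h^2$ on $h\Z^3$. Combining the two estimates via the triangle inequality gives $\| m_h[-D^h] - \ve \nabla \phi \|_\infty \le C h^2$.

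Finally, dividing by $\ve$, which is bounded below by $\ve_{\rm min} > 0$ uniformly in $h$ by \reff{epsilon}, preserves the $O(h^2)$ bound and yields
\[
\left\| \frac{m_h[-\hat{D}^h_{\rm min}]}{\ve} - \nabla \hat{\phi}_{\rm min} \right\|_\infty \le C h^2,
\]
as desired. No significant obstacle is expected: the Taylor step is routine because the regularity hypothesis $\phi \in C^4_{\rm per}(\overline{\Omega})$ gives enough smoothness for $\ve \partial_m \phi$ to be $C^3_{\rm per}$, and the propagation of the $L^\infty$ estimate through $m_h$ and division by $\ve$ is a direct consequence of Theorem~\ref{t:pbL2error} and \reff{epsilon}.
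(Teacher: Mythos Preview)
Your proposal is correct and follows exactly the approach the paper intends: the paper's proof of this corollary consists only of the sentence ``similar to that of Corollary~\ref{c:D4GradphiP},'' and your write-up is precisely that argument with Theorem~\ref{t:pbL2error} substituted for Theorem~\ref{t:L2Poisson}. One minor remark: $\ve\,\partial_1\phi$ is only $C^2$ (not $C^3$) under the stated hypotheses $\ve\in C^2_{\rm per}$ and $\phi\in C^4_{\rm per}$, but $C^2$ is all that the second-order Taylor expansion requires, so this does not affect the argument.
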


\section{Local Algorithms and Their Convergence}
\label{s:LocalAlg}

%%%%%%%%%%%%%%%%%%%%%%%%%%
%\subsection{Local algorithm for minimizing the Poisson energy}
\subsection{Minimizing the discrete Poisson energy}
\label{ss:LcoalAlg4Poisson}

Given $\ve \in V_h$ with $\ve > 0$ and $\rho^h \in \mathring{V}_h.$ 
The local algorithm \cite{MaggsRossetto_PRL02,M:JCP:2002} 
for minimizing the discrete Poisson energy
%the discrete electrostatic energy without ions
$F_h: S_{\rho,h}\to \R$ defined in \reff{Fh}
%%The local algorithm for minimizing the discrete 
consists of two parts. One is the initialization of 
a displacement $D^{(0)} = (u^{(0)}, v^{(0)}, w^{(0)}) \in S_{\rho, h}$ such that 
%%with the zero total displacement field: 
$\sA_h(D^{(0)}) = 0$. The other is 
the local update of the displacement at each grid box. 
%%Solving Poisson's equation may be a possible choice, but here 
%We use a local and cell-by-cell method as an alternative. 
%%First, we define 
%%$w_{i, j, k+1/2}$, 
%%the $z$-component of the displacement by
%%$w$ is updated according to the average charge density of the 
%%$k$-th plane $\langle p \rangle_k =(\sum_{i,j}^N \rho_{i,j,k})/N^2$. It reads
To construct a desired initial displacement, we first define
\cite{DuenwegPRE09}
\begin{align*}
& \forall i, j \in \{ 0, \dots, N-1\}:  \quad \hat{w}^{(0)}_{i,j,1/2}=0, \quad 
\hat{w}^{(0)}_{i,j,k+1/2}=\hat{w}^{(0)}_{i,j,k-1/2}+ 
%%h\langle p \rangle_k, \quad k=1,\dots,N-1, 
h p_k, \quad k=1,\dots,N-1, 
\\
&\forall i, k \in \{ 0, \dots, N-1\}: \quad 
\hat{v}^{(0)}_{i,1/2,k}=0, \quad 
\hat{v}^{(0)}_{i,j+1/2,k}=\hat{v}^{(0)}_{i,j-1/2,k}+ 
%%h\langle q \rangle_{j,k}, \quad j=1,\dots,N-1, 
h q_{j,k}, \quad j=1,\dots,N-1, 
\\
&\forall j, k \in \{ 0, \dots, N-1 \}: \quad
\hat{u}^{(0)}_{1/2,j,k}=0, \quad
\hat{u}^{(0)}_{i+1/2,j,k}=\hat{u}^{(0)}_{i-1/2,j,k}+ 
h (\rho^h_{i,j,k}- p_k- q_{j,k}), 
\\
&\qquad \qquad \qquad \qquad \qquad \qquad \qquad \qquad 
\qquad i=1,\dots,N-1, 
\end{align*}
where $ p_k =(1/N^2) \sum_{l,m=0}^{N-1} \rho^h_{ l,m,k}$ and 
$ q_{j,k} =(1/N) \sum_{l=0}^{N-1} \rho^h_{l,j,k}- p_k$
$(j, k = 0, \dots, N-1).$ We extend 
$\hat{D}^{(0)} = (\hat{u}^{(0)}, \hat{v}^{(0)}, \hat{w}^{(0)}) $ periodically, and then
define $D^{(0)} = \hat{D}^{(0)} - \sA_h (\hat{D}^{(0)}).$
It is readily verified that $D^{(0)} \in  S_{\rho, h}$ and $\sA_h ( D^{(0)} ) = 0.$

%%%%%%%%%%%%%%%%%%%%%%%%%%%%%%%
\begin{comment}
%%by this loop for periodic boundary condition.
In order for $D^{(0)}$ to satisfy
the condition of zero total electric field, we finally modify $D^{(0)}$ by
%%current, the components of displacement are updated by subtracting their average values,
\[
\begin{aligned}
&\tilde{u}_{i+1/2,j,k}=u_{i+1/2,j,k}- \sA_h(u), \\
%%(\sum_{i,j,k}^N u_{i+1/2,j,k})/N^3,\\
&\tilde{v}_{i,j+1/2,k}=v_{i,j+1/2,k}- \sA_h(v), \\
%%(\sum_{i,j,k}^N v_{i,j+1/2,k})/N^3,\\
&\tilde{w}_{i,j,k+1/2}=w_{i,j,k+1/2}- \sA_h(w).
%%(\sum_{i,j,k}^N w_{i,j,k+1/2})/N^3.
\end{aligned}
\]
$(\tilde{u},\tilde{v},\tilde{w})$ is sufficient to be the initial value of displacement satisfying  the requirements.

Once the iteration of local updates is done with the resulting displacement $D$, then 
the final, one-step global correction is to determine $D \in S_{\rho, h}$ by 
$D/\ve \leftarrow D/\ve - \sA_{h}(D/\ve).$

\end{comment}
%%%%%%%%%%%%%%%%%%%%%%

We now describe the local update. Let $D = (u, v, w) \in S_{\rho, h}.$
%%with a general, variable coefficient $\ve\in V_h$ with $\ve > 0.$
Fix $(i, j, k)$ with $0 \le i, j, k \le N-1$ and consider the grid 
box $B_{i, j, k}= (i, j, k)+[0, 1]^3;$
%%the grid box with the $8$ vertices $(i+\delta_i, j+\delta_j, k + \delta_k)$, 
%%where $\delta_i, \delta_j, \delta_k \in \{ 0, 1 \};$ 
cf.\ Figure~\ref{fig:relaxation} (Left).
%{\color{blue}{[Can we put one or two diagrams here to show the grid box 
%%$B_{i, j, k}$ and the face on the  plane $z = kh$? 
%%Perhaps also indicate the perturbations $\alpha, ..., \delta$? -Bo]}}
%%{\textcolor{blue}{Put a figure here? or use one in sections above?}}
%%i.e., given the  $u$-values, $v$-values, and $w$-values on 
%%the edges (or more precisely the centers of the edges) parallel to 
%%the $x$-axis, $y$-axis, and $z$-axis, respectively.
%%and moreover the discrete Gauss' law holds true; cf.\ \reff{Srhoh}.
We update $D$ on the edges of the three faces of $B_{i, j, k}$ 
that share the vertex $(i, j, k)$, 
%%one by one, first the face on the plane
first the face on the plane
$x = ih$, then $y=jh$, and finally $z = kh.$ 
%%After each of these three updates, 
%%the new displacement field is still denoted by $D.$

\begin{figure}[htp]
    \centering
    \includegraphics[width=0.67\linewidth]{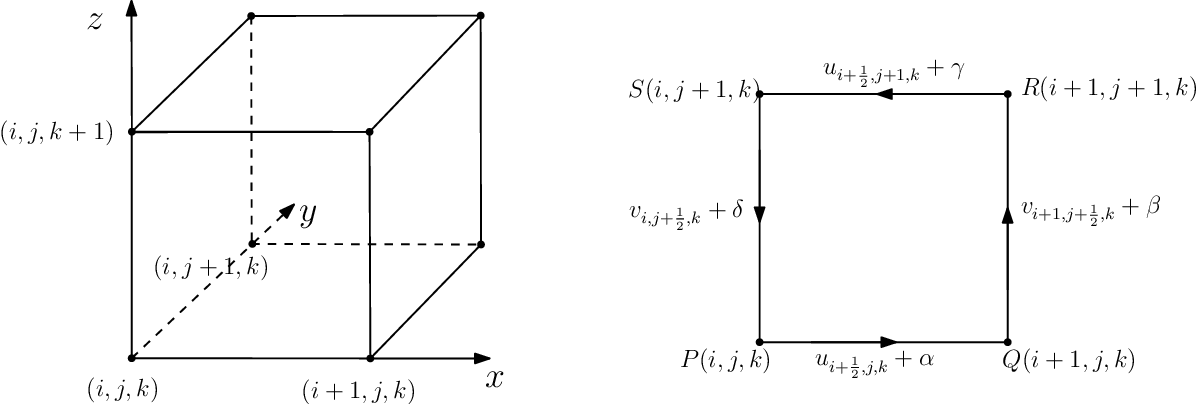}
    \caption{{\small { (Left) The grid box $B_{i, j, k} = (i, j, k)+[0,1]^3.$ 
%%with the $8$ vertices $(i+\delta_i, j+\delta_j, k+\delta_k) $ with $\delta_i, 
%%    \delta_j, \delta_k \in \{ 0, 1 \}$. 
(Right) The grid face of box $B_{i, j, k}$ with 
    %%, a face of  the grid box $B_{i, j, k}$, with the 
   vertices $P = (i, j, k)$, $Q = (i+1, j, k)$, $R = (i+1, j+1, k)$, 
and $S = (i, j+1, k).$ The perturbations $\alpha$, $\beta$, 
$\gamma$ and $\delta$ of $u$ and $v$ with subscript, 
the corresponding components of the displacement $D$, 
are to be determined.}}}
    \label{fig:relaxation}
\end{figure}

Consider the face on the plane $z = kh$, the square 
of vertices $P = (i, j, k)$, $Q = (i+1, j, k)$, $R = (i+1, j+1, k)$, 
and $S = (i, j+1, k);$ cf.\ Figure~\ref{fig:relaxation} (Right).
To update the $4$ values $u_{i+1/2, j, k}, $ $u_{i+1/2, j+1, k},$, $v_{i, j+1/2, k},$ 
and $v_{i+1, j+1/2, k}$ of $D$ on the $4$ edges of the face 
$PQRS$, we define a locally perturbed displacement
$\check{D} = (\check{u}, \check{v}, \check{w})  \in S_{\rho, h}$ 
by $\check{D} = D$ everywhere except
\begin{align*}
    &\check{u}_{i+1/2,j,k}=u_{i+1/2,j,k}+\alpha,
    \\
    & \check{v}_{i+1,j+1/2,k}=v_{i+1,j+1/2,k}+\beta, \\
    &\check{u}_{i+1/2,j+1,k}=u_{i+1/2,j+1,k}+\gamma, \\
    &
    \check{v}_{i,j+1/2,k}=v_{i,j+1/2,k}+\delta,
\end{align*}
%%and the corresponding periodic grids, 
where $\alpha, \beta, \gamma, \delta \in \R$ are to be determined. In order for
$\check{D} \in S_{\rho, h}$, the discrete Gauss' law $\nabla_h \cdot D = \rho^h$ 
%%\reff{hGauss} %holds true for both $D$ and $\check{D}$ at any grid points. 
at the $4$ vertices $P, Q, R, S$ should be satisfied. Consequently, 
$\alpha +\delta=0, $ $ -\alpha + \beta = 0, $ 
$-\beta - \gamma = 0, $ and $ \gamma-\delta = 0.$
%%Solving these equations, we find that
Thus, $\alpha = \beta = - \gamma = - \delta =:\eta \in \R.$ 
%%We can now find out the exact value of $\eta$ that will reduce most the energy from $F_h[D]$ to 
%$F_h[\check{D}]$ and also the amount of reduced energy 
%$F_h[\check{D}] - F_h[{D}]$ by minimizing the quadratic function 
The optimal value of $\eta$ is set to minimize the perturbed energy
$F_h[\check{D}]$, or equivalently, the energy change
\begin{align*}
    \Delta F(\eta) &:=F_h[\check{D}] - F_h[{D}]
    \\
    &=\frac{\ve_{z, i, j, k}h^3 }{2} \eta^2  
    %%\left[ \eta^2 \left( \frac{1}{\ve_{i+1/2,j,k}} +\frac{1}{\ve_{i+1,j+1/2,k}} +\frac{1}{\ve_{i+1/2,j+1,k}} +\frac{1}{\ve_{i,j+1/2,k}} \right) \right. \\
    %%&\quad \left.  
    +2\eta \left( \frac{u_{i+1/2,j,k}}{\ve_{i+1/2,j,k}}+ \frac{v_{i+1,j+1/2,k}}{\ve_{i+1,j+1/2,k}}-\frac{u_{i+1/2,j+1,k}}{\ve_{i+1/2,j+1,k}}-\frac{v_{i,j+1/2,k}}{\ve_{i,j+1/2,k}}\right) 
    %%\right]
    \qquad \forall \eta \in \R,
\end{align*}
where 
\begin{equation*}
    \ve_{z, i,j,k}=\frac{1}{\ve_{i+1/2,j,k}} +\frac{1}{\ve_{i+1,j+1/2,k}} +\frac{1}{\ve_{i+1/2,j+1,k}} +\frac{1}{\ve_{i,j+1/2,k}}. 
\end{equation*}
This is minimized at a unique $\eta = \eta_{z, i, j, k}$ with the minimum energy change
$\Delta F_{z, i, j, k} :=\min_{\eta \in\R} \Delta F(\eta)$ given by 
\begin{align}
\label{etazijk}
&\eta_{z, i, j, k} = -\frac{1}{\ve_{z,i,j,k}}
\left( \frac{u_{i+1/2,j,k}}{\ve_{i+1/2,j,k}}+ \frac{v_{i+1,j+1/2,k}}{\ve_{i+1,j+1/2,k}}-\frac{u_{i+1/2,j+1,k}}{\ve_{i+1/2,j+1,k}}-\frac{v_{i,j+1/2,k}}{\ve_{i,j+1/2,k}} \right),
\\
\label{Fzijk}
&\Delta F_{z, i, j, k}  = - \frac{1}{2}  \ve_{z, i, j, k} h^3 \eta_{z, i, j, k}^2. 
\end{align}
Therefore, we update $D$ by 
\begin{align}
\label{Updatez1}
&{u}_{i+1/2,j,k} \leftarrow u_{i+1/2,j,k}+\eta_{z, i, j, k},
\\
\label{Updatez2}
   & {v}_{i+1,j+1/2,k} \leftarrow v_{i+1,j+1/2,k}+\eta_{z, i, j, k}, 
   \\
\label{Updatez3}
   &  {u}_{i+1/2,j+1,k} \leftarrow u_{i+1/2,j+1,k}-\eta_{z, i, j, k}, 
   \\
\label{Updatez4}
   &
    {v}_{i,j+1/2,k} \leftarrow v_{i,j+1/2,k}-\eta_{z, i, j, k}.
\end{align}
%%{\color{blue}{[The updated $D$ is denoted by $D^z.$ add this def of $D^z$. same for x, y. - Bo]}}
We denote by $D^z \in S_{\rho, h}$ this updated displacement. 

Similarly, we can update the $D$-values on the $4$ edges
of the face of the grid box $B_{i, j,k}$ on the plane $ y = jh $ and 
the plane $x  = i h$ to get the updated displacement $D^y\in S_{\rho, h}$ 
and $D^x \in S_{\rho, h},$ respectively, by
\begin{align}
\label{Updatey1}
& {w}_{i,j,k+1/2} \leftarrow w_{i,j,k+1/2}+\eta_{y, i, j, k}, \\
\label{Updatey2}
&  {u}_{i+1/2,j,k+1} \leftarrow u_{i+1/2,j,k+1}+\eta_{y, i, j, k}, \\
\label{Updatey3}
& {w}_{i+1,j,k+1/2} \leftarrow w_{i+1,j,k+1/2}-\eta_{y, i, j, k},
\\
\label{Updatey4}
& {u}_{i+1/2,j,k} \leftarrow u_{i+1/2,j,k}-\eta_{y, i, j, k},
\\
\label{Updatex1}
    &{v}_{i,j+1/2,k} \leftarrow v_{i,j+1/2,k}+\eta_{x, i, j, k},
    \\
\label{Updatex2}
  & {w}_{i,j+1,k+1/2} \leftarrow w_{i,j+1,k+1/2}+\eta_{x, i, j, k}, 
   \\
 \label{Updatex3}
   &  {v}_{i,j+1/2,k+1} \leftarrow v_{i,j+1/2,k+1}-\eta_{x, i, j, k}, 
   \\
 \label{Updatex4}
 &  {w}_{i,j,k+1/2} \leftarrow w_{i,j,k+1/2}-\eta_{x, i, j, k}. 
\end{align}
Note that the sign of each of the  perturbations 
$\eta_{x, i, j, k}$, $\eta_{y, i, j, k}$, and $\eta_{z, i, j, k}$ 
is defined by \reff{Updatex1}, \reff{Updatey1}, 
and \reff{Updatez1}, respectively. This follows from the right-hand rule for 
orientations, i.e., the grid faces used for defining these $\eta$-values are on the 
$xy$, $yz$, and $zx$ planes, and the convention of using counterclockwise 
directions for the sign of perturbation along each edge of a face; 
cf.\ Figure~\ref{fig:relaxation} (Right). 
The optimal perturbations $\eta_{y, i, j, k}$ and $\eta_{x, i, j, k}$ 
and the corresponding energy differences 
$\Delta F_{y, i, j, k}$ and $\Delta F_{x, i, j, k}$ are given by
\begin{align}
\label{etayijk}
&\eta_{y, i, j, k}= -\frac{1}{\ve_{y, i, j, k}} 
\left( \frac{w_{i,j,k+1/2}}{\ve_{i,j,k+1/2}}
+\frac{u_{i+1/2,j,k+1}}{\ve_{i+1/2,j,k+1}}
-\frac{w_{i+1,j,k+1/2}}{\ve_{i+1,j,k+1/2} } 
- \frac{u_{i+1/2,j,k}}{\ve_{i+1/2,j,k}} \right),
\\
\label{etaxijk}
   &\eta_{x, i, j, k} = -\frac{ 1}{\ve_{x, i,j,k}}
   \left( \frac{v_{i,j+1/2,k}}{\ve_{i,j+1/2,k}}
+\frac{w_{i,j+1,k+1/2}}{\ve_{i,j+1,k+1/2}}
-\frac{v_{i,j+1/2,k+1}}{\ve_{i,j+1/2,k+1}} 
-\frac{w_{i,j,k+1/2}}{\ve_{i,j,k+1/2}} \right), 
\\
\label{Fyijk}
   &\Delta F_{y, i, j, k}  = - \frac{1}{2} \ve_{y, i, j, k}h^3 \eta_{y, i,j,k}^2,
   \\
   \label{Fxijk}
   &\Delta F_{x, i, j, k}  = - \frac{1}{2} \ve_{x, i, j, k} h^3 \eta_{x, i, j, k}^2,
\end{align}
where 
\begin{align*}
&\ve_{y, i, j, k} = \frac{1}{\ve_{i,j,k+1/2}} + \frac{1}{\ve_{i+1/2,j,k+1} }
+ \frac{1}{\ve_{i+1,j,k+1/2}} +\frac{1}{\ve_{i+1/2,j,k}}, 
\\
&\ve_{x,i,j,k} = \frac{1}{\ve_{i,j+1/2,k}}
+ \frac{1}{\ve_{i,j+1,k+1/2}} 
+\frac{1}{\ve_{i,j+1/2,k+1}}
+\frac{1}{\ve_{i,j,k+1/2}}.
\end{align*}
%In the case that $\ve$ is a constant, the expressions of all these subscripted $\eta$ and
%$\Delta F$ can be simplified in a straight forward way. 
Note that 
\[
h (\ve_{x, i, j, k}\eta_{x, i, j, k}, 
\ve_{y, i, j, k}\eta_{y, i, j, k}, \ve_{z, i, j, k} \eta_{z, i, j, k})
= - \left(\nabla_h \times \frac{D}{\ve}\right)_{i+1/2, j+1/2, k+1/2} \qquad \forall i, j, k \in \Z.
\]

We summarize these calculations in the following lemma:

\begin{lemma}
    \label{l:localalg4Poisson}
Let $\ve \in V_h$ with $\ve > 0$ on $h\Z^3,$ $\rho^h \in \mathring{V}_h$, 
and $D = (u, v, w) \in S_{\rho, h}$.
    \begin{compactenum}
%        Given $i, j, k \in \Z$, an update of $D$ by 
%        \reff{Updatez1}--\reff{Updatex4} does not change the total displacement
%        field, i.e., $\sA_h(D)$  remains unchanged. 
 %      $\sum_{i, j, k=1}^N u_{i+1/2, j, k}$, $\sum_{i,j,k=1}^N v_{i, j+1/2, k}$, and 
%%   $\sum_{i, j, k =1}^N w_{i, j, k+1/2}$ remain unchanged. 
        \item[{\rm (1)}] 
        Given $i, j, k \in \{ 0, \dots, N-1 \}$. Let 
        $D^x $, $D^y, $ and $D^z$ be updated from $D$ by \reff{Updatez1}--\reff{Updatex4}
%       by \reff{Updatex1}--\reff{Updatex4}, \reff{Updatey1}--\reff{Updatey4}, 
%        and \reff{Updatez1}--\reff{Updatez4} with
 with $\eta_{x, i, j, k}$, $\eta_{y, i, j, k}$, $\eta_{z, i, j, k}$,
 $\Delta F_{x, i, j, k},$, $ \Delta F_{y, i, j, k}$, and $\Delta F_{z, i, j, k}$ 
 given in   \reff{etazijk}, \reff{Fzijk}, and \reff{etayijk}--\reff{Fxijk}, 
%%        given in \reff{Fxijk}, \reff{Fyijk}, and \reff{Fzijk}, 
        respectively.  Then   $D^x, D^y, D^z \in S_{\rho, h}$, 
     $\sA_h(D^x) = \sA_h(D^y) = \sA_h(D^z) = \sA_h(D),$ and 
        \begin{align*}
\eta_{\sigma, i, j, k}^2  = \frac14 \| D^\sigma - D \|^2_h 
= -\frac{2}{\ve_{\sigma, i, j,k} h^3} \Delta F_{\sigma, i, j, k},  
\qquad \sigma \in \{ x, y, z \}. 
%%& 4 \eta_{y, i, j, k}^2 =  \| D^y - D \|^2_h 
%%= -\frac{8}{\ve_{y, i, j,k} h^3} \Delta F_{y, i, j, k}, \\
%%&4 \eta_{z, i, j, k}^2  = \| D^z - D \|^2_h 
%%= -\frac{8}{\ve_{x, i, j,k} h^3} \Delta F_{x, i, j, k}.
        \end{align*}
          \item[{\rm (2)}] 
        $D/\ve$ is curl-free, i.e.,   $\nabla_h \times (D/\ve )= 0$  on $h(\Z+1/2)^3$,
if and only if $\eta_{z, i, j, k} = \eta_{y, i, j, k} = \eta_{x, i, j, k} = 0$
        for all $i, j, k \in \{ 0, \dots, N-1\}.$ 
        \qed
    \end{compactenum}
\end{lemma}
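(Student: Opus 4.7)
My plan is to verify each assertion by reading off what is already built into the local update, since the perturbations $\eta_{\sigma,i,j,k}$ have been derived precisely to minimize the local energy change while preserving Gauss' law on the relevant grid box face.

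For Part (1), I would first check that $D^x, D^y, D^z \in S_{\rho,h}$ by inspecting the discrete divergence $\nabla_h \cdot D$ at each of the four vertices $P=(i,j,k)$, $Q=(i+1,j,k)$, $R=(i+1,j+1,k)$, $S=(i,j+1,k)$ of the face in the plane $z=kh$ (and analogously for the other two faces). The perturbation pattern $(+\eta,+\eta,-\eta,-\eta)$ prescribed by \reff{Updatez1}--\reff{Updatez4} contributes $\alpha+\delta$, $-\alpha+\beta$, $-\beta-\gamma$, and $\gamma-\delta$ to $h(\nabla_h \cdot \check D)$ at $P, Q, R, S$ respectively; these all vanish with $\alpha=\beta=-\gamma=-\delta=\eta_{z,i,j,k}$. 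At every other grid point, $\check D=D$, so $\nabla_h\cdot D$ is unchanged, giving $D^z\in S_{\rho,h}$. The same bookkeeping for the faces in the planes $y=jh$ and $x=ih$ gives $D^y,D^x\in S_{\rho,h}$. Averaging: for each of the three updates, exactly two of the four perturbed edge values receive $+\eta$ and two receive $-\eta$ to the same scalar component, so the sums $\sum_{i,j,k}u$, $\sum_{i,j,k}v$, $\sum_{i,j,k}w$ are preserved, which gives $\sA_h(D^\sigma)=\sA_h(D)$.

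Next, for the identity in Part (1), I would compute $\|D^\sigma-D\|_h^2$ directly from the definition of $\|\cdot\|_h$. Exactly four edge components of $D^\sigma-D$ are nonzero, each equal to $\pm\eta_{\sigma,i,j,k}$; substituting into the defining sum and combining with the formulas \reff{Fzijk}, \reff{Fyijk}, \reff{Fxijk} for the minimal energy changes $\Delta F_{\sigma,i,j,k}=-\tfrac{1}{2}\ve_{\sigma,i,j,k}h^3\eta_{\sigma,i,j,k}^2$ yields the claimed chain of equalities.

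For Part (2), the key observation is that each of $\eta_{x,i,j,k}$, $\eta_{y,i,j,k}$, $\eta_{z,i,j,k}$ coincides, up to the nonzero scalar factor $-h/\ve_{\sigma,i,j,k}$, with one of the three components of $(\nabla_h\times (D/\ve))_{i+1/2,j+1/2,k+1/2}$. This is already essentially recorded in the identity
\[
 h\,(\ve_{x,i,j,k}\eta_{x,i,j,k},\ve_{y,i,j,k}\eta_{y,i,j,k},\ve_{z,i,j,k}\eta_{z,i,j,k}) = -\Bigl(\nabla_h\times \tfrac{D}{\ve}\Bigr)_{i+1/2,j+1/2,k+1/2}
\]
stated just before the lemma; it suffices to match the four edge terms in each $\eta_{\sigma,i,j,k}$ (cf.\ \reff{etazijk}, \reff{etayijk}, \reff{etaxijk}) with the corresponding face-circulation in the definition of the discrete curl. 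Since $\ve>0$ and the weights $\ve_{\sigma,i,j,k}>0$, each $\eta_{\sigma,i,j,k}$ vanishes if and only if the corresponding component of the curl at $(i+1/2,j+1/2,k+1/2)$ vanishes. Running this over all $(i,j,k)\in\{0,\dots,N-1\}^3$ and over $\sigma\in\{x,y,z\}$ gives the stated equivalence. I do not expect any real obstacle: both parts are direct consequences of how the local perturbations were set up, and the only mild care is to keep track of the signs dictated by the counterclockwise orientation convention in Figure~\ref{fig:relaxation} (Right).
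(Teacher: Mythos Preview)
Your proposal is correct and matches the paper's approach exactly: the paper omits the proof entirely (the lemma ends with \qed), and the commented-out proof sketch in the source says precisely to verify Part~(1) directly from the update formulas \reff{Updatez1}--\reff{Updatex4} and Part~(2) from the definition of the discrete curl together with \reff{etazijk}, \reff{etayijk}, \reff{etaxijk}. Your bookkeeping of the vertex contributions for Gauss' law, the $\pm\eta$ cancellation for $\sA_h$, and the appeal to the displayed identity $h(\ve_{\sigma,i,j,k}\eta_{\sigma,i,j,k})_{\sigma}=-\nabla_h\times(D/\ve)$ are exactly the intended verifications.
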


%%%%%%%%%%%%%%%%%%%%%%
\begin{comment}
    \begin{proof} 
    We directly verify Part (1) by \reff{Updatez1}--\reff{Updatex4}, 
    and Part (2) by 
        the definition of discrete curl operator, 
        %%\reff{nhtimesD}, 
        that of 
        $D/\ve$ (cf.\ Part (3) of Theorem~\ref{t:DiscreteEnergy}), and
        \reff{etazijk}, \reff{etayijk}, and \reff{etaxijk}.
    \end{proof}
\end{comment}
%%%%%%%%%%%%%%%%%%%%%

Here is the local algorithm for a constant coefficient $\ve$. In this case, 
the expressions of all those subscripted $\eta$ and
$\Delta F$ can be simplified. 
%%in a straight forward way. 

\medskip

\noindent
{\bf Local algorithm for minimizing $F_h: S_{\rho, h} \to \R$.}
%%the electrostatic energy (without ions).}

%%\noindent
%%$\qquad $
%%Input: $L$, $N$, $\rho\in \mathring{V}_h$, $\ve \in V_h$ with $\ve> 0,$ and 
%%a tolerance $\mbox{Tor} > 0$. 

\noindent
$\qquad$ {Step 1.}
Initialize a displacement $D^{(0)} \in S_{\rho, h}$ with $\sA_h (D^{(0)}) = 0.$
 Set $m = 0.$

\noindent
$\qquad $ {Step 2.} Update $D:=D^{(m)}.$

\noindent
$\qquad\qquad \quad \ \, $
For $i, j, k = 0,  \dots, N-1$

\noindent $\qquad \qquad \qquad \quad  $ 
Update $D$ to get $D^x$ by \reff{Updatex1}--\reff{Updatex4} and $D \leftarrow D^x$, 

\noindent $\qquad \qquad \qquad \quad $ 
Update $D$ to get $D^y$ by \reff{Updatey1}--\reff{Updatey4} and $D \leftarrow D^y$, 

\noindent $\qquad \qquad \qquad \quad  $ 
Update $D$ to get $D^z$ by \reff{Updatez1}--\reff{Updatez4} and $D \leftarrow D^z$.

\noindent
$\qquad \qquad \quad \ \, $
End for

\noindent
$\qquad $
Step 3. 
If $\eta_{x, i, j, k} = \eta_{y, i, j, k} = \eta_{z, i, j, k} = 0$ for all
$i, j, k = 0, \dots, N-1$, then stop. 

$\qquad \qquad $
Otherwise, 
%%go to Step 2.
set $D^{(m+1)} = D$ and $m:= m+1$ and go to Step 2. 

%\noindent $\qquad $
%%{Step 4.} Update $D$ by $D/\ve \leftarrow D/\ve - \sA_h ( D / \ve ).$

\medskip

\begin{remark}
Suppose the local algorithm generates a sequence of displacements 
%$D^{(t)}$ $(t = 1, 2, \dots)$ and it 
converging to some $D^{(\infty)} \in S_{\rho, h}.$ By Theorem~\ref{t:DiscreteEnergy},
$D^{(\infty)}$ is the minimizer of $F_h: S_{\rho, h} \to \R$ if and only 
$\nabla_h \times (D^{(\infty)}/\ve)  = 0$ and $\sA_h (D^{(\infty)}/\ve)=0.$ 
It is expected that $\nabla_h \times (D^{(\infty)}/\ve)  = 0$ which is equivalent to 
the vanishing of all perturbations, the subscripted $\eta$, in the update. 
Each update in the local algorithm does not change $\sA_h (D)$ but 
may likely change $\sA_h (D/\ve)$ if $\ve $ is not a constant. 
It is generally impossible to construct an initial
displacement so that at the end $\sA_h(D^{(\infty)}/\ve) = 0.$ Therefore, the above
algorithm only works for a constant $\ve$ in general.
\end{remark}

Before we present a new algorithm for a variable $\ve$, we prove the convergence of
the local algorithm for a constant dielectric coefficient. 

\begin{theorem}
%%[Convergence of the local algorithm for a charged system without ions]
\label{t:ConvP}
 Le $\ve \in V_h$ be a positive constant, 
  $\rho^h \in \mathring{V}_h$, and $ D^h_{\rm min} \in S_{\rho, h}$ 
be the unique minimizer of $F_h: S_{\rho, h} \to \R.$
Let $D^{(0)}\in S_{\rho,h}$ be such that $\sA_h(D^{(0)}) = 0$
%%satisfy the condition of zero total field and denote
%%\reff{ZeroCurrent} and 
and let $D^{(t)}\in S_{\rho, h} $ $(t = 0, 1,\dots)$ 
be the sequence  (finite or infinite) of 
    displacements generated by the local algorithm. 
    %%with any cell ordering that can be different for different rounds.
    \begin{compactenum}
    \item[{\rm (1)}] If the sequence is finite ending at $D^{(m)}$, then 
    $D^{(m)}= D^h_{\rm min}$ and $F_h[D^{(m)}]  = F_h [D^h_{\rm min}].$
    %%$ =    \min_{D \in S_{\rho h}} F_h [D].$
    \item[{\rm (2)}] If the sequence is infinite, then $D^{(t)}\to D^h_{\min}$ on 
    $h (\Z+1/2)^3$ and $F_h[D^{(t)}]\to F[D^h_{\min}]$.
   %%  $=\min_{D\in S_{\rho,h}} F_h$ 
 %%   as $t\to\infty$.
    \end{compactenum}
\end{theorem}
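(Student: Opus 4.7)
The plan is to exploit the energy descent built into each local update, combined with the structural preservation of $S_{\rho,h}$ and the invariant $\sA_h(D)=0$, and then invoke the characterization of the minimizer from Theorem~\ref{t:DiscreteEnergy}. First I would record the two invariants: each of the updates \reff{Updatex1}--\reff{Updatex4}, \reff{Updatey1}--\reff{Updatey4}, \reff{Updatez1}--\reff{Updatez4} is a pair of $\pm\eta$ modifications on four edges of a single grid face, so it preserves both the discrete Gauss' law (hence $D^{(t)}\in S_{\rho,h}$ for all $t$) and the triple of componentwise sums $\sA_h(D)$. Since $\sA_h(D^{(0)})=0$ and $\ve$ is a positive constant, this gives $\sA_h(D^{(t)}/\ve)=0$ for every $t$.

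For the finite case (Part (1)), the algorithm stops only when all $\eta_{\sigma,i,j,k}=0$, so Part (2) of Lemma~\ref{l:localalg4Poisson} yields $\nabla_h\times(D^{(m)}/\ve)=0$. Together with $D^{(m)}\in S_{\rho,h}$ and $\sA_h(D^{(m)}/\ve)=0$, Theorem~\ref{t:DiscreteEnergy} (the equivalence of Parts (1) and (3)) forces $D^{(m)}=D^h_{\rm min}$, and the energy identity follows.

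For the infinite case (Part (2)), the formulas \reff{Fzijk}, \reff{Fyijk}, \reff{Fxijk} show that each sub-step decreases $F_h$ by $\tfrac12\ve_{\sigma,i,j,k}h^3\eta_{\sigma,i,j,k}^2\ge 0$, so $\{F_h[D^{(t)}]\}$ is nonincreasing and bounded below by $F_h[D^h_{\rm min}]\ge 0$. Telescoping the energy differences over all cycles then gives
\begin{equation*}
\sum_{t=0}^{\infty}\sum_{\sigma\in\{x,y,z\}}\sum_{i,j,k=0}^{N-1}\tfrac12\,\ve_{\sigma,i,j,k}\,h^3\,\bigl(\eta^{(t)}_{\sigma,i,j,k}\bigr)^{2}<\infty,
\end{equation*}
so each $\eta^{(t)}_{\sigma,i,j,k}\to 0$ as $t\to\infty$. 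Because $F_h[D^{(t)}]\le F_h[D^{(0)}]$ and $\|\cdot\|_{1/\ve,h}$ is a norm on the finite-dimensional space $Y_h$, the sequence $\{D^{(t)}\}$ is bounded, hence precompact. Let $D^{(t_\ell)}\to\bar D$ along some subsequence.

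The delicate point, and the main obstacle, is the order-dependence of the local updates within a cycle: the perturbation $\eta^{(t)}_{y,i,j,k}$ is computed from the $D$ produced after the $x$-update, not from $D^{(t)}$ itself. To handle this I would argue as follows. Each sub-update changes $D$ only on four edges, by at most $|\eta^{(t)}_{\sigma,i,j,k}|$ in absolute value; since all these perturbations tend to zero uniformly in $(\sigma,i,j,k)$ as $t\to\infty$ and there are only finitely many of them per cycle, the intermediate states during the $t_\ell$-th cycle all converge to the same limit $\bar D$. The maps $D\mapsto\eta_{\sigma,i,j,k}$ defined by \reff{etazijk}, \reff{etayijk}, \reff{etaxijk} are continuous, so passing to the limit grid-point by grid-point and sub-step by sub-step yields $\eta_{\sigma,i,j,k}(\bar D)=0$ for every $(\sigma,i,j,k)$. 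By Part (2) of Lemma~\ref{l:localalg4Poisson}, $\nabla_h\times(\bar D/\ve)=0$ on $h(\Z+1/2)^3$. Combined with $\bar D\in S_{\rho,h}$ (a closed set) and $\sA_h(\bar D/\ve)=0$, Theorem~\ref{t:DiscreteEnergy} gives $\bar D=D^h_{\rm min}$. Since every convergent subsequence has the same limit $D^h_{\rm min}$, the bounded sequence $\{D^{(t)}\}$ itself converges to $D^h_{\rm min}$, and continuity of $F_h$ gives the energy convergence.
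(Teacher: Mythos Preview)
Your proof is correct and follows essentially the same strategy as the paper: energy monotonicity, the invariants $D^{(t)}\in S_{\rho,h}$ and $\sA_h(D^{(t)})=0$, precompactness, and Theorem~\ref{t:DiscreteEnergy} applied to any subsequential limit. The organization differs slightly: the paper reindexes to single face-updates, then uses a pigeonhole argument (some face is hit infinitely often along the subsequence) together with $\|D^{(t+1)}-D^{(t)}\|_h\to 0$ to propagate $\eta^{(\infty)}=0$ from one face to all faces, whereas you keep the outer-cycle indexing, obtain $\eta^{(t)}_{\sigma,i,j,k}\to 0$ for \emph{every} face directly from the summability of energy drops, and then observe that all intermediate states within cycle $t_\ell$ lie within $O\bigl(\sum_{\sigma,i,j,k}|\eta^{(t_\ell)}_{\sigma,i,j,k}|\bigr)$ of $D^{(t_\ell)}$ and hence also converge to $\bar D$. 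Both routes are valid; yours is arguably a bit more direct in that it bypasses the pigeonhole step.
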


\begin{proof}
%%[Proof of Theorem~\ref{t:ConvP}]
 (1) Since $D^{(m)}$ is the terminate update, $\eta_{z, i, j, k} = 
 \eta_{y, i, j, k} = \eta_{x, i, j, k } = 0$ for all $i, j, k.$ Thus, by Lemma~\ref{l:localalg4Poisson}, 
 $D/\ve$ is curl free, and 
 %%By Lemma~\ref{l:localalg4Poisson} again, 
 $\sA_h( D^{(m)})= \sA_h (D^{(0)}) = 0$ which implies $\sA_h( D^{(m)} / \ve ) = 0$
 since $\ve $ is a constant.
 %%satisfies the condition of  zero total current. 
 Therefore, by Theorem~\ref{t:DiscreteEnergy}, 
 $D^{(m)} = D^h_{\rm min}$ and $F_h [D^{(m)}] = F_h [D^h_{\rm min}].$

(2) Note that for each $t \in \N$, the iteration from $D^{(t)}$ to $D^{(t+1)}$ consists of 
 a cycle of $3 N^3$ local updates (with $1$  on each of the 
 $3$ faces of the grid box associated with
 each grid point and a total of $N^3$ grid points). 
%% For convenience of proof, we redefine 
Let us redefine the sequence of updates, still denoted
%%using the same notation
 $D^{(t)}$ $(t=1, 2, \dots)$, by a single-step local update, i.e., $D^{(t+1)}$ is obtained
 by updating $D^{(t)}$ on one of the $3N^3$ grid faces. 
The new $D^{(t+3N^3)}$ and $D^{(t)}$ are updates on the same grid face for each $t \ge 1$. 
 Clearly, the original sequence is a subsequence of the new one.
 %%of the newly defined one, and Note from the local algorithm that 
 %%Note also that, by our assumption, at least one of the local updates in each cycle of $3N^3$ updates %%strictly decreases the energy. 
 %%In terms of our newly defined, refined
%% sequence $D^{(t)}$ $(t = 1, 2, \dots)$, this means that
%% for any $m \in \N$, there exists at least one $t$ such that 
%% \[
%% (m-1) \cdot 3 N^3 + 1 \le t \le m \cdot 
%% 3 N^3 \qquad \mbox{and} \qquad  F_h(D^{(t+1)}) <F_h (D^{(t)}).
%% \]
We prove that this new sequence
converges to $D^h_{\rm min}$, which will imply that 
the original sequence converges to $D^h_{\rm min}.$
 
 %To be more precisely, we order all
 %the grid points labelled by $(i, j, k)$ $(i, j, k = 0, \dots, N-1)$ (with an arbitrary ordering)
 %and for each of the grid points, we order the $3$ grid faces associated with the grid points
 %(again with arbitrary ordering). With such an ordering, we update the displacement one by one
 %until all the $3N^3$ different faces are updated. 
 %arition each of the $3$ grid faces, 
 %then on on grid edges, one edge after another, one grid face 
 %after another, and one cycle of all the $N^3$ grid points after another. 
 %This means that $D^{(t+1)}$ is obtained by update $D^{(t)}$ on a single
 %%Let us construct the sequence $D^{(t)}$ $(t = 1, 2, \dots)$ in 

By Lemma~\ref{l:localalg4Poisson},  $F_h[D^{(t)}]$ decreases as $t$ increases.
 %%(cf.\ \reff{Fzijk}, \reff{Fyijk}, and \reff{Fxijk}) 
 Since $0 \leq F_h[D^{(t)}] \leq F_h[D^{(0)}]$ for all $t\ge 1,$  the limit
%% \label{Fhinfty}  
 $    F_{h, \infty}:=\lim_{t \to \infty} F_h [D^{(t)}]$ 
 exists and $F_{h, \infty} \ge 0.$ Denoting
\begin{equation}
    \label{detalt4Poisson}
\delta_t=F_h[D^{(t)}]-F_h[D^{(t+1)}] \geq 0 \qquad (t = 0, 1, \dots), 
\end{equation}
we have 
\[
0\le \sum_{t=0}^{\infty} \delta_t =
\lim_{T \to \infty} \sum_{t=0}^T \delta_t 
=\lim_{T \to \infty} \left( F_h [D^{(0)}]- F_h [D^{(T+1)}]\right) = F_h[D^{(0)}]- F_{h,\infty}
\leq   F_h[D^{(0)}].
\]
Hence, $ \lim_{t\to \infty} \delta_t = 0.$

To show $D^{(t)} \to  D^h_{\min}$, which implies immediately 
$F_h[D^{(t)}] \to F_h[D^h_{\min}]$, it suffices to show that the limit of any convergent 
subsequence of $\{ D^{(t)}\}_{t=1}^\infty$ 
is $D^h_{\min}$. Let $\{ D^{(t_r)} \}_{r=1}^{\infty}$ 
be such a subsequence and assume
$D^{(\infty)} =\lim _{r\to\infty} D^{(t_r)}. $ Since $D^{(t)} \in S_{\rho, h}$ 
and $\sA_h( D^{(t)} ) = 0$ for all $t \ge 1$
by Lemma~\ref{l:localalg4Poisson}, $D^{(\infty)} \in S_{\rho, h}$ and 
$\sA_h (D^{(\infty)}) = 0$.
%%since $\ve$ is a constant. 
By Theorem \ref{t:DiscreteEnergy} 
it suffices to show that $D^{(\infty)}$ is locally in equilibrium, 
i.e., $\nabla_h \times (D^{(\infty)}/\ve) = 0$
which is the same as $\nabla_h \times D^{(\infty)} = 0$ since $\ve$ is a constant. 

%This and \reff{Fhinfty} imply that 
%\[
% F_{h, \infty}=\lim_{r\to\infty} F_h[ D^{(t_r)}] =F_h [D_{\infty}].
%\]

%Note that each $D^{(t_r)}$ is updated from $D^{(t_r-1)}$ on one grid face. 
Since $\{ D^{(t_r)} \}_{r=1}^{\infty}$ is an infinite sequence and there are only 
finitely many grid faces, there exists a grid face with vertices, 
say, $(i+\delta_1, j+\delta_2, k)$ with $\delta_1, \delta_2 \in \{ 0, 1 \}$, 
%%{\color{blue}{[$\delta_i$ used before as finite difference operators. -Bo]}} 
on which $D^{(t_r)}$ is updated 
%%its values, with a strictly decreasing energy value,  
for infinitely many $r$'s. Therefore, there exists a subsequence 
of $\{ D^{(t_r)} \}_{r=1}^{\infty}$, not relabelled, such that 
for each $r\geq 1$, $D^{(t_r)}$ is updated on that same grid face.
%%with vertices $(p+\delta_i, q+\delta_j, r)$ 
%%($\delta_i, \delta_j \in \{ 0, 1 \}$), 
%%each update decreasing strictly the energy. 
Since $D^{(t_r)} \to D^{(\infty)}$, 
%%as $r\to \infty$,
$\eta^{(t_r)}_{z,i,j,k} \to \eta^{(\infty)}_{z,i, j, k}$,
%%as $ r\to \infty,$ 
where $ \eta^{(t_r)}_{z,i, j, k}$ and $\eta^{(\infty)}_{z,i, j, k} $ are the 
$\eta_z$ values as defined in \reff{etazijk} with $D^{(t_r)}$ and $D^{(\infty)}$ 
replacing $D$, respectively. 
%%where for any $D=(u,v,w)\in S_{\rho,h}$ and any $(i,j,k)$, 
%%\[
%%\eta_z=-\frac{1}{C} \left(\frac{u_{i+1/2,j,k}}{\ve_{i+1/2,j,k}}+ \frac{v_{i+1,j+1/2,k}}{\ve_{i+1,j+1/2,k}}-%%\frac{u_{i+1/2,j+1,k}}{\ve_{i+1/2,j+1,k}}-\frac{v_{i,j+1/2,k}}{\ve_{i,j+1/2,k}}\right)  .
%%\]
On the other hand, since $\delta_t \to 0$, 
Lemma~\ref{l:localalg4Poisson} implies that 
%%and \reff{deltato0}, 
$[\eta^{(t_r)}_{z,i, j, k}]^2 \to 0$.
%%as $r \to \infty.$
Hence, $\eta^{(\infty)}_{z,i, j, k}=0$.
%%and by Lemma~\ref{l:localalg4Poisson}, 
%%$D^{(\infty)}$ is in equilibrium in the grid face with vertices $(i+\delta_i, j+\delta_j, k)$ 
%%($\delta_i, \delta_j \in \{ 0, 1 \}$). 

Finally, fix any grid point $(l, m, n).$
%%Finally, fix an arbitrary grid face, ${\mathscr F},$ associated with a 
%%$($ with vertices $(l+\delta_i,m+\delta_j,n+\delta_k)$ 
%%with $\delta_k = 0$ and $\delta_i, \delta_j \in \{ 0, 1 \}$, or $\delta_j = 0$ and 
%%$\delta_i, \delta_k \in \{ 0, 1 \},$ or $\delta_i = 0$ 
%%and $\delta_j, \delta_k \in \{ 0, 1 \}$. 
We show $\eta^{(\infty)}_{z,l,m,n}=\eta^{(\infty)}_{y,l,m,n}=\eta^{(\infty)}_{x,l,m,n}=0$, 
where these $\eta$-values are defined as in \reff{etazijk}, \reff{etayijk}, and 
\reff{etaxijk} with $D^{(\infty)}$ and $(l, m, n)$ replacing $D$ and $(i, j, k)$, respectively. 
This will imply that $D^{(\infty)}$ is in local equilibrium, 
%%on all the grid faces, 
and complete the proof.
%Recall that one round of updates means exactly one update on a grid face for all the $3N^3$ grid faces. 
%%(We allow different cell orderings in different rounds.) Consider the 
%Recall that each displacement in the subsequence $\{D^{(t_r)}\}_{r=1}^{\infty}$ defined above is 
%updated by a local update on the fixed grid face with vertices $(p + \delta_1, q+\delta_2, r)$ 
%($\delta_1, \delta_2 \in \{ 0, 1 \}$). 
%%each of its members is updated only on the face $(a+\delta_i, b+\delta_j, c)$ 
%with $\delta_i, \delta_j \in \{ 0, 1 \}$, and $D^{(t_r)} \to D_{\infty}$ as $r\to\infty$. 
%%Let we count each round of updates starting from the face 
%%$(a+\delta_i, b+\delta_j, c)$ with $\delta_i, \delta_j \in \{ 0, 1 \}$. 
Note that in the local algorithm a cycle of $3N^3$ local updates 
are done for all the grid faces before next cycle starts. 
Thus, for each $r\geq 1$, there exists an integer $\tau_r$ 
such that $1\leq \tau_r \leq 3N^3$ and $D^{(t_r+\tau_r)}$ is updated, 
with the perturbation $\eta_{z, l, m, n}^{(t_r+\tau_r)}$, on the grid
face parallel to the $z$-plane of the grid box $B_{l, m, n}=(l, m, n)+[0,1]^3;$ 
%%associated with $(l, m, n)$; 
cf.\ Figure~\ref{fig:relaxation} (Left).
(Since the order of grid points is fixed for local updates, 
the integer $\tau_r$ is independent of $r$.)
%% for the definition of $B_{l, m, n}.$
%%for example, $(l+\delta_i,m+\delta_j,n+\delta_k)$ 
%%with $\delta_j, \delta_k \in \{ 0, 1 \},\delta_i=0$. 
Since $\delta_t \to 0$, Lemma~\ref{l:localalg4Poisson} implies that 
$\| D^{(t+1)} - D^{(t)} \|_h \to 0$ as $t\to\infty$.  Thus, 
%%It follows from \reff{DDtt} that 
%%and for each $r\geq 1$, $t_r+s_r$ is only at most $3N^3$-step away from $t_r$, 
%%we should have $D^{(t_r+s_r)} \to D_{\infty}$ as $r\to\infty$. More precisely,
\begin{align*}
    \| D^{(t_r+\tau_r)} - D^{(t_r)}\|_h
    & \leq  \sum_{s=1}^{3 N^3} \| D^{(t_r+s)} - D^{(t_r+s-1)} \|_h
%    \\
%    &
 %    \leq 3 N^3 \sum_{i=1}^{3 N^3} \| D^{(t_r+s)} - D^{(t_r+s-1)} \|^2 
 %   & \leq 3N^3 \sum_{i=1}^{s_r} \frac{8 \ve_{\max}}{ 4 h^3 } \delta_{t_r+i-1} \\
 %   & \leq \frac{6 N^3 \ve_{\max}}{h^3 }  \sum_{j=t_r}^{\infty} \delta_{j} 
 %\\
% &
    \to 0 \qquad \mbox{as } r \to \infty.  
\end{align*}
%%as $r \to \infty$ (which implies $t_r \to \infty$). 
This and the fact that $D^{(t_r)} \to D^{(\infty)}$ imply
%\[
%\| D^{(t_r+s_r)} - D_{\infty}\| \leq \| D^{(t_r+s_r)} - D^{(t_r)}\|+ \| D^{(t_r)} - D_{\infty}\| \to 0,
%\]
$D^{(t_r+\tau_r)} \to D^{(\infty)}$. Consequently, by Lemma~\ref{l:localalg4Poisson},
$ \eta^{(\infty)}_{z,l,m,n}=\lim_{r\to \infty} \eta^{(t_r+\tau_r)}_{z,l,m,n} = 0.$ 
%%= \lim_{r\to \infty} \left( \frac{2}{h^3} \delta_{t_r+\tau_r} \right)^{1/2} =0.
Similarly, $\eta^{(\infty)}_{x, l, m, n} = 0$ and $\eta^{(\infty)}_{y, l, m, n} = 0$.
\end{proof}

To treat the case of a variable coefficient $\ve$, we propose a new algorithm, a local 
algorithm with shift, 
by adding a step of shifting $D$ so that $\sA_h(D/\ve) = 0$. This is equivalent
to a global optimization as indicated by the following lemma whose proof 
is straightforward and thus omitted:  

\begin{lemma}
    \label{l:global} 
    Let $\ve\in V_h$ be such that $\ve > 0$, $\rho^h \in \mathring{V}_h$, 
    $D = (u, v, w) \in S_{\rho, h}$, and
    \[
    (\hat{a}, \hat{b}, \hat{c}) = - \sum_{i, j, k=0}^{N-1}
    \left(
    \frac{u_{i+1/2, j, k}/\ve_{i+1/2, j, k} }{\sum_{l, m, n=0}^{N-1}  1/\ve_{l+1/2, m, n} }, 
    \frac{v_{i, j+1/2, k}/\ve_{i, j+1/2,  k} }{\sum_{l, m, n=0}^{N-1}   1/\ve_{l, m+1/2,  n} },
    \frac{w_{i, j, k+1/2}/\ve_{i, j, k+1/2} }{\sum_{l, m, n=0}^{N-1}  1/\ve_{l, m, n+1/2} }
    \right).
    \]
%%%%%%%%%%%%%%%%%%%%%%%%%%%%%%%%%%%%%%
\begin{comment} 
    \begin{align*}
    &\hat{a} =  - \frac{\sum_{i, j, k=0}^{N-1} u_{i+1/2, j, k}/\ve_{i+1/2, j, k} }{\sum_{i,j,k=0}^{N-1}
    1/\ve_{i+1/2, j, k} }, 
    \\
     &\hat{b} =  - \frac{\sum_{i, j, k=0}^{N-1} v_{i, j+1/2, k}/\ve_{i, j+1/2,  k} }{\sum_{i,j,k=0}^{N-1}
    1/\ve_{i, j+1/2,  k} }, 
    \\
     &\hat{c} =  - \frac{\sum_{i, j, k=0}^{N-1} w_{i, j, k+1/2}/\ve_{i, j, k+1/2} }{\sum_{i,j,k=0}^{N-1}
    1/\ve_{i, j, k+1/2} }.
    \end{align*}
\end{comment}
%%%%%%%%%%%%%%%%%%%%%%%%%%%%%%%%%%%%%%
Then $D + (a, b, c ) \in S_{\rho, h}$ for any $a, b, c \in \R,$ 
$(\hat{a}, \hat{b}, \hat{c})$ is the unique minimizer of 
$g(a, b, c) := F_h [D + (a, b, c)] - F_h[D]$ $(a, b, c \in \R),$ 
and the minimum  of $g: \R^3 \to \R$ is 
    \begin{align*}
    g( \hat{a}, \hat{b}, \hat{c}) 
%%    &= -\frac{h^3}{2}
%%    \left[   \frac{ (\sum_{i, j, k=0}^{N-1} u_{i+1/2, j, k}/\ve_{i+1/2, j, k})^2 }{\sum_{i,j,k=0}^{N-1}
%%    1/\ve_{i+1/2, j, k} }
 %%   + \frac{ (\sum_{i, j, k=0}^{N-1} v_{i, j+1/2, k}/\ve_{i, j+1/2,  k})^2 }{\sum_{i,j,k=0}^{N-1}
%%    1/\ve_{i, j+1/2,  k} }    \right.
%    \\
 %%   &\left. \qquad \qquad 
%%    +\frac{ (\sum_{i, j, k=0}^{N-1} w_{i, j, k+1/2}/\ve_{i, j, k+1/2})^2 }{\sum_{i,j,k=0}^{N-1}
%%    1/\ve_{i, j, k+1/2} } \right]    \\
    & = -\frac{h^3}{2} \left[ \left( \sum_{i,j,k=0}^{N-1}
    \frac{1}{\ve_{i+1/2, j, k}}\right) \hat{a}^2 + 
    \left( \sum_{i,j,k=0}^{N-1}\frac{1}{\ve_{i, j+1/2,  k}}\right) \hat{b}^2  
    + \left( \sum_{i,j,k=0}^{N-1}  \frac{1}{\ve_{i, j, k+1/2} }\right) \hat{c}^2 \right].
    \end{align*}
    Moreover, $\sA_h ( (D+(\hat{a}, \hat{b}, \hat{c}) ) / \ve) = 0.$
    \qed
\end{lemma}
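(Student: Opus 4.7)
The proof is a direct quadratic minimization, and I will proceed by computation rather than by invoking heavier machinery. First I would observe that adding a constant vector $(a,b,c)$ to $D=(u,v,w)$ does not alter any of the discrete differences $u_{i+1/2,j,k}-u_{i-1/2,j,k}$, $v_{i,j+1/2,k}-v_{i,j-1/2,k}$, or $w_{i,j,k+1/2}-w_{i,j,k-1/2}$. Hence $\nabla_h\cdot(D+(a,b,c)) = \nabla_h\cdot D = \rho^h$ on $h\Z^3$, so $D+(a,b,c)\in S_{\rho,h}$ for every $(a,b,c)\in\R^3$, giving the first assertion.

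Next, using the definition \reff{Fh} of $F_h$ and expanding each squared sum, I would write
\[
g(a,b,c)=F_h[D+(a,b,c)]-F_h[D] = g_1(a)+g_2(b)+g_3(c),
\]
where, for instance,
\[
g_1(a) = \frac{h^3}{2}\sum_{i,j,k=0}^{N-1}\frac{(u_{i+1/2,j,k}+a)^2-u_{i+1/2,j,k}^2}{\ve_{i+1/2,j,k}}
= \frac{h^3}{2}\left(\sum_{i,j,k}\frac{1}{\ve_{i+1/2,j,k}}\right)a^2 + h^3 \left(\sum_{i,j,k}\frac{u_{i+1/2,j,k}}{\ve_{i+1/2,j,k}}\right)a,
\]
with analogous expressions for $g_2(b)$ and $g_3(c)$. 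Since $\ve>0$ on $h\Z^3$, each $g_m$ is a strictly convex quadratic in a single variable, so the three minimizations decouple. Setting $g_1'(a)=g_2'(b)=g_3'(c)=0$ yields exactly the stated formulas for $\hat a,\hat b,\hat c$, and establishes uniqueness of the minimizer. Substituting back into the completed-square form $g_1(a)=\tfrac{h^3}{2}(\sum 1/\ve_{i+1/2,j,k})(a-\hat a)^2-\tfrac{h^3}{2}(\sum 1/\ve_{i+1/2,j,k})\hat a^2$ (and similarly for the other two components) gives the closed-form expression for $g(\hat a,\hat b,\hat c)$.

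Finally, for the zero-average property, I would simply note that the first component of $\sA_h((D+(\hat a,\hat b,\hat c))/\ve)$ equals
\[
\frac{1}{N^3}\sum_{i,j,k=0}^{N-1}\frac{u_{i+1/2,j,k}+\hat a}{\ve_{i+1/2,j,k}}
= \frac{1}{N^3}\left[\sum_{i,j,k}\frac{u_{i+1/2,j,k}}{\ve_{i+1/2,j,k}} + \hat a \sum_{i,j,k}\frac{1}{\ve_{i+1/2,j,k}}\right],
\]
which vanishes precisely because $\hat a$ is defined so that this bracket equals zero; the second and third components vanish analogously. There is no real obstacle here: the statement is a three-by-three decoupled quadratic minimization together with a consistency check. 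The only point to be careful about is verifying that all three sums $\sum 1/\ve_{i+1/2,j,k}$, $\sum 1/\ve_{i,j+1/2,k}$, $\sum 1/\ve_{i,j,k+1/2}$ are strictly positive (which follows from $\ve\ge\ve_{\min}>0$) so that the denominators in $\hat a,\hat b,\hat c$ are well-defined and the quadratic forms are coercive.
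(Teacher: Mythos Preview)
Your proof is correct and is exactly the straightforward quadratic computation the paper has in mind; indeed, the paper omits the proof entirely, stating that it ``is straightforward and thus omitted,'' and your argument supplies precisely the direct verification that was intended.
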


In our local algorithm with shift
for minimizing the discrete Poisson energy with a variable coefficient $\ve,$ 
the initial $D^{(0)} $ is not necessary to satisfy $\sA_h (D^{(0)} )= 0.$ Moreover,
we introduce $N_{\rm local}\in \N$ to control the number of cycles of local updates
followed by one global shift.

\medskip

%%{\color{blue}{[Please check if the algorithm makes sense. -BL]}}

%\zhou{Need a name for the algorithm for referencing convenience?}
%\BL{A globally corrected local algorithm?  Or just local-global algorithm? This is simpler. 
%We don't have to follow the authors who proposed the original algorithm. In fact, by our math work, 
%we have improved the algorithm to treat more general case.}

\noindent
{\bf A local algorithm with shift for minimizing $F_h: S_{\rho, h} \to \R$.}

\noindent
$\qquad$ {Step 1.}
Initialize a displacement $D^{(0)} \in S_{\rho, h}.$ 
 Set $m = 0.$

\noindent
$\qquad $ {Step 2.} Update locally $D:=D^{(m)}.$

\noindent
$\qquad\qquad \quad \ \, $
For $n=1, \dots N_{\rm local}$

\noindent $\qquad \qquad \qquad \quad $
For $i, j, k = 0,  \dots, N-1$

\noindent $\qquad \qquad \qquad \quad \quad  $ 
Update $D$ to get $D^x$ by \reff{Updatex1}--\reff{Updatex4} and $D \leftarrow D^x$, 

\noindent $\qquad \qquad \qquad \quad \quad $ 
Update $D$ to get $D^y$ by \reff{Updatey1}--\reff{Updatey4} and $D \leftarrow D^y$, 

\noindent $\qquad \qquad \qquad \quad \quad  $ 
Update $D$ to get $D^z$ by \reff{Updatez1}--\reff{Updatez4} and $D \leftarrow D^z$.

\noindent $\qquad \qquad \qquad \quad $
End for

\noindent
$\qquad \qquad \quad \ \, $
End for

\noindent
$\qquad $ {Step 3.} Shift $D:$
Compute $\hat{a}$, $\hat{b}, $ $\hat{c}$ and 
$D \leftarrow D + (\hat{a}, \hat{b}, \hat{c}))$.

\noindent
$\qquad $
Step 4. 
If $\eta_{x, i, j, k} = \eta_{y, i, j, k} = \eta_{z, i, j, k} = 0$ for all
$i, j, k = 0, \dots, N-1$ and $\hat{a} = \hat{b} = \hat{c} =0$, 

$\qquad \qquad $
then stop. 
Otherwise, 
%%go to Step 2.
set $D^{(m+1)} = D$ and $m:= m+1$. Go to Step 2. 

%\noindent $\qquad $
%%{Step 4.} Update $D$ by $D/\ve \leftarrow D/\ve - \sA_h ( D / \ve ).$

\medskip

%{\color{blue}{[I revised a lot the proof, and shortened it much. Note that there two sequences
%$D^{(t)}$. One is from the local algorithm: from $D^{(t)}$ to $D^{(t+1)}$, there are 
%$3 N^3$ updates with $3$ updates for each grid point and a total of $N^3$ grid points. 
%Another is formed by one update at a time, i.e., $D^{(t+1)}$ is obtained by one
%update of $D^{(t)}$. For proof, the latter one is better. Also, the convergence of 
%the latter, second one implies that of the first one. -Bo]}}

\begin{theorem}
%%[Convergence of the local algorithm for a charged system without ions]
\label{t:ConvLocalGlobal}
Let $\ve \in V_h$ with $\ve > 0$, $\rho_h \in \mathring{V}_h$,  
and $ D^h_{\rm min} \in S_{\rho, h}$ 
be the unique minimizer of $F_h: S_{\rho, h} \to \R.$ 
Let $D^{(0)}\in S_{\rho,h}$ and  $D^{(t)}\in S_{\rho, h} $ 
$(t = 0, 1,  \dots)$ be the sequence  (finite or infinite)
 generated by the local algorithm with shift.
\begin{compactenum}
\item[{\rm (1)}] 
If the sequence is finite ending at $D^{(m)}$, then 
$D^{(m)}= D^h_{\rm min}$ and $F_h[D^{(m)}]  = F_h [D^h_{\rm min}].$
%%$ = \min_{D \in S_{\rho h}} F_h [D].$
\item[{\rm (2)}] 
If the sequence is infinite, then $D^{(t)}\to D^h_{\min}$ on 
$h (\Z+1/2)^3$ and $F_h[D^{(t)}]\to F[D^h_{\min}]$.
%%as $t\to\infty$.
\end{compactenum}
\end{theorem}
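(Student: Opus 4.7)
The plan follows closely the strategy of Theorem~\ref{t:ConvP}, but I must additionally handle the shift step and the fact that local updates may alter $\sA_h(D/\ve)$ when $\ve$ is not constant. I will use Theorem~\ref{t:DiscreteEnergy} (characterization of $D^h_{\rm min}$ as the unique element of $S_{\rho,h}$ satisfying $\nabla_h\times(D/\ve)=0$ and $\sA_h(D/\ve)=0$), together with Lemma~\ref{l:localalg4Poisson} (which relates each local $\eta$ to the corresponding energy decrease) and Lemma~\ref{l:global} (which identifies Step~3 as the exact minimizer over constant shifts and guarantees $\sA_h(D/\ve)=0$ immediately after Step~3).

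For Part~(1), termination at $D^{(m)}$ means all local perturbations $\eta_{x,i,j,k},\eta_{y,i,j,k},\eta_{z,i,j,k}$ and the shift $(\hat a,\hat b,\hat c)$ computed at $D^{(m)}$ vanish. Lemma~\ref{l:localalg4Poisson}(2) then gives $\nabla_h\times(D^{(m)}/\ve)=0$, and Lemma~\ref{l:global} together with $(\hat a,\hat b,\hat c)=0$ gives $\sA_h(D^{(m)}/\ve)=0$. Theorem~\ref{t:DiscreteEnergy}(3) thus yields $D^{(m)}=D^h_{\rm min}$.

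For Part~(2), I first argue monotonicity and summability of energy decrease. Each local update and the shift weakly decrease $F_h$ (by Lemma~\ref{l:localalg4Poisson}(1) and Lemma~\ref{l:global}), so $\{F_h[D^{(t)}]\}$ is nonincreasing and bounded below by $0$; hence $\delta_t:=F_h[D^{(t)}]-F_h[D^{(t+1)}]\ge0$ satisfies $\sum_{t\ge 0}\delta_t\le F_h[D^{(0)}]<\infty$, so $\delta_t\to 0$. Now enumerate the $K:=3N^3N_{\rm local}+1$ sub-steps composing one iteration (the $3N^3N_{\rm local}$ local updates followed by one shift), and let $D^{(t,s)}$ denote the displacement after $s$ sub-steps of iteration $t$, so $D^{(t,0)}=D^{(t)}$ and $D^{(t,K)}=D^{(t+1)}$. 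The energy decrement of each individual sub-step is bounded above by $\delta_t$; by Lemma~\ref{l:localalg4Poisson}(1) this gives $|\eta|^2\le \frac{2\delta_t}{h^3\ve_{\sigma,i,j,k}}$ for any local $\eta$ used in iteration $t$, and by Lemma~\ref{l:global} an analogous bound for $\hat a^2,\hat b^2,\hat c^2$. Since $\ve\ge\ve_{\min}>0$ uniformly, all sub-step perturbation magnitudes are bounded by $C\sqrt{\delta_t}\to 0$, and in particular $\max_{0\le s\le K}\|D^{(t,s)}-D^{(t)}\|_h\to 0$ as $t\to\infty$.

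Because $S_{\rho,h}$ is closed and $\{D^{(t)}\}$ lies in a bounded set (the energy bound together with the fact that $D^{(t)}$ and $D^{(t)}-D^{(0)}\in S_{0,h}$ is controlled via $\|\cdot\|_{1/\ve,h}$), some subsequence $D^{(t_r)}\to D^{(\infty)}\in S_{\rho,h}$. To conclude by Theorem~\ref{t:DiscreteEnergy} it suffices to verify $\nabla_h\times(D^{(\infty)}/\ve)=0$ and $\sA_h(D^{(\infty)}/\ve)=0$. The second property is automatic: each $D^{(t)}$ with $t\ge 1$ is the output of Step~3, so $\sA_h(D^{(t)}/\ve)=0$ by Lemma~\ref{l:global}, and this passes to the limit. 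For the first property, fix any grid point $(i,j,k)$ and any $\sigma\in\{x,y,z\}$; let $s_\sigma$ be the sub-step index within each cycle at which the update associated to $\eta_{\sigma,i,j,k}$ occurs (the indices $s_x,s_y,s_z$ are independent of $r$ by the fixed ordering). Writing $\eta^{(t_r)}_{\sigma,i,j,k}$ for the corresponding perturbation computed on $D^{(t_r,s_\sigma-1)}$, Lemma~\ref{l:localalg4Poisson}(1) gives $|\eta^{(t_r)}_{\sigma,i,j,k}|\le C\sqrt{\delta_{t_r}}\to 0$; on the other hand the continuous dependence of the formulas \eqref{etazijk}, \eqref{etayijk}, \eqref{etaxijk} on the four neighboring components of $D$, combined with $D^{(t_r,s_\sigma-1)}\to D^{(\infty)}$ (since $D^{(t_r)}\to D^{(\infty)}$ and all sub-step perturbations in iteration $t_r$ are $o(1)$), yields convergence to the $\eta$-value evaluated at $D^{(\infty)}$. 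Hence that value is $0$, and Lemma~\ref{l:localalg4Poisson}(2) gives $\nabla_h\times(D^{(\infty)}/\ve)=0$. Therefore $D^{(\infty)}=D^h_{\rm min}$; since every convergent subsequence has the same limit, the whole sequence $D^{(t)}\to D^h_{\rm min}$ and, by continuity of $F_h$, $F_h[D^{(t)}]\to F_h[D^h_{\rm min}]$. The only delicate point is keeping track of which sub-step produces the perturbation used to compute a given $\eta$ at $D^{(\infty)}$ so that the bound $|\eta^{(t_r)}|\le C\sqrt{\delta_{t_r}}$ can be applied; the fixed cyclic ordering makes this bookkeeping straightforward.
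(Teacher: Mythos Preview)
Your proof is correct and follows essentially the same strategy as the paper's: monotone energy decrease gives $\delta_t\to 0$, hence all local perturbations and the shift tend to zero, so any subsequential limit $D^{(\infty)}\in S_{\rho,h}$ satisfies the curl-free and zero-average conditions of Theorem~\ref{t:DiscreteEnergy}(3) and must equal $D^h_{\rm min}$. One organizational difference worth noting: you keep the outer iteration index $t$ and observe that $\sA_h(D^{(t)}/\ve)=0$ holds \emph{exactly} for every $t\ge 1$ (since each $D^{(t)}$ is the output of the shift in Step~3), which immediately gives $\sA_h(D^{(\infty)}/\ve)=0$; the paper instead relabels everything as a single-step sequence and proves $G(D^{(t)})\to 0$ asymptotically via a case analysis on whether the most recent update was local or global. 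Your observation shortcuts that part of the argument. (Your boundedness justification can be stated more simply as $\|D^{(t)}\|_{1/\ve,h}^2=2F_h[D^{(t)}]\le 2F_h[D^{(0)}]$.)
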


\begin{proof}
    (1) This is similar to the proof of Part (1) of the last theorem. 

    (2) For any $D = (u, v, w) \in S_{\rho, h}$, we define $\eta = \eta (D) = 
    (\eta_x, \eta_y, \eta_z)$ 
    %%$ \in Y_h$ (cf.\ \eqref{Yh})
    by \eqref{etaxijk}, \eqref{etayijk}, and 
    \eqref{etazijk} at any $(i, j,k)$. We also define $G = G (D) = (\hat{a}, 
    \hat{b}, \hat{c}) \in \R^3$
  with $ \hat{a}$, $\hat{b}$, and $\hat{c}$ given in Lemma~\ref{l:global}. 
    Clearly, both $\eta(D)$ and $G(D)$ depend on $D$ linearly and hence continuously. 
    We claim that 
    \begin{equation}
        \label{DAlimit}
        \lim_{t \to \infty} \eta(D^{(t)}) = (0, 0, 0) \quad  \mbox{(at all the grid points)}
        %%\forall i, j, k   \in \{ 0, \dots, N-1\} \quad
        \quad \mbox{and} \quad \lim_{t\to \infty} G (D^{(t)}) = (0, 0, 0).
    \end{equation}

Suppose \eqref{DAlimit} is true. We prove that $D^{(t)} \to D^h_{\rm min}$, which 
implies $F_h [D^{(t)}] \to F_h[D^h_{\rm min}]$.
%%as $t \to \infty.$
It suffices to show the following:  assume that $D^{(t_r)}$ $(r = 1, 2, \dots)$ is a convergent
subsequence of $D^{(t)}$ $(t = 1, 2, \dots)$ and $D^{(t_r)} \to D^{(\infty)}$, 
then $D^{(\infty)} = D^h_{\rm min}.$ In fact, with such an assumption, 
%%it is clear that 
$D^{(\infty)} \in S_{\rho, h}$, and 
$\eta(D^{(\infty)}) =(0, 0, 0)$ and $G(D^{(\infty)})=(0, 0, 0)$ by \reff{DAlimit}.
Hence, $\nabla_h \times (D^{(\infty)} / \ve ) = 0$ by Lemma~\ref{l:localalg4Poisson}
and $\sA_h (D^{(\infty)}/\ve) = 0$ by Lemma~\ref{l:global}.
Consequently, $D^{(\infty)} = D^h_{\rm min}$ by Theorem~\ref{t:DiscreteEnergy}.

We now proceed to prove \reff{DAlimit}.
Note that for each $t \in \N$, the iteration from $D^{(t)}$ to $D^{(t+1)}$ consists of 
 $N_{\rm local}$ cycles of local updates and one global shift. Each cycle consists
 of $3 N^3$ local updates  on $3$ grid faces associated with
 each grid point and with a total of $N^3$ grid points. 
 For convenience of proof, we redefine the sequence of updates, still denoted
 $D^{(t)}$ $(t=1, 2, \dots)$, by a single-step local or global update, 
i.e., $D^{(t+1)}$ is obtained
from $D^{(t)}$ either by a local update on one of the $3N^3$ grid faces or by
 a global update (i.e., global shift). 
The order of these local and global updates is kept the same as
 in the algorithm. Clearly, the original sequence is a subsequence of the new one. 
 %%Note from the local algorithm that  the new 
%% Moreover, the new $D^{(t+3N_{\rm local} N^3+1)}$ and 
 %%$D^{(t)}$ are both local updates on the same grid face or are both global updates. 
 %Note also that, by our assumption, at least one of the local updates
 %in each cycle of $3N^3$ updates strictly decreases the energy. In terms of our newly defined, refined
 %sequence $D^{(t)}$ $(t = 1, 2, \dots)$, this means that
 %for any $m \in \N$, there exists at least one $t$ such that 
 %\[
 %(m-1) \cdot 3 N^3 + 1 \le t \le m \cdot 
 % 3 N^3 \qquad \mbox{and} \qquad  F_h(D^{(t+1)}) <F_h (D^{(t)}).
 %\]
We shall prove \reff{DAlimit} for this new sequence.
 
 %To be more precisely, we order all
 %the grid points labelled by $(i, j, k)$ $(i, j, k = 0, \dots, N-1)$ (with an arbitrary ordering)
 %and for each of the grid points, we order the $3$ grid faces associated with the grid points
 %(again with arbitrary ordering). With such an ordering, we update the displacement one by one
 %until all the $3N^3$ different faces are updated. 
 %arition each of the $3$ grid faces, 
 %then on on grid edges, one edge after another, one grid face 
 %after another, and one cycle of all the $N^3$ grid points after another. 
 %This means that $D^{(t+1)}$ is obtained by update $D^{(t)}$ on a single
 %%Let us construct the sequence $D^{(t)}$ $(t = 1, 2, \dots)$ in 

By Lemma~\ref{l:localalg4Poisson} and Lemma~\ref{l:global},  $F_h[D^{(t)}]\ge 0$ 
decreases as $t$ increases. Thus, the limit 
$  F_{h, \infty}:=\lim_{t \to \infty} F_h [D^{(t)}]\ge 0$ exits. 
 %%(cf.\ \reff{Fzijk}, \reff{Fyijk}, and \reff{Fxijk}) 
% Since $0 \leq F_h[D^{(t)}] \leq F_h[D^{(0)}]$ for all $t\ge 1,$  the limit
% \begin{equation}
%     \label{Fhinfty}  
%     F_{h, \infty}:=\lim_{t \to \infty} F_h [D^{(t)}]
% \end{equation}
% exists and $F_{h, \infty} \ge 0.$ 
Denoting 
\[
\delta_t=F_h[D^{(t)}]-F_h[D^{(t+1)}] \geq 0 \qquad (t = 0, 1, \dots), 
\]
%\begin{equation}
%    \label{detalt4LG}
%%\delta_t=F_h[D^{(t)}]-F_h[D^{(t+1)}] \geq 0 \qquad (t = 1, 2, \dots), 
%\end{equation}
we have as before (cf.\ the proof of Theorem~\ref{t:ConvP})
$0 \le \sum_{t=1}^\infty \delta_t\le F_h [D^{(0)}]$ and hence 
%\[
%0\le \sum_{t=0}^{\infty} \delta_t =\lim_{T \to \infty} \sum_{t=1}^T \delta_t 
%=\lim_{T \to \infty} \left( F_h [D^{(0)}]- 
%F_h [D^{(T)}]\right) = F_h[D^{(0)}]- F_{h,\infty} \leq   F_h[D^{(0)}].
%\]
%Hence, 
\begin{equation}
  \label{deltato0LG}
   \lim_{t\to \infty} \delta_t = 0.
\end{equation}

Denote $\eta^{(t)} = (\eta^{(t)}_x, \eta^{(t)}_y, \eta^{(t)}_z) = \eta (D^{(t)}) $ 
and $G^{(t)} = G( D^{(t)}) = (\hat{a}^{(t)}, \hat{b}^{(t)}, \hat{c}^{(t)}) $ 
$(t  = 1, 2, \dots).$ We show that $\eta^{(t)}_z \to 0$ at all $i, j, k$ as $t \to \infty.$ 
%%The proof of $\eta^{(t)}_x \to 0$ or $\eta^{(t)}_y \to 0$ at all $i, j, k$ is similar.  
Let us fix $ t \ge 1$ and also $i, j, k$. 
By \eqref{etazijk}, $\eta^{(t)}_{z, i, j, k}$ is a linear combination of 
$u^{(t)}_{i+1/2, j, k}$, $u^{(t)}_{i+1/2, j+1, k}$, $v^{(t)}_{i, j+1/2, k}$, 
and $v^{(t)}_{i+1, j+1/2, k}.$ Each of these values is obtained from some previous 
local updates or a global update. There are two cases: one is that the last update 
that determines all these values is local, and the other global. 

Consider the first case. Assume the last update that determines all 
$u^{(t)}_{i+1/2, j, k}$, $u^{(t)}_{i+1/2, j+1, k}$, $v^{(t)}_{i, j+1/2, k}$, 
and $v^{(t)}_{i+1, j+1/2, k}$ is a local update from $D^{(t'-1)}$ to $D^{(t')}$ with some 
$t'$ such that $ t'\le t < t'+3  N^3+1.$  
(This $1$ accounts for a possible global update.)
Note that some of the four $u^{(t)}$ and $v^{(t)}$-values 
might have been possibly updated
before this last update.
Assume also the perturbation associated with this last local update is 
$\eta^{(t'-1)}_{\theta, l, m, n}$ for
some $l, m, n$ with $\theta = x$ or $y$ or $z.$ 
All $l$, $m$, $n$, and $\theta$ depend on $t'$ and hence $t$, and 
$(l, m, n)$ may not be the same as $(i, j, k).$ 
By Lemma~\ref{l:localalg4Poisson},
\reff{deltato0LG}, and the fact that $t'\to \infty$ as $t \to \infty$, 
%% $t' \le t < t'+3  N^3+1$, we have 
\begin{equation}
    \label{thetalmn}
    \lim_{t\to \infty} \eta^{(t'-1)}_{\theta, l, m, n} =0. 
\end{equation}
This, together with Lemma~\ref{l:localalg4Poisson} again, implies 
%%and the fact $t' \le t < t'+3 M N^3$, implies
\begin{equation}
    \label{Dtprime}
 \| D^{(t')} - D^{(t'-1)} \|_h^2 = 4 [\eta^{(t'-1)}_{\theta, l, m, n}]^2 
\to 0 \quad \mbox{as } t \to \infty.
\end{equation}
Note that, after that last local update from $(t'-1)$ to $(t')$, all the values of 
$u^{(t)}_{i+1/2, j, k}$, $u^{(t)}_{i+1/2, j+1, k}$, $v^{(t)}_{i, j+1/2, k}$, 
and $v^{(t)}_{i+1, j+1/2, k}$ are not changed before the next update from 
$D^{(t)}$ to $D^{(t+1)}$. Thus, $u^{(t)}_{i+1/2, j, k}=u^{(t')}_{i+1/2, j, k}, $
$u^{(t)}_{i+1/2, j+1, k} = u^{(t')}_{i+1/2, j+1, k},$
$v^{(t)}_{i, j+1/2, k} = v^{(t')}_{i, j+1/2, k}, $ 
and $v^{(t)}_{i+1, j+1/2, k}=v^{(t')}_{i+1, j+1/2, k}.$
Consequently, $\eta^{(t)}_{z, i, j, k} = \eta^{(t')}_{z, i, j, k}.$
 By \reff{etazijk}, $\eta^{(t')}_{z, i, j, k}$ 
and $ \eta^{(t'-1)}_{\theta, l, m, n}$ depend linearly 
and hence continuously on $D^{(t')}$ and $D^{(t'-1)}$, 
respectively.  Hence, it follows from \reff{Dtprime} that 
$ \eta^{(t')}_{z, i, j, k} - \eta^{(t'-1)}_{\theta, l, m, n} \to 0$
as $t \to \infty. $ This and \reff{thetalmn} imply
$\eta^{(t)}_{z, i, j, k} = \eta^{(t')}_{z, i, j, k} \to 0$ as $t \to \infty. $
Similarly, $\eta^{(t)}_{x, i, j, k} \to 0$ and $\eta^{(t)}_{y, i, j, k} \to 0.$ 

Now consider the second case: the update from $D^{(t-1)}$ to $D^{(t)}$ is global, i.e., 
$D^{(t)} = D^{(t-1)} + (\hat{a}^{(t-1)}, \hat{b}^{(t-1)}, \hat{c}^{(t-1)}).$ By Lemma~\ref{l:global}
and \reff{deltato0LG}, all $\hat{a}^{(t)}$, $\hat{b}^{(t)}$, $\hat{c}^{(t)}$ converge to $0$. Therefore, 
since $\eta_{z, i, j, k} = \eta_{z, i, j, k}(D)$ depends on $D$ linearly, 
$\eta^{(t)}_{z, i, j, k}  - \eta^{(t-1)}_{z, i, j, k} \to 0$.
%%as $t \to \infty.$
Note that $\eta^{(t-1)}_{z, i,j, k}$ is a linear combination of 
$u^{(t-1)}_{i+1/2, j, k}$, $u^{(t-1)}_{i+1/2, j, k}$, $v^{(t-1)}_{i, j+1/2, k}$, 
and $v^{(t-1)}_{i+1, j+1/2, k}$. 
Since the update from $D^{(t-1)}$ to $D^{(t)}$ is global, the last update
that determines  those four values of $D^{(t-1)}$ must be a local update. By case 1 above, 
we have $\eta^{(t-1)}_{z, i, j, k} \to 0$, and hence 
%%as $t \to \infty.$ Therefore, 
$\eta^{(t)}_{z, i, j, k} \to 0$.
%%as $t \to \infty.$ 
Similarly,  $\eta^{(t)}_{x, i, j, k}\to 0$ and $\eta^{(t)}_{y, i, j, k} \to 0$.
%%as $t\to \infty.$ 
The first limit in \reff{DAlimit} is proved. 

We now prove the second limit in \reff{DAlimit}. Let $t \ge 0.$ If 
the update from $D^{(t)}$ to $D^{(t+1)}$ is global, then $G(D^{(t)})\to (0, 0, 0)$ as
$t\to \infty$ by Lemma~\ref{l:global} and \reff{deltato0LG}. 
Suppose the update is local. Then, there exists an integer $m=m(t)$ such that
$1 \le m \le 3 N_{\rm local} N^3$, and with the notation $t_0 = t - m$, 
%%$t_m = t$,  
%%and $t_n = t_0 + n $ $(n=0, 1, \dots, m)$, 
the update from $D^{(t_0)}$ to $D^{(t_0+1)}$ is global but all the updates 
from $D^{(t_0+n)}$ to $D^{(t_0+n+1)}$ $(n=1, \dots, m-1)$ are local. 
It follows from Lemma~\ref{l:global}, \eqref{deltato0LG}, and the fact
that $t_0\to \infty$ as $t \to \infty$ that 
\begin{equation}
    \label{globalto0}
    \| G(D^{(t_0)}) \|^2 \le C(\ve) h^{-3} \delta_{t_0} \to 0 \quad \mbox{as } t\to \infty, 
\end{equation}
where $C(\ve) > 0$ is a constant independent of $h$ and $t_0.$
By Lemma~\ref{l:localalg4Poisson}, Lemma~\ref{l:global},  and \eqref{deltato0LG}, 
$ \| D^{(t')} - D^{(t'-1)}\|_h \to 0$ as $t'\to \infty.$ 
%%since $m \le 3 N_{\rm local} N^3$, 
Thus, 
$
 \| D^{(t)} - D^{(t_0)} \|_h \le \sum_{n=1}^m 
\| D^{(t_0 + n)} - D^{(t_0 + n-1)} \|_h \to 0.
$
%%\begin{align*}
%%\| D^{(t)} - D^{(t_0)} \| \le \sum_{n=1}^m \| D^{(t_0 + n)} - D^{(t_0 + n-1)} \| \to 0
%%\qquad \mbox{as } t\to \infty. 
%%\end{align*}
This and \reff{globalto0}, together with the continuity of $G(D)$ on $D $ 
by Lemma~\ref{l:global}, imply that $G(D^{(t)}) \to (0, 0, 0)$. 
\end{proof}

%%%%%%%%%%%%%%%%%%%%%%%%%%%%%%%%%%%%%%%%%%
\subsection{Minimizing the discrete Poisson--Boltzmann energy}
\label{ss:LocalAlg4PB}

Let $\ve\in V_h$ satisfy $\ve > 0$ on $h\Z^3$ and $\rho^h \in V_h$ 
satisfy \reff{DiscreteNeutrality}. The local algorithm for 
minimizing the discrete Poisson--Boltzmann (PB) energy functional 
$\hat{F}_h: X_{\rho, h} \to \R$ consists of two parts: initialization and local updates.
We initialize discrete concentrations $c^{(0)} = (c^{(0)}_1, \dots, c^{(0)}_M)$ by setting 
%%each $c^{(0)}_s$ $(1 \le s \le M)$ to be a constant grid function 
$c^{(0)}_{s, i, j, k} = L^{-3} N_s$ for all $i, j, k \in \Z$ and $s  = 1, \dots, M.$
%(We recall that $h = L/N$.) 
Both the positivity condition \reff{hpos} and the conservation of mass \reff{hcons} are satisfied. 
%{\color{blue}{[Please check this. -Bo]} }
We then initialize the displacement $D^{(0)}$ that satisfies the discrete Gauss' law in 
the same way as in the previous local algorithm for minimizing the discrete
Poisson energy functional, with the discrete total charge density
$\rho^h + \sum_{s=1}^M q_s c^{(0)}_s$ replacing $\rho^h $ there.
%%except we do not need the second step for getting $\sA_h(D^{(0)}) = 0.$ 
%%Thus, the discrete Gauss' law \reff{hGauss} is also satisified. 
%Thus the initial concentrations and displacement 
Thus $(c^{(0)}, D^{(0)})\in X_{\rho, h}.$ 

%%Given a set of discrete ionic concentrations and displacement 
Let $(c, D) = (c_1,\dots, c_M; u, v, w) \in X_{\rho, h}$ be such that 
$c_{s, i, j, k} > 0$ for all $s \in \{ 1, \dots, M\}$ and 
let $i, j, k \in \{ 0, \dots, N-1\}.$
%%$s, i, j, k$ with $1 \le s \le M$ and $0 \le i, j, k \le N-1.$
Fix $s $  and $(i, j, k)$.  Define $(\check{c}, \check{D})$ to 
be the same as $(c, D)$ except
%%we define $c_{s, i, j, k}$ and $c_{s, i+1, j, k}$ to 
\[
\check{c}_{s, i, j, k} := c_{s, i, j, k} - \zeta_s, \quad 
\check{c}_{s, i+1, j, k}:=c_{s, i+1, j, k} + \zeta_s, \quad 
\check{u}_{i+1/2, j, k} := u_{i+1/2, j, k} -hq_s\zeta_s, 
\]
and their corresponding periodic values, 
where $\zeta_s \in (-c_{s, i+1,j, k}, c_{s, i, j, k})$ is to be determined. 
One verifies that $(\check{c}, \check{D}) \in X_{\rho, h}$ and all 
the components of $\check{c}$ are still strictly positive. 
We choose $\zeta_s $ to minimize 
 the perturbed energy $\hat{F}_h[ (\check{c}, \check{D})]$, 
 %%as a function of $\zeta_s \in (-c_{s, i+1,j, k}, c_{s, i, j, k})$. Equivalently, we 
equivalently,  the energy change
\begin{align}
\label{DhFdeltas}
    \Delta \hat{F}_h (\zeta_s) &:= \hat{F}_h[\check{c}, \check{D}] - \hat{F}_h[c, D]
   \nonumber  \\
    & = h^3 \left[ \left({c}_{s,i,j,k}-\zeta_s \right) \log \left({c}_{s,i,j,k}-\zeta_s \right) + \left({c}_{s,i+1,j,k}+\zeta_s\right) \log \left({c}_{s,i+1,j,k}+\zeta_s\right) \right. 
    \nonumber \\ 
     &\left. \qquad -{c}_{s,i,j,k} \log {c}_{s,i,j,k} -{c}_{s,i+1,j,k} \log {c}_{s,i+1,j,k} \right]
    \nonumber  \\
     & \qquad 
     + \frac{h^3}{2} \left[ \frac{\left(u_{i+1/2,j,k}-hq_s\zeta_s\right)^2- u_{i+1/2,j,k}^2}{\varepsilon_{i+1/2,j,k}}  \right]
     \qquad \forall \zeta_s \in (-c_{s, i+1,j, k}, c_{s, i, j, k}).
\end{align}
%%This function is convex and attains its unique minimum at $\zeta_s = \zeta_{s, i+1/2, j, k}$
%%determined by 
We verify that $ (\Delta \hat{F}_h)''>0$, and hence $\Delta \hat{F}_h$ is strictly convex, in 
$(-c_{s, i+1, j, k}, c_{s, i, j, k})$. Thus, $\Delta \hat{F}_h$
attains its unique minimum at some 
$\zeta_s = \zeta_{s, i+1/2, j, k} \in (-c_{s, i+1,j, k}, c_{s, i, j, k}), $ which 
is determined by 
  $ (\Delta \hat{F}_h)'(\zeta_{s, i+1/2, j, k}) = 0$, i.e., 
\begin{align}
    \label{zetasi}
 & \log \left({c}_{s,i+1,j,k}+\zeta_{s, i+1/2, j,k}\right)  
 -\log \left({c}_{s,i,j,k}-\zeta_{s, i+1/2, j, k}\right) 
 \nonumber \\
 & \qquad 
 - \frac{h q_s}{\ve_{i+1/2,j,k}} \left(u_{i+1/2,j,k}-hq_s\zeta_{s, i+1/2, j, k}\right)=0.
\end{align}
%%See more details in Lemma~\ref{l:localalg4PB} below. 
%%This equation can be solved using  Newton's iteration with a few steps.
%Note that the updated concentrations $ c_{s, i, j, k} - \delta_s$ and $c_{s, i+1, j, k}+
%\delta_s$ are still positive. 
%%{\color{blue}{[How do you solve for $\delta_s$? Please add something to explain this. -Bo]}}
%%If $\delta_s $ is the solution to \reff{optimaldeltas}, then the energy change
%%$\Delta \hat{F}(\delta_s)$ is minimized. 

With $\zeta:=\zeta_{s, i+1/2, j, k}$, $\alpha:= c_{s, i, j, k}$,  
$\beta := c_{s, i+1, j, k},$ $\gamma := u_{i+1/2, j, k},$
$a = h^2 q_s^2/\ve_{i+1/2, j, k}>0$, and 
$b = hq_s /\ve_{i+1/2, j, k}\in \R,$ 
\reff{zetasi} becomes $f(\alpha, \beta, \gamma, \zeta) = 0$, where 
\[
f(\alpha, \beta, \gamma, \zeta) = \log (\beta + \zeta) - \log (\alpha - \zeta) 
- b \gamma + a  \zeta, 
\]
and it is defined for  $ \alpha > 0, $ $\beta > 0,$ $-\infty < \gamma < \infty$, and
$ -\beta < \zeta < \alpha$. 
Clearly,  $f$ is a continuously differentiable function. Moreover, 
%%$ \partial_{\zeta} f (\alpha, \beta, \gamma, \zeta) 
%%= 1/(\beta+\zeta) + 1/ (\alpha - \zeta) + a> 0. $ 
\[
\partial_{\zeta} f (\alpha, \beta, \gamma, \zeta) 
= \frac{1}{\beta+\zeta} + \frac{1}{\alpha - \zeta} + a> 0. 
\]
Since $f (\alpha, \beta, \gamma, \zeta) = 0$ has a unique solution 
$\zeta = \zeta (\alpha, \beta, \gamma)$ for $ \alpha > 0, 
$ $\beta > 0,$ and $-\infty < \gamma < \infty$,  
it follows from the Implicit Function Theorem that 
$\zeta=\zeta(\alpha, \beta, \gamma)$ depends on 
$(\alpha, \beta, \gamma)$ uniquely and continuously differentiably. 
Taking the partial derivative 
%%with respect to $\alpha$ 
on both sides of $f(\alpha, \beta, \gamma, \zeta)=0,$ we obtain 
\[
 \partial_\alpha \zeta = \frac{\beta+\zeta}{q(\zeta)},
\qquad 
\partial_\beta \zeta = \frac{\zeta-\alpha}{q(\zeta)}, 
\qquad 
\partial_\gamma \zeta =  \frac{b(\alpha-\zeta)(\beta + \zeta)}{q(\zeta)}, 
\]
where $q(\zeta) = a (\alpha-\zeta)(\beta+\zeta)+\beta + \alpha$.
%%For $-\beta \le  \zeta  \le \alpha$,  
%\[
%\partial_\alpha \zeta = \frac{\beta+\zeta}{a (\alpha-\zeta)(\beta+\zeta)+\beta + \alpha}.
%\]
%Denote the denominator by $q(\zeta)$. Then for  the quadratic function
% $q$ reaches its maximum  $(\alpha+ \beta)[1 + a(\alpha + \beta)/4]$
%+ \beta$ at $(\alpha - \beta)/2$ and its minimum $\alpha + \beta$
%%at the end points $\zeta = \alpha $ and $\zeta = -\beta.$ Since $a > 0$, we then have 
Therefore, $0 < \partial_\alpha \zeta < 1$, $-1 < \partial_\beta \zeta < 0,$ and 
$| \partial_\gamma \zeta | \le |b|/a,$ and hence 
$\zeta = \zeta(\alpha, \beta, \gamma)$ is Lipschitz-continuous for 
$ \alpha > 0, $ $\beta > 0,$ $-\infty < \gamma < \infty$, and $-\beta < \zeta < \alpha$. 

By \reff{DhFdeltas}, \reff{zetasi}, and the fact that 
$\log (1 + a ) \le a $ for any $a \in (-1, 1)$, we have 
%%the energy change is 
    \begin{align*}
 %%   \label{energycDi}
    \Delta \hat{F}_h (\zeta_{s, i+1/2, j, k})  
 %%   & = \hat{F}_h[\check{c}, \check{D}]-\hat{F}_h[c, D]
    %%[(\hat{c},\hat{D})\to (\hat{c}^{s=1},\hat{D}^{s=1})] \\
  %%  \\
  &= h^3 \left( {c}_{s,i,j,k} \log 
  \frac{c_{s, i,j,k} - \zeta_{s, i+1/2, j, k}}{{c}_{s,i,j,k}}  + {c}_{s,i+1,j,k} \log \frac{{c}_{s,i+1,j,k}+\zeta_{s, i+1/2, j, k}}{{c}_{s,i+1,j,k}} 
    \right.
   \nonumber  \\
    & \qquad \left. -\zeta_{s, i+1/2, j, k} \log \frac{{c}_{s,i,j,k}-\zeta_{s, i+1/2, j, k}}{{c}_{s,i+1,j,k}+\zeta_{s, i+1/2, j, k}}\right) 
   \nonumber  \\
    &\qquad 
    +\frac{h^4 q_s \zeta_{s, i+1/2, j, k}}{2 \ve_{i+1/2, j, k}} 
    (hq_s \zeta_{s, i+1/2, j, k} - 2 u_{i+1/2, j, k})
   \nonumber  \\
   %% +\frac{h^3}{2} \left[ \frac{\left(u_{i+1/2,j,k}-hq_s\delta_s\right)^2- %%\left(u_{i+1/2,j,k}\right)^2}{\varepsilon_{i+1/2,j,k}}  \right] \\
     &=  h^3 \left[ {c}_{s,i,j,k} \log \left( 1 -  \frac{\zeta_{s, i+1/2, j, k}}{{c}_{s,i,j,k}} \right)+ {c}_{s,i+1,j,k} \log \left( 1 + \frac{\zeta_{s, i+1/2, j, k}} {{c}_{s,i+1,j,k}} 
     \right)\right]
    \nonumber  \\
    & \qquad  - \frac{h^5 q_s^2 \zeta_{s, i+1/2, j, k}^2 }{2\ve_{i+1/2,j,k}}
%%     &  \qquad  +  \frac{h^4 q_s\zeta_{s, i+1/2, j, k}}{\ve_{i+1/2,j,k}} \left(u_{i+1/2,j,k}-h q_s 
  %%   \zeta_{s, i+1/2, j, k} \right)  
 %%   \nonumber  \\
%%     & \qquad +\frac{h^4 q_s \zeta_{s, i+1/2, j, k}}{2 \ve_{i+1/2, j, k}} 
 %%   (hq_s \zeta_{s, i+1/2, j, k} - 2 u_{i+1/2, j, k})
     \nonumber \\
   % &\le  h^3 \left( {c}_{s,i,j,k} \frac{-\delta_{s}}{{c}_{s,i,j,k}} + {c}_{s,i+1,j,k} \frac{\delta_{s}} {{c}_{s,i+1,j,k}} - \frac{h^2 q_s^2 \delta_{s}^2 }{2\ve_{i+1/2,j,k}}\right) \\
    &\le - \frac{h^5 q_s^2 \zeta_{s, i+1/2, j, k}^2 }{2\ve_{i+1/2,j,k}}.
    \end{align*}
This indicates that the optimal perturbation is bounded by the related change of energy. 

To summarize, we update $c_{s, i, j, k}$, $c_{s, i+1, j, k}$, and $u_{i+1/2, j, k}$ to 
\begin{align}
\label{updatecsi}
&\check{c}_{s, i, j, k} = c_{s, i, j, k} - \zeta_{s, i+1/2, j, k} \quad
\mbox{and} \quad 
\check{c}_{s, i+1, j, k} = c_{s, i+1, j, k} + \zeta_{s, i+1/2, j, k}, 
\\
\label{updateui}
& 
\check{u}_{i+1/2, j, k} = u_{i+1/2, j, k} - h q_s \zeta_{s, i+1/2, j, k}, 
\end{align}
where $\zeta_{s, i+1/2, j, k}\in (-c_{s, i+1,j, k}, c_{s, i, j, k})$ is determined by 
\reff{zetasi}. 
Similarly, we update $c_{s, i, j, k}$, $c_{s, i, j+1, k}$, $v_{i, j+1/2, k}$, and 
$c_{s, i, j, k}$, $c_{s, i, j, k+1}$, $w_{i, j, k+1/2}$, respectively, by 
\begin{align}
    \label{updatecsj}
&\check{c}_{s, i, j, k} = c_{s, i, j, k} - \zeta_{s, i, j+1/2, k}  \quad
\mbox{and} \quad 
\check{c}_{s, i, j+1, k} = c_{s, i, j+1, k} + \zeta_{s, i, j+1/2, k}, 
\\
\label{updatevj}
&
\check{v}_{i, j+1/2, k} = v_{i, j+1/2, k} - h q_s \zeta_{s, i, j+1/2, k}, 
\\
\label{updatecsk}
&\check{c}_{s, i, j, k} = c_{s, i, j, k} - \zeta_{s, i, j, k+1/2} \quad 
\mbox{and} \quad 
\check{c}_{s, i, j, k+1} = c_{s, i, j, k+1} + \zeta_{s, i, j, k+1/2}, 
\\
\label{updatewk}
& \check{w}_{i, j, k+1/2} = w_{i, j, k+1/2} - h q_s \zeta_{s, i, j, k+1/2},
\end{align}
where $\zeta_{s, i, j+1/2, k} \in (-c_{s, i, j+1, k}, c_{s, i, j, k}) $ and 
$\zeta_{s, i, j, k+1/2}\in (-c_{s, i, j, k+1}, c_{s, i, j, k})$ are uniquely determined,
respectively, by
\begin{align}
    \label{zetasj}
 & \log \left({c}_{s,i,j+1,k}+\zeta_{s, i, j+1/2, ,k}\right)  
 -\log \left({c}_{s,i,j,k}-\zeta_{s, i, j+1/2, k}\right) 
 \nonumber \\
 & \qquad 
 - \frac{h q_s}{\ve_{i, j+1/2,k}} \left(v_{i,j+1/2,k}-hq_s\zeta_{s, i, j+1/2, k}\right)=0;
\\
    \label{zetask}
 & \log \left({c}_{s,i,j,k+1}+\zeta_{s, i, j, k+1/2}\right)  
 -\log \left({c}_{s,i,j,k}-\zeta_{s, i, j, k+1/2}\right) 
 \nonumber \\
 & \qquad 
 - \frac{h q_s}{\ve_{i, j, k+1/2}} \left(w_{i, j, k+1/2}-hq_s\zeta_{s, i, j, k+1/2}\right)=0.
\end{align}
%%    (1) All the updates are for $i, j, k \in \{ 0, \dots, N-1\}$ but the updated
%%$c$ and $D$ can be extended periodically.
%%(2) It is clear that after each local update, all the components of $c$ remain positive
 %%   at all the grid points.   
    We solve \eqref{zetasi}, \eqref{zetasj}, and \eqref{zetask} using Newton's iteration with
a few steps. Note that $\zeta_{s, i+1/2, j, k} = \zeta_{s, i, j+1/2, k} = \zeta_{s, i, j, k+1/2}
        = 0$ for all $s, i, j, k$ is equivalent to the local equilibrium condition
        \reff{logloglog} in Theorem~\ref{t:DiscretePBEnergy}.

We summarize some of the properties of these local updates in the following: 

%{\color{blue}{[May need to change $\rho$ to $\rho_h$. Also change 
%$X_{\rho, h}$ to $X_h$? -BL]}}

%%%%%%%%%%%%%%%%
\begin{comment}
The energy changes $\Delta\hat{F}(\zeta_{s, i, j+1/2, k})$ and 
$\Delta\hat{F}(\zeta_{s, i, j, k+1/2})$ associated with the updates 
\eqref{updatecsj}, \eqref{updatevj} and \eqref{updatecsk}, \eqref{updatewk} satisfy
  \begin{align}
  \label{energycDj}
    &\Delta \hat{F} (\zeta_{s, i,j+1/2, k})  
    \le - \frac{h^5 q_s^2 \zeta_{s, i, j+1/2,  k}^2 }{2\ve_{i, j+1/2,k}}, 
    \\
    \label{energycDk}
    &   \Delta \hat{F} (\zeta_{s, i,j, k+1/2})  
    \le - \frac{h^5 q_s^2 \zeta_{s, i, j+1/2,  k}^2 }{2\ve_{i, j, k+1/2}}.
    \end{align}
\end{comment}
%%%%%%%%%%%%%%%%%%%%%%%%

%%We summarize these calculations in the following lemma:
%properties from the local algorithm in the following proposition; 
%these will be used for proving the convergence of the algorithm: 

\begin{lemma}
    \label{l:localalg4PB} 
    Let $\ve \in V_h$ be such that $\ve > 0$ on $h\Z^3$
and let $\rho^h \in V_h$ satisfy \reff{DiscreteNeutrality}.  Let
    $(c, D) = (c_1, \dots, c_M, u, v, w)\in X_{\rho, h}$ satisfy $c_s > 0$ on 
    $h\Z^3$ for all $s = 1, \dots, M. $
    \begin{compactenum}
        \item[{\rm (1)}] 
        Let $0 \le i, j, k \le N-1$ and $ 1 \le s \le M$. 
        Update $(c, D)$ to $(\check{c}, \check{D}) \in  X_{\rho, h}$ by 
        \eqref{updatecsi}--\eqref{updatewk} with 
        %%the perturbations 
        $\zeta_{s, i+1/2, j, k}$, $\zeta_{s, i, j+1/2, k}$, 
            and $\zeta_{s, i, j, k+1/2}$ given 
            in \eqref{zetasi}, \eqref{zetasj}, and \eqref{zetask}, respectively. 
        \begin{compactenum}
        \item[{\rm (i)}]
        Each update keeps the components of $c$ to be still positive at all the grid points.
\item[{\rm (ii)}] The perturbations
$\zeta_{s, i+1/2, j, k} \in (-c_{s, i+1, j, k}, 
c_{s, i, j, k}), $ $ \zeta_{s, i, j+1/2, k} \in (-c_{s, i, j+1, k}, c_{s, i, j, k}), $
and $\zeta_{s, i, j, k+1/2} \in (-c_{s, i,j, k+1}, c_{s,i, j,k})$ 
are Lipschitz-continuous functions of
%depend Lipschitz-continuously 
%%and continuously differentiably  on 
$c_{s, i, j, k},$ $ c_{s, i+1, j, k},$ and $  u_{i+1/2, j, k}$; 
$c_{s, i, j, k},$ $  c_{s, i, j+1, k}, $ and $  v_{i, j+1/2, k}$; 
and  $c_{s, i,j, k},$ $ c_{s,i, j,k+1},$ and $  w_{i, j, k+1/2}$, respectively.
            \item[{\rm (iii)}]
 The energy change $\Delta \hat{F}_h(\zeta) = 
\hat{F}_h[\check{c}, \check{D}]-\hat{F}_h[c, D]$ 
            %%and the displacement changes 
            associated with the three updates from $(c, D)$ to $(\check{c}, \check{D})$ for 
            given $s, i, j, k$ satisfy
%%respectively,
            \[
       | \Delta \hat{F}_h (\zeta_{s,\sigma})|  
    \ge \frac{h^5 q_s^2 \zeta_{s, \sigma}^2 }{2\ve_{\sigma}}
    \quad \forall \sigma \in \{ (i+1/2, j, k), (i, j+1/2, k), (i,j, k+1/2)\}.
            \]
             %% \begin{align*}
  %            \label{energycDi}
            %%   &\Delta \hat{F}_h (\zeta_{s, i+1/2, j, k})  
   %% \le - \frac{h^5 q_s^2 \zeta_{s, i+1/2, j, k}^2 }{2\ve_{i+1/2, j, k}}, 
  %%  \quad  | \check{u}_{i+1/2, j, k} - u_{i+1/2, j, k}| \le h |q_s \zeta_{s, i+1/2, j, k}|, 
  %%  \\
% \label{energycDj}
   %% &\Delta \hat{F}_h (\zeta_{s, i,j+1/2, k})  
  %%  \le - \frac{h^5 q_s^2 \zeta_{s, i, j+1/2,  k}^2 }{2\ve_{i, j+1/2,k}}, 
    %%\quad
  %%  |\check{v}_{i, j+1/2, k} - v_{i, j+1/2, k}| =  h | q_s \zeta_{s, i, j+1/2, k}|, 
 %%   \\
   % \label{energycDk}
  %%  &   \Delta \hat{F}_h (\zeta_{s, i,j, k+1/2})  
  %%  \le - \frac{h^5 q_s^2 \zeta_{s, i, j+1/2,  k}^2 }{2\ve_{i, j, k+1/2}}.
    %%\quad  | \check{w}_{i, j, k+1/2} - w_{i, j, k+1/2}| =   h |q_s \zeta_{s, i, j, k+1/2}|.
 %%   \end{align*}
        \end{compactenum}
        \item[{\rm (2)}]
        The updates of $(c, D)$ at all the grid points do not further decrease the energy, 
        i.e., $\zeta_{s, i+1/2, j, k} = \zeta_{s, i, j+1/2, k} = \zeta_{s, i, j, k+1/2}
        = 0$ for all $s, i, j, k$,
        if and only if the local equilibrium conditions \reff{logloglog} are satisfied. 
        \qed
    \end{compactenum}
\end{lemma}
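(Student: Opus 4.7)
The plan is to treat the three directional updates symmetrically and to harvest as much as possible from the derivation already carried out in the text preceding the lemma statement, where the perturbation $\zeta_{s,i+1/2,j,k}$ was analyzed in detail. I would organize the argument in the four atomic claims (i)--(iii) of Part (1) and the equivalence of Part (2), and prove each for the $x$-directional update only, pointing out that the $y$- and $z$-directional updates are verbatim copies with $(v, j+1/2)$ or $(w, k+1/2)$ replacing $(u, i+1/2)$.

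For claim (i), I would argue that the function $\zeta_s \mapsto \Delta \hat F_h(\zeta_s)$ defined in \reff{DhFdeltas} extends continuously to the closed interval $[-c_{s,i+1,j,k}, c_{s,i,j,k}]$ with value $+\infty$ at either endpoint, because the $c_{s,i,j,k}\log c_{s,i,j,k}$ and $c_{s,i+1,j,k}\log c_{s,i+1,j,k}$ terms blow up to $+\infty$ when either $c_{s,i,j,k}-\zeta_s$ or $c_{s,i+1,j,k}+\zeta_s$ vanishes. Combined with the strict convexity established from $(\Delta\hat F_h)''>0$, this forces the unique minimizer $\zeta_{s,i+1/2,j,k}$ to lie in the open interval, which is exactly positivity preservation.

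For claim (ii), I would invoke the Implicit Function Theorem applied to the equation $f(\alpha,\beta,\gamma,\zeta)=0$ exactly as in the text, compute $\partial_\alpha\zeta, \partial_\beta\zeta, \partial_\gamma\zeta$, and note the uniform pointwise bounds $0<\partial_\alpha\zeta<1$, $-1<\partial_\beta\zeta<0$, and $|\partial_\gamma\zeta|\le |b|/a$ established there; these pointwise bounds immediately yield Lipschitz continuity with explicit constants. For claim (iii), I would reproduce the inequality chain already assembled after \reff{zetasi}, where using $\log(1+x)\le x$ on the two concentration logarithms turns $\Delta \hat F_h(\zeta_{s,i+1/2,j,k})$ into $-h^5 q_s^2 \zeta_{s,i+1/2,j,k}^2/(2\ve_{i+1/2,j,k})$ plus nonpositive remainders; taking absolute values gives the lower bound on $|\Delta \hat F_h|$. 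The analogous inequalities for $\sigma=(i,j+1/2,k)$ and $\sigma=(i,j,k+1/2)$ follow by identical calculations, simply replacing $u,\ve_{i+1/2,j,k}$ by $v,\ve_{i,j+1/2,k}$ or $w,\ve_{i,j,k+1/2}$.

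For Part (2), the equivalence follows from inspecting \reff{zetasi}, \reff{zetasj}, \reff{zetask}: setting $\zeta_{s,i+1/2,j,k}=0$ in \reff{zetasi} immediately yields $\log c_{s,i+1,j,k}-\log c_{s,i,j,k}=hq_s u_{i+1/2,j,k}/\ve_{i+1/2,j,k}$, which is the first equation of \reff{logloglog}; and conversely that equation combined with uniqueness of the minimizer forces $\zeta_{s,i+1/2,j,k}=0$. The same holds for the $y$- and $z$-directions. I expect no real obstacle in this proof; the only delicate point is the boundary behavior in (i), where one must justify that the minimum of the strictly convex, continuous extension on the compact interval cannot be attained at an endpoint, and this reduces to observing that $u\log u \to 0$ at $0$ while the derivative $1+\log u\to -\infty$, so one-sided derivatives at the endpoints are $-\infty$ (on the left endpoint) and $+\infty$ (on the right), precluding boundary minimization.
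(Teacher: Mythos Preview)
Your approach is essentially the same as the paper's, which carries out all the calculations in the text preceding the lemma and then records the lemma with a bare \qed. Your organization into the four claims and your handling of (ii), (iii), and Part (2) match the paper's discussion exactly.

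There is, however, an internal contradiction in your treatment of (i). You first claim that $\Delta\hat F_h$ extends continuously to the closed interval ``with value $+\infty$ at either endpoint, because the \ldots\ terms blow up to $+\infty$.'' This is false: the relevant terms are $(c_{s,i,j,k}-\zeta_s)\log(c_{s,i,j,k}-\zeta_s)$ and $(c_{s,i+1,j,k}+\zeta_s)\log(c_{s,i+1,j,k}+\zeta_s)$, and $u\log u\to 0$ as $u\to 0^+$, so $\Delta\hat F_h$ extends with \emph{finite} values at the endpoints. You then correctly contradict yourself in the final paragraph, observing that $u\log u\to 0$ and that it is the \emph{derivative} $(\Delta\hat F_h)'$ that blows up (to $-\infty$ at the left endpoint, $+\infty$ at the right), which by strict convexity forces the unique minimizer into the open interval. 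Drop the first, incorrect sentence and keep only the derivative argument; that is what the paper implicitly uses when it writes ``$\Delta\hat F_h$ attains its unique minimum at some $\zeta_s\in(-c_{s,i+1,j,k},c_{s,i,j,k})$.''
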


\medskip

%{\color{blue}{[I removed the part of updating D, as this one will only reduce
%the electrostatic potential energy. It may speed up computations but we don't have
%quantitative estimates on the efficiency. Also, updating D will alone will not converge. -Bo]}}
%{\color{blue}{[Which loop goes first? $s = ...$ or $i, j, k = ...$? -Bo]}}

\noindent
{\bf Local algorithm for minimizing $\hat{F}_h: X_{\rho, h} \to \R$}

%%(simplified version)
%%the electrostatic energy (without ions).}

\noindent
$\qquad$ {Step 1.}
%%Initialize a set of concentrations and displacement 
Initialize $(c^{(0)}, D^{(0)}) \in X_{\rho, h}$ and set $m = 0.$ 

\noindent
$\qquad $ 
{Step 2.} Update $(c,D) = (c_1, \dots, c_M; u, v, w):=(c^{(m)},D^{(m)}).$

\noindent
$\qquad\qquad \quad\ \,    $
For $i, j, k = 0, \dots, N-1$

\noindent 
$\qquad \qquad \qquad \quad $
For $s= 1, \dots, M$

\noindent 
$\qquad \qquad \qquad \qquad\quad  $
Update $c_{s, i, j, k}$, $c_{s, i+1, j, k}$, and  $u_{i+1/2, j, k}$.

\noindent 
$\qquad \qquad \qquad \quad $
End for

\noindent 
$\qquad \qquad \qquad \quad $
For $s= 1, \dots, M$

\noindent 
$\qquad \qquad \qquad \qquad \quad $
Update $c_{s, i, j, k}$, $c_{s, i, j+1, k}$, and  $v_{i, j+1/2, k}$.

\noindent 
$\qquad \qquad \qquad \quad $
End for

\noindent 
$\qquad \qquad \qquad \quad $
For $s= 1, \dots, M$

\noindent 
$\qquad \qquad \qquad \qquad \quad $ 
Update $c_{s, i, j, k}$, $c_{s, i, j, k+1}$, and  $w_{i, j, k+1/2}$.

\noindent 
$\qquad \qquad \qquad \quad $
End for

\noindent
$\qquad \qquad \quad \ \, $
End for

\noindent
$\qquad \qquad \quad \ \, $
Set $ D^{(m+1)} = D$.

\noindent
$\qquad $ {Step 3.}
If the updates of $(c, D)$ at all the grid points do not
further decrease the energy, 

\noindent
$\qquad \qquad\quad \ \,  $
then stop.
Otherwise, set  $m: = m+1$ and go to Step 2. 

\medskip

%\begin{remark}
    In practice, to speed up the convergence, one can add in Step 2 
    the local updates of
the displacement $D$ as in the local algorithm for minimizing the 
discrete Poisson energy (cf.\ section~\ref{ss:LcoalAlg4Poisson}). For instance, 
we can add the following at the end of the loop over $i, j, k = 0$ to 
$N-1$ in Step 2: 

\noindent $\qquad \qquad \qquad \quad  $ 
Update $D$ to get $D^x$ by \reff{Updatex1}--\reff{Updatex4} and $D \leftarrow D^x$, 

\noindent $\qquad \qquad \qquad \quad  $ 
Update $D$ to get $D^y$ by \reff{Updatey1}--\reff{Updatey4} and $D \leftarrow D^y$, 

\noindent $\qquad \qquad \qquad \quad   $ 
Update $D$ to get $D^z$ by \reff{Updatez1}--\reff{Updatez4} and $D \leftarrow D^z$.

%%%%%%%%%%%%%%%%%%%%%%%%%%%%
\begin{comment}
%%have the following, new Step 2:

\noindent
$\qquad $ 
{Step 2.} Update $(c,D) = (c_1, \dots, c_M; u, v, w):=(c^{(m)},D^{(m)}).$

\noindent
$\qquad\qquad \quad\ \,    $
For $i, j, k = 0, \dots, N-1$

\noindent 
$\qquad \qquad \qquad \quad $
For $s= 1, \dots, M$

\noindent 
$\qquad \qquad \qquad \qquad\quad  $
Update $c_{s, i, j, k}$, $c_{s, i+1, j, k}$, and  $u_{i+1/2, j, k}$.

\noindent 
$\qquad \qquad \qquad \quad $
End for

\noindent 
$\qquad \qquad \qquad \quad $
For $s= 1, \dots, M$

\noindent 
$\qquad \qquad \qquad \qquad \quad $
Update $c_{s, i, j, k}$, $c_{s, i, j+1, k}$, and  $v_{i, j+1/2, k}$.

\noindent 
$\qquad \qquad \qquad \quad $
End for

\noindent 
$\qquad \qquad \qquad \quad $
For $s= 1, \dots, M$

\noindent 
$\qquad \qquad \qquad \qquad \quad $ 
Update $c_{s, i, j, k}$, $c_{s, i, j, k+1}$, and  $w_{i, j, k+1/2}$.

\noindent 
$\qquad \qquad \qquad \quad $
End for

\noindent $\qquad \qquad \qquad \quad  $ 
Update $D$ to get $D^x$ by \reff{Updatex1}--\reff{Updatex4} and $D \leftarrow D^x$, 

\noindent $\qquad \qquad \qquad \quad  $ 
Update $D$ to get $D^y$ by \reff{Updatey1}--\reff{Updatey4} and $D \leftarrow D^y$, 

\noindent $\qquad \qquad \qquad \quad   $ 
Update $D$ to get $D^z$ by \reff{Updatez1}--\reff{Updatez4} and $D \leftarrow D^z$.

\noindent
$\qquad \qquad \quad \ \, $
End for

\noindent
$\qquad \qquad \quad \ \, $
Set $ D^{(m+1)} = D$.

\end{comment}
%%%%%%%%%%%%%%%%%%%%%%%%%%%%

\noindent
Note that adding updates of the displacement 
does not change the concentration and also keeps the 
discrete Gauss' law satisfied, and hence produces $(c, D) \in X_{\rho, h}.$ 

%%We now state and prove the convergence of the local algorithm for minimizing the 
%%discrete PB energy functional. The key idea of the proof is to get the local equilibrium
%%for the limiting concentrations and displacement by showing that the optimal perturbations
%%vanish in the limit. This is similar to the proof of Theorem~\ref{t:ConvLocalGlobal}.

%%{\color{blue}{[Remark on the efficiency on adding the update of $D$ after updating
%%$c$. Add a numerical example. If $(c, D)$ updates do not lower the energy, then terminate 
%%the iteration. Does it mean in that case the $\eta$-values for updating $D$
%%along will also be all $0$? It should be. Make a remark on this. -Bo]}}

\begin{theorem}
%%[Convergence of local algorithm with ions]
\label{t:ConvPB}
  Let $ \ve \in V_h$ be such that $\ve > 0$ on $h\Z^3$
and $\rho^h\in V_h$ satisfy \reff{DiscreteNeutrality}. 
Let $(c^{(0)},D^{(0)})\in X_{\rho, h}$ with $c_s^{(0)}>0$ on $h\Z^3$ for all $s \in 
\{ 1, \dots, M \}$ and let $(c^{(t)},D^{(t)}) \in X_{\rho, h}$ $(t = 0, 1,  \dots)$ 
be the sequence (finite or infinite) 
%%of concentrations and displacements 
generated by the local algorithm. Let
$(\hat{c}^h_{\rm min}, \hat{D}^h_{\rm min})\in X_{\rho, h}$ be the unique
minimizer of $\hat{F}_h: X_{\rho, h} \to \R.$
\begin{compactenum}
    \item[{\rm (1)}] If the sequence 
    %%all local updates in a single cycle will not lower further the free energy, then the sequence 
    $(c^{(t)},D^{(t)})$ $(t = 0, 1, \dots)$ is finite and the last one is 
    $(c^{(m)}, D^{(m)})$, then $(c^{(m)}, D^{(m)}) = (\hat{c}^h_{\rm min}, \hat{D}^h_{\rm min})$. 
    %%\operatorname{argmin} _{\mathring{X}_h} \hat{F}_h$, which is the same as 
    %%$\operatorname{argmin} _{x_h} \hat{F}_h$.
    \item[{\rm (2)}] If the sequence $(c^{(t)},D^{(t)})$ $(t = 0, 1,  \dots)$ is infinite, then 
    \[
    \lim_{t\to \infty} (c^{(t)},D^{(t)}) = (\hat{c}^h_{\rm min},\hat{D}^h_{\rm min})
    \quad \mbox{and} \quad \lim_{t \to \infty}  
    \hat{F}_h [c^{(t)}, D^{(t)}] = \hat{F}_h [\hat{c}^h_{\rm min}, \hat{D}^h_{\rm min}]. 
    \]
   %% the local updates in each cycle always decrease the free energy, 
   %%then $(c^{(t)},D^{(t)})$, $(t\geq 1)$ is an infinite sequence, and
 %%   \[
 %%   \begin{aligned}
 %%      &\lim_{t\to\infty} (c^{(t)},D^{(t)})= 
 %%\operatorname{argmin} _{\mathring{X}_h} \hat{F}_h = \operatorname{argmin} _{{X}_h} \hat{F}_h\\
 %%      &\lim_{t\to\infty} \hat{F}_h (c^{(t)},D^{(t)})=\min_{\mathring{X}_h} \hat{F}_h =\min_{{X}_h} \hat{F}_h
 %%   \end{aligned}
 %%   \]
\end{compactenum}
\end{theorem}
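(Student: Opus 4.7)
My plan is to follow the template of Theorems~\ref{t:ConvP} and~\ref{t:ConvLocalGlobal}, adapting the monotone-energy argument to the PB setting, with the characterization of the minimizer in Theorem~\ref{t:DiscretePBEnergy} Part~(3) as the target.

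For Part~(1), I would argue as follows. By Lemma~\ref{l:localalg4PB}, each local update preserves membership in $X_{\rho,h}$ and the strict positivity of the components of $c$; since $(c^{(0)},D^{(0)})\in X_{\rho,h}$ with $c^{(0)}_s>0$, every iterate also satisfies $c^{(t)}_s>0$ on $h\Z^3$. If the iteration terminates at $(c^{(m)},D^{(m)})$, then by Step~3 of the algorithm the optimal perturbations $\zeta_{s,\sigma}$ at $(c^{(m)},D^{(m)})$ all vanish, and Lemma~\ref{l:localalg4PB} Part~(2) then gives the finite-difference Boltzmann distributions \reff{logloglog}. Combined with $c^{(m)}_s>0$, Part~(3) of Theorem~\ref{t:DiscretePBEnergy} identifies $(c^{(m)},D^{(m)})=(\hat{c}^h_{\rm min},\hat{D}^h_{\rm min})$.

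For Part~(2), I will flatten the outer iteration into a single-step sequence, where each transition $(c^{(t)},D^{(t)})\mapsto(c^{(t+1)},D^{(t+1)})$ consists of exactly one local update of one triple $(c_s,c_s,u)$, $(c_s,c_s,v)$, or $(c_s,c_s,w)$ on a single edge, in the cyclic order prescribed by the algorithm. This is a sequence in $X_{\rho,h}$ that lies in a bounded (hence compact) subset of a finite-dimensional space: mass conservation \reff{hcons} and nonnegativity \reff{hpos} bound each $c^{(t)}_{s,i,j,k}\in[0,N_s/h^3]$, while $u\log u\ge -1/e$ and monotonicity $\hat{F}_h[c^{(t)},D^{(t)}]\le \hat{F}_h[c^{(0)},D^{(0)}]$ bound $\|D^{(t)}\|_{1/\ve,h}$. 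Denoting $\delta_t=\hat{F}_h[c^{(t)},D^{(t)}]-\hat{F}_h[c^{(t+1)},D^{(t+1)}]\ge 0$, the fact that $\hat{F}_h$ is bounded below (by the minimum value) yields $\sum_t\delta_t<\infty$, so $\delta_t\to 0$. Lemma~\ref{l:localalg4PB}(iii) then gives $[\zeta^{(t)}_{s,\sigma_t}]^2\le \tfrac{2\ve_{\sigma_t}}{h^5 q_s^2}\delta_t\to 0$ for the particular triple being updated at step $t$, which also implies $\|(c^{(t+1)},D^{(t+1)})-(c^{(t)},D^{(t)})\|\to 0$ by the form of the updates \reff{updatecsi}--\reff{updatewk}.

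To finish, take a convergent subsequence $(c^{(t_r)},D^{(t_r)})\to(c^{(\infty)},D^{(\infty)})\in X_{\rho,h}$. For each fixed $(s,i,j,k)$ and each of the three edges emanating from $(i,j,k)$, periodicity of the cyclic sweep means there is $\tau\in\{1,\dots,3MN^3\}$ (independent of $r$) such that $(c^{(t_r+\tau)},D^{(t_r+\tau)})$ is obtained from $(c^{(t_r+\tau-1)},D^{(t_r+\tau-1)})$ by the update corresponding to that $(s,i,j,k)$ and that edge. The zero-gap telescoping $\|(c^{(t_r+\tau)},D^{(t_r+\tau)})-(c^{(t_r)},D^{(t_r)})\|\to 0$ gives convergence of both endpoints to $(c^{(\infty)},D^{(\infty)})$, and the vanishing perturbation $\zeta^{(t_r+\tau-1)}\to 0$ forces, by Lemma~\ref{l:localalg4PB}(ii) and the defining equations \reff{zetasi}, \reff{zetasj}, \reff{zetask} at the limit, the local equilibrium \reff{logloglog} at $(c^{(\infty)},D^{(\infty)})$---provided $c^{(\infty)}_s>0$ everywhere so that the logarithms are defined. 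Invoking Theorem~\ref{t:DiscretePBEnergy} Part~(3) then identifies $(c^{(\infty)},D^{(\infty)})=(\hat{c}^h_{\rm min},\hat{D}^h_{\rm min})$, and the uniqueness of the limit point, together with compactness, gives convergence of the full sequence, with continuity of $\hat{F}_h$ delivering the energy convergence.

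The main obstacle will be establishing strict positivity $c^{(\infty)}_s>0$ in the limit, since $c\mapsto \log c$ is not continuous at the boundary $c=0$ and Lemma~\ref{l:localalg4PB}(ii) gives only Lipschitz-continuity of $\zeta$ on $\{c_s>0\}$. I will argue this by contradiction: suppose $c^{(\infty)}_{s_0,i_0,j_0,k_0}=0$; by mass conservation \reff{hcons} there exists a grid point where $c^{(\infty)}_{s_0}$ is strictly positive, and by grid path-connectedness one can find two adjacent grid points $P,Q$ with $c^{(\infty)}_{s_0,P}=0$ and $c^{(\infty)}_{s_0,Q}>0$. Consider the perturbation that moves an amount $\epsilon>0$ of species $s_0$ from $Q$ to $P$ (with the compensating adjustment of the single $D$-component on edge $PQ$ as in \reff{updatecsi}--\reff{updateui}); by the expansion of $\Delta\hat{F}_h(\epsilon)$, the leading behavior as $\epsilon\to 0^+$ is $h^3\epsilon\log\epsilon+O(\epsilon)$, which is strictly negative and bounded above by $-\kappa<0$ for a suitable small $\epsilon>0$ independent of $r$. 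By continuity of $\hat{F}_h$, this perturbation applied to $(c^{(t_r)},D^{(t_r)})$ for $r$ large would decrease the energy by at least $\kappa/2$, contradicting $\delta_t\to 0$ when this edge is visited during the sweep from $t_r$ onward. Hence $c^{(\infty)}_s>0$ everywhere, closing the argument.
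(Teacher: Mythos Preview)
Your proposal is correct and follows the same overall architecture as the paper: flatten to single-step updates, use monotone energy to get $\delta_t\to 0$ and hence vanishing applied perturbations, take a convergent subsequence, establish positivity of $c^{(\infty)}$ and then local equilibrium \reff{logloglog}, and conclude via Theorem~\ref{t:DiscretePBEnergy}. One small imprecision: the offset $\tau$ is not automatically independent of $r$ unless you pass to a further subsequence (by pigeonhole on the $3MN^3$ positions in the cycle) or simply allow a bounded $\tau_r$; either fix is immediate and the telescoping argument goes through.

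Where you genuinely diverge from the paper is in the positivity step. The paper first proves the stronger full-sequence claim $\zeta^{(t)}_{s,\sigma}\to 0$ for \emph{every} $(s,\sigma)$, and then derives positivity analytically: along the subsequence the defining equation \reff{zetasi} reads $\log(\beta_r+\zeta_r)-\log(\alpha_r-\zeta_r)+a\zeta_r-b\gamma_r=0$, and if $\beta_r\to 0$ while $\zeta_r\to 0$ and $\alpha_r\to\alpha_\infty>0$, the left side diverges to $-\infty$, a contradiction. Your argument is variational instead: a fixed $\epsilon$-move into the vanishing site yields $\Delta\hat F_h\sim h^3\epsilon\log\epsilon<0$ at the limit, and by continuity of $u\log u$ at $u=0$ this persists along the sequence, contradicting $\delta_t\to 0$ when the algorithm next visits that edge. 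Both are valid; the paper's is slicker once the full-sequence claim is in hand, while yours is self-contained and sidesteps that claim entirely. Similarly, for local equilibrium the paper again invokes its full-sequence claim together with Lipschitz continuity, whereas you obtain the same conclusion more directly by telescoping to the step where the specific edge is updated and using that the \emph{applied} perturbation there vanishes.
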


\begin{proof}
(1) This follows from  Lemma~\ref{l:localalg4PB} (Part (i) of (1) and (2)) and 
Theorem~\ref{t:DiscretePBEnergy}.

%% (1) If no local updates will lower the free energy then the last updated $(c^{(t)},D^{(t)})$ is in local equilibrium and hence is the minimizer.
%%We prove the convergence for the local algorithm that does not include Step 3 (separate updates of the 
%%displacement), as the inclusion of Step 3 is the same. 
   
    (2)  We note that for each $t \ge 1$ the update from $(c^{(t)}, D^{(t)})$
to $(c^{(t+1)}, D^{(t+1)})$ consists of $ 3 M N^3$ local updates
(with a total $N^3$ grid points,  $3$ updates along the three edges for each grid, and $s = 1, \dots, M$).  
 For convenience, we redefine the sequence of iterates, still denoted
 $(c^{(t)}, D^{(t)})$ $(t=1, 2, \dots)$, by the sequence of single-step local update, i.e., 
 for each $t \ge 1$, 
 $(c^{(t+1)}, D^{(t+1)})$ is obtained
 from $(c^{(t)}, D^{(t)})$ by one of the $3 M$ updates associated to $M$ components 
 of $c^{(t)}$ and the three
 edges connected to one of the $N^3$ grid points. We keep the order of all these updates
 as in the local algorithm.  Note from the local algorithm that the new 
 $(c^{(t+ 3MN^3)}, D^{(t+3MN^3)})$ and 
 $D^{(t)}$ are updates on the same component of the concentration and the same edge of 
grid points. 
%%a same grid point. 
Clearly, the original sequence is a subsequence of the new one.
We shall prove the desired convergence for this new  sequence.
%% of displacements converges to $D^h_{\rm min}$, the minimizer of $F_h: S_{\rho, h} \to \R.$
 This implies the convergence of the original sequence.
 
%Since each local update decreases the energy, it follows from \eqref{drto0} that the sequence
%of changes of energy corresponding to each local update converges to $0$. 
%%By Part (iii) of (1) in Lemma~\ref{l:localalg4PB} and \reff{drto0}, 
%the optimal perturbation in each local update is controlled by 
%the amount of the change of energy. Therefore, by \reff{drto0}, 

 %%   Since $(c^{(t)}, D^{(t)}) \in X_{\rho, h}$ for all $t \ge 1$ and 
 %%   $(c^{(t_r)},D^{(t_r)})\to (c^{(\infty)},D^{(\infty)})$, all the components
  %%  of $c^{(\infty)}$ are nonnegative, and both the mass conservation \eqref{hcons} and
  %%  the discrete Gauss' law \eqref{hGauss} hold true for 
 %%   $(c^{(\infty)}, D^{(\infty)}).$ Thus, $(c^{(\infty)}, D^{(\infty)})\in X_{\rho, h},$ and  Part (2a) is proved. 
%We first make some general observations about the sequence $(c^{(t)}, D^{(t)})$ 
%$(t = 1, 2, \dots).$
Since $\sigma \mapsto \sigma\log \sigma $ $(\sigma \ge 0)$ is bounded below, the discrete
    energy functional $\hat{F}_h: X_{\rho, h} \to \R $ is bounded below. 
    Since each update in the local algorithm decreases the energy, 
    the sequence $\hat{F}_h[c^{(t)},D^{(t)}]$ $(t = 0, 1, \dots)$ decreases monotonically and is 
    bounded below. Thus, $\hat{F}_{h, \infty}:=
    \lim_{t\to \infty} \hat{F}_h[c^{(t)}, D^{(t)}] \in \R$ exists. Denoting 
    \begin{equation}
        \label{definedeltat}
    \delta_t= \hat{F}_h[c^{(t)},D^{(t)}]-\hat{F}_h[c^{(t+1)},D^{(t+1)}],\qquad t=0, 1,\dots, 
    \end{equation}
    we have all $\delta_t \ge 0$ and 
    $
    0\le \sum_{t=0}^{\infty} \delta_t \leq \hat{F}_h[c^{(0)},D^{(0)}] -\hat{F}_{h, \infty} < \infty.
    $
    In particular, 
    \begin{equation}
        \label{drto0}
        \lim_{t\to \infty} \delta_t = 0.
    \end{equation}
%This and \eqref{trinfty} implies 
%%since $(c^{(t_r)},D^{(t_r)})\to (c^{(\infty)},D^{(\infty)})$, we have 
%%$\hat{F}_h[(c^{(t_r)},D^{(t_r)})] \to %%\hat{F}_h[(c^{(\infty)},D^{(\infty)})]=\hat{F}_{h, \infty}$. 
%\begin{equation}
%\label{hatFhinfty}
%\hat{F}_{h, \infty} = \lim_{t\to \infty} \hat{F}_{h}
%[c^{(t)}, D^{(t)}] = \lim_{r \to \infty} \hat{F}_h
%[c^{(t_r)}, D^{(t_r)}] = \hat{F}_h [c^{(\infty)}, D^{(\infty)}]. 
%\end{equation}

Let us denote $(c^{(t)}, D^{(t)}) = (c^{(t)}_1, \dots, c^{(t)}_M; u^{(t)}, v^{(t)}, w^{(t)})$
$ (t = 0, 1,  \dots).$ For any $s, i, j, k \in \Z$ $(1 \le s \le M$ and $ 0 \le i, j, k \le N-1)$ 
 and any $t \ge 0$, we define $\zeta^{(t)}_{s, i+1/2, j, k}$
to be the unique solution to \eqref{zetasi} with $c^{(t)}_{s, i, j, k}$, 
$c^{(t)}_{s, i+1, j, k}$, and $u^{(t)}_{s, i+1/2, j, k}$ replacing 
those without the superscript $(t)$. Similarly, we define 
$\zeta^{(t)}_{s, i, j+1/2, k}$ and $\zeta^{(t)}_{s, i, j, k+1/2}$; 
cf.\ \eqref{zetasj} and \eqref{zetask}.
%%and collect them into a vector-valued grid function $\zeta_s^{(t)}.$ 
%%with $c_s$ and $u, v, w$ replaced by $c_s^{(t)}$ and $u^{(t)}, v^{(t)}, w^{(t)}$, the components
%%of $(c^{(t)}, D^{(t)})$, respectively. We also denote by $\zeta_s^{(t)}$ the vector of components being
%%those $\zeta$ values at the all different $i, j, k.$ 
%By Lemma~\ref{l:localalg4PB} (Part (ii) of (1)), all  these values 
%%depend continuously on $(c^{(t)}, D^{(t)})$ for a fixed $t$. 
We claim that 
\begin{equation}
    \label{key}
\zeta^{(t)}_{s, i+1/2, j, k} \to 0, \quad 
\zeta^{(t)}_{s, i, j+1/2, k}\to 0, \quad 
\mbox{and} \quad 
\zeta^{(t)}_{s, i, j, k+1/2}\to 0 
\quad \mbox{as } t \to \infty.
\end{equation}
%%for all $s, i, j, k.$
We shall prove the first convergence as the other two are similar. 
%Note that $(c^{(t)}, D^{(t)})$ is obtained from $(c^{(t-1)}, D^{(t-1)})$ by a sequence of 
%$3 M N^3$ local updates ($M$ components of $c$, $N^3$ grid points, and $3$ edges 
%associated with each grid point). 

Fix $t,$ $s,$  $i,$ $j,$ and $ k $. The values of $c^{(t)}_{s, i, j, k}$, 
$c^{(t)}_{s, i+1, j, k}$, and $u^{(t)}_{s, i+1/2, j, k}$, which are the only components
of $c^{(t)}$ and $D^{(t)}$ used to define $ \zeta^{(t)}_{s, i+1/2, j, k} $ 
(cf.\ 
%%\eqref{updatecsi}, \eqref{updateui}, and 
\eqref{zetasi}--\reff{updateui}),  are possibly obtained by 
several local updates (instead of just one single update) 
%%from some $(c^{(\tau)}, D^{(\tau)})$ with $\tau < t < \tau + 3 M N^3$ 
%%in the cycle of local updates
%%, with the number of such updates bounded by $3 M N^3$, 
%%from $(c^{(t-1)}, D^{(t-1)})$ to $(c^{(t)}, D^{(t)})$
at grid points nearby and including $(i, j, k)$. 
%(For instance, it is possible that $c^{(t)}_{s, i+1, j, k}$ alone is updated finally 
%while the other values are updated beforehand.)
%In each update, the changes of concentration $c^{(t-1)}_s$ and displacement $D^{(t-1)}$ 
%at these points are proportional to the perturbation $\zeta$ values. 
Assume that the last local update that determines all 
$c^{(t)}_{s, i, j, k}$, $c^{(t)}_{s, i+1, j, k}$, and $u^{(t)}_{i+1/2, j, k}$ is 
from $({c}^{(t'-1)}, {D}^{(t'-1)})$ to $(c^{(t')}, D^{(t')} ),$ where 
$t'\le t < t'+3 M N^3.$ This means that 
$c^{(t)}_{s, i, j, k} = c^{(t')}_{s, i, j, k}$, 
$c^{(t)}_{s, i+1, j, k} = c^{(t')}_{s, i+1, j, k}$, 
and $u^{(t)}_{i+1/2, j, k} = u^{(t')}_{i+1/2, j, k},$ and 
hence $\zeta^{(t')}_{s, i+1/2, j, k} = \zeta^{(t)}_{s, i+1/2, j, k}. $
The update is given by 
%%from $({c}^{(t'-1)}, {D}^{(t'-1)})$ to $(c^{(t')}, D^{(t')} )$ leads to 
\begin{equation*}
  %  \label{tilde1}
c^{(t')}_{s, i,j, k} = {c}^{(t'-1)}_{s, i,j, k} + {\delta}_i^{(t'-1)},\quad
 c^{(t')}_{s, i+1, j, k} = {c}^{(t'-1)}_{s, i+1, j, k}+ {\delta}_{i+1}^{(t'-1)}, \quad 
u^{(t')}_{s, i+1/2, j, k} =  {u}^{(t'-1)}_{s, i+1/2, j, k}+
{\delta}_{i+1/2}^{(t'-1)}.
\end{equation*}
Some of these perturbations ${\delta}_i^{(t'-1)},$ 
$ {\delta}_{i+1}^{(t'-1)}$, and ${\delta}_{i+1/2}^{(t'-1)}$ 
maybe $0$ but at least one of them is nonzero. Assume that this last local
update is associated with an edge connecting
some grid points $(l, m, n)$ and  $(l+1, m, n)$ or $(l, m+1, n)$ or
$(l, m,n+1)$ and with the species $s'$ that may be different from $s.$ If we denote the corresponding
optimal perturbation by ${\zeta}^{(t'-1)}_{s', l, m, n}$ 
(cf.\ \reff{zetasi}, \reff{zetasj}, and \reff{zetask}), then we can write
\begin{equation*}
 %%   \label{tilde2}
{\delta}_i^{(t'-1)} = \sigma_i {\zeta}^{(t'-1)}_{s', l, m, n},  \quad
{\delta}_{i+1}^{(t'-1)} = \sigma_{i+1} {\zeta}^{(t'-1)}_{s', l, m, n}, \quad  
{\delta}_{i+1/2}^{(t'-1)} = - \sigma_{i+1/2} h q_{s'} {\zeta}^{(t'-1)}_{s', l, m, n},
\end{equation*}
where $\sigma_i, \sigma_{i+1}, \sigma_{i+1/2} \in \{ 0, 1, -1 \}$
and at least one of them is nonzero. 
%%$\tilde{\zeta}^{(t)}_{s, i+1/2, j, k} 
%%\in (-\tilde{c}^{(t)}_{s, i+1, j, k}, \tilde{c}^{(t)}_{s, i, j, k})$ is the optimal perturbation, 
%%defined to replace $\zeta_{s, i+1, j, k}$ as the unique solution to \eqref{zetasi} with
%%$c_{s, i, j, k},$ $c_{s, i+1, j, k}$, and $u_{i+1/2, j, k}$ replaced by 
%%$\tilde{c}^{(t)}_{s, i,j, k}$, $ \tilde{c}^{(t)}_{s, i+1, j, k}, $ and 
%%$ \tilde{u}^{(t)}_{s, i+1/2, j, k}$, respectively. 
By Lemma~\ref{l:localalg4PB} (Part (iii) of (1)), $({\zeta}^{(t'-1)}_{s', l, m,n})^2$
is bounded by the energy change resulting from this local update.
%%which is less than or equal to those by a cycle of local updates. 
Consequently, it follows from
\reff{definedeltat}, \reff{drto0}, and the fact that $t'\to \infty$ if $t\to \infty$ that 
%%$t' \le t < t'+3 M N^3$ that 
\begin{equation}
\label{tilde3}
\lim_{t\to \infty} {\zeta}^{(t'-1)}_{s', l, m,n}  =0.
\end{equation}
Therefore, by the formulas of local update (cf.\ \reff{updatecsi} and \reff{updateui}), 
%%This and \reff{tilde1} and \reff{tilde2} imply that
\begin{equation}
    \label{tilde4}
\lim_{t\to \infty} \left[ (c^{(t')}, D^{(t')}) - ({c}^{(t'-1)}, {D}^{(t'-1)}) \right] = 0.
\end{equation}
By Lemma~\ref{l:localalg4PB} (Part (ii) of (1)), 
${\zeta}^{(t'-1)}_{s', l, m, n}$ and $\zeta^{(t')}_{s, i+1/2, j, k}$
depend respectively on $({c}^{(t'-1)}, {D}^{(t'-1)})$ and 
$(c^{(t')}, D^{(t')})$ Lipschitz-continuously. 
Therefore, it follows from \reff{tilde4} that 
$\zeta^{(t')}_{s, i+1/2, j, k} - {\zeta}^{(t'-1)}_{s', l, m, n}
\to 0$ as $t \to \infty.$ Consequently, by \reff{tilde3} again, 
$\zeta^{(t)}_{s, i+1/2, j, k} = \zeta^{(t')}_{s, i+1/2, j, k}  \to 0$ as $t \to \infty.$

We now prove 
   $  (c^{(t)},D^{(t)}) \to (\hat{c}^h_{\rm min},\hat{D}^h_{\rm min})$ 
   which implies  $ \hat{F}_h [c^{(t)}, D^{(t)}] \to 
   \hat{F}_h [\hat{c}^h_{\rm min}, \hat{D}^h_{\rm min}].$
Assume that 
    \begin{equation}
    \label{trinfty}
    \lim_{r\to \infty} (c^{(t_r)}, D^{(t_r)}) = (c^{(\infty)},D^{(\infty)})
    \end{equation}
  %%  for an arbitrary 
    for a convergent subsequence $\{ (c^{(t_r)}, D^{(t_r)})\}_{r=1}^\infty$
    of $\{ (c^{(t)},D^{(t)})\}_{t=1}^\infty$ and some discrete and vector-valued
    functions $c^{(\infty)}$ and $D^{(\infty)}. $
    We show that  
    $(c^{(\infty)},D^{(\infty)}) =(\hat{c}^h_{\rm min}, \hat{D}^h_{\rm min}).$ 
    This will complete the proof.  
    %%Note that the limit is with respect to the point-wise convergence of
   %% functions, which is equivalent to the convergence with respect to any norm as
%%$X_{\rho, h}$ is finitely dimensional. 
    Since clearly $(c^{(\infty)},D^{(\infty)}) \in X_{\rho, h}$, by 
    Theorem~\ref{t:DiscretePBEnergy}, we need only to show that 
    $c^{(\infty)}_{s, i, j, k} > 0$ for all $s, i, j, k$ and 
       %% all the components of $c^{(\infty)}$ are positive on all the grid points
$(c^{(\infty)}, D^{(\infty)})$ is in local equilibrium, i.e., it satisfies
    \eqref{logloglog}.
    
%We now prove the positivity of the components of $c^{(\infty)}.$ 
If there exists $s \in \{ 1, \dots, M \}$ such that $c^{(\infty)}_s=0$ 
at some grid point, then by \reff{hcons} and the nonnegativity
of $c^{(\infty)}_s$, we may assume without loss of generality that 
$\alpha_\infty:=c^{(\infty)}_{s,l,m,n}>0$ but $c^{(\infty)}_{s,l+1,m,n}=0$ for some $(l, m, n).$ 
Let $c^{(\infty)} = (c^{(\infty)}_1, \dots, c^{(\infty)}_M)$ and $D^{(\infty)} = 
(u^{(\infty)}, v^{(\infty)}, w^{(\infty)})$. 
It follows from \reff{trinfty} that as $r\to \infty$, 
\[
\alpha_r:=c^{(t_r)}_{s, l, m, n} \to \alpha_\infty > 0, \quad 
\beta_r:=c^{(t_r)}_{s, l+1, m, n} \to 0, \quad 
\gamma_r: = u^{(t_r)}_{s, l+1/2, m, n} \to \gamma_\infty := u^{(\infty)}_{s, l+1/2, m, n}.
\]
By \reff{key}, $\zeta_r := \zeta^{(t_r)}_{s, l+1/2, m, n} \to 0$. 
%%as $r \to \infty.$ 
%in the local update of $c^{(t_r)}_{s, l, m, n}, $ $c^{(t_r)}_{s, l+1, m, n}$, 
%and $u^{(t_r)}_{s, l+1/2, m, n}$ satisfies that $\zeta_r \to 0$ as $r \to \infty. $
On the other hand, by \eqref{zetasi}, $\zeta_r $ is uniquely determined by 
\[
\log (\beta_r + \zeta_r) - \log(\alpha_r - \zeta_r) +a \zeta_r - b\gamma_r = 0,
\]
where $a = h^2 q_s^2 / \ve_{l+1/2, m, n}$ and $ b = hq_s / \ve_{l+1/2, m, n}$
are independent of $r.$ As $r\to \infty$, the left-hand side of this equation 
diverges to $-\infty$, while the right-hand side remains $0$. This is a contradiction. 
Thus $c^{(\infty)}_{s, i,j, k} > 0$ for all $s, i, j, k.$

%Finally, we prove that $(c^{(\infty)}, D^{(\infty)})$ is in local equilibrium. 
Fix $s, i, j, k$ and define $\zeta^{(\infty)}_{s, i+1/2, j, k}$ by \reff{zetasi}
with $c^{(\infty)}_{s, i, j, k}$, $c^{(\infty)}_{s, i+1, j, k}$, and 
$u^{(\infty)}_{i+1/2, j, k}$ replacing
$c_{s, i, j, k}$, $c_{s, i+1, j, k}$, and $u_{i+1/2, j, k},$ respectively. 
%% denote by $\zeta^{(\infty)}_{s, i+1/2, j, k}$ 
%%for  $c^{(\infty)}_{s, i, j, k}$, $c^{(\infty)}_{s, i+1, j, k}$, and 
%%$u^{(\infty)}_{i+1/2, j, k};$ cf.\ \eqref{zetasi}. 
%%with $\zeta^{(t)}_{s,i+1/2, j, k}$, 
%%$c^{(t)}_{s, i,j, k}$, $c^{(t)}_{s, i+1,j, k}$, and $u^{(t)}_{s, i+1/2, j, k}$
%%replacing their counterparts without the superscript $ (t).$ 
Then, by Part (ii) of (1) of Lemma~\ref{l:localalg4PB} and \eqref{trinfty}, 
 $ \zeta^{(t_r)}_{s, i+1/2, j, k} \to \zeta^{(\infty)}_{s, i+1/2, j, k}$ as $r \to \infty.$
%%\begin{equation}
%%    \label{zetatoinfty}
 %%   \lim_{r\to \infty} \zeta^{(t_r)}_{s, i+1/2, j, k} = \zeta^{(\infty)}_{s, i+1/2, j, k}.
%%\end{equation}
But $ \zeta^{(t_r)}_{s, i+1/2, j, k} \to 0 $ by \reff{key}. 
Hence $\zeta^{(\infty)}_{s, i+1/2, j, k} = 0.$
Similarly, $\zeta^{(\infty)}_{s, i, j+1/2, k} = \zeta^{(\infty)}_{s, i, j, k+1/2}=0$. 
Since $s, i, j, k$ can be arbitrary, 
%this means that no local updates on 
%$(c^{(\infty)}, D^{(\infty)})$ can further reduce the energy. Therefore, by 
Part (2) of Lemma~\ref{l:localalg4PB} implies that 
$(c^{(\infty)}, D^{(\infty)})$ is in local equilibrium.
\end{proof}

\section{Numerical Tests}
\label{s:NumericalTests}

%\BL{Test 1: constant $\ve$, $h^2$ L2 and L infty. for P. Test 2: variable $\ve$, not sine, cosine. 
%also $h^2$ for L2 and L infinity. for P.  Test 3. PB: variable $\ve$. L2 and L infinity. $h^2.$
%For each test: 2 figures. One with 3 subfigures - energy decay vs. local alg iteration steps, 
%- L2 decay vs local alg steps, - L infinity dacay vs local algo steps. Another one: (a) $O(h^2)$ for
%L2 and (b) $O(h^2)$ for L infty.}

In this section, we conduct three
%%a series of 
numerical tests to show the finite-difference approximation errors and 
demonstrate the convergence of the local algorithms. 
The computational box in all these tests is $[0,2]^3$ (i.e., $L = 2$).

\medskip

\noindent
{\it Test 1. The Poisson energy with a constant permittivity.}
%% $\ve$}. 
We set 
\[
\ve = 1, \quad  
\phi (x_1, x_2, x_3) = - \cos (\pi x_1) \cos (\pi x_3) \cos (\pi x_3), 
\quad \mbox{and} \quad \rho = - \Delta \phi.
\]
Then $\phi \in \mathring{H}^1_{\rm per}(\Omega) $ is the unique solution
to Poisson's equation  $\Delta \phi = - \rho$
with the $[0, 2]^3$-periodic boundary condition, and $D := - \nabla \phi$ is the unique
minimizer of the Poisson energy functional $F: S_\rho \to \R.$ 
For a finite-difference grid with grid size $h = L/N$ for some $N \in \N$, we denote 
by $D_h \in S_{\rho, h}$ the finite-difference displacement that minimizes the
discrete energy $F_h: S_{\rho, h} \to \R$. We also denote by $D^{(k)}_h$ $(k = 0, 1, \dots)$
the iterates produced by the local algorithm. 
Figure~\ref{f:ConRs1} plots the discrete energy $F_h [D_h^{(k)}]$, $L^2$-error 
$\| \sP_h D - D_h^{(k)}\|_h$, and $L^\infty$-error 
$ \| \sP_h D - D_h^{(k)} \|_\infty$  vs.\ the
iteration step $k$ of local update with the grid size $h = L/N = 2/160 = 0.0125$.
We observe a fast decrease of the energy at the
beginning of iteration and then slow decrease of the energy afterwards. 
The errors converge to some values that are set by the grid size $h.$ 
In Figure~\ref{f:DConOrder1}, we plot in the log-log scale 
the $L^2$ and $L^\infty$-errors for the approximation $D_h$ of the exact minimizer $D$ and also
for the approximation $E_h:=m_h[D_h]/\ve$ of the electric field $-\nabla \phi$, respectively, against the 
finite-difference grid size $h.$ We observe the $O(h^2)$ convergence rates as predicted
by Theorem~\ref{t:L2Poisson} and Corollary~\ref{c:D4GradphiP}.

\vspace{-2 mm}

\begin{figure}[h] 
\begin{center}
  \includegraphics[scale=0.37]{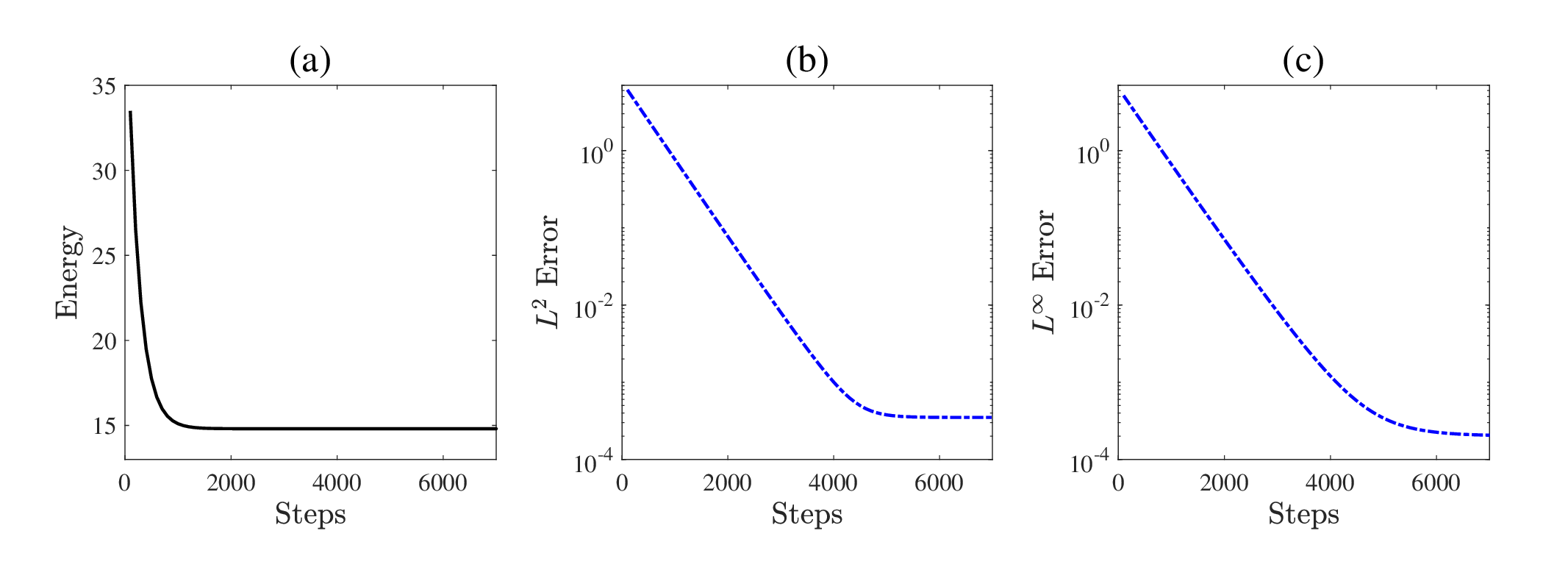}	
  \end{center}
  \vspace{-2em}  
  	\caption{
   {\small {The discrete energy~(a), $L^2$-error~(b), and $L^{\infty}$-error~(c)   
for the displacement $D^{(k)}_h$ vs.\ the 
iteration step $k$ in the local algorithm for Test 1.}}}
 \label{f:ConRs1}
\end{figure}

\vspace{-8 mm}

\begin{figure}[h]
 \centering
  	\includegraphics[width=3.2in,height=2in]{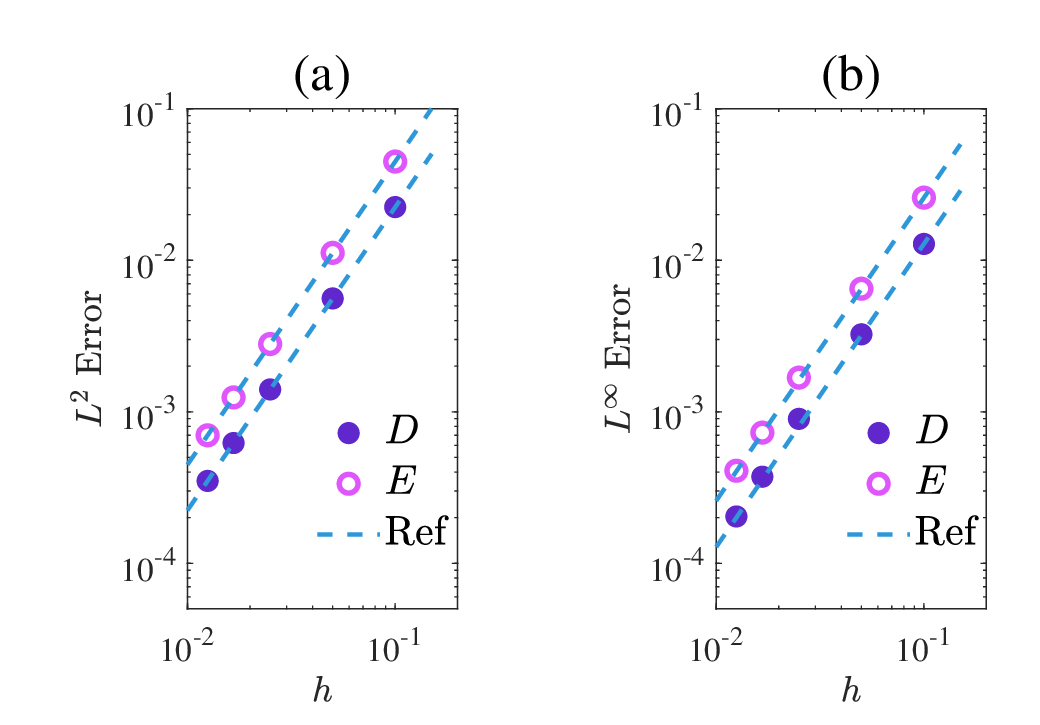}
  	\caption{{\small {Log-log plots of the $L^2$-error (a) 
and the $L^{\infty}$-error (b) for the approximation $D_h$
   of the displacement $D$ (indicated by $D$) and the 
reconstructed approximation $E_h:=m_h[D_h]/\ve$ of the
   electric field $E := -\nabla \phi$ (indicated by $E$) for Test 1. 
   The blue dashed lines are reference lines indicating
   the $O(h^2)$ convergence rate.}}}
   \label{f:DConOrder1} 
\end{figure}

%%\medskip

\noindent
{\it Test 2. The Poisson energy with a variable permittivity.}
%% $\ve$.}
We set 
\begin{align*}
&\ve (x_1, x_2, x_3) = 3 - \cos (\pi x_1), 
\\
& \phi(x_1, x_2, x_3) =f(x_1) \cos (\pi x_2) \cos (\pi x_3),
\\
& f(x_1) = \left\{ 
\begin{aligned}
&e^{\frac{1}{  (x_1-1)^2-0.5^2} } & & \mbox{if } |x_1-1| < 0.5,
\\
&0  & & \mbox{if } 0 \le x_1 \le 0.5 \mbox{ or } 1.5 \le x_1 \le 2, 
\end{aligned}
\right.
\end{align*}
 first for $(x_1, x_2, x_3)\in [0, 2]^3$ and then extend them $[0, 2]^3$-periodically 
to $\R^3,$
%%where $f(x_1) = \exp{\left( ((x_1-1)^2-0.5^2)^{-1}\right)}$ if $|x_1-1| < 0.5 $ and 
%%$f(x_1) = 0$ if $0 \le x_1 \le 0.5$ or $1.5 \le x_1 \le 2.$
%%\BL{Should that be $(x-1)^2 - 0.5^2$ ? Otherwise, it is not continuous 
%%at $x = 1.5$. Please check the code. Thanks.}
%\[
%\mbox{\BL{Will remove this eq.}} \qquad \qquad 
%f(x)=\left\{\begin{aligned}
%    & e^{\frac{1}{x^2-0.5^2}} & \mbox{if } |x-1|<0.5,\\
 %   & 0 & \mbox{if } |x-1|\geq 0.5.
%\end{aligned} \right.
%\]
Note that $f$ is a $C^{\infty}$-function. We then define  $ \rho = - \nabla \cdot \ve \nabla \phi$
and $D = -\ve \nabla \phi.$ So, $\phi$ is the periodic solution to Poisson's equation
$\nabla \cdot \ve \nabla \phi = -\rho$ and $D \in S_{\rho}$ is the minimizer of 
$F: S_\rho \to \R.$
As in Test 1, for a finite-difference grid with grid size $h = L/N$ for some $N \in \N$, we denote 
by $D_h \in S_{\rho, h}$ the finite-difference displacement that minimizes the
discrete energy $F_h: S_{\rho, h} \to \R$. We also denote by $D^{(k)}_h$ $(k = 0, 1, \dots)$
the iterates produced by the local algorithm with shift. 
Figure~\ref{f:ConRs2} plots the discrete energy $F_h [D_h^{(k)}]$, 
$L^2$-error $\| \sP_h D - D_h^{(k)}\|_h$,
and $L^\infty$-error $ \| \sP_h D - D_h^{(k)} \|_\infty$  vs.\ the iteration step
$k$ of local update with the grid size $h = L/N = 2/160 = 0.0125$.
We again observe a fast decrease of the energy at the
beginning of iteration and then slow decrease of the energy afterwards. The errors converge to 
some values that are set by the grid size $h.$ In Figure~\ref{f:DConOrder2}, we plot in the 
log-log scale the $L^2$ and $L^\infty$ errors for the 
approximation $D_h$ of the exact minimizer $D$ and also
for the approximation $E_h:=m_h[D_h]/\ve$ of the electric field $-\nabla \phi$, respectively, against the 
finite-difference grid size $h.$ We observe the $O(h^2)$ convergence rate as predicted
by Theorem~\ref{t:L2Poisson} and Corollary~\ref{c:D4GradphiP}.

\vspace{-2 mm}

\begin{figure}[h]
%%[htpp]
  	\centering
  %\hspace{-2em}
  \includegraphics[scale=0.37]{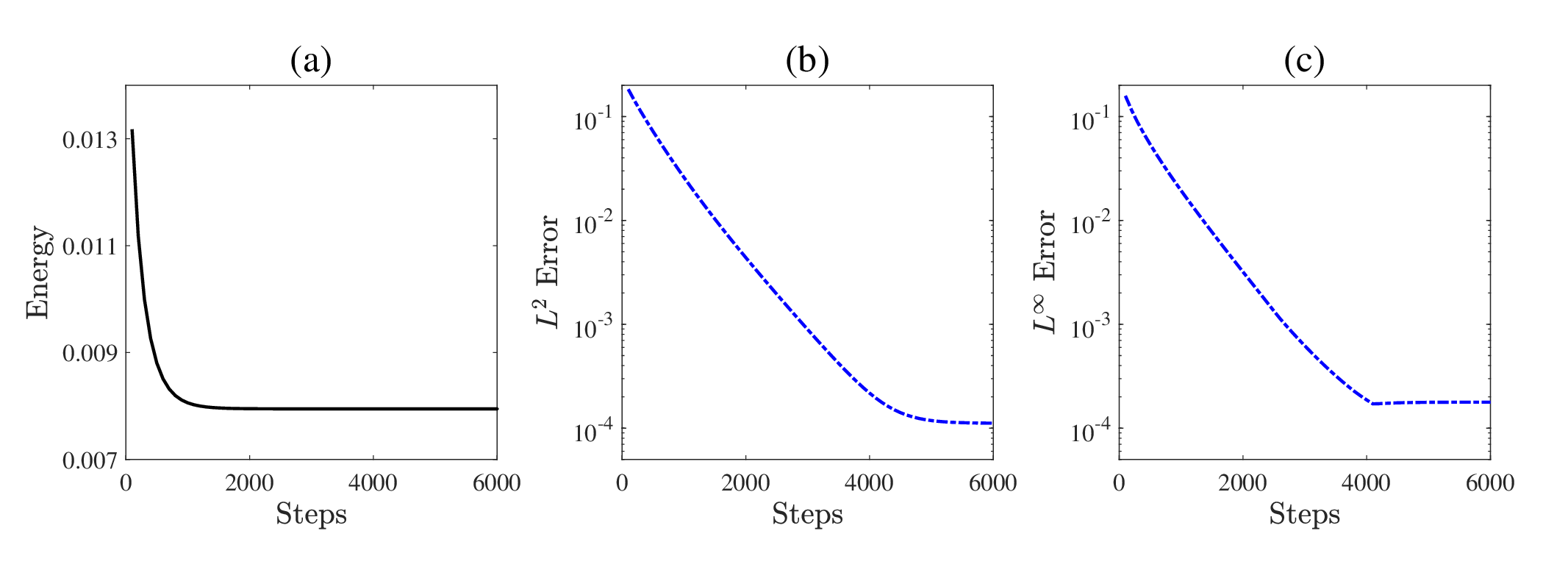}
  \vspace{-2 mm}
    	\caption{
   {\small {The discrete energy~(a), $L^2$-error~(b), and $L^{\infty}$-error~(c)   for 
 the displacement $D^{(k)}_h$ vs.\ the iteration step $k$ in the local algorithm 
with shift for Test 2.}}}
%%  	\caption{The evolution of energy~(a), $L^2$~(b) and $L^{\infty}$~(c) numerical errors for electric displacements in Test 2. }
  	\label{f:ConRs2}
\end{figure}

\vspace{-3 mm}

\begin{figure}[h]
%%tpp]
  	\centering
  	\includegraphics[width=3.2 in,height=2.1in]{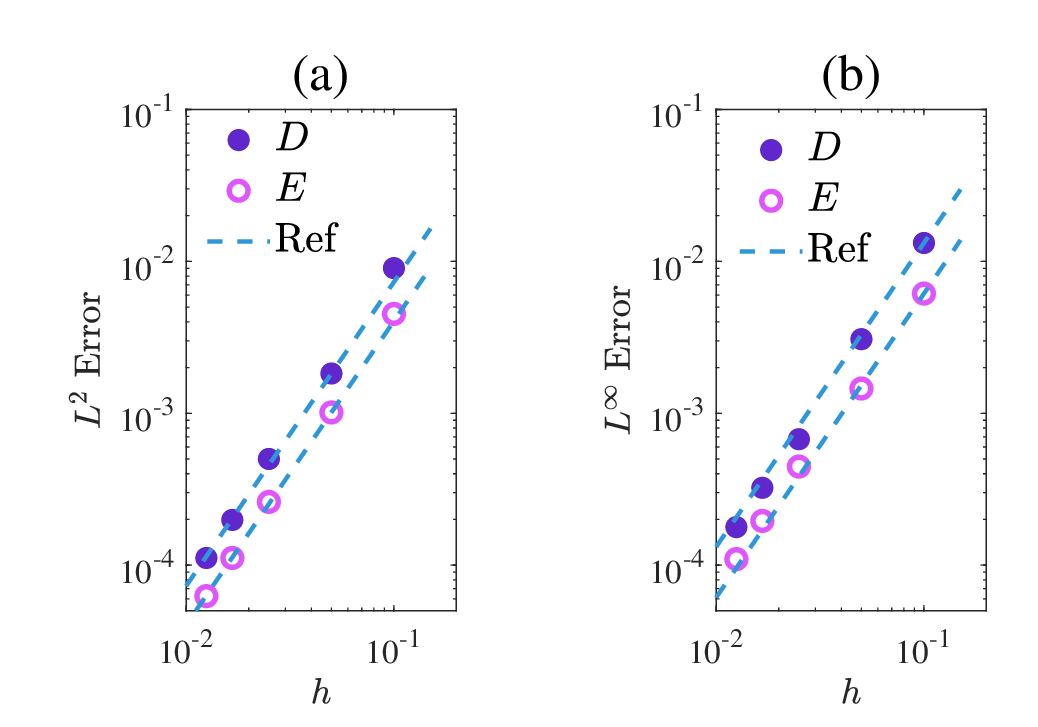}
%%   \vspace{-1em}
%%  	\caption{$L^2$ (a) and $L^{\infty}$ (b) numerical errors for electric 
%%displacements in Test 2. The blue dashed line is $O(h^2)$ for reference.}
     	\caption{{\small {Log-log plots of the $L^2$-error (a) and 
the $L^{\infty}$-error (b) for the approximation $D_h$
   of the displacement $D$ (marked $D$) and the reconstructed 
approximation $E_h:=m_h[D_h]/\ve$ of the
   electric field $E := -\nabla \phi$ (marked $E$) for Test 2. 
   The blue dashed lines (marked Ref) are reference lines indicating
  the $O(h^2)$ convergence rate.}}}
  	\label{f:DConOrder2} 
\end{figure}

\medskip

\noindent
{\it Test 3: The Poisson--Boltzmann (PB) energy with a variable permittivity.}
We define $M = 2$, $q_1 = - q_2 = 1$, and 
\begin{align*}
&\ve(x_1, x_2, x_3) = 3 - \cos (\pi x_1 \cos (\pi x_2) \cos (\pi x_3),
\\
&\phi(x_1, x_2, x_3) =-\cos (\pi x_1 ) \cos (\pi x_2)  \cos (\pi x_3), 
\\
& 
c_s = e^{-q_s \phi} \quad (s = 1, 2) \quad \mbox{and} \quad D = - \ve \nabla \phi, 
\\
& N_s= \int_\Omega e^{-q_s \phi} dx, \quad s = 1, 2, 
\\
& \rho (x) = - \nabla \cdot \ve \nabla \phi(x) -\sum_{s=1}^2
     N_s q_s \left( \int_\Omega e^{-q_s \phi(x} dx \right)^{-1} e^{-q_s \phi(x)}
\\
& \qquad =  - \nabla \cdot \ve \nabla \phi(x) -\sum_{s=1}^2 q_s e^{-q_s \phi(x)}, 
\end{align*}
where $x = (x_1, x_2, x_3).$
Note that we do not need to compute the integral that defines $N_s$.  
%\BL{how about changing $\cos (\pi (x - 1)) $ to $-\cos (\pi x)$? Same for $y$ and $z$?
%We discussed this - still don't understand why need to show the center of $[0, 2]$?}
%%We also define $c_s = e^{-q_s \phi}$ with $s = 1, 2$
%%%%and $c_1(x, y, z) =e^{- \phi(x, y, z)}, $  $ c_2 (x, y, z) = e^{\phi(x, y, z) }$, and 
%%and $D = - \ve \nabla \phi.$ 
It can be verified that $\phi$ is the unique periodic solution
to the CCPBE \eqref{CCPBE}, Moreover,  
$(c, D) = (c_1, c_2; D) \in X_\rho$ is the unique minimizer of $\hat{F}: X_\rho \to \R \cup \{ + \infty \}.$
%%$ \quad s=1,2,$ where $q_s=(-1)^{s+1}$ and potential
%% In the Boltzmann relation, ionic concentrations should be 
%\[
%c^e_s=N_s e^{-z_s \phi} \left(\int_\Omega e^{-z_s \phi}  dx\right)^{-1} 
%\]
For a given finite-difference grid of size $h$, we denote by 
$(c_h, D_h) = (c_{1,h}, c_{2, h}; D_h)\in X_{\rho, h}$ the unique 
minimizer of the discrete PB energy functional $\hat{F}_h: X_{\rho, h}
\to \R$. We also denote by 
$(c_h^{(k)}, D_h^{(k)}) = (c_{1,h}^{(k)}, c_{1, h}^{(k)}; D_h^{(k)})$
$(k=0, 1, \dots)$ the iterates produced by the local algorithm. Figure~\ref{f:ConRs3} 
plots the discrete energy $\hat{F}_h [c^{(k)}_h, D_h^{(k)}]$, $L^2$-errors 
$\| c_{s} - c_{s, h} \|_h $ $(s = 1, 2)$ and $\| \sP_h D - D_h^{(k)}\|_h$, 
and $L^\infty$-errors 
$\| c_{s} - c_{s, h} \|_\infty $ $(s = 1, 2)$ and $\| \sP_h D - D_h^{(k)}\|_\infty$, 
 vs.\ the iteration step $k$  of local 
update with $h=L/N=2/160=0.0125$. We observe the monotonic decrease of all the
energy and errors. In fact, the errors converge to 
some values that are set by the grid size $h.$ 
In Figure~\ref{f:DConOrder3}, we plot in the log-log scale 
the $L^2$ and $L^\infty$ errors for the approximation $c_{s,h}$ of 
$c_s$ $(s = 1, 2)$ and $D_h$ of $D$, and also the approximation 
$E_h:=m_h[D_h]/\ve$ of the electric field $-\nabla \phi$, respectively, against the 
finite-difference grid size $h.$ We observe the $O(h^2)$ convergence rate as predicted
by Theorem~\ref{t:pbL2error} and Corollary~\ref{c:D4GradphiPB}.

%Finally, in Table~\ref{Table:time}, we compare the computational time (in seconds) between
%the local algorithm for PB without and with additional local updates of the displacement, 
%where $N$ is the number of grid points along one coordinate direction. 
%We observe that additional local updates of the displacement reach the error tolerance 
%three times faster than that without those additional updates. 

\vspace{-2 mm}

\begin{figure}[h]
%%tbb]
  	\centering
  	%\hspace{-2em}
   \includegraphics[scale=0.37]{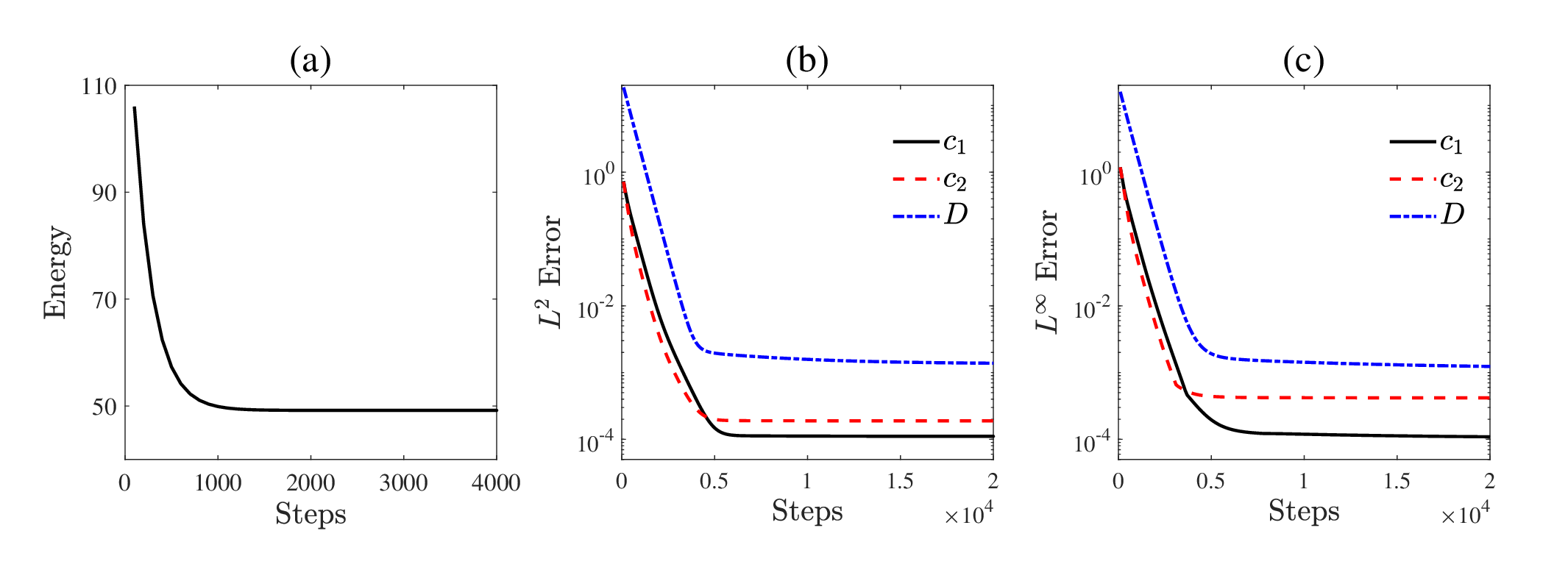}
   \vspace{-2mm}
  	\caption{\small{
   The discrete energy~(a), the $L^2$-error~(b), and the $L^{\infty}$-error~(c) for 
 the approximations $(c_h^{(k)}, D^{(k)}_h)$ vs.\ the iteration step $k$ 
in the local algorithm for Test 3.}}
 %  The evolution of energy~(a), $L^2$~(b) and $L^{\infty}$~(c) numerical errors for ionic concentrations and electric displacements in Test 3. }}
  	\label{f:ConRs3}
\end{figure}

\vspace{-3 mm}

\begin{figure}[h]
%%tbb]
  	\centering
  	\includegraphics[width=3.4in,height=2.3in]{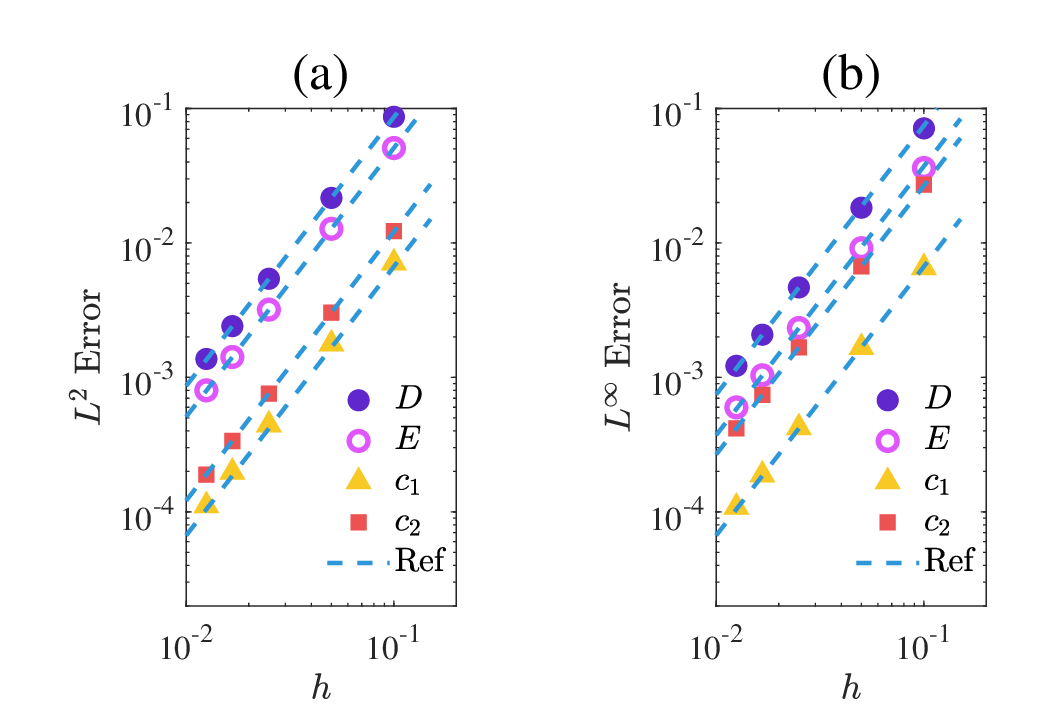}
   \vspace{-2mm}
  	\caption{\small{
   Log-log plots of the $L^2$-error (a) and the $L^{\infty}$-error (b) 
for the approximation 
   of $(c_h, D_h) = (c_{1,h}, c_{2, h}; D_h) $ of $(c, D) = (c_1, c_2, D)$  (marked
   $c_1$, $c_2$, and $D$), respectively, 
   and for the approximation $E_h:=m_h[D_h]/\ve$ of the
   electric field $E := -\nabla \phi$ (marked $E$) for Test 3. 
   The blue dashed lines (marked Ref) are reference lines indicating
   the $O(h^2)$ convergence rate.}}
%%   $L^2$ (a) and $L^{\infty}$ (b) numerical errors for electric displacements and ionic concentrations  in Test 3. The blue dashed line is $O(h^2)$ for reference.}}
  	\label{f:DConOrder3} 
\end{figure}

\addcontentsline{toc}{section}{Appendix}
%\noindent
%{\bf \huge Appendix}
%\phantomsection 
\section*{Appendix}
\label{s:Appendix}
\renewcommand{\thesection}{A}
\setcounter{equation}{0}

\begin{proof}[Proof of Lemma~\ref{l:FourierBasis}]
%%(1) These can be verified directly by the definition of $\nabla_h, 
%%$ $\nabla_{-h}$, and $\Delta_h.$ 
%%(1) This part can be proved by direct verifications. 
%%(2) Let $\Phi = (u, v, w) $  with $u, v, w \in V_h$.
%%+ V_h \sqrt{-1}.$
%{\color{blue}{[I simplified the proof - removed the summation by
%parts (still in the latex file but commented out).  Remove this entire proof or %move it to Appendix? -Bo]}}
The first discrete Green's identity  follows from an application of 
summation by parts and the periodicity. The second identity follows from the first one.

Let us use the symbol $\sqrt{-1}$ instead of $i$ to denote the imaginary unit: $\sqrt{-1}^2 = -1.$ 
For each grid point $(l, m, n)\in \Z^3,$ we define 
%%a complex-valued, $\overline{\Omega}$-periodic, grid function
$\xi^{(l, m, n)}: h\Z^3 \to \C$ by 
\[
\xi^{(l,m,n)}_{i,j, k} = L^{-3/2} 
e^{\sqrt{-1}2\pi  l i/N}e^{\sqrt{-1}2\pi m j/N}
e^{\sqrt{-1} 2 \pi n k /N }\qquad \forall i, j, k \in \Z.
\]
The system
$\{ \xi^{(l, m, n)}: l,m,n=0, 1, \dots, N-1\}$ is an orthonormal basis for the space of
all complex-valued, $\overline{\Omega}$-periodic, grid functions with respect to 
the inner product $\langle \cdot, \cdot \rangle_h$ defined in \reff{phipsih}. 

Let $\phi: h\Z^3 \to \C$ be $\overline{\Omega}$-periodic 
and satisfy $\sA_\Omega(\phi) = 0.$ 
Since $\xi^{(0, 0, 0)}$ is a constant function and 
$\langle \phi, \xi^{(0, 0, 0)} \rangle_h = \sA_h(\phi) = 0$, we have 
%%since $\sum_{p, q, r = 0}^{N-1} \phi_{p, q, r} = 0$. 
%for any $i, j, \, k \in \{ 0, \dots, N-1\}$ that 
\begin{align*}
&\phi_{i, j, k} = \sum_{l, m, n = 0}^{N-1} \langle \phi, 
\xi^{(l, m, n)} \rangle_h \xi^{(l, m, n)}_{i, j, k} 
= {\sum_{l, m, n}}' \langle \phi, \xi^{(l, m, n)} 
\rangle_h \xi^{(l, m, n)}_{i, j, k},  \quad 0 \le i, j, k \le N-1, 
\\
&\| \phi \|^2_h = \sum_{l,m,n=0}^{N-1} |\langle \phi, \xi^{(l,m,n)}  \rangle_h|^2
= {\sum_{l, m, n}}' |\langle \phi, \xi^{(l,m,n)}  \rangle_h|^2,
\end{align*}
where ${\sum'_{l, m, n}}$ denotes the sum over all $(l, m, n)$ such that
$0 \le l, m, n \le N-1$ and $(l, m, n) \ne (0, 0, 0).$
Hence, 
\[
\phi_{i+1, j, k}-\phi_{i, j, k} = {\sum_{l, m, n}}'
\langle \phi, \xi^{(l, m, n)} \rangle_h 
\xi^{(l, m, n)}_{i, j, k} \left( e^{\sqrt{-1} 2 \pi l /N} - 1\right).
\]
Consequently,  since $ \xi^{l,m,n} $ $(l,m,n = 0, \dots, N-1)$ 
are orthonormal, we have 
\begin{align*}
    & \sum_{i,j,k=0}^{N-1}( \phi_{i+1, j, k} - \phi_{i, j, k} )
    \overline{ ( \phi_{i+1, j, k} - \phi_{i, j, k})}
    \\
    & \quad =  \sum_{ i, j, k=0}^{N-1} 
    {\sum_{l,m,n}}'{\sum_{p, q, r}}' \langle \phi, \xi^{(l, m, n)}\rangle_h 
    \overline{\langle \phi, \xi^{(p, q, r)} \rangle_h }
    \xi^{(l, m, n)}_{i, j, k} \overline{ \xi^{(p, q, r)}_{i, j, k}} 
 %%   \\& \qquad \qquad \cdot
    \left( e^{\sqrt{-1} 2 \pi l /N} - 1\right)
    \overline{\left( e^{\sqrt{-1} 2 \pi p/N } - 1\right)}
    \\
    & \quad = 
       {\sum_{l,m,n}}'{\sum_{p, q, r}}' \langle \phi, \xi^{(l, m, n)}\rangle_h 
    \overline{\langle \phi, \xi^{(p, q, r)} \rangle_h }
    \left( e^{\sqrt{-1} 2 \pi l / N} - 1\right)
    \overline{\left( e^{\sqrt{-1} 2 \pi p/N } - 1\right)}
    \sum_{ i, j, k=0}^{N-1} 
       \xi^{(l, m, n)}_{i, j, k} \overline{ \xi^{(p, q, r)}_{i, j, k}} 
       \\
       & \quad = \frac{1}{h^3} 
       {\sum_{l,m,n}}' \left| \langle \phi, \xi^{(l, m, n)}\rangle_h \right|^2
    \left| e^{\sqrt{-1} 2 \pi l/N} - 1\right|^2
    \\
           & \quad = \frac{4}{h^3} 
       {\sum_{l,m,n}}' \left| \langle \phi, \xi^{(l, m, n)}\rangle_h \right|^2
    \sin^2 \left( \frac{\pi l}{N} \right),
\end{align*}
where we used the identity $ 1 - \cos ( 2 \pi l/N)=  2\sin^2 (\pi l/N ).$
Calculations for the differences $\phi_{i,j+1, k} - \phi_{i,j,k}$ and 
$\phi_{i,j,k+1} - \phi_{i,j,k}$ are similar. 

%%%%%%%%%%%%%%%%%
\begin{comment}

% Moreover, for any function $\phi$ in this space, we have
%%\in V_h + \sqrt{-1} V_h, $
\begin{align*}
 \phi_{i,j,k} = \sum_{l,m,n=0}^{N-1} \langle \phi, \xi^{(l,m,n)} \rangle_h 
\xi^{(l,m,n)}_{i,j,k} \quad \forall i, j, k \in \Z
 \quad \mbox{and} \quad 
 \| \phi \|^2_h = \sum_{l,m,n=0}^{N-1} |\langle \phi, \xi^{(l,m,n)}  \rangle_h|^2. 
\end{align*} 
If $0 \le l,m,n \le N-1 $ and $(l, m, n) \ne (0, 0, 0)$ then
 we can verify that $\mbox{Ave}_h (\xi^{(l, m, n)})=0;$ cf.\ \reff{Aveh}
 for the definition of $\mbox{Ave}_h(\cdot).$
The system of 
$\{ \xi^{(l, m, n)}: l,m,n=0, 1, \dots, N-1, (l, m, n) \ne (0, 0, 0)\}$
forms an orthonormal basis for the subspace of complex-valued, 
$\overline{\Omega}$-periodic, grid functions $\phi$ 
with the zero average $\mbox{Ave}_h (\phi) = 0.$ 
%%The dimension of  this subspace is $N^3-1.$
 %%  (3) Let $\xi^{(l)}_i = e^{\sqrt{-1} 2\pi L l i / N}$ for all $l, i \in \Z.$ Then
%%$\xi^{(l, m, n}_{i, j, k} = \xi^{(l)}_i \xi^{(m)}_j \xi^{(n)}_k$ for all 
%%$i,j,k, l, m, n\in \Z.$ 
%Fix $\phi \in \mathring{V}_h$. 

\end{comment}
%%%%%%%%%%%%%%%%%%%%%%%%%%%%

It now follows from 
\reff{dphidpsih} and the definition of $\nabla_h \phi$ that 
\[
\| \nabla_h \phi \|_h^2 = 
\frac{4}{h^2} {\sum_{l,m,n}}' \left| \langle \phi, \xi^{(l, m, n)}\rangle_h \right|^2
    \left[ \sin^2 \left( \frac{\pi l}{N} \right) + 
    \sin^2\left( \frac{\pi m}{N} \right) + \sin^2\left(
    \frac{\pi n}{N} \right) \right].
\]
Note that $\sin^2(\pi (N-1)/N)= \sin^2 (\pi / N)$ and that
$\sin x \ge (2/\pi) x $ if $x \in [0, \pi/2].$ Hence, if $1 \le l \le N-1$, then 
$ \sin^2({\pi l}/{N} ) \ge \sin^2 ( {\pi }/{N} ) \ge 
({2}/{N} )^2 = {4 h^2}/{L^2}.$
%\[
%\sin^2\left(\frac{\pi l}{N} \right) \ge \sin^2 \left( \frac{\pi }{N} \right) \ge 
%\left( \frac{2}{\pi} \frac{\pi}{N} \right)^2 = \frac{4 h^2}{L^2},
%\]
Finally, we have 
\[
\| \nabla_h \phi \|^2_h \ge  \frac{48}{L^2}  \sum_{l,m,n=0}^{N-1}
\left| \langle \phi, \xi^{(l, m, n)}\rangle_h \right|^2
= \left( \frac{4\sqrt{3}}{L} \right)^ 2 \| \phi \|^2_h,
\]
%\[
%\| \nabla_h \phi \|^2_h \ge \frac{4}{L^2} \sum_{l,m,n=0}^{N-1}
%\left| \langle \phi, \xi^{(l, m, n)}\rangle_h \right|^2 = \frac{4}{L^2} \| \phi \|^2_h,
%\]
leading to the desired inequality. 
\end{proof}

%%%%%%%%%%%%%%%%%%%%%%%%%%%%%%%%%%%%%%%%%%%%%%%%%%%%%%%%%%%%%%%%%%%%%%%%%%%%%%%%%%
\addcontentsline{toc}{section}{Acknowledgement}
\section*{Acknowledgment}

This work was supported in part by the US National Science Foundation
through the grant DMS-2208465 (BL), 
%%the National Science Foundation of China (NSFC)
%%through the grant 12071288 (XZ), Shanghai Science and Technology Commission, China, through 
%%grants 20JC1414100 and 21JC1403700 (XZ), the Strategic Priority Research Program of 
%%CAS, China, through grant XDA25010403 (XZ), the HPC Center of Shanghai Jiao 
%%Tong University, China (XZ),
the National Natural Science Foundation 
of China through the grant 12171319 (SZ).
The authors thank Professor Burkhard D\"unweg for helpful discussions and thank
Professor Zhenli Xu for his interest in and support to this work. 
BL and QY thank Professor Zhonghua Qiao 
for hosting their visit to The Hong Kong Polytechnic University in the summer of 2023
where this work was initiated.

%%%%%%%%%%%%%%%%%%%%%%%%%

\addcontentsline{toc}{section}{References}
%%\section*{References}

%%%%%%%%%%%%%%%%%%%%%%%%

%\setlength{\bibsep}{0.0pt}
\bibliographystyle{plain}
%\bibliography{charges}

\begin{thebibliography}{10}
	
	\bibitem{Adams75}
	R.~Adams.
	\newblock {\em Sobolev Spaces}.
	\newblock Academic Press, New York, 1975.
	
	\bibitem{AndelmanHandbook95}
	D.~Andelman.
	\newblock Electrostatic properties of membranes: {T}he {P}oisson--{B}oltzmann
	theory.
	\newblock In R.~Lipowsky and E.~Sackmann, editors, {\em Handbook of Biological
		Physics}, volume~1, pages 603--642. Elsevier, 1995.
	
	\bibitem{BSJHM_PNAS01}
	N.~A. Baker, D.~Sept, S.~Joseph, M.~J. Holst, and J.~A. McCammon.
	\newblock Electrostatics of nanosystems: {A}pplication to microtubules and the
	ribosome.
	\newblock {\em Proc. Natl. Acad. Sci. USA}, 98:10037--10041, 2001.
	
	\bibitem{DuenwegPRE09}
	M.~Baptista, R.~Schmitz, and B.~D{\"u}nweg.
	\newblock Simple and robust solver for the {P}oisson--{B}oltzmann equation.
	\newblock {\em Phys. Rev. E}, 80:016705, 2009.
	
	\bibitem{Beale_SINUM2009}
	J.~T. Beale.
	\newblock Smoothing properties of implicit finite difference methods for a
	diffusion equaiton in maximum norm.
	\newblock {\em SIAM J. Numer. Anal.}, 47(4):2476--2495, 2009.
	
	\bibitem{Chapman1913}
	D.~L. Chapman.
	\newblock A contribution to the theory of electrocapillarity.
	\newblock {\em Phil. Mag.}, 25:475--481, 1913.
	
	\bibitem{CDLM_JPCB08}
	J.~Che, J.~Dzubiella, B.~Li, and J.~A. McCammon.
	\newblock Electrostatic free energy and its variations in implicit solvent
	models.
	\newblock {\em J. Phys. Chem. B}, 112:3058--3069, 2008.
	
	\bibitem{DardenPME_JCP93}
	T.~A. Darden, D.~M. York, and L.~G. Pedersen.
	\newblock Particle mesh {E}wald: an ${N \cdot \log (N)} $ method for {E}wald
	sums in large systems.
	\newblock {\em J. Chem. Phys.}, 98:10089--10092, 1993.
	
	\bibitem{DavisMcCammon_ChemRev90}
	M.~E. Davis and J.~A. McCammon.
	\newblock Electrostatics in biomolecular structure and dynamics.
	\newblock {\em Chem. Rev.}, 90:509--521, 1990.
	
	\bibitem{deLeeuW_ElecPBCI80}
	S.~W. de~Leeuw, J.~W. Perram, and E.~R. Smith.
	\newblock Simulation of electrostatic systems in periodic boundary conditions.
	{I.} {L}attice sums and dielectric constants.
	\newblock {\em Proc. R. Soc. Lond. A}, 373:27--56, 1980.
	
	\bibitem{deLeeuW_ElecPBCII80}
	S.~W. de~Leeuw, J.~W. Perram, and E.~R. Smith.
	\newblock Simulation of electrostatic systems in periodic boundary conditions.
	{II.} {E}quivalence of boundary conditions.
	\newblock {\em Proc. R. Soc. Lond. A}, 373:57--66, 1980.
	
	\bibitem{DebyeHuckel1923}
	P.~Debye and E.~H{\"u}ckel.
	\newblock Zur theorie der elektrolyte.
	\newblock {\em Physik. Zeitschr.}, 24:185--206, 1923.
	
	\bibitem{EvansBook2010}
	L.~C. Evans.
	\newblock {\em Partial Differential Equations}, volume~19 of {\em Graduate
		Studies in Mathematics}.
	\newblock Amer. Math. Soc., 2nd edition, 2010.
	
	\bibitem{Ewald_1921}
	P.~P. Ewald.
	\newblock Die berechnung optischer und elektrostatischer gitterpotentiale.
	\newblock {\em Ann. Phys.}, 369(3):253--287, 1921.
	
	\bibitem{Fixman_JCP1979}
	M.~Fixman.
	\newblock The {P}oisson--{B}oltzmann equation and its applications to
	polyelectrolytes.
	\newblock {\em J. Chem. Phys.}, 70(11):4995--5005, 1979.
	
	\bibitem{FogolariBrigoMolinari02}
	F.~Fogolari, A.~Brigo, and H.~Molinari.
	\newblock The {P}oisson--{B}oltzmann equation for biomolecular electrostatics:
	a tool for structural biology.
	\newblock {\em J. Mol. Recognit.}, 15:377--392, 2002.
	
	\bibitem{FrenkelSmit_1996}
	D.~Frenkel and B.~Smit.
	\newblock {\em Understanding Molecular Simulation: From Algorithms to
		Applications}.
	\newblock Academic Press, 1996.
	
	\bibitem{GilbargTrudinger98}
	D.~Gilbarg and N.~S. Trudinger.
	\newblock {\em Elliptic Partial Differential Equations of Second Order}.
	\newblock Springer--Verlag, 2nd edition, 1998.
	
	\bibitem{Gouy1910}
	M.~Gouy.
	\newblock Sur la constitution de la charge \'electrique a la surface d'un
	\'electrolyte.
	\newblock {\em J. Phys. Th{\' e}or. Appl.}, 9:457--468, 1910.
	
	\bibitem{GrochowskiTrylska08}
	P.~Grochowski and J.~Trylska.
	\newblock Continuum molecular electrostatics, salt effects and counterion
	binding---{A} review of the {P}oisson--{B}oltzmann model and its
	modifications.
	\newblock {\em Biopolymers}, 89:93--113, 2008.
	
	\bibitem{Hille_Book2001}
	B.~Hille.
	\newblock {\em Ion Channels of Excitable Membranes}.
	\newblock Sinauer Associates, 3rd edition, 2001.
	
	\bibitem{HonigScience95}
	B.~Honig and A.~Nicholls.
	\newblock Classical electrostatics in biology and chemistry.
	\newblock {\em Science}, 268:1144--1149, 1995.
	
	\bibitem{Jackson99}
	J.~D. Jackson.
	\newblock {\em Classical Electrodynamics}.
	\newblock Wiley, New York, 3rd edition, 1999.
	
	\bibitem{Landau_Book93}
	L.~D. Landau, E.~M. Lifshitz, and L.~P. Pitaevski.
	\newblock {\em Electrodynamics of Continuous Media}.
	\newblock Butterwort-Heinemann, 2nd edition, 1993.
	
	\bibitem{Lee_JMathPhys2014}
	C.-C. Lee.
	\newblock The charge conserving {P}oisson--{B}oltzmann equations: {E}xistence,
	uniqueness, and maximum principle.
	\newblock {\em J. Math. Phys.}, 55:051503, 2014.
	
	\bibitem{Li_Nonlinearity09}
	B.~Li.
	\newblock Continuum electrostatics for ionic solutions with nonuniform ionic
	sizes.
	\newblock {\em Nonlinearity}, 22:811--833, 2009.
	
	\bibitem{Li_SIMA09}
	B.~Li.
	\newblock Minimization of electrostatic free energy and the
	{P}oisson--{B}oltzmann equation for molecular solvation with implicit
	solvent.
	\newblock {\em SIAM J. Math. Anal.}, 40:2536--2566, 2009.
	
	\bibitem{LiChengZhang_SIAP11}
	B.~Li., X.-L. Cheng, and Z.-F. Zhang.
	\newblock Dielectric boundary force in molecular solvation with the
	{P}oisson--{B}oltzmann free energy: {A} shape derivative approach.
	\newblock {\em SIAM J. Appl. Math.}, 71(10):2093--2111, 2011.
	
	\bibitem{LiLiuXuZhou_Nonlinearity2013}
	B.~Li, P.~Liu, Z.~Xu, and S.~Zhou.
	\newblock Ionic size effects: {G}eneralized {B}oltzmann distributions,
	counterion stratification, and modified {D}ebye length.
	\newblock {\em Nonlinearity}, 26:2899--2922, 2013.
	
	\bibitem{LiShields16}
	J.~Li and S.~Shields.
	\newblock Superconvergence analysis of {Y}ee scheme for metamaterial
	{M}axwell's equations on non-uniform rectangular meshes.
	\newblock {\em Numer. Math.}, 134:741--781, 2016.
	
	\bibitem{LiuZhang_2023}
	S.~Liu, Z.~Zhang, H.-B. Cheng, L.-T. Cheng, and B.~Li.
	\newblock Explicit-solute implicit-solvent molecular simulation with binary
	level-set, adaptive-mobility, and {GPU}.
	\newblock {\em J. Comput. Phys.}, 472:111673, 2023.
	
	\bibitem{M:JCP:2002}
	A.~C. Maggs.
	\newblock Dynamics of a local algorithm for simulating {C}oulomb interactions.
	\newblock {\em J. Chem. Phys.}, 117(5):1975--1981, 2002.
	
	\bibitem{MaggsRossetto_PRL02}
	A.~C. Maggs and V.~Rossetto.
	\newblock Local simulation algorithms for {C}oulomb interactions.
	\newblock {\em Phys. Rev. Lett.}, 88:196402, 2002.
	
	\bibitem{MonkSuli94}
	P.~Monk and E.~S{\" u}li.
	\newblock A convergence analysis of {Y}ee's scheme on nonuniform grids.
	\newblock {\em SIAM J. Numer. Anal.}, 31(2):393--412, 1994.
	
	\bibitem{eint}
	I.~Pasichnyk and B.~D{\"u}nweg.
	\newblock Coulomb interactions via local dynamics: {A} molecular-dynamics
	algorithm.
	\newblock {\em J. Phys.: Condens. Matter}, 16:S3999, 2004.
	
	\bibitem{Pruitt_NumerMath2014}
	M.~Pruitt.
	\newblock Large time step maximum norm regularity of {L}-stable difference
	methods for parabolic equations.
	\newblock {\em Numer. Math.}, 128:551--587, 2014.
	
	\bibitem{Pruitt_M2AN2015}
	M.~Pruitt.
	\newblock Maximum norm regularity of periodic elliptic difference operators
	with variable coefficients.
	\newblock {\em ESAIM: Math. Model. Numer. Anal.}, 49:1451--1461, 2015.
	
	\bibitem{Qiao2022ANP}
	Z.~Qiao, Z.~Xu, Q.~Yin, and S.~Zhou.
	\newblock A {Maxwell-Amp{\`e}re Nernst-Planck} framework for modeling charge
	dynamics.
	\newblock {\em SIAM J. Appl. Math.}, 83:374--393, 2023.
	
	\bibitem{Qiao2022NumANP}
	Z.~Qiao, Z.~Xu, Q.~Yin, and S.~Zhou.
	\newblock Structure-preserving numerical method for {Maxwell-Amp\`{e}re
		Nernst-Planck} model.
	\newblock {\em J. Comput. Phys.}, 475:111845, 2023.
	
	\bibitem{Qiao2024SISC}
	Z.~Qiao, Z.~Xu, Q.~Yin, and S.~Zhou.
	\newblock Local structure-preserving relaxation method for equilibrium of
	charged systems on unstructured meshes.
	\newblock {\em SIAM J. Sci. Comput.}, 46(4):A2248--A2269, 2024.
	
	\bibitem{RauchScott_SIAP2021}
	J.~Rauch and L.~R. Scott.
	\newblock The charge-group summation method for electrostatics of periodic
	crystals.
	\newblock {\em SIAM J. Appl. Math.}, 81(2):694--717, 2021.
	
	\bibitem{RauchScott_SIMA2021}
	J.~B. Rauch and L.~R. Scott.
	\newblock The electrostatic potential of periodic crystals.
	\newblock {\em SIAM J. Math. Anal.}, 53:1474--1491, 2021.
	
	\bibitem{Baker_QRBiophys2012}
	P.~Ren, J.~Chun, D.~G. Thomas, M.~J. Schnieders, M.~Marucho, J.~Zhang, and
	N.~A. Baker.
	\newblock Biomolecular electrostatics and solvation: a computational
	perspective.
	\newblock {\em Quarterly Rev. Biophys.}, 45:427--491, 2012.
	
	\bibitem{RM:PRL:2004}
	J.~Rottler and A.~C. Maggs.
	\newblock Local molecular dynamics with {Coulombic} interactions.
	\newblock {\em Phys. Rev. Lett.}, 93(17):170201, 2004.
	
	\bibitem{SharpHonig90}
	K.~A. Sharp and B.~Honig.
	\newblock Calculating total electrostatic energies with the nonlinear
	{P}oisson--{B}oltzmann equation.
	\newblock {\em J. Phys. Chem.}, 94:7684--7692, 1990.
	
	\bibitem{Temam84}
	R.~Temam.
	\newblock {\em Navier--Stokes Equations: Theory and Numerical Analysis}.
	\newblock North-Holland, 3rd edition, 1984.
	
	\bibitem{Varah75}
	J.~M. Varah.
	\newblock A lower bound for the smallest singular value of a matrix.
	\newblock {\em Linear Algebra Appl.}, 11:3--5, 1975.
	
	\bibitem{Varga76}
	R.~S. Varga.
	\newblock On diagonal dominance arguments for bounding $\| {A}^{-1} \|_\infty$.
	\newblock {\em Linear Algebra Appl.}, 14:211--217, 1976.
	
	\bibitem{Yee_1966}
	K.~Yee.
	\newblock Numerical solution of initial boundary value problems involving
	{M}axwell's equations in isotropic media.
	\newblock {\em IEEE Trans. Antennas and Propagation}, 14(3):302--307, 1966.
	
	\bibitem{ZhangCheng_SISC23}
	Z.~Zhang and L.-T. Cheng.
	\newblock Binary level-set method for variational implicit solvation model.
	\newblock {\em SIAM J. Sci. Comput.}, 45:B618--B645, 2023.
	
	\bibitem{ZhangRicci_JCTC202021}
	Z.~Zhang, C.~G. Ricci, C.~Fan, L.-T. Cheng, B.~Li, and J.~A. McCammon.
	\newblock Coupling {M}onte {C}arlo, variational implicit solvation, and binary
	level-set for simulations of biomolecular binding.
	\newblock {\em J. Chem. Theory Comput.}, 17:2465--2478, 2021.
	
	\bibitem{Zhou_JCP94}
	H.-X. Zhou.
	\newblock Macromolecular electrostatic energy within the nonlinear
	{P}oisson--{B}oltzmann equation.
	\newblock {\em J. Chem. Phys.}, 100:3152--3162, 1994.
	
	\bibitem{VISMPB_JCTC14}
	S.~Zhou, L.-T. Cheng, J.~Dzubiella, B.~Li, and J.~A. McCammon.
	\newblock Variational implicit solvation with {P}oisson--{B}oltzmann theory.
	\newblock {\em J. Chem. Theory Comput.}, 10:1454--1467, 2014.
	
	\bibitem{ZhouWangLi_PRE11}
	S.~Zhou, Z.~Wang, and B.~Li.
	\newblock Mean-field description of ionic size effects with non-uniform ionic
	sizes: {A} numerical approach.
	\newblock {\em Phys. Rev. E}, 84:021901, 2011.
	
\end{thebibliography}

\end{document}